\newtheorem{thm}{Theorem}[section]
\newtheorem{prop}[thm]{Proposition}
\newtheorem{lem}[thm]{Lemma}
\newtheorem{cor}[thm]{Corollary}
\theoremstyle{definition}
\newtheorem{ques}[thm]{Question}
\newtheorem{defn}[thm]{Definition}
\theoremstyle{remark}
\newtheorem{remk}[thm]{Remark}
\newtheorem{remks}[thm]{Remarks}
\newtheorem{exm}[thm]{Example}
\newtheorem{exms}[thm]{Examples}
\newtheorem{notat}[thm]{Notation}
\numberwithin{equation}{section}
\newenvironment{rem}{\begin{remk}}%
{\hfill$\square$\end{remk}}
{\hfill$\square$\end{remks}}
{\hfill$\square$\end{exm}}
{\hfill$\square$\end{exms}}
{\hfill$\square$\end{notat}}
\newcommand{\thmref}{Theorem~\ref}
\newcommand{\propref}{Proposition~\ref}
\newcommand{\lemref}{Lemma~\ref}
\newcommand{\sC}{{\mathcal C}}
\newcommand{\sF}{{\mathcal F}}
\newcommand{\sH}{{\mathcal H}}
\newcommand{\sI}{{\mathcal I}}
\newcommand{\sJ}{{\mathcal J}}
\newcommand{\sK}{{\mathcal K}}
\newcommand{\sL}{{\mathcal L}}
\newcommand{\sM}{{\mathcal M}}
\newcommand{\sO}{{\mathcal O}}
\newcommand{\sR}{{\mathcal R}}
\newcommand{\sW}{{\mathcal W}}
\newcommand{\sZ}{{\mathcal Z}}
\newcommand{\A}{{\mathbb A}}
\newcommand{\C}{{\mathbb C}}
\renewcommand{\H}{{\mathbb H}}
\renewcommand{\P}{{\mathbb P}}
\newcommand{\Q}{{\mathbb Q}}
\newcommand{\Z}{{\mathbb Z}}
\newcommand{\fp}{{\mathfrak p}}
\newcommand{\Ker}{{\rm Ker}}
\newcommand{\CH}{{\rm CH}}
\newcommand{\surj}{\twoheadrightarrow}
\newcommand{\inj}{\hookrightarrow}
\newcommand{\red}{{\rm red}}
\newcommand{\Pic}{{\rm Pic}}
\newcommand{\Div}{{\rm Div}}
\newcommand{\Hom}{{\rm Hom}}
\newcommand{\Spec}{{\rm Spec \,}}
\newcommand{\sing}{{\rm sing}}
\newcommand{\divf}{{\rm div}}
\newcommand{\id}{{\operatorname{id}}}
\newcommand{\Sch}{{\operatorname{\mathbf{Sch}}}}
\newcommand{\Sm}{{\mathbf{Sm}}}
\newcommand{\cyc}{{\operatorname{\rm cyc}}}
\newcommand{\Nis}{{\operatorname{Nis}}}
\newcommand{\et}{{\text{\'et}}}
\newcommand{\ds}{{/\kern-3pt/}}
\newcommand{\ov}{\overline}
\renewcommand{\dim}{\text{\rm dim}}
\newcommand{\tuborg}{\left\{\begin{array}{ll}}
\newcommand{\sluttuborg}{\end{array}\right.}
\newcommand{\zar}{{\rm zar}}
\newcommand{\edim}{{\rm edim}}
\newcommand{\reg}{{\rm reg}}
\newcommand{\wt}{\widetilde}
\newcommand{\etl}{{\acute{e}t}}
\def\cO{\mathcal{O}}
\def\ol#1{\overline{#1}}
\newcounter{elno}
\newcounter{elno-abc}   
\newenvironment{listabc}{
                         \begin{list}{\alph{elno-abc})
                                     }{\usecounter{elno-abc}}
                      }{
                         \end{list}}
\newcounter{elno-abc-prime}   
\newenvironment{listabcprime}{
                         \begin{list}{\alph{elno-abc-prime}')
                                     }{\usecounter{elno-abc-prime}}
                      }{
                         \end{list}}
\begin{document}
\title{Zero-cycle groups on algebraic varieties}
\author{Federico Binda and Amalendu Krishna}
\address{Dipartimento di Matematica ``Federigo Enriques'',
  Universit\`a degli Studi di Milano\\ Via Cesare Saldini 50, 20133 Milano,
  Italy}
\email{federico.binda@unimi.it}
\address{Department of Mathematics, Indian Institute of Science,  
Bangalore, 560012, India.}
\email{amalenduk@iisc.ac.in}


\keywords{Cycles with modulus, Cycles on singular varieties,
  Motivic cohomology}        

\subjclass[2020]{Primary 14C25; Secondary 14F42, 19E15}

\begin{abstract}
  We compare various groups of 0-cycles on quasi-projective varieties over
  a field. As applications, we show that for certain singular projective
  varieties, the Levine-Weibel Chow group of 0-cycles coincides with the corresponding 
Friedlander-Voevodsky motivic cohomology. We also show that over an algebraically closed
field of positive characteristic, the Chow group of
0-cycles with modulus on a smooth projective variety with respect to a reduced
divisor coincides with the Suslin homology of the complement of the divisor.
We prove several generalizations of the finiteness theorem of Saito and Sato for
the Chow group of 0-cycles over $p$-adic fields.
We also use these results to deduce a torsion theorem for Suslin homology which extends a result of Bloch to
open varieties.
\end{abstract}
\setcounter{tocdepth}{1}

\maketitle
\tableofcontents

\section{Introduction}\label{sec:Introduction}
The goal of this work is to clarify the relation between various groups of 0-cycles
 associated to algebraic varieties and prove some new properties of these
groups. One of the questions we want to
address is whether one can isolate a class of algebraic varieties
for which one could describe the a priori non-$\A^1$-invariant motivic
cohomology groups (such as the Levine-Weibel Chow group and the Chow group
with modulus) in terms of the~---~more classical~---~$\A^1$-invariant motivic
cohomology groups
such as the ones 
defined by Friedlander, Suslin and Voevodsky. We answer this question
for a projective variety which has either normal crossing singularities or 
it is obtained as the double of a smooth variety along a reduced divisor.
The interest in the latter case originates from the fact that it 
allows us to describe the Chow group of 0-cycles with
modulus on smooth varieties with reduced modulus in terms of 
the singular homology {\`a} la Suslin--Voevodsky. 
In the following subsections, we describe our main results and their
applications.

\subsection{Cycles with modulus vs. Suslin homology}\label{sec:CM}
Let $k$ be a field and $X$ a smooth projective variety over $k$.
Let $D \subset X$ be an effective Cartier divisor and let $U = X \setminus D$.
Let $\CH_0(X|D)$ denote
the Chow group of 0-cycles with modulus (see \cite{KS} or \cite{BS}).
These groups are supposed to provide the cycle theoretic
description of the relative algebraic $K$-theory in the same way as
the classical Chow groups do for the non-relative $K$-theory: for the case of 0-cycles this relationship has now been  established in many important cases (see e.g., \cite{BKS}, \cite{GKris2}, \cite{GKris}). 
The relative $K$-theory is an important tool to study the ordinary
$K$-theory of singular varieties by embedding them inside smooth varieties.
It is therefore important to know the structure of the Chow groups with modulus
under various special conditions on the base field and the variety.

It is not hard to see that
there is a natural surjection $\lambda_{X|D}\colon \CH_0(X|D) \surj H^{sus}_0(U)$, where
the latter is the 0-th Suslin homology of $U$ (also known as the
Suslin-Voevodsky singular homology). It acts, in many ways, as a replacement of the
classical Chow group of 0-cycles for non-proper varieties over $k$ (see  \cite{SS} for
an early example of this philosophy).

Since $H^{sus}_0(U)$ is part of the well developed theory of 
motivic cohomology with compact support, there are many tools
to study and compute it. On the other hand, the  nature of the \emph{relations}
defining $\CH_0(X|D)$ in some cases 
can be used to grasp  new information about  $H^{sus}_0(U)$ using $\lambda_{X|D}$.
It is therefore important to get conditions under which  the above surjection is an
isomorphism. 
We give a complete answer to this when $k$ is algebraically closed and
a partial answer in general.

\begin{thm}\label{thm:Main-2}
  Assume that $k$ is algebraically closed and $D$ is
  reduced{\footnote{$\lambda_{X|D}$ is almost never an
isomorphism with integral coefficients if $D$ is not reduced.}}. Then the map
\[
\lambda_{X|D} \colon \CH_0(X|D) \to H^{sus}_0(U)
\]
has the following properties.
\begin{enumerate}
\item
It is an isomorphism if ${\rm char}(k) > 0$.
\item
  It is not always an isomorphism if ${\rm char}(k) = 0$.
\end{enumerate}
\end{thm}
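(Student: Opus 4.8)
The plan is to attack the two parts by rather different methods. For part (1), in positive characteristic, I would first reduce to the case where $k$ is the algebraic closure of a finite field by a standard spreading-out and specialization argument, since both $\CH_0(X|D)$ and $H^{sus}_0(U)$ behave well under filtered colimits of the base field and the surjection $\lambda_{X|D}$ is compatible with base change. Over $\overline{\F}_p$ one has strong rigidity: $H^{sus}_0(U)$ is, after the degree map is accounted for, a torsion group (indeed its degree-zero part is finite up to $p$-torsion issues, controlled by class field theory for the open variety $U$, in the spirit of Schmidt–Spiess and Geisser). The strategy is then to show that $\ker(\lambda_{X|D})$ is both divisible and torsion. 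Divisibility can be extracted from a Bloch-type moving/Roitman argument: the modulus condition is, in characteristic $p$, ``tame enough'' on a reduced divisor that any cycle class in the kernel can be moved into an arbitrarily small (in the sense of the modulus filtration) position, which forces infinite divisibility. That the kernel is torsion should follow from comparing with the relative algebraic $K$-theory: by the known cases of the relationship between $\CH_0(X|D)$ and relative $K_0$ cited in the introduction (\cite{BKS}, \cite{GKris2}, \cite{GKris}), together with Geisser–Levine–type results identifying the relevant $K$-groups over $\overline{\F}_p$ with motivic/étale cohomology, one sees that the ``new'' part of $\CH_0(X|D)$ beyond $H^{sus}_0(U)$ lives in a group that is uniquely divisible, hence zero once we know it is also torsion. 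The interplay of these two facts (divisible $\cap$ torsion-controlled $= 0$) gives injectivity.

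For part (2), the task is only to exhibit \emph{one} counterexample in characteristic zero, so I would take $k = \C$ (or $\ov{\Q}$) and choose $X$ and $D$ so that $\CH_0(X|D)$ visibly carries transcendental/Hodge-theoretic information that $H^{sus}_0(U)$ cannot see. The cleanest choice is a smooth projective surface $X$ with $p_g(X) > 0$ together with a reduced divisor $D$ such that $U = X \setminus D$ still has $H^{sus}_0(U)$ small — for instance $D$ an ample smooth curve, so that by affine Lefschetz and the comparison of Suslin homology with the relevant motivic cohomology of the affine surface $U$, the group $H^{sus}_0(U)$ is (up to the degree map) finitely generated or at least ``small'', while on the modulus side the albanese-type / regulator map from $\CH_0(X|D)$ detects a nontrivial piece of $H^0(X,\Omega^2_X)$ via the additive/infinitesimal part of the modulus filtration (this is exactly the phenomenon underlying the footnote, and is already present for $D$ reduced in characteristic zero because of the failure of the classical Bloch conjecture-type divisibility). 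Concretely, one can use a degeneration/deformation of $D$ inside a linear system: two linearly equivalent reduced divisors $D_0, D_1$ give, via the modulus condition, relations among 0-cycles that translate into a period integral $\int_{D_0}^{D_1}$ of a nonzero holomorphic $2$-form; nonvanishing of such periods shows $\ker(\lambda_{X|D})$ is nonzero.

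The main obstacle is part (1): specifically, proving that $\ker(\lambda_{X|D})$ is torsion. Divisibility is comparatively soft (it is a moving-lemma statement), and the reduction to $\overline{\F}_p$ is routine, but controlling the kernel requires genuinely identifying $\CH_0(X|D)$ — or at least its kernel over $\lambda_{X|D}$ — with something computable, and the available dictionary with relative $K$-theory is only known ``in many important cases'' rather than in full generality. I expect the proof to route through the comparison results established earlier in the paper (the normal-crossing and double-construction theorems mentioned in the introduction): the double $S_X = X \cup_D X$ lets one replace $\CH_0(X|D)$ by the Levine–Weibel Chow group $\CH_0^{LW}(S_X)$, and then the positive-characteristic statement becomes a statement comparing $\CH_0^{LW}$ of this particular singular projective variety with Friedlander–Voevodsky motivic cohomology, where rigidity over $\overline{\F}_p$ and the Geisser–Levine identification of mod-$n$ motivic cohomology can be brought fully to bear. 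So the real work is in verifying that the double-construction comparison is an isomorphism integrally in characteristic $p$, which is where the torsion-plus-divisible dichotomy must be made precise.
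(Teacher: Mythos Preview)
Your approach to part (1) eventually lands on the right mechanism: the double $S_X$ and the comparison of $\CH^{LW}_0(S_X)$ with Friedlander--Voevodsky motivic cohomology is indeed the core of the argument. However, the spreading-out reduction to $\overline{\F}_p$ is unnecessary (the paper works directly over any algebraically closed field of positive characteristic), and you are missing a key ingredient. The double-construction comparison (\thmref{thm:LW-MC-Main}) only yields that $\ker(\lambda_{X|D})$ is $p$-primary torsion; to kill this $p$-torsion one does \emph{not} pass through relative $K$-theory or Geisser--Levine rigidity as you suggest, but through the Albanese. The paper establishes a canonical isomorphism of semi-abelian varieties $J^d(X|D)\xrightarrow{\cong} J^d(U)$ (\thmref{thm:Semi-abl-Alb}), and then invokes two Roitman-type torsion theorems---one for $\CH_0(X|D)$ (from \cite{Krishna-2}) and one for $H^{sus}_0(U)$ (from \cite{SS} and \cite{GK-1})---to conclude that $\lambda_{X|D}$ is an isomorphism on \emph{all} torsion, in particular on the $p$-primary part. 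Your ``divisible $\cap$ torsion'' dichotomy is not how the argument is organized.

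Your proposal for part (2) has a genuine gap. The claim that $H^{sus}_0(U)$ is ``finitely generated or at least small'' when $U$ is an affine surface is false: the localization sequence for motivic cohomology with compact support gives a surjection $H^{sus}_0(U)\twoheadrightarrow \CH_0(X)$ (since $H^{2d}(D,\Z(d))=0$ for a curve $D$), so for a surface with $p_g>0$ the target is already Mumford-infinite. More fundamentally, your proposed mechanism---periods of holomorphic $2$-forms detecting an ``additive/infinitesimal part''---does not apply when $D$ is reduced and \emph{smooth}: by \thmref{thm:Main-2-fin} the map $\lambda_{X|D}$ is an isomorphism with ${\Z}/m$-coefficients for every $m$ whenever the irreducible components of $D_{\red}$ are regular, so the kernel is divisible, and your argument gives no reason for it to be nonzero. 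The paper's counterexample is of a completely different nature: it takes $X=\P^2_k$ (so $p_g=0$) and $D$ the \emph{singular} reduced cuspidal cubic. The point is that although $D$ is reduced, its pullback to a curve through the cusp acquires multiplicity, so the Kerz--Saito modulus condition is genuinely stronger than the Suslin condition. The computation is then purely $K$-theoretic: one gets $\CH_0(\P^2_k|D)\cong \Omega_k\oplus\Z$ via $SK_1$ of the affine cuspidal cubic (using \cite{GW}), while a direct localization computation in $\mathbf{DM}^{\rm eff}(k,\Z)$ gives $H^{sus}_0(\P^2_k\setminus D)\cong\Z$.
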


\begin{thm}\label{thm:Main-2-fin}
  Assume that $k$ is perfect and the irreducible components of $D_\red$ are regular.
  Then the following hold.
  \begin{enumerate}
\item
  If ${\rm char}(k) = p > 0$, then
  \[
    \lambda_{X|D} \colon \CH_0(X|D)[\tfrac{1}{p}] \to H^{sus}_0(U)[\tfrac{1}{p}]
  \]
  is an isomorphism.
\item
  If ${\rm char}(k) = 0$, then
  \[
    \lambda_{X|D} \colon {\CH_0(X|D)}/m \to {H^{sus}_0(U)}/m
  \]
  is an isomorphism for every integer $m \neq 0$.
\end{enumerate}
\end{thm}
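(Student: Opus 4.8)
The plan is to bootstrap from Theorem 1.1, which handles the algebraically closed case with the stronger hypothesis that $D$ is reduced, and to use a trace/transfer argument together with the known structural results relating both sides to motivic cohomology. First I would observe that the surjectivity of $\lambda_{X|D}$ is automatic, so only injectivity of the displayed maps is at issue. The strategy in each case is to reduce to the situation of Theorem 1.1 by base change to the algebraic closure $\bar k$ and descent. Write $\bar X = X \times_k \bar k$, $\bar D = D \times_k \bar k$, $\bar U = U \times_k \bar k$; since the components of $D_\red$ are regular and $k$ is perfect, $\bar D$ is reduced (regularity is preserved, and a regular scheme over a perfect field stays reduced after base change), so Theorem 1.1 applies to $\bar X$ and $\bar D$ and gives an isomorphism $\lambda_{\bar X|\bar D}\colon \CH_0(\bar X|\bar D) \xrightarrow{\sim} H^{sus}_0(\bar U)$ in positive characteristic. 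One then needs a Galois/pro-finite descent spectral sequence (or a direct limit over finite subextensions $k'/k$ together with a norm argument) comparing $\CH_0(X|D)$ with $\CH_0(\bar X|\bar D)$ and $H^{sus}_0(U)$ with $H^{sus}_0(\bar U)$. The point is that while these descent maps are not isomorphisms integrally, a standard transfer argument shows that for any finite extension $k'/k$ of degree $n$, the composite $\CH_0(X|D) \to \CH_0(X_{k'}|D_{k'}) \to \CH_0(X|D)$ (extension of scalars followed by norm/pushforward) is multiplication by $n$, and likewise on Suslin homology; combined with compatibility of $\lambda$ with both pushforward and pullback, this forces $\lambda_{X|D}$ to be an isomorphism after inverting the degrees of all finite extensions through which one must pass.

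For part (1) (char $p$), the heart of the matter is that $\bar k/k$ is a filtered colimit of finite extensions, and over a perfect field of characteristic $p$ these extensions are separable, so every relevant degree $n$ is prime to $p$ exactly when... — here one must be careful. Inseparability is not an issue since $k$ is perfect, so all finite subextensions of $\bar k/k$ are separable, and their degrees can be arbitrary integers, \emph{not} just powers of $p$. So a naive "invert the degrees" argument would invert everything. The correct move is instead Galois descent: $\bar k/k$ is (pro-)Galois with group $G = \Gal(\bar k/k)$, and one uses a Hochschild–Serre type spectral sequence together with the fact that both $\CH_0(X|D)$ and $H^{sus}_0(U)$ (as motivic cohomology groups with and without modulus) carry $G$-actions whose fixed points / coinvariants recover the $k$-versions, up to terms that are controlled. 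After inverting $p$, the key input is that the higher continuous cohomology of $G$ with coefficients in these (torsion-prime-to-$p$ and divisible-away-from-$p$) modules vanishes in the relevant range — this is where the positive-characteristic hypothesis really enters, via the structure of the groups established in the proof of Theorem 1.1 (roll-up of Geisser–Levine / Geisser–Schmidt type results identifying the torsion with étale cohomology and the cotorsion with a lattice). Concretely I expect the argument to run: $\CH_0(X|D)[1/p]$ and $H^{sus}_0(U)[1/p]$ are each extensions of a finitely generated group by a divisible torsion group, the divisible parts are identified via $\lambda$ already over $\bar k$ by Theorem 1.1, and the finitely generated quotients (degree maps / albanese-type invariants) are visibly matched by $\lambda$ because $\lambda$ is compatible with the degree map to $\Z$ and with pushforward to $\Spec k$.

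For part (2) (char $0$), one works mod $m$ rather than with $1/p$-coefficients. Here the plan is to use the Bloch–Kato / Beilinson–Lichtenbaum comparison: mod $m$, both $\CH_0(X|D)/m$ and $H^{sus}_0(U)/m$ are expected to be computed by étale (motivic) cohomology with $\mu_m$-coefficients, for which descent along $\bar k/k$ is governed by ordinary Galois cohomology, and the comparison map $\lambda$ becomes an isomorphism after base change to $\bar k$ by Theorem 1.1 combined with the fact that Theorem 1.1(1) is proved characteristic by characteristic — wait, Theorem 1.1(1) is only positive characteristic, so in char $0$ one cannot invoke it. Instead, for the mod-$m$ statement in char $0$ the right input is a \emph{separate} computation: one shows directly that mod $m$ the relations defining $\CH_0(X|D)$ and those defining $H^{sus}_0(U)$ agree, because the obstruction to $\lambda$ being an isomorphism integrally is a \emph{uniquely divisible} group (this is the phenomenon behind Theorem 1.1(2): in char $0$ the kernel of $\lambda$ over $\bar k$ is a big uniquely divisible group, e.g. coming from a Roitman-type argument with $1$-forms). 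A uniquely divisible group vanishes mod $m$, so $\lambda/m$ is an isomorphism over $\bar k$; then one descends by the Galois cohomology of finite $\mu_m$-type modules, which is harmless after the fact that both sides mod $m$ are finite (or at least finitely generated) $\Z/m$-modules with matching $G$-structure. The main obstacle, and the step I would spend the most effort on, is making precise and proving the claim that $\ker(\lambda_{\bar X|\bar D})$ in characteristic $0$ is uniquely divisible (equivalently, divisible and torsion-free), since this is exactly what lets mod-$m$ coefficients collapse the discrepancy; this should follow from the structure theory of $\CH_0$ with modulus over algebraically closed fields (comparison with relative Albanese and the Bloch-type torsion theorem for Suslin homology advertised in the abstract), but it requires care to ensure no $m$-torsion sneaks in from the modulus condition.
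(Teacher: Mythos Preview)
Your approach is genuinely different from the paper's, but it has real gaps that would be hard to close.

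\textbf{The main gap: descent does not work.} For part~(1), you correctly note that the naive transfer argument fails: if $\alpha \in \ker(\lambda_{X|D})$ and you know its base change to $\bar k$ vanishes (via Theorem~1.1), then $\alpha_{k'} = 0$ for some finite separable $k'/k$, whence $[k':k]\cdot\alpha = 0$. But $[k':k]$ need not be a power of~$p$, so inverting~$p$ does not kill~$\alpha$. You then pivot to a Hochschild--Serre argument, but there is no usable spectral sequence here: neither $\CH_0(X|D)$ nor $H^{sus}_0(U)$ is known to satisfy Galois descent, and the structural claim you sketch (extension of finitely generated by divisible torsion after inverting~$p$) is false over non-algebraically-closed fields --- e.g.\ $\CH_0(X)[1/p]$ over a $p$-adic field is typically huge. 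For part~(2), the situation is worse: you need $\ker(\lambda_{\bar X|\bar D})$ uniquely divisible in characteristic zero, which is essentially what you are trying to prove (torsion-freeness of this kernel is a Roitman-type statement not supplied by Theorem~1.1), and then you would still need to descend mod~$m$, which again requires control you do not have.

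\textbf{A second gap: you never reduce to $D$ reduced.} Theorem~1.1 requires $D$ reduced, but Theorem~1.2 only assumes the components of $D_{\red}$ are regular. Your sentence ``$\bar D$ is reduced (regularity is preserved\dots)'' conflates $D$ with $D_{\red}$. Passing from $D$ to $D_{\red}$ on the Chow-with-modulus side is exactly the content of Miyazaki's invariance theorem, which is one of the three key inputs the paper uses and which you do not mention.

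\textbf{What the paper actually does.} The paper's proof is entirely over~$k$ and never base-changes or invokes Theorem~1.1. It works directly with the cycle presentations: given a Suslin relation coming from an integral curve $C \subset X$ with normalization $C^N$ and $f \in k(C^N)^\times$ satisfying $f \equiv 1$ on $\nu^{-1}(D)$, one must show $\nu_*(\divf f)$ dies in $\CH_0(X|D)_R$. Three tricks: (i) replace $X$ by $\P^n_X$ so that $C^N \hookrightarrow \P^n_X$ is a closed immersion, reducing to $C$ regular and embedded; (ii) apply Jannsen's blow-up lemma (appendix to Saito--Sato) to make the strict transform of $C$ meet $D_{\red}$ transversally in its regular locus, at which point the Suslin condition $f(x)=1$ on $C \cap D$ becomes exactly the modulus condition relative to $D_{\red}$; (iii) invoke Miyazaki to pass from $\CH_0(X|D_{\red})$ back to $\CH_0(X|D)$ after tensoring with~$R$. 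This is where the coefficient ring $R = \Z[1/p]$ or $\Z/m$ enters, not via any torsion argument over~$\bar k$.
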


Part (2) \thmref{thm:Main-2} may  appear surprising, because one would expect
$\CH_0(X|D)$ to behave like an $\A^1$-invariant theory if $D$ is reduced.
One application of this is that it implies that
the Chow groups with modulus defined by Russell \cite{RussellANT} does
not in general coincide with the above Chow group with modulus.
It also implies that Russell's Chow groups can not satisfy Bloch's formula
in terms of the top Zariski/Nisnevich cohomology of Milnor or Quillen
$K$-sheaves.

\subsection{Bloch's torsion theorem for Suslin homology}
\label{sec:BTor}
Let $U$ be a quasi-projective variety of dimension $d \ge 0$
over an algebraically closed
field $k$ which admits a smooth compactification.
Let $\ell \neq {\rm char}(k)$ be a prime number.
If $U$ itself is complete, then Bloch's torsion theorem \cite{Bloch-torsion}
(established independently from Roitman's  theorem)
affirms the existence of an isomorphism
\[ \CH_0(U)\{\ell\} \xrightarrow{\cong} 
H^{2d-1}_{\etl}(U, {\Q_\ell}/{\Z_\ell}(d)).\]
This provides a simple description of the
prime-to-$p$ torsion in the Chow group of zero cycles in terms of an {\'e}tale
cohomology group. 
This isomorphism is very subtle because there is no obvious map
in either direction and its construction is one of the main challenges.

When $U$ is no longer complete, but admits a smooth compactification as indicated above, we can replace the Chow group $\CH_0(U)$ by $H_0^{sus}(U)$, as explained in the previous paragraph. An extension of Roitman's torsion theorem for $H_0^{sus}(U)$, providing an isomorphism between the prime-to-$p$ torsion part of $H_0^{sus}(U)$ and the corresponding torsion in the semi-abelian Albanese variety of $U$, has been proved by  
Spie{\ss} and Szamuely \cite{SS}. As a consequence of our main result, we also get the following theorem (see Remark \ref{remk:SS} for a different approach).
Let $H^*_{c, \etl}(U, \sF)$ denote the {\'e}tale cohomology of $U$ with
compact support (e.g., see \cite{Milne}).

\begin{thm}\label{thm:Main-7}
Let $U$ be a quasi-projective variety of dimension $d \ge 0$
over an algebraically closed field such that $U$ admits a smooth compactification.
Then there is an isomorphism
\[
H^{sus}_0(U)\{\ell\} \xrightarrow{\cong} 
H^{2d-1}_{c, \etl}(U, {\Q_\ell}/{\Z_\ell}(d)).
\]
\end{thm}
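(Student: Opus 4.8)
The strategy is to reduce \thmref{thm:Main-7} to the already-proven
\thmref{thm:Main-2} and to Bloch's original torsion theorem, using a duality between Suslin
homology and \'etale cohomology with compact support. Let $X$ be a smooth compactification
of $U$ and set $D = X \setminus U$ with the reduced structure; since $k$ is algebraically
closed we may assume $D$ is a reduced Cartier divisor after blowing up $X$ along the boundary,
which does not change any of the groups in play. First I would treat the case
$\Char(k) = p > 0$: here \thmref{thm:Main-2}(1) gives
$\CH_0(X|D) \xrightarrow{\cong} H^{sus}_0(U)$, so it suffices to produce the isomorphism on the
level of $\CH_0(X|D)\{\ell\}$. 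For this one uses the comparison between the Chow group of
0-cycles with modulus and the cohomology of a relative motivic complex on $(X,D)$, together with
the duality identifying the latter, after $\otimes \Q_\ell/\Z_\ell$, with
$H^{2d-1}_{c,\etl}(U, \Q_\ell/\Z_\ell(d))$; the necessary torsion statement is exactly Geisser's
or the Kerz--Saito-type cohomological Hasse principle input applied to the pair $(X,D)$. For
$\Char(k) = 0$ one cannot use \thmref{thm:Main-2} directly, but \thmref{thm:Main-2-fin}(2) shows
$\lambda_{X|D}$ is an isomorphism modulo every $m$, hence on prime-to-$p$ (here all) torsion,
and one can alternatively argue via a Lefschetz-pencil reduction to curves, or by the
specialization/spreading-out argument reducing char $0$ to char $p$, which is the route hinted
at in Remark \ref{remk:SS}.

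The heart of the matter is the following purely cohomological input, which I would isolate as a
lemma. There is a natural identification
\[
H^{sus}_0(U)\{\ell\} \;\cong\; \Hom\bigl(H^{2d}_{c,\etl}(U, \Z_\ell(d)), \Q_\ell/\Z_\ell\bigr)\{\ell\}
\]
coming from the fact that $H^{sus}_0(U) = H^{BM}_0(U, \Z(0))$ in the sense of Suslin--Voevodsky
(Borel--Moore motivic homology) together with the comparison between motivic Borel--Moore
homology with finite coefficients and \'etale cohomology with compact support in degree matching
the dimension. Then the long exact sequence relating
$H^{2d-1}_{c,\etl}(U,\Q_\ell/\Z_\ell(d))$, $H^{2d}_{c,\etl}(U,\Z_\ell(d))$ and its
$\Q_\ell$-counterpart, combined with the finite generation of $H^{sus}_0(U)$ modulo divisibles
(which for a variety admitting a smooth compactification follows from the Roitman-type theorem of
Spie{\ss}--Szamuely \cite{SS} identifying the torsion with that of the semi-abelian Albanese),
yields the claimed isomorphism. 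One should be careful that $H^{2d-1}_{c,\etl}(U,\Q_\ell/\Z_\ell(d))$
is the right target: when $U$ is proper this reduces to Bloch's
$H^{2d-1}_{\etl}(U, \Q_\ell/\Z_\ell(d))$, and the compact-support version is precisely what
interpolates correctly for open $U$.

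I expect the main obstacle to be the construction of the map itself, exactly as in Bloch's
original argument: there is no obvious morphism in either direction between
$H^{sus}_0(U)\{\ell\}$ and $H^{2d-1}_{c,\etl}(U,\Q_\ell/\Z_\ell(d))$. The cleanest way around
this is to bypass a direct construction and instead prove both sides are computed by the same
object. Concretely, I would use that both the $\ell$-primary torsion of $H^{sus}_0(U)$ and the
group $H^{2d-1}_{c,\etl}(U,\Q_\ell/\Z_\ell(d))$ are, by Pontryagin duality, the cofree
$\Z_\ell$-modules on $\Hom(-, \Q_\ell/\Z_\ell)$ of the degree-$0$ part of the dual theory, and
that the relevant dual groups agree by the motivic-to-\'etale comparison theorem for varieties
over algebraically closed fields (Suslin's rigidity / Suslin--Voevodsky). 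The delicate points
will be (i) controlling the $p$-part in characteristic $p$, which is why the theorem is stated with
$\ell \neq \Char(k)$ and why \thmref{thm:Main-2}(1) — an integral statement away from $p$ is not
needed, only the rational-coefficient piece is — is invoked rather than an integral comparison;
and (ii) checking functoriality and independence of the chosen compactification, for which one
uses that any two smooth compactifications are dominated by a third after blow-ups and that
$H^{sus}_0$ and $H^*_{c,\etl}$ are both insensitive to such modifications of the boundary in the
relevant degree.
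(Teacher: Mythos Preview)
Your proposal gestures at the alternative route the paper itself acknowledges in \remref{remk:SS}---namely, deducing the result from the Spie{\ss}--Szamuely Roitman theorem together with a duality between Suslin homology with finite coefficients and \'etale cohomology with compact support. That route is legitimate, but your write-up of it contains real errors and is too diffuse to constitute a proof.

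First, the identification ``$H^{sus}_0(U) = H^{BM}_0(U,\Z(0))$'' is wrong: Suslin homology is the Suslin--Voevodsky \emph{singular} homology, not Borel--Moore; for smooth $U$ of pure dimension $d$ it is dual to motivic cohomology \emph{with compact support}, i.e.\ $H^{sus}_i(U)_\Lambda \simeq H^{2d-i}_c(U,\Lambda(d))$. Second, your displayed formula $H^{sus}_0(U)\{\ell\}\cong \Hom(H^{2d}_{c,\etl}(U,\Z_\ell(d)),\Q_\ell/\Z_\ell)\{\ell\}$ is incorrect as stated (the right-hand side computes to $(\Q_\ell/\Z_\ell)^{\pi_0(U)}$ by Poincar\'e duality). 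The honest mechanism is the Suslin--Voevodsky comparison $H^{sus}_1(U,\Z/\ell^n)\cong H^{2d-1}_{c,\etl}(U,\Z/\ell^n(d))$ together with the universal-coefficient sequence; this leaves you needing to kill $H^{sus}_1(U)\otimes \Q_\ell/\Z_\ell$, which you never address. Third, the appeals to ``Geisser's or Kerz--Saito-type cohomological Hasse principle'' and ``specialization/spreading-out'' are placeholders, not arguments; neither is what is actually used here.

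For comparison, the paper's own proof is rather different and more hands-on. In characteristic $p>0$ it first reduces (via localization for $M^c$) to the case where $D$ is an effective Cartier divisor, and then combines \thmref{thm:Main-2}(1) with the torsion theorem of \cite{Krishna-2} for $\CH_0(X|D)$. In characteristic zero, \thmref{thm:Main-2} is unavailable, so the paper instead proves the surface case directly via the double $S_X$ (Lemmas~\ref{lem:Double-0}--\ref{lem:Double-2}, using \cite{BPW} and \cite{Krishna-2}), and then runs an induction on $d$ using a Bertini hypersurface section: the Lefschetz isomorphism for $H^{*}_{c,\etl}$ from \cite{GK-1} handles the \'etale side, while on the Suslin side one uses the commutative square with the Albanese maps, \cite{SS} for the horizontal isomorphisms on torsion, and the Lefschetz theorem for the semi-abelian Albanese from \cite{GK-1}. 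None of these concrete ingredients appear in your plan.
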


\begin{remk}\label{remk:p-adic-tor}
When $U$ is complete, the $p$-adic analogue of 
\thmref{thm:Main-7} can be deduced from \cite[Cor.~3.18]{GS} using 
\cite[Thm.~8.1]{GL}.
We hope to address this question in the non-complete case in a future project.
It is also an interesting question if it is possible to eliminate
the condition in \thmref{thm:Main-7} that $U$ admits a smooth compactification.
We expect this to be the case. 
\end{remk}

\subsection{Finiteness of 0-cycle groups}\label{sec:FF}
To motivate our next set of results, suppose that
$k$ is a $p$-adic field and $m \ge 1$ is an integer such that $p \nmid m$. A
conjecture of Colliot-Th{\'e}l{\`e}ne predicted that if $X$ is a smooth projective
variety over $k$, then ${\CH_0(X)}/m$ is finite.
Such a finiteness result was previously known over finite and algebraically closed
fields (without any restriction on $m$).

The conjecture of Colliot-Th{\'e}l{\`e}ne was affirmatively answered by
Saito and Sato \cite{SaitoSatoAnnals} (see also Bloch's appendix to
\cite{EsnaultWittenberg} and
\cite{EKW} for simplified proofs).
It is natural to expect that a similar finiteness result would hold even if
$X$ is not complete provided we replace
${\CH_0(X)}/m$ by ${H^{sus}_0(X)}/m$. 
In the same spirit, one would expect that ${\CH_0(X|D)}/m$ is finite for $X$
projective, if we interpret ${\CH_0(X|D)}$ as a Chow group with compact support for
$X \setminus |D|$.

We give positive answers to both questions. Furthermore, we show that the finiteness
result of Saito and Sato also holds for singular schemes.
The question of finiteness of Suslin homology with finite coefficients arose during a
conversation of authors with Colliot-Th{\'e}l{\`e}ne on his
conjecture.

For a reduced quasi-projective scheme $X$ over a field, let $\CH^{LW}_0(X)$ denote the
Levine-Weibel Chow group of $X$ (see \S~\ref{sec:set-up}).

\begin{thm}\label{thm:Main-4}
 Let $X$ be an integral quasi-projective scheme over $k$ which is regular
 in codimension one.  Let $D \subset X$ be an effective Cartier divisor.
 We then have the following.
\begin{enumerate}
  \item
 ${H^{sus}_0(X)}/m$ is finite if $X$ is smooth.
\item
  ${\CH_0(X|D)}/m$ is finite if $X$ is smooth and complete.
\item
 ${\CH^{LW}_0(X)}/m$ is finite if $X$ is complete. 
\end{enumerate}
\end{thm}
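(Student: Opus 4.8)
\medskip

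The plan is to deduce all three assertions from the finiteness theorem of Saito and Sato --- that $\CH_0(Z)/m$ is finite for every smooth projective variety $Z$ over a $p$-adic field with $p\nmid m$ --- by passing throughout to $\Z/m$-coefficients. Since $\Char(k)=0$, resolution of singularities is available and we use it freely. I would establish part (1) first, then deduce part (2) from part (1) and part (3) from part (2) by means of the comparison theorems.

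For part (1), write $V=X$; as the Suslin complex sits in non-negative homological degrees, the universal coefficient sequence gives $H^{sus}_0(V)/m\hookrightarrow H^{sus}_0(V,\Z/m)$, so it suffices to bound the right-hand side. Fix a smooth projective compactification $\ov V$ with $E=\ov V\setminus V=\bigcup_i E_i$ a simple normal crossing divisor, put $E_I=\bigcap_{i\in I}E_i$ for $\varnothing\neq I$ (smooth projective, of codimension $c=|I|$ in $\ov V$), and set $n=\dim V$. Iterating the Gysin (purity) triangles in $\DM$ along the strata expresses $M(V)$ as the total object of a finite complex with terms $M(\ov V)$ and the Tate twists $M(E_I)(c)[2c]$; applying $\Hom_{\DM}(\Z/m,-)$ then yields a spectral sequence whose pieces contributing to $H^{sus}_0(V,\Z/m)$ are --- up to the usual bookkeeping of twists and shifts --- subquotients of $\CH_0(\ov V)/m$ and of the top-degree groups $H^{\dim E_I}_{\Nis}\bigl(E_I,\mathcal K^M_n/m\bigr)$. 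The first is finite by Saito--Sato. For a stratum of codimension $c\ge 3$ the group $H^{\dim E_I}_{\Nis}(E_I,\mathcal K^M_n/m)$ vanishes --- its Gersten cokernel is built from Milnor groups $\mathcal K^M_{c}(\kappa)$ for $\kappa$ a finite extension of $k$, which are divisible and hence killed by $m$ --- while for $c=1,2$ it is a ``class group'' in the sense of the Kato--Saito higher-dimensional class field theory of $E_I$ over the $p$-adic field $k$, whose finiteness modulo $m$ is the core arithmetic input (due to Kato--Saito and to Saito--Sato). This proves part (1).

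For part (2), with $X$ smooth projective and $D$ an effective Cartier divisor: the natural map $\CH_0(X|D)\to\CH_0(X|D_\red)$ is surjective, and in characteristic zero its kernel is, by the structure of Chow groups with modulus (the graded pieces of the modulus filtration being built from absolute K\"ahler differentials over $k$, which are $\Q$-vector spaces), uniquely divisible; hence $\CH_0(X|D)/m\xrightarrow{\sim}\CH_0(X|D_\red)/m$. By embedded resolution of $D_\red\subset X$ together with the blow-up invariance of the Chow group of $0$-cycles with reduced modulus, we may further assume the irreducible components of $D_\red$ are regular, without changing $X\setminus D_\red=X\setminus D$. Then \thmref{thm:Main-2-fin}\,(2) gives $\CH_0(X|D)/m\cong\CH_0(X|D_\red)/m\cong H^{sus}_0(X\setminus D)/m$, which is finite by part (1).

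For part (3), let $X$ be integral projective and regular in codimension one, and fix a resolution $\pi\colon Y\to X$ with $Y$ smooth projective, $\pi$ an isomorphism over $X_\reg$, and $\pi^{-1}(X_\sing)_\red$ a simple normal crossing divisor. By the comparison of the Levine--Weibel Chow group of $X$ with Chow groups with modulus of a resolution, the pushforward of $0$-cycles provides a surjection $\pi_\ast\colon\CH_0(Y|E)\twoheadrightarrow\CH^{LW}_0(X)$ for a suitable effective Cartier divisor $E$ on $Y$ supported on the exceptional locus; since $Y$ is smooth and complete, $\CH_0(Y|E)/m$ is finite by part (2), hence so is its quotient $\CH^{LW}_0(X)/m$. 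The main obstacle in this program is the d\'evissage in part (1): identifying the boundary contributions precisely and proving their finiteness over $p$-adic fields --- a step that genuinely refines, rather than merely quotes, the theorem of Saito and Sato --- is the technical heart, while a secondary difficulty is to secure the pushforward surjection and the blow-up invariance above for $X$ only regular in codimension one, where the singular locus need not be a nice subscheme.
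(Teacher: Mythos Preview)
Your approach to parts (1) and (2) is workable and broadly parallel to the paper's, though the route differs. For (1), the paper does not insist on a smooth compactification: it embeds $V$ into an arbitrary integral projective scheme and proves a general finiteness statement $H^{2d+i}(Z,\Lambda'(d+i))$ finite for \emph{all} projective $Z$ (Prop.~\ref{prop:Sing-proj-fin}) via Temkin alterations and cdh-excision; then a single localization sequence finishes. Your stratification by the SNC boundary reaches the same arithmetic inputs (Lemma~\ref{lem:smooth-proj-fin}), but note that the $c=1$ contribution --- finiteness of $\CH^{d+1}(E_i,1)/m$ over a $p$-adic field --- is not a consequence of Kato--Saito or Saito--Sato; the paper invokes Forr{\'e} here, after an alteration to achieve strict semi-stable reduction. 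For (2) the paper is slightly more economical: it resolves to make $D_\red$ snc and applies Theorem~\ref{thm:Main-2-fin} directly to the (possibly non-reduced) pullback $f^*(D)$, so no separate ``reduce the modulus'' step is needed.

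Part (3) is where your plan breaks. You assert a pushforward surjection $\pi_*\colon \CH_0(Y|E)\twoheadrightarrow \CH^{LW}_0(X)$ for a resolution $\pi\colon Y\to X$ and some $E$ supported on the exceptional locus. On the level of free $0$-cycle groups this is the identity on $\sZ_0(X_\reg)$, so the content is that the modulus relations on $Y$ land in $\sR^{LW}_0(X)$. But a relation in $\CH_0(Y|E)$ comes from a finite map $\phi\colon C\to Y$ from a normal curve and $f\in G(C,\phi^*E)$; the composite $\pi\circ\phi\colon C\to X$ is finite onto a curve $C'\subset X$, yet there is no reason $C'$ is Cartier (or even lci) along $X_\sing$, nor that $f$ is a unit along $(\pi\circ\phi)^{-1}(X_\sing)$ --- the modulus condition controls $f$ only along $\phi^{-1}(E)$, which may be strictly smaller. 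So $\pi_*({\rm div}(f))$ need not lie in $\sR^{LW}_0(X)$, and the map is not known to be well-defined in dimension $\ge 3$. The comparison $\CH^{LW}_0(X)\cong \CH_0(\wt X|nE)$ that would rescue this is precisely \cite[Thm.~1.5]{GK-1}, which the paper uses only for surfaces; in higher dimension it is not available as an input. The paper therefore proceeds quite differently: it proves Theorem~\ref{thm:LW-Suslin}, namely ${\CH^{LW}_0(X)}/m\xrightarrow{\sim}{H^{sus}_0(X_\reg)}/m$ in characteristic zero, by induction on $d$ --- the base case $d=2$ uses the surface comparison just mentioned, while for $d\ge 3$ one reduces to normal $X$, uses the Albanese (and \cite{Levine-AJM}, \cite{SS}) to show $\Ker(\theta_X)\to \CH^{LW}_0(X)/m$ is zero, and cuts by a Bertini hypersurface to descend. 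Part (3) then follows from part (1). Your ``secondary difficulty'' is thus the actual obstruction, and it is not bypassed by the comparison you cite.
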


We show that the above finiteness result also holds over finite and
algebraically closed fields (see \S~\ref{sec:Finiteness}).

\vskip .3cm

\subsection{Levine-Weibel Chow group vs. motivic cohomology}
\label{sec:LWMC**}
Let $k$ be a field of exponential characteristic $p \ge 1$. Let $\Lambda$ be a commutative ring, and assume that $p$ is invertible in $\Lambda$ or that $k$ admits resolution of singularities.
Let $X$ be a reduced quasi-projective $k$-scheme of dimension $d \ge 0$.

Let $H^{2d}(X, \Lambda(d))$ denote the Friedlander-Voevodsky motivic cohomology
of $X$ (see  \S~\ref{sec:set-up} for a reminder of the definition).
It was shown in \cite{KP} that there is a canonical
map
\begin{equation}\label{eqn:LW-FS}
\lambda_X \colon \CH^{LW}_0(X) \otimes_{\Z} \Lambda \to H^{2d}(X, \Lambda(d)),
\end{equation}
where $\Lambda$ is as above. Note that, a priori, the source and the target of $\lambda_X$ are very different objects: on one side we have a group of cycles, defined by generators and relations (in particular, no topology is involved), while on the other side we have a group defined either as a Hom-group in a motivic category, or as the cdh-hypercohomology of a certain complex of sheaves.

It is well known that this map can not be an isomorphism for arbitrary
varieties: for instance, $H^{2d}(X, \Lambda(d))$ does not
see the difference between $X$ and its semi-normalization. On the other
hand, one can easily show for a curve $X$ with a cusp singularity
that $\CH^{LW}_0(X) \otimes_{\Z}\Lambda$ changes after passing to
the semi-normalization of $X$.

On the other hand, it is a result of Voevodsky that for regular $X$, both groups are isomorphic, as a (very) special case of the identification of motivic cohomology with Bloch's higher Chow groups. It is therefore reasonable to ask if there is a specific class of singular varieties or, rather, a specific class of singularities, for which $\lambda_X$ is an isomorphism (for instance, we have already observed that semi-normality is a necessary condition). 

In this paper, we prove the following positive result when
$X$ has normal crossings
singularities (see Definition~\ref{defn:ncg}).

\begin{thm}\label{thm:Main-6}
Assume that $k$ is an infinite perfect field 
and $X$ is a normal crossing projective
variety of dimension $d \ge 0$ over $k$.
Then $\lambda_X$ is an isomorphism in the following cases.
  \begin{enumerate}
  \item
    $p > 1$ and $\Lambda = \Z[\tfrac{1}{p}]$.
      \item
      $m \in k^{\times}$ and $\Lambda = {\Z}/m$. 
    \end{enumerate}
  \end{thm}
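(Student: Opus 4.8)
The plan is to reduce the statement about normal crossing varieties to the case treated by Voevodsky for smooth (in fact just regular) varieties, by an excision/Mayer--Vietoris argument coming from the normalization. Write $\pi\colon \widetilde{X} \to X$ for the normalization; since $X$ has normal crossing singularities, $\widetilde{X}$ is smooth projective, and the conductor subscheme $E \subset \widetilde{X}$ (the preimage of the singular locus, with its reduced structure) is a smooth divisor — more precisely, a disjoint union of the pairwise intersections of components, pulled apart. The key input is the Cartesian square of schemes relating $(\widetilde{X},E)$ and $(X,X_\sing)$, which is an abstract blow-up square. On the motivic-cohomology side, $H^{2d}(-,\Lambda(d))$ satisfies cdh-descent (this is exactly why the hypothesis on $\Lambda$ — either $p$ invertible, or resolution of singularities — is needed: it guarantees the relevant descent for the cdh topology, using e.g. Kelly's work in positive characteristic), so this square produces a long exact Mayer--Vietoris sequence computing $H^{2d}(X,\Lambda(d))$ in terms of $H^*(\widetilde{X},\Lambda(d))$, $H^*(E,\Lambda(d))$ and $H^*(\pi^{-1}(X_\sing),\Lambda(d))$. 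On the cycle side, I would use the corresponding presentation of $\CH^{LW}_0(X)$: by the results on Levine--Weibel Chow groups of normal crossing varieties (the pro-cdh / descent machinery for $\CH^{LW}_0$ developed for exactly this situation), there is an analogous exact sequence expressing $\CH^{LW}_0(X)\otimes \Lambda$ as the pushout/fiber of $\CH_0(\widetilde{X})\otimes\Lambda$ and $\CH_0(E)\otimes\Lambda$ over $\CH_0(\pi^{-1}(X_\sing))\otimes\Lambda$ (after tensoring with $\Lambda$, all the higher terms that could obstruct this vanish for dimension reasons, the singular locus and $E$ having dimension $d-1$).

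With both sides presented by compatible exact sequences, the map $\lambda_X$ becomes a morphism of such sequences, and the strategy is the five lemma. The comparison maps on the building blocks $\widetilde{X}$, $E$, and $\pi^{-1}(X_\sing)$ are the maps $\lambda_{(-)}$ for those schemes: $\widetilde{X}$ and $E$ are smooth, so there $\lambda$ is Voevodsky's isomorphism $\CH_0 \otimes \Lambda \xrightarrow{\sim} H^{2\dim}(-,\Lambda(\dim))$; $\pi^{-1}(X_\sing)$ is smooth as well (it is a disjoint union of smooth strata after normalizing the normal crossing locus). One has to be a little careful because $E$ and $\pi^{-1}(X_\sing)$ have dimension $d-1$, not $d$, so the relevant motivic cohomology group is $H^{2(d-1)}(-,\Lambda(d-1))$; the gluing is set up so that the connecting maps match the pushforward maps along the closed immersions, and functoriality of $\lambda$ with respect to these pushforwards (established in \cite{KP}) is what makes the ladder commute. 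I would then invoke the five lemma to conclude $\lambda_X$ is an isomorphism; cases (1) and (2) are handled uniformly, the only difference being which $\Lambda$ one plugs in, with the hypothesis $p>1$, $\Lambda=\Z[1/p]$ or $m\in k^\times$, $\Lambda=\Z/m$ being exactly what makes cdh-descent for motivic cohomology available.

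The main obstacle, and the real content, is establishing the two exact sequences — in particular the one on the cycle side. Cdh-descent for Friedlander--Voevodsky motivic cohomology is standard given the hypotheses, but there is no a priori reason for the non-$\A^1$-invariant group $\CH^{LW}_0(X)$ to satisfy any kind of descent; making this precise requires the pro-cdh descent theorem for additive/Levine--Weibel Chow groups (the "Bloch's formula for the Levine--Weibel Chow group of normal crossing schemes" circle of results), together with a vanishing argument after $\otimes\Lambda$ showing that the pro-system degenerates and that the potentially-obstructing $\CH_1$-type terms die. A secondary technical point is verifying the compatibility of the two sequences under $\lambda_{(-)}$, i.e. that $\lambda$ commutes with the boundary maps; this is where the functoriality of the construction of \cite{KP} for closed immersions and proper pushforwards must be used carefully, keeping track of the Tate twists. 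Once the diagram is in place, the conclusion is formal.
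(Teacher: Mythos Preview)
Your proposal has a genuine gap, and it is precisely the one you flag as ``the real content'': the exact sequence on the cycle side does not exist in the form you need, and the vague appeal to ``pro-cdh descent for Levine--Weibel Chow groups'' does not supply it. There is no known Mayer--Vietoris presentation of $\CH^{LW}_0(X)_\Lambda$ for the abstract blow-up square of the normalization; indeed the whole point of the theorem is that $\CH^{LW}_0$ is a priori \emph{not} $\A^1$-invariant and not known to satisfy cdh-type descent, so one cannot simply match it against the cdh Mayer--Vietoris sequence for $H^{2d}(-,\Lambda(d))$. The ``Bloch's formula'' results you allude to give a description via $K$-cohomology, but that does not directly yield the pushout/fiber description over the normalization square, and the ``vanishing after $\otimes\Lambda$'' of the obstructing higher terms is exactly what is unclear. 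A secondary but concrete error: your claim that $E=\pi^{-1}(X_\sing)$ and $X_\sing$ are smooth is false once there are triple intersections. On each component $X_i$ of $\widetilde X$ the conductor is $\bigcup_{j\neq i}(X_i\cap X_j)$, a simple normal crossing divisor, not a disjoint union; and $X_\sing$ is itself a normal crossing variety of dimension $d-1$, not smooth. So even if the five-lemma strategy were available, you would need an induction on $d$, not a direct reduction to Voevodsky's theorem for smooth schemes.

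The paper takes an entirely different route that avoids any descent statement for $\CH^{LW}_0$. It introduces a third group $\CH^{EKW}_0(X)$ (defined by Esnault--Kerz--Wittenberg via two types of relations: Suslin-type relations on curves in a single component, and relations along embedded ``snc subcurves'') for which the isomorphism $\CH^{EKW}_0(X)_\Lambda \xrightarrow{\sim} H^{2d}(X,\Lambda(d))$ is already known \cite{EKW}. The entire work then goes into constructing a surjection $\psi_X\colon \CH^{EKW}_0(X)_\Lambda \twoheadrightarrow \CH^{LW}_0(X)_\Lambda$ compatible with $\lambda_X$. For the snc-subcurve relations this is formal (\lemref{lem:snc-curve-reg}). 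For the Suslin-type relations the paper shows, for each irreducible component $Y$ with $E=\overline{X\setminus Y}\cap Y$, that the inclusion induces a push-forward $\CH_0(Y|E)\to \CH^{LW}_0(X)$ (\thmref{thm:Reg-mod-LW}); this is the technical heart, proved by induction on $d$ using a Bertini argument together with the moving lemma \lemref{lem:reg-mod} (which in turn rests on the double construction and the decomposition~\eqref{eqn:doubl-ex-seq}). One then feeds in \thmref{thm:Main-2-fin} to pass from the modulus condition to the Suslin-type condition with $\Lambda$-coefficients. Nowhere does the argument require a descent sequence for $\CH^{LW}_0$; instead the comparison is made at the level of relations defining the cycle groups.
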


We prove a similar isomorphism when $X$ is the join of a smooth
projective variety along a reduced divisor if $k$ is algebraically closed,
see \thmref{thm:LW-MC-Main}.

Note that $H^{2d}(X, \Lambda(d))$ is not known to have a presentation in terms 
of generators and relations in general. In \cite[Thm.~7.1]{EKW},
a variant (denoted by $\CH^{EKW}_0(X)$ in this paper) 
of the Levine-Weibel Chow group
was defined for $X$ as in \thmref{thm:Main-6}
in terms of generators and relations. It was shown that 
$\CH^{EKW}_0(X) \otimes_{\Z}  \Lambda$ is isomorphic to $H^{2d}(X, \Lambda(d))$.
\thmref{thm:Main-6} implies that under its hypothesis, there is an isomorphism
\[
\CH^{EKW}_0(X) \otimes_{\Z} \Lambda \xrightarrow{\cong} \CH^{LW}_0(X) \otimes_{\Z} \Lambda.
\]
As a consequence, it follows that $\CH^{EKW}_0(X) \otimes_{\Z} \Lambda$
can be defined in terms
of only one type of relations instead of two types of relations used in
\cite{EKW}.

We end the discussion of our main results with yet another consequence.
Let $X$ be a smooth projective variety of dimension $d \ge 2$
over an infinite
perfect field and let $D \subset X$ be a simple normal crossing divisor.
Let $\iota \colon D \inj X$ be the inclusion.
By the classical moving lemma for Chow groups, one knows that $\CH_1(X)$
is the quotient of the free abelian group $\sZ_1(X)_D$ of integral 
1-dimensional cycles on $X$ which do not meet $D_\sing$ by a suitable
rational equivalence relation. We have the obvious pull-back map
$\iota^* \colon \sZ_1(X)_D \to \sZ_1(D, D_\sing)$ 
(see \S~\ref{ssec:0-cycles-singschemes}) defined by taking the
scheme theoretic intersection with $D$. But it is not clear if 
it descends to a map between the Chow groups.
As a consequence of \thmref{thm:Main-6}, we can conclude the following.

\begin{cor}\label{cor:Main-8}Let $X$ and $D$ be as above, and let $\Lambda = \Z[\tfrac{1}{p}]$ if $p>1$ or $\Lambda = {\Z}/m$ with $m \in k^{\times}$. Then 
the map $\iota^* \colon \sZ_1(X)_D \to \sZ_1(D, D_\sing)$ descends to a
homomorphism
\[
\iota^* \colon \CH_1(X) \otimes_{\Z} \Lambda \to \CH^{LW}_0(D)\otimes_{\Z} \Lambda.
\]
\end{cor}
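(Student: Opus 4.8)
The plan is to transport the cycle-level pull-back through motivic cohomology, using \thmref{thm:Main-6} to interpret the target, and then to match the two descriptions on a convenient set of generators. Set $e = \dim D = d-1$. Since $X$ is smooth, Voevodsky's comparison of motivic cohomology with higher Chow groups provides a canonical isomorphism $\CH_1(X) \otimes_{\Z} \Lambda = \CH^{e}(X) \otimes_{\Z} \Lambda \xrightarrow{\ \cong\ } H^{2e}(X, \Lambda(e))$ under which an integral curve $Z$ goes to its motivic cycle class $\cl_X(Z)$. Since the simple normal crossing divisor $D$ on the smooth $X$ is in particular a normal crossing projective variety of dimension $e$, \thmref{thm:Main-6} applies to $D$ and yields an isomorphism $\lambda_D \colon \CH^{LW}_0(D) \otimes_{\Z} \Lambda \xrightarrow{\ \cong\ } H^{2e}(D, \Lambda(e))$. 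Finally, the standing assumption on $\Lambda$ (that $p$ be invertible in $\Lambda$, or that $k$ admit resolution of singularities) is exactly what makes Friedlander--Voevodsky motivic cohomology a contravariant functor on finite type $k$-schemes, so $\iota$ induces a restriction map $\iota^* \colon H^{2e}(X, \Lambda(e)) \to H^{2e}(D, \Lambda(e))$. I would define the sought homomorphism $\ov{\iota}^* \colon \CH_1(X) \otimes_{\Z} \Lambda \to \CH^{LW}_0(D) \otimes_{\Z} \Lambda$ as the composite
\[
\CH_1(X) \otimes_{\Z} \Lambda \ \cong\ H^{2e}(X, \Lambda(e)) \xrightarrow{\ \iota^*\ } H^{2e}(D, \Lambda(e)) \ \cong\ \CH^{LW}_0(D) \otimes_{\Z} \Lambda
\]
of Voevodsky's isomorphism, the motivic pull-back, and $\lambda_D^{-1}$. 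It then remains to see that $\ov{\iota}^*$ is a descent of the cycle-level map, i.e.\ that $\ov{\iota}^*([Z]) = [Z \cap D]$ for every integral curve $Z$ representing a class in $\CH_1(X)$. Because $\dim D_\sing \le d-2$, the classical moving lemma (available since $k$ is infinite) lets us represent any class by a $1$-cycle in $\sZ_1(X)_D$ each of whose components additionally meets $D$ properly; so we may assume $Z$ is an integral curve with $Z \cap D_\sing = \emptyset$ and $Z \not\subset D$. Then $W := Z \cap D$ (scheme-theoretic intersection) is a $0$-dimensional closed subscheme of $D$ whose support $W_0$ is a finite subset of $D_\reg$.

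For the verification I would argue with cohomology supported on $W_0$. Write $U = X \setminus D_\sing$, so that $D_\reg = D \cap U$ is a smooth effective Cartier divisor in the smooth variety $U$ and $Z \subset U$. The cycle class $\cl_X(Z)$ is, by construction, the image under ``forget supports'' of a canonical class $[Z]^{\mathrm{mot}} \in H^{2e}_{|Z|}(X, \Lambda(e))$; by naturality of the localization triangle, $\iota^*(\cl_X(Z))$ is correspondingly the forget-supports image of $\iota^*_{|Z|}\big([Z]^{\mathrm{mot}}\big) \in H^{2e}_{W_0}(D, \Lambda(e))$. Since $W_0 \subset D_\reg$, excision identifies $H^{2e}_{W_0}(D, \Lambda(e)) \cong H^{2e}_{W_0}(D_\reg, \Lambda(e))$, and the latter group lies entirely in the smooth world: by purity it is $\bigoplus_{x \in W_0} \Lambda$, and the cycle class map identifies it with $\sZ_0(D_\reg)_{W_0} \otimes_{\Z} \Lambda$. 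Restricting further to $U$, the computation of $\iota^*_{|Z|}([Z]^{\mathrm{mot}})$ becomes the pull-back of a cycle class along the regular closed immersion $D_\reg \hookrightarrow U$ of \emph{smooth} varieties; by the standard compatibility of the motivic pull-back with the refined Gysin homomorphism --- together with the fact that for a Cartier divisor met properly the refined intersection cycle equals the cycle of the scheme-theoretic intersection (the higher Tor-terms vanish) --- this equals the class of $[Z \cap D_\reg] = [W]$. On the other hand, $\lambda_D$ is set up in \cite{KP} precisely so that $\lambda_D([W])$ is the forget-supports image of the same canonical class of the $0$-cycle $W$ in $H^{2e}_{W_0}(D, \Lambda(e)) \cong H^{2e}_{W_0}(D_\reg, \Lambda(e))$. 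Hence $\iota^*(\cl_X(Z))$ and $\lambda_D([W])$ are the forget-supports images of one and the same class, so they agree in $H^{2e}(D, \Lambda(e))$; applying $\lambda_D^{-1}$ gives $\ov{\iota}^*([Z]) = [W] = [Z \cap D]$, as desired.

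The main obstacle is concentrated in the verification, and more precisely in the one non-formal input it uses: the compatibility of the abstract contravariant functoriality of Friedlander--Voevodsky motivic cohomology with classical refined intersection theory along the regular immersion $D_\reg \hookrightarrow U$ of smooth varieties --- i.e.\ that the motivic pull-back of a cycle class is the cycle class of the Gysin pull-back --- combined with unwinding the construction of $\lambda_D$ from \cite{KP} so as to recognize its value on $0$-cycles supported in $D_\reg$ as a cycle class with supports. The remaining ingredients (the moving lemma for $1$-cycles away from the codimension-$\ge 2$ set $D_\sing$, and excision and purity for motivic cohomology with supports in the finite set $W_0$) are standard. One final point worth stressing: the argument must genuinely be carried out with cohomology supported on $W_0$ and cannot be reduced to the plain restriction map $H^{2e}(D, \Lambda(e)) \to H^{2e}(D_\reg, \Lambda(e))$, which is far from injective in general --- already when $D$ is a nodal curve it kills a copy of $k^{\times}$ --- so the two classes could not be compared by restricting to $D_\reg$ alone.
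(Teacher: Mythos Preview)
Your proposal is correct and follows exactly the approach the paper has in mind: the corollary is stated there simply as ``a consequence of \thmref{thm:Main-6}'' with no further argument, and what you have written is precisely the unpacking of that sentence---identify both source and target with motivic cohomology in bidegree $(2e,e)$ (Voevodsky's comparison for the smooth $X$, \thmref{thm:Main-6} for the normal crossing variety $D$), use the contravariant functoriality of motivic cohomology, and then verify on generators via cohomology with supports and the characterizing diagram~\eqref{eqn:MC-sing-1} for $\lambda_D$. Your care in working with supports on $W_0$ rather than restricting to $D_\reg$, and your explicit invocation of the moving lemma to ensure $Z \not\subset D$, supply details the paper leaves implicit.
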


\vskip .3cm

\subsection{Overview of proofs}\label{sec:proofs}
To prove the first part of \thmref{thm:Main-2}, the key ingredients are the Roitman
torsion theorems of \cite{Krishna-2} and \cite{SS}, and the Lefschetz hyperplane
section theorem for Serre's Albanese variety from \cite{GK-1}.
Using these, the theorem is deduced from \propref{prop:Kernel-torsion}.
The latter is a general result of independent interest and estimates the kernel
of the map from the Chow group with modulus to Suslin homology. 
We prove the second part of \thmref{thm:Main-2} by constructing a specific example.

The proof of \thmref{thm:Main-2-fin} uses three main tricks. The first is
that in order to compare Chow group with modulus with Suslin homology,
we need to replace the integral normal curves used in defining the
Suslin homology by some embedded curves. To take care of this,
we pass from $X$ to a projective space over it where the curve is automatically
embedded and then use the covariant functoriality of the Chow group with
modulus. The second trick is to perform a series of monoidal transformations on
$X$ in order to make the given integral curve transverse with respect to
the modulus divisor $D$. We achieve this using a result of 
Jannsen (see the appendix of \cite{SaitoSatoAnnals}). 
The last trick is to apply a result of Miyazaki \cite{Miyazaki}
which allows us to assume that the pull-back of the divisor under blow-ups is reduced.

We prove \thmref{thm:Main-7} by first reducing it to the case of surfaces.
We solve the latter case by translating the problem to a
torsion theorem for the Chow group of a singular surface, namely, the double of
$X$ along $D$. We solve this problem using the methods of the torsion theorems
of \cite{Krishna-2}.

We deduce part (1) of Theorem~\ref{thm:Main-4} from a general finiteness result 
(see \propref{prop:Sing-proj-fin}) that we prove in this paper.
This result applies to all projective schemes over certain fields.
Using this, the finiteness of the Suslin homology follows using the
localization sequence for motivic cohomology with compact support.
The part (2) of \thmref{thm:Main-4} follows from  
Theorem~\ref{thm:Main-2-fin} and part (1) of Theorem~\ref{thm:Main-4} after we make 
$D$ reduced using a theorem of Miyazaki \cite{Miyazaki}, and a divisor with 
normal crossings using a sequence of blow ups with smooth centers.
We deduce part (3) of \thmref{thm:Main-4} from its part (1) and
an independent result \thmref{thm:LW-Suslin} which compares the Levine-Weibel
Chow group and the Suslin homology.

The strategy to prove \thmref{thm:Main-6} is to construct a surjective map
$\CH^{EKW}_0(X) \to \CH^{LW}_0(X)$ such that its composite with
$\lambda_X$ is the one shown to be an isomorphism by Esnault, Kerz and
Wittenberg. In order to construct such a map, our key step is
\thmref{thm:Reg-mod-LW}, which in turn relies on a moving lemma from
\cite{BK}.

We recollect various cycle and motivic cohomology groups in
\S~\ref{sec:set-up}. We recall and prove a result about the
Albanese maps in \S~\ref{sec:Alb-double}. These are used in
proving \thmref{thm:Main-2}. The main result of this section is
\thmref{thm:LW-MC-Main} which proves an isomorphism between
the Levine-Weibel Chow group and motivic cohomology of a
special kind of singular variety. 
The results of \S~\ref{sec:CM} are proven in 
\S~\ref{sec:MS**}. Theorem~\ref{thm:Main-4} is shown in \S~\ref{sec:Finiteness}.
In \S~\ref{sec:Surface-00}, we prove
\thmref{thm:Main-6} for surfaces and in \S~\ref{sec:high-dim}, 
we complete the proof of the general case.

\vskip .3cm

\subsection*{Notations and conventions}
If $A$ is a Noetherian ring, we shall write $\Sch_A$ for the category
of schemes which are separated and of finite type over $\Spec(A)$.
We let $\Sm_A$ denote the full subcategory of $\Sch_A$ whose objects
are smooth schemes over $\Spec(A)$. For $X\in \Sch_A$, we shall write $X_{\rm reg}$ (resp. $X_{\rm sm}$) for the regular locus of $X$ (resp. for the smooth locus of $X\to \Spec(A)$). By a regular (or smooth) closed point of $X$ we will mean a closed point lying in $X_{\rm reg}$ (resp. in $X_{\rm sm}$). We will write $X_{\rm sing}$ for the singular locus of $X$ (i.e.,  the complement $X \setminus X_{\rm reg}$) with reduced closed subscheme structure. We let $X_{(i)}$ denote the set of points $x \in X$ such that
$\dim(\ov{\{x\}}) = i$.

We will be mostly interested in the case where $A = k$ is a perfect field. 
If $p$ is the exponential characteristic of $k$,
we shall let $\Lambda$ be the ring $\Z$ if $k$ admits resolution of
singularities, or the ring $\Z[\tfrac{1}{p}]$ otherwise (\cite[Def. 3.4]{FV}).
For any abelian group $M$, we shall write $M\{p'\}$ (resp. $M\{p\}$) for the subgroup of elements in $M$ which are torsion of order prime to $p$ (resp. a power of $p$). For a commutative ring $R$, we let
$M_R = M {\underset{\Z}\otimes} R$.

In this paper, we shall describe the setting of a subsection in the beginning
of the subsection. Further assumptions will be explicitly stated whenever they are
used in the middle of a subsection.

\section{Preliminaries: cycles groups and
  motivic cohomology}\label{sec:set-up}
We begin by fixing some notations and clarifying our terminology.
Throughout this paper, we shall encounter several different groups of 0-cycles: the groups will often share the same set of generators, but will be subject to different notions of rational equivalence. In a nutshell, our main results will be a comparison between the different notions, by means of suitable moving lemmas, and to deduce new
properties of these 0-cycle group. We shall draw some consequences.
In this preliminary section, we introduce the main players, recalling some definitions already existing in the literature. New groups of cycles, tailored on the specific geometry of the varieties we are interested in, will be defined later in the paper.

\subsection{Cycle groups on singular schemes}\label{ssec:0-cycles-singschemes}
Let $k$ be a field of exponential characteristic $p \ge 1$. Let $X\in \Sch_k$ be a reduced $k$-scheme and $Y \subset X$ a closed subscheme. For any $r\geq 0$, we write
$\sZ_r(X,Y)$ for the free abelian group on the set of integral $r$-dimensional cycles in $X$ (i.e., integral closed subschemes of dimension $r$) which do not meet $Y$.
Let $\sR_r(X)$ be the subgroup of $\sZ_r(X) := \sZ_r(X, \emptyset)$ generated by the cycles which are rationally equivalent to zero (see for example \cite[\S 1]{Fulton}). We write $\CH_r(X)$ for the quotient $\sZ_r(X)/\sR_r(X)$ and refer to it as the (Fulton's) Chow group of $r$-cycles on $X$ modulo rational equivalence.
We shall also use the notation $\CH^{d-r}(X)$ for $\CH_r(X)$ if $X$ has pure
dimension $d$.

We now recall the definition of the Levine-Weibel Chow group of 0-cycles on $X$ from   
\cite{LW} and its modified version from \cite{BK}.
Let $Y \subset X$ be a nowhere dense closed subscheme containing $X_\sing$.

\begin{defn}\label{defn:0-cycle-S-1}  
Let $C$ be a reduced $k$-scheme which is of pure dimension one (from now on, such $C$ will be simply called \emph{a curve}).  
We shall say that a pair $(C, Z)$ is \emph{a good curve  
relative to $Y$} if there exists a finite morphism $\nu\colon C \to X$  
and a  closed proper subscheme $Z \subsetneq C$ such that the following hold.  
\begin{enumerate}  
\item  
No component of $C$ is contained in $Z$.  
\item  
$\nu^{-1}(Y) \cup C_{\rm sing}\subseteq Z$.  
\item  
$\nu$ is local complete intersection at every   
point $x \in C$ such that $\nu(x) \in Y$.   
\end{enumerate}  
\end{defn}

Let $(C, Z)$ be a good curve relative to $Y$ and let   
$\{\eta_1, \cdots , \eta_r\}$ be the set of generic points of $C$.   
Let $\sO_{C,Z}$ denote the semilocal ring of $C$ at   
$S = Z \cup \{\eta_1, \cdots , \eta_r\}$.  
Let $k(C)$ denote the ring of total  
quotients of $C$ and write $\sO_{C,Z}^\times$ for the group of units in   
$\sO_{C,Z}$. Notice that $\sO_{C,Z}$ coincides with $k(C)$   
if $|Z| = \emptyset$.   
As $C$ is Cohen-Macaulay, $\sO_{C,Z}^\times$  is the subgroup of group of units
in the ring of total quotients $k(C)^\times$   
consisting of those $f\in \sO_{C,x}$ which are regular and invertible for
every $x\in Z$.

Given any $f \in \sO^{\times}_{C, Z} \inj k(C)^{\times}$, we denote by    
${\rm div}_C(f)$ (or ${\rm div}(f)$ in short)   
the divisor of zeros and poles of $f$ on $C$, which is defined as follows. If   
$C_1,\ldots, C_r$ are the irreducible components of $C$,   
and $f_i$ is the factor of $f$ in $k(C_i)$, we set   
${\rm div}(f)$ to be the $0$-cycle $\sum_{i=1}^r {\rm div}(f_i)$, where   
${\rm div}(f_i)$ is the usual   
divisor of a rational function on an integral curve in the sense of   
\cite{Fulton}. As $f$ is an invertible   
regular function on $C$ along $Z$, ${\rm div}(f)\in \sZ_0(C,Z)$.  

By definition, given any good curve $(C,Z)$ relative to $Y$, we have a   
push-forward map $\sZ_0(C,Z)\xrightarrow{\nu_{*}} \sZ_0(X,Y)$.  
We shall write $\sR_0(C|Z, X|Y)$ for the subgroup  
of $\sZ_0(X, Y)$ generated by the set   
$\{\nu_*({\rm div}(f))| f \in \sO^{\times}_{C, Z}\}$.

\begin{defn}  \label{def:LWandlciChow}
  (1) Let $\sR_0(X,Y)$ denote the subgroup of $\sZ_0(X, Y)$ generated
  by   
the image of the map $\sR_0(C|Z, X|Y) \to \sZ_0(X, Y)$, where  
$(C, Z)$ runs through all good curves relative to $Y$.  
We let $\CH^{BK}_0(X,Y) = \frac{\sZ_0(X, Y)}{\sR_0(X,Y)}$ and call it the
\emph{lci Chow group} of zero cycles of $X$.
We shall write $\sR_0(X,X_\sing)$ as $\sR_0(X)$ and $\CH^{BK}_0(X,X_\sing)$
as $\CH^{BK}_0(X)$.

(2) 
Let $\sR^{LW}_0(X)$ denote the subgroup of $\sZ_0(X)$ generated  
by the divisors of rational functions on good curves as above, where  
we further assume that the map $\nu\colon C \inj X$ is a closed immersion.
We let   $\CH^{LW}_0(X)=\frac{\sZ_0(X, X_{\rm sing})}{\sR^{LW}_0(X)}$ and call it
the \emph{Levine-Weibel Chow group} of zero cycles on $X$.
We shall refer to good curves relative to $X_\sing$ as {\sl lci curves} and
embedded good curves on $X$ as {\sl Cartier curves}.
\end{defn}
By definition, there is a canonical surjection  
$\CH^{LW}_0(X) \surj \CH^{BK}_0(X)$.

\begin{rem}The Chow group $\CH^{LW}_0(X)$ was  
discovered by Levine and Weibel \cite{LW} in an attempt to describe the  
Grothendieck group of a singular scheme in terms of algebraic cycles.  
The modified version $\CH^{BK}_0(X)$ was introduced in \cite{BK}.  
\end{rem}

Note that there is always a  map
\begin{equation}\label{eqn:lci-Fulton}
  \CH^{BK}_0(X) \to \CH_0(X)
  \end{equation}
which is far, in general, from being an isomorphism if $X_{\rm sing} \neq
\emptyset$. This is already clear when $\dim (X)=1$, see \cite{LW} or
\cite[Lem. 3.12]{BK}.

Assume now that $X$ is complete of pure dimension $d \ge 0$ and let
$X_1, \ldots, X_r$ be the irreducible components of $X$. For any
0-cycle $\alpha \in \sZ_0(X, X_\sing)$, we let
$\alpha_i$ be the restriction of $\alpha$ on $X_i$ and let
$\deg(\alpha) = (\deg(\alpha_1), \ldots , \deg(\alpha_r)) \in \Z^r$.
This defines a homomorphism
$\deg \colon  \sZ_0(X, X_\sing) \to \Z^r$. It is well known and easy to check
from the definition of $\sR^{LW}_0(X)$ that $\deg(\sR^{LW}_0(X)) = 0$.
One therefore has a degree homomorphism
$\deg \colon \CH^{LW}_0(X) \to \Z^r$. Using a projective bundle trick (e.g., see
the proof of \thmref{thm:M1}), one deduces that $\deg$ actually factors
through $\CH^{BK}_0(X)$. We shall let $\CH^{LW}_0(X)_{\deg 0}$ and
$\CH^{BK}_0(X)_{\deg  0}$ denote the kernels of these degree maps.
Note that $\deg$ does not factor through  $\CH_0(X)$ unless $X$ is integral.

\subsection{Singular homology}\label{ssec:SuslinHomologydef}
Let $k$ be a field and $U$ a smooth $k$-scheme. Recall that the Suslin-Voevodsky singular homology of $U$ (also called Suslin homology)  is defined as follows. Let $\Delta^n$ be the algebraic $n$-simplex and denote by $C_n(U)$ the free abelian group generated by those integral closed subschemes $Z$ of $U\times \Delta^n$ for which the projection $Z\to \Delta^n$ is finite and surjective. By pulling back via the structure maps of the cosimplicial scheme  $\Delta^{\bullet}_k$, one equips $C_\bullet(U)$ with the structure of a simplicial abelian group. We denote the associated chain complex (via the Dold-Kan correspondence) also by $C_\bullet(U)$. The $n$-th singular homology
group $H^{sus}_n(U)$ of $U$ is the $n$-th homology of this complex.

When $n=0$, another, explicit, description is available (and will be taken as a definition for us). For $i\colon C\inj U$ an integral curve, let $\phi\colon C^N\to C$ be its normalization and let $\ol{C}^N$ be the unique projective normal
compactification of $C^N$. We denote by $C_\infty$ the complement $\ol{C}^N \setminus C^N$, and consider it as a \emph{reduced} Cartier divisor on $\ol{C}$. Write $G(\ol{C}^N, C_\infty)$ for ${\rm Ker}(\cO_{\ol{C}^N, C_\infty}^\times \to \cO_{C_\infty}^\times)$, where $\cO_{\ol{C}^N, C_\infty}$ denotes as before the semilocal ring of $\ol{C}^N$ at the union of the closed points supporting $C_\infty$. Note that $G(\ol{C}^N, C_\infty)\subset k(C)^\times = k(\ol{C}^N)^\times$.

For $f\in G(\ol{C}^N, C_\infty)$, we can consider its divisor $\divf_{\ol{C}^N}(f)\in \sZ_0(\ol{C}^N, C_\infty)$. Since $\phi$ is finite and $i$ is a closed immersion, we can consider the composite
\[ \sZ_0(\ol{C}^N, C_\infty) = \sZ_0(C^N) \xrightarrow{\phi_*} \sZ_0(C) \xrightarrow{i_*} \sZ_0(U).  \]
We write $\sR^{S}_0(C, U)$ for the subgroup of $\sZ_0(U)$ generated by the set $\{ i_*\phi_* (\divf(f)) \,|\, f\in G(\ol{C}^N, C_\infty)\}$. Let $\sR^S_0(U)$ denote the subgroup of $\sZ_0(U)$ generated by $\sR^{S}_0(C, U)$ where $C$ runs through all integral curves in $X$.

\begin{thm}$($\cite[Thm. 5.1]{Schmidt-1}$)$\label{thm:suslinhomology}
  There is a canonical isomorphism
  \[
    \frac{\sZ_0(U)}{\sR_0^S(U)} \xrightarrow{\cong} H^{sus}_0(U).
  \]
  \end{thm}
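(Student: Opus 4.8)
The plan is to unwind the definitions: by Dold--Kan one has $H^{sus}_0(U) = C_0(U)/\partial(C_1(U))$, and $C_0(U) = \sZ_0(U)$ (an integral closed subscheme of $U \times \Delta^0 = U$ finite over $\Delta^0 = \Spec k$ is exactly a closed point), so the assertion is the equality of subgroups $\partial(C_1(U)) = \sR^S_0(U)$ of $\sZ_0(U)$, which I would establish by the two inclusions.

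For $\sR^S_0(U) \subseteq \partial(C_1(U))$ I would argue by functoriality. Push-forward along the closed immersion $i\colon C \inj U$ and along the finite map $\phi\colon C^N \to C$ preserves the property ``finite and surjective over $\Delta^n$'', hence induces chain maps $C_\bullet(C^N) \xrightarrow{\phi_*} C_\bullet(C) \xrightarrow{i_*} C_\bullet(U)$. So it suffices to know that each $\divf_{\ol C^N}(f)$ with $f \in G(\ol C^N, C_\infty)$ is a boundary in $C_\bullet(C^N)$; but this is the curve case of the theorem, which is part of the Suslin--Voevodsky computation of the singular homology of a smooth curve in terms of the relative Picard group of its smooth compactification (which has the obvious presentation by relative Cartier divisors disjoint from the boundary).

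The substance is the reverse inclusion. Since $C_1(U)$ is free on integral cycles, I would fix an integral $Z \subset U \times \Delta^1$, finite and surjective over $\Delta^1 \cong \A^1$ (coordinate $t$, with the two faces at $t = 0$ and $t = 1$). If the image of $Z$ in $U$ is a point then $Z = \{x\} \times \A^1$ and $\partial Z = 0$; otherwise let $i\colon C \inj U$ be the closure of that image, an integral closed curve, so $Z \inj C \times \A^1$ remains finite surjective over $\A^1$. Let $n\colon Z^N \to Z$ be the normalization (finite, $Z$ being excellent); then $g := \pr_{\A^1} \circ n \colon Z^N \to \A^1$ is finite, $Z^N$ has a unique regular projective compactification $\ol Z^N$ with reduced boundary $Z^N_\infty$, and $g$ extends to a finite $\ol Z^N \to \P^1$ with $Z^N = g^{-1}(\A^1)$ and $Z^N_\infty = g^{-1}(\infty)$. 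The composite $Z^N \to Z \inj C \times \A^1 \to C$ factors (by normality) through $\psi\colon Z^N \to C^N$, which extends to a finite morphism $\ol\psi\colon \ol Z^N \to \ol C^N$ with $\ol\psi(Z^N) \subseteq C^N$, hence $\ol\psi^{-1}(C_\infty) \subseteq Z^N_\infty$. Now set $G := g/(g-1)$: then $\divf_{\ol Z^N}(G) = g^*[0] - g^*[1]$ is supported in $Z^N$, and $G$ is a unit with value $1$ along $Z^N_\infty = g^{-1}(\infty)$, so $G \in G(\ol Z^N, Z^N_\infty)$. Put $f := \Nm_{k(\ol Z^N)/k(\ol C^N)}(G)$. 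From $\ol\psi^{-1}(C_\infty) \subseteq Z^N_\infty$ and multiplicativity of the norm on residue fields one gets $f \in G(\ol C^N, C_\infty)$, and compatibility of proper push-forward of divisors with the norm along $\ol\psi$ gives $\ol\psi_*\divf_{\ol Z^N}(G) = \divf_{\ol C^N}(f)$. Since $\ol\psi$ restricts to $\psi$ on $Z^N$ and $i \circ \phi \circ \psi = \pr_U \circ n$ there,
\[
i_*\phi_*\bigl(\divf_{\ol C^N}(f)\bigr) = (\pr_U \circ n)_*\bigl(g^*[0] - g^*[1]\bigr) = Z|_{t=0} - Z|_{t=1} = \partial Z,
\]
the middle step using that $n_*(g^*[j])$ equals the fibre cycle $Z \times_{\A^1} \{j\}$ on $Z$. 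Hence $\partial Z \in \sR^S_0(C, U) \subseteq \sR^S_0(U)$.

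The hard part will be the geometry underlying this reverse inclusion: choosing the normalizations and compactifications of $Z$ and of the image curve $C$ compatibly, so that $\ol\psi$ is genuinely a finite morphism of regular projective curves carrying $C_\infty$ into $Z^N_\infty$ --- which is exactly what forces $f = \Nm(G)$ to have modulus $1$ along $C_\infty$ --- together with the length bookkeeping identifying $n_*(g^*[j])$ with the fibre cycle $Z \times_{\A^1} \{j\}$ over the (possibly singular) points of $Z$ above $t = j$; the latter follows by applying the snake lemma to multiplication by $g - j$ on $0 \to \cO_Z \to n_*\cO_{Z^N} \to \mc Q \to 0$, where $\mc Q := n_*\cO_{Z^N}/\cO_Z$ has finite length, $g - j$ is a nonzerodivisor on $\cO_Z$ and on $n_*\cO_{Z^N}$, and kernel and cokernel of an endomorphism of a finite-length module have the same length.
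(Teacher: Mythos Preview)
The paper does not actually supply a proof of this statement: it is quoted verbatim as \cite[Thm.~5.1]{Schmidt-1} and immediately afterwards the left-hand quotient is adopted as the working definition of $H^{sus}_0(U)$. So there is nothing in the paper to compare your argument against.

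That said, your argument is correct and is essentially the standard one. A few remarks. For the inclusion $\sR^S_0(U)\subseteq\partial C_1(U)$, you reduce by push-forward to the case of a smooth curve $C^N$ and then invoke the Suslin--Voevodsky computation $H^{sus}_0(C^N)\cong\Pic(\ol{C}^N,C_\infty)$ (this is \cite[Thm.~7.16]{MVW}, to which the paper also appeals elsewhere); this is a legitimate black box, independent of Schmidt's theorem, and it does exactly what you need. Your handling of the reverse inclusion is clean: the key geometric point---that the normalized projective model $\ol{Z}^N$ maps finitely to $\ol{C}^N$ carrying $C_\infty$ into $Z^N_\infty$, so that the norm of $G=g/(g-1)$ has modulus $C_\infty$---is precisely the mechanism in Schmidt's proof, and your snake-lemma bookkeeping for $n_*(g^*[j])=[Z\times_{\A^1}\{j\}]$ is the standard length identity for normalization of a $1$-dimensional scheme. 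The only thing worth making explicit is why push-forward along $\phi$ and $i$ preserves ``finite and surjective over $\Delta^n$'': the image of such a $W$ is proper over $\Delta^n$ (a surjective image of a proper map, into a separated target) and quasi-finite, hence finite; you use this implicitly but it is worth one sentence.
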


  In the sequel, we shall use the left hand side of this isomorphism as the
  definition of $H^{sus}_0(U)$. There is a
  degree homomorphism $\deg \colon H^{sus}_0(U) \to \Z^r$, where
  $r$ is the number of connected components of $U$. This is the product of
  maps on Suslin homology induced by the projection of each connected
  component of $U$ to $\Spec(k)$. We let
  $H^{sus}_0(U)_{\deg 0}$ be the kernel of $\deg$.

\begin{remk}\label{remk:mot-homology-DM}
  The $j$-th Suslin homology group of $U$ with coefficients in $\Lambda$
  has the following motivic interpretation which will be explored further later in the
  text.
\[ H_j^{sus}(U)_{\Lambda} = \Hom_{\mathbf{DM}^{\rm eff}_{\Nis}(k, \Lambda)}(\Lambda[j], M(U)),
  \]
  where $\mathbf{DM}^{\rm eff}_{\Nis}(k, \Lambda)$ is Voevodsky's category of effective
  motives with $\Lambda$-coefficients and $M(U)$ is the motive
of $U$.  

(2) It is clear from the above description that
$H_0^{sus}(U) \cong \CH_0(U)$ if $U$ is complete.
\end{remk}

\subsection{Motivic cohomology}
\label{sec:LW-mot-coh}Let $k$ be a perfect
field of exponential characteristic $p \ge 1$.
Let $X$ be a quasi-projective scheme of dimension $d \ge 0$ over $k$. 
Recall from \cite[Lecture~16]{MVW} that given $T \in \Sch_k$ and integer
$r \ge 0$, the presheaf $z_{equi}(T,r)$ on $\Sm_k$ is defined by
letting $z_{equi}(T,r)(U)$ be the free abelian group generated by
integral closed subschemes $Z \subset U \times T$ such that $Z$ is dominant and
equidimensional of relative dimension $r$ (any fiber is either empty or all
its components have dimension $r$) over a component of $U$. 
It is known that $z_{equi}(T,r)$ is a sheaf on the big {\'e}tale site of $\Sm_k$.

Let $C_{\ast}z_{equi}(T,r)$ be the chain complex of
presheaves of abelian groups associated to
the simplicial presheaf on $\Sm_k$ given by
$C_nz_{equi}(T,r)(U) = z_{equi}(T,r)(U \times \Delta^n_k)$.
The simplicial structure on $C_{\ast}z_{equi}(T,r)$ 
is though the cosimplicial scheme  $\Delta^{\bullet}_k$, as before.
Recall the following definition of motivic cohomology of singular schemes
from \cite[Defn.~9.2]{FV}.

\begin{defn}\label{defn:MC-sing}
The motivic cohomology groups of $X$ are defined as the
hypercohomology 
$H^m(X, \Z(n)) = \H^{m-2n}_{cdh}(X,  C_{\ast}z_{equi}(\A^n_k,0)_{cdh})$. Here, $C_{\ast}z_{equi}(\A^n_k,0)_{cdh}$ is the complex obtained by applying term wise the change of site functor from $\Sm_k$ to $\Sch_k$.
\end{defn}

This definition is rather untouchable. However, if one is willing to work with $\Lambda$-coefficients (or under the assumption of resolution of singularities), motivic cohomology groups have a description in terms of Hom-groups in Voevodsky's category of motives, and even on the complexes $C_{\ast}z_{equi}(\A^n_k,0)_{cdh}$ we have a better understanding. More precisely, we have the following result (see \cite[Thm.~3.10]{KP}).
\begin{thm} There is a functorial
isomorphism \[H^m(X, \Lambda(n)) \simeq \Hom_{\mathbf{DM}(k, \Lambda)}(M(X), \Lambda(n)[m]),\] where
$\mathbf{DM}(k, \Lambda)$ is Voevodsky's non-effective category of motives for the cdh-topology (also known as the `big' category of motives) with $\Lambda$-coefficients.
\end{thm}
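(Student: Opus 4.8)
The plan is to deduce the identification $H^m(X,\Lambda(n)) \simeq \Hom_{\mathbf{DM}(k,\Lambda)}(M(X),\Lambda(n)[m])$ from the general machinery comparing the Friedlander--Voevodsky $cdh$-local presentation of motivic cohomology with the construction of $\mathbf{DM}(k,\Lambda)$. First I would recall that $\mathbf{DM}(k,\Lambda)$ is built as the (stable, $\P^1$-inverted) homotopy category of Nisnevich sheaves with transfers on $\Sm_k$, localized at $\A^1$, and that by the work of Cisinski--D\'eglise (and Voevodsky in the case $\Lambda = \Z$ under resolution of singularities) the motive $M(X)$ of a possibly singular $X \in \Sch_k$ may be computed via $cdh$-descent: $M(X)$ is represented by the $cdh$-sheafification of the presheaf with transfers $\Z_{\tr}(X)$, or equivalently $M(X) = \hocolim$ over a smooth $cdh$-hypercover of $X$ of the motives of the smooth terms. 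This is exactly the content that makes \cite[Thm.~3.10]{KP} applicable, so the proof is really a matter of assembling the right citations.

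The key steps, in order, would be: (i) identify the internal Hom object in $\mathbf{DM}(k,\Lambda)$, namely $\Hom_{\mathbf{DM}(k,\Lambda)}(M(X),\Lambda(n)[m]) \simeq \mathbb{H}^m_{?}(X, \Lambda(n))$ where $\Lambda(n)$ denotes the motivic complex, using that $\Lambda(n)$ as an object of $\mathbf{DM}(k,\Lambda)$ is represented by (a shift of) $C_*\Z_{\tr}(\mathbb{G}_m^{\wedge n})$ tensored with $\Lambda$; (ii) invoke the theorem of Friedlander--Voevodsky (\cite[Thm.~8.1, 8.2]{FV}, see also \cite[Prop.~5.5.5]{} and the $cdh$-descent results) identifying $C_*z_{equi}(\A^n_k,0)$ with the Suslin complex $C_*\Z_{\tr}(\A^n_k) \simeq C_*\Z_{\tr}(\mathbb{G}_m^{\wedge n})[2n] \otimes \Lambda$ on smooth schemes after $\A^1$-localization and with $\Lambda$-coefficients; (iii) pass from $\Sm_k$ to $\Sch_k$ via the change-of-site functor, which is precisely the passage recorded in Definition~\ref{defn:MC-sing}, and use that both sides satisfy $cdh$-descent so that the comparison on smooth schemes (where it is classical, being Voevodsky's identification of motivic cohomology with higher Chow groups on smooth $k$-schemes) propagates to all of $\Sch_k$; (iv) conclude by reading off the hypercohomology in the correct degree, matching the degree shift $H^m(X,\Z(n)) = \mathbb{H}^{m-2n}_{cdh}(X, C_*z_{equi}(\A^n_k,0)_{cdh})$ of Definition~\ref{defn:MC-sing} with the $\Hom$-group indexing.

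I expect the main obstacle to be the bookkeeping around the two distinct hypotheses under which the statement is asserted: either $p$ invertible in $\Lambda$, or $k$ admits resolution of singularities. In the resolution-of-singularities case one is squarely in Voevodsky's original setting and $cdh$-descent for motives of singular schemes is \cite[Ch.~4]{MVW}-style standard; in the case $p \in \Lambda^\times$ one must instead appeal to the $\ell dh$- or $cdh$-descent theorems of Kelly and Cisinski--D\'eglise that replace resolution of singularities by de Jong alterations, which is the technically heavier input. The cleanest route is simply to cite \cite[Thm.~3.10]{KP} (as the excerpt in fact does) and to add a sentence reconciling the definition of motivic cohomology used there with Definition~\ref{defn:MC-sing} above; the only genuine content to check is that the complex $C_*z_{equi}(\A^n_k,0)_{cdh}$ represents $\Lambda(n)[2n]$ in $\mathbf{DM}(k,\Lambda)$, which follows from the equidimensional-cycles presentation of the motivic complex on smooth schemes together with $cdh$-descent. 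Functoriality in $X$ is automatic once both sides are expressed as ($cdh$-hyper)cohomology of a fixed complex of sheaves, respectively as a $\Hom$ out of the contravariantly functorial object $M(X)$.
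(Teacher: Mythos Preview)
Your proposal is correct and in fact goes beyond what the paper itself does: the paper gives no proof of this theorem at all, simply attributing the result to \cite[Thm.~3.10]{KP} in the sentence preceding the statement. Your outline of the argument (identify $\Lambda(n)$ via $C_*z_{equi}(\A^n_k,0)$, reduce to smooth schemes where the comparison is classical, propagate via $cdh$-descent using either resolution of singularities or Kelly's $\ell dh$-results when $p \in \Lambda^\times$) is an accurate summary of how that cited theorem is actually established, and you yourself note that the cleanest route is to cite \cite[Thm.~3.10]{KP}, which is precisely the paper's approach.
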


\begin{rem}
  When $X$ is smooth, the motivic cohomology groups are isomorphic to Bloch's higher Chow groups. More precisely, for any $m,n\in \Z$, there is a natural isomorphism $H^m(X, \Z(n)) = \CH^n(X, 2n-m)$. See 
\cite[Cor. 2]{VoevHigherChow}. 
\end{rem}

In this paper, we are specifically interested in the bi-degree $(2d,d)$. By \cite[Lem.~7.12]{KP}, there is a canonical group homomorphism
\begin{equation}\label{eqn:MC-sing-0}
\gamma_X \colon \CH_0^{BK}(X)_{\Lambda} \to H^{2d}(X, \Lambda(d)).
\end{equation}
We remark here that 
the construction of $\gamma_X$ uses \cite[Thm.~5.1]{KP} whose proof
holds verbatim
with integral coefficients when $k$ admits resolution of singularities. 
The map $\gamma_X$ has the property that for a closed point $x \in X_{\rm reg}$,
the diagram
\begin{equation}\label{eqn:MC-sing-1}
\xymatrix@C1pc{
\Lambda \ar[r]^-{\simeq} & \CH_0(k(x))_{\Lambda} \ar[r]^-{\simeq} \ar[d] & 
H^0(k(x), \Lambda(0)) \ar[d] \\
& \CH_0^{BK}(X)_{\Lambda} \ar[r]^-{\gamma_X} &  H^{2d}(X, \Lambda(d))}
\end{equation}
commutes, where the vertical arrows are the push-forward maps
(see \cite[\S~7]{KP}, and \cite[\S 5.1]{BKres} where a similar construction is performed with motivic cohomology replaced by \'etale cohomology with finite coefficients). 
We let 
\[\lambda^{LW}_X\colon \CH^{LW}_0(X)_{\Lambda} \to \CH_0^{BK}(X)_{\Lambda} \to 
H^{2d}(X, \Lambda(d))\] denote the composite map.

We let $M^c(X) \in \mathbf{DM}(k, \Lambda)$ denote the motive of $X$ with compact support.
One knows that $M^c(X) \simeq M(X)$ if $X$ is complete.
Under our assumption on $\Lambda$, it follows from \cite{VoevTriangCat} and
\cite[Chap.~5]{Kelly} that for a closed immersion $Z_1 \subset Z_2$
of schemes, there is a distinguished triangle (called the localization sequence) in $\mathbf{DM}(k, \Lambda)$:
\begin{equation}\label{eqn:loc-seq}
M^c(Z_1) \to M^c(Z_2) \to M^c(Z_2 \setminus Z_1) \to M^c(Z_1)[1].
\end{equation}

Using the duality between the motivic homology and the motivic cohomology
with compact support, one gets a functorial isomorphism
$H^{sus}_n(Z)_\Lambda \simeq H^{2d-n}_c(Z, \Lambda(d))$ if $Z$ is a smooth scheme of pure
dimension $d$, see \cite{FV} and \cite[Chap.~5]{Kelly}, where the motivic cohomology groups with compact support is defined as 
\[ H^m_c(Z, \Lambda(n)) \simeq \Hom_{\mathbf{DM}(k, \Lambda)}(M^c(Z), \Lambda(n)[m]).
\]

In what follows, we will frequently use these identifications.
We shall sometimes write $H^m(Z, \Z(n))$ as $H^{m,n}(Z)$.

\subsection{Cycles with modulus}
Let $k$ be a field. Given any integral normal curve $C$ over $k$ and an effective Cartier (or Weil) divisor $E$ on $C$, we say that a rational function $f\in k(C)^\times$ on $C$ has modulus $E$ if $f\in {\rm Ker}(\cO_{C, E}^\times \to  \cO_E^\times)$. As in \S~\ref{ssec:SuslinHomologydef}, we let $G(C, E)$ denote the subgroup of $k(C)^\times$ given by such rational functions. Note that $E$ is not necessarily reduced.

Let $X\in \Sch_k$ be an integral scheme,   and let $D$ be an effective Cartier divisor on $X$. Write as before $\sZ_0(X, D)$ for the free abelian group on the set of closed points in $X \setminus D$. If $C$ is an integral normal curve over $k$ equipped with a finite morphism $\phi_C\colon C\to D$ such that $\phi_C(C)\not\subset D$, we have a well defined group homomorphism
\[\tau_C\colon G(C, \phi_C^*(D)) \to \sZ_0(X,D), \quad f\mapsto (\phi_C)_*(\divf(f)),\] 
where $\phi_C^*(D)$ denote the pullback of $D$ to $C$. Note that $\phi_C^*(D)$ is not necessarily reduced, even when $D$ is. Recall the following definition, that originally appeared in \cite{KS}.
\begin{defn}[Kerz-Saito] We define the Chow group $\CH_0(X|D)$ of $0$-cycles of $X$ with modulus $D$ as the cokernel of the homomorphism
    \[\divf\colon \bigoplus_{\phi_C\colon C\to X}G(C, \phi_C^*D)\to \sZ_0(X,D),\]
    where the sum runs over the set of finite morphisms $\phi_C\colon C\to X$ from integral normal curves such that $\phi_C(C)\not\subset D$. 
\end{defn}

The Chow groups of zero cycles with modulus are known to be covariantly functorial (pushforward) for proper maps  $f\colon X'\to X$ with respect to Cartier divisors $D'$ on $X'$ and $D$ on $X$ such that $f^*(D)$ is defined and satisfies $f^*(D) \leq D'$ as effective Cartier divisors. See \cite[Lem. 2.4]{BS} or
\cite[Prop.~2.10]{KPv}. Similarly, it is easy to construct a pullback along finite flat maps. See again \cite[Lem. 2.4]{BS} or \cite[Prop.~2.12]{KPv}.

\begin{lem}\label{lem:mod-Suslin} Assume that $X$ is complete and 
  $U=X \setminus D$ is a smooth $k$-scheme.  Then the canonical bijection
  $\sZ_0(X,D) \xrightarrow{\cong} \sZ_0(U)$ induces a surjective homomorphism
  \begin{equation}\label{eqn:mod-Suslin}\lambda_{X|D}\colon
    \CH_0(X|D)\surj H_0^{sus}(U).\end{equation}
    \end{lem}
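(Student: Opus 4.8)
The plan is to show that, under the canonical identification $\theta\colon \sZ_0(X,D)\xrightarrow{\cong}\sZ_0(U)$ of free abelian groups on the closed points of $U=X\setminus D$, the subgroup of modulus relations (the image of $\divf$ in the definition of $\CH_0(X|D)$) is carried into the subgroup $\sR^S_0(U)$ of Suslin relations. Since $\theta$ is already a bijection, this statement alone yields both the well-definedness of $\lambda_{X|D}$ on $\CH_0(X|D)=\sZ_0(X,D)/(\text{modulus relations})$ and its surjectivity onto $H_0^{sus}(U)=\sZ_0(U)/\sR^S_0(U)$: any target class lifts to some $\beta\in\sZ_0(U)$, which is $\theta(\alpha)$ for a unique $\alpha$, and $[\alpha]\mapsto[\beta]$. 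Thus everything reduces to a statement about generators: for every finite morphism $\phi\colon C\to X$ from an integral normal curve with $\phi(C)\not\subset D$ and every $f\in G(C,\phi^*D)$, the cycle $\phi_*(\divf_C(f))$ lies in $\sR^S_0(U)$.

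To handle such a generator I would first rewrite it as a function on a normalized image curve. Set $E:=\phi(C)$, an integral proper curve in $X$ with $E\not\subset D$, and let $\pi\colon E^N\to E$ be its normalization; since $C$ is normal, $\phi$ factors as $C\xrightarrow{\nu}E^N\xrightarrow{\pi}E\inj X$ with $\nu$ finite surjective. Put $V:=E\cap U$, an integral curve in $U$ whose normalization has smooth compactification $E^N$, and let $V_\infty\subset E^N$ be the reduced preimage of $D$, so that $V_\infty=E^N\setminus V^N$ in the notation of \S\ref{ssec:SuslinHomologydef}. The projection formula for divisors under finite maps (\cite[Prop.~1.4]{Fulton}) gives $\nu_*\divf_C(f)=\divf_{E^N}(g)$ with $g:=\Nm_{k(C)/k(E)}(f)$, whence
\[
\phi_*(\divf_C(f))=\pi_*\nu_*\divf_C(f)=\pi_*\divf_{E^N}(g).
\]
Provided $g$ is a unit along $V_\infty$ and satisfies the Suslin modulus condition there, the right-hand side is precisely the Suslin generator $i_*\psi_*\divf_{E^N}(g)$ attached to the integral curve $i\colon V\inj U$ with normalization $\psi$.

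The heart of the matter, and the step I expect to be the main obstacle, is verifying that $g=\Nm(f)$ lies in $G(E^N,V_\infty)=\Ker(\cO^\times_{E^N,V_\infty}\to\cO^\times_{V_\infty})$, i.e. that passing from the possibly non-reduced modulus $\phi^*D$ on $C$ to the reduced modulus $V_\infty$ on $E^N$ is compatible with the norm. I would argue point by point. Fix $w\in V_\infty$, let $R=\cO_{E^N,w}$ with maximal ideal $\fm_w$, and let $S$ be the semilocalization of $C$ at the fibre $\nu^{-1}(w)=\{v_1,\dots,v_s\}$, a finite free $R$-module. At each $v_j$ one has $\ord_{v_j}(\phi^*D)=e_j\,\ord_w(\pi^*\iota^*D)\ge e_j$, where $e_j$ is the ramification index of $\nu$ at $v_j$, because $\ord_w(\pi^*\iota^*D)\ge 1$ as $w$ lies over $D$. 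The condition $f\in G(C,\phi^*D)$ then forces $f-1\in\fm_wS$ and $f\in S^\times$. Writing $\Nm(f)=\det_R(m_f)$ for multiplication by $f$ on the free module $S$ and reducing modulo $\fm_w$, the operator $m_f\otimes_R\kappa(w)$ becomes multiplication by $\bar f=\bar 1$, so $\Nm(f)\equiv\det(\mathrm{id})=1\pmod{\fm_w}$; moreover $\Nm(f)\in R^\times$ since $f\in S^\times$. Thus $g$ is a unit congruent to $1$ along the reduced divisor $V_\infty$, giving $g\in G(E^N,V_\infty)$ and $\divf_{E^N}(g)\in\sZ_0(E^N,V_\infty)=\sZ_0(V^N)$. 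Combining this with the displayed identity shows $\phi_*(\divf_C(f))\in\sR^S_0(V,U)\subseteq\sR^S_0(U)$, completing the reduction and the proof. The only delicate bookkeeping is noting that $\ord_w(\pi^*\iota^*D)\ge 1$ already suffices — this is exactly why the reducedness of the Suslin modulus $V_\infty$ causes no loss of information — and that identifying $\pi_*\divf_{E^N}(g)$ with the Suslin generator is legitimate because $\divf_{E^N}(g)$ is supported away from $V_\infty$.
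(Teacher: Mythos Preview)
Your proof is correct and is a careful working-out of precisely the exercise the paper leaves to the reader (the paper's proof consists of the single sentence ``This is an easy exercise using \thmref{thm:suslinhomology}''). The norm argument you give---factoring $\phi$ through the normalization $E^N$ of its image curve and checking that $\Nm_{k(C)/k(E)}(f)$ satisfies the reduced Suslin modulus condition along $V_\infty$---is the standard and essentially only way to pass from the finite-map relations defining $\CH_0(X|D)$ to the embedded-curve relations of \thmref{thm:suslinhomology}, so there is no meaningful difference in approach.
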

    \begin{proof}
      This is an easy exercise using \thmref{thm:suslinhomology}.
      \end{proof}

The map \eqref{eqn:mod-Suslin} is not an isomorphism if $D$ is not reduced. If $\dim(X)=1$ and $X$ is a complete normal curve over $k$, the group $\CH_0(X|D)$ agrees with the relative Picard group $\Pic(X,D)$ of line bundles $\mathcal{L}$ on $X$ equipped with a trivialization $\mathcal{L}_{|D} \cong \cO_D$ along $D$ (see, e.g., \cite[\S 4]{BS}), while  $H_0^{sus}(U)$ agrees with the relative Picard group $\Pic(X, D_{\rm red})$, by \cite[Thm. 7.16]{MVW}. So, the two groups do not agree when $D$ is not reduced. 
\thmref{thm:Main-2} of this paper describes the nature of
$\lambda_{X|D}$ when $D$ is reduced.

\subsection{Normal crossing varieties} Let $X$ be an integral
Noetherian scheme of finite Krull dimension $d \ge 0$.
Let $D \subset X$ be an effective Cartier divisor and
let $D_i \subset D, \ i \in I$ be the irreducible components of $D$. 
Note that $D = \emptyset$ if $d = 0$.  Recall the following definition 
(e.g., see \cite[2.4]{deJong}).
\begin{defn}\label{defn:ncs}
(1) We say that $D$ is  a \emph{simple normal crossing divisor} on $X$ 
if the local ring $\cO_{X,x}$ is regular for each point $x\in D$ and for 
each $J \subset I$ nonempty, the scheme theoretic intersection 
$D_J = \bigcap_{j\in J} D_j$ is a regular scheme which is either empty or
of pure dimension $d+1-|J|$.

(2) We say that $D$ is a \emph{normal crossing divisor} on $X$ if for every 
$x \in D$, there exists an {\'e}tale morphism $U \to X$ with $x$ in its image 
such that $D \times_X U$ a simple normal crossing divisor on $U$. 
\end{defn}

\begin{defn}\label{defn:ncg}
Let $k$ be a field and $X$ a reduced quasi-projective $k$-scheme.
Let $\{X_1, \ldots , X_n\}$ be the set of irreducible components of $X$. We 
shall say that $X$ is a normal crossing variety of dimension $d \ge 0$ 
if for every nonempty subset $J \subset [1, n]$, the scheme theoretic 
intersection $X_J := {\underset{i \in J}\bigcap} X_i$ is a smooth $k$-scheme
which is either empty or of pure dimension $d + 1 - |J|$.
\end{defn}

\subsection{The double construction}\label{sec:double-construction}
Let $k$ be any field.
Let $X$ be an integral regular quasi-projective scheme of dimension $d$ over $k$
and let $D \subset X$ be an effective Cartier divisor. Recall from 
\cite[\S~2.1]{BK} that the double (or join) of $X$ along $D$ is a quasi-projective
scheme $S(X,D) = X \amalg_D X$ so that
\begin{equation}\label{eqn:rel-et-2}
\begin{array}{c}
\xymatrix@R=1pc{
D \ar[r]^-{\iota} \ar[d]_{\iota} & X \ar[d]^{\iota_+} \\
X \ar[r]_-{\iota_-} & S(X,D)}
\end{array}
\end{equation}
is a co-Cartesian square in $\Sch_k$.
In particular, the identity map of $X$ induces a finite flat map
$\Delta_X\colon S(X,D) \to X$ such that $\Delta_X \circ \iota_\pm = {\rm id}_X$
and $\pi = \iota_+ \amalg \iota_-: X \amalg X \to S(X,D)$ 
is the normalization map.
We let $X_\pm = \iota_\pm(X) \subset S(X,D)$ denote the two irreducible
components of $S(X,D)$.  We use $S_X$ as a shorthand for $S(X, D)$
when the divisor $D$ is understood. $S_X$ is a reduced quasi-projective
scheme whose singular locus is $D \subset S_X$. 
It is projective (resp.  affine) whenever
$X$ is so. It follows from \cite[Lem.~2.2]{Krishna-2} that  
~\eqref{eqn:rel-et-2}
is a bi-Cartesian square.

We have a pullback isomorphism
$\sZ_0(S_X,D)\xrightarrow{(\iota^*_{+}, \iota^*_{-})} \sZ_0(X_+,D) \oplus 
\sZ_0(X_-,D)$, as well as pushforward inclusion maps
${p_{\pm}}_* \colon \sZ_0(X,D) \to \sZ_0(S_X,D)$ such that 
$\iota^*_{+} \circ {p_{+}}_* = \id$
and $\iota^*_{+} \circ {p_{-}}_* = 0$. The main result of \cite{BK} connects the lci Chow group of zero cycles on $S_X$ with Fulton's Chow group of $X$ and the Kerz-Saito Chow group with modulus $\CH_0(X|D)$ as follows.

If $k$ is perfect{\footnote{The perfectness condition has been removed 
recently in \cite[Thm.~1.1]{GKR}.}}, there are push-forward maps
$p_{\pm,*} \colon \CH_0(X|D) \to \CH_0^{BK}(S_X)$ and pull-back maps
$\iota^*_\pm \colon \CH_0^{BK}(S_X) \to \CH_0(X)$, where the latter 
are split by $\Delta^*_X\colon \CH_0(X) \to \CH_0^{BK}(S_X)$.
By \cite[Thm.~7.1]{BK}, there is a split short exact 
sequence 
\begin{equation}\label{eqn:doubl-ex-seq}
0 \to \CH_0(X|D) \xrightarrow{p_{+,*}} \CH_0^{BK}(S_X) \xrightarrow{\iota^*_-}
\CH_0(X) \to 0
\end{equation}
and a similar split exact sequence corresponding to the maps $(p_{-,*}, \iota^*_+)$.

\section{Chow group vs. motivic cohomology}
\label{sec:Alb-double}
The goal of this section is to prove that the composition of the map
~\eqref{eqn:MC-sing-0} with the Levine-Wiebel Chow group is
an isomorphism for $S_X$.
In combination with ~\eqref{eqn:doubl-ex-seq}, this will be the key step
in the proof of \thmref{thm:Main-2}.
We begin by recalling Serre's Albanese varieties and Albanese
maps and state \thmref{thm:Semi-abl-Alb} which we shall use in the
next section.

\subsection{The semi-abelian Albanese variety for the double}
\label{sec:Alb-double*}
We fix an algebraically closed field $k$ of any characteristic. We fix a pair $(X,D)$ consisting of an integral smooth projective $k$-scheme $X$ of dimension $d\geq 1$ and an effective Cartier divisor $D$ on it. We write $S_X$ for the double $S(X,D)$ of 
\S~\ref{sec:double-construction} and $U=X \setminus D$.

Let $\pi: \ov{S}_X= X_+ \amalg X_- \to S_X$ denote again the normalization map. 
$NS(\ov{S}_X)$ denote the Neron-Severi group of $\ov{S}_X$. Recall that
this is the quotient of the class group of $\ov{S}_X$ by the subgroup of
Weil divisors which are algebraically equivalent to zero.
Let ${\rm Div}(S_X)$ denote the free abelian group of Weil divisors on $S_X$.
Let $\Pi_{1}(S_X)$ denote the subgroup of ${\Div}(\ov{S}_X)$ generated
by the Weil divisors which are supported on $\pi^{-1}(D)$.
There is a canonical map $\iota_{S_X} \colon \Pi_{1}(S_X) \to
NS(\ov{S}_X)$.
Let $\Pi_{\rm Serre}(S_X)$ denote the kernel this map and let $\Pi(S_X)$ be the
kernel of the canonical map
\begin{equation}\label{eqn:Weil-Pic}
\Pi_{1}(S_X) \xrightarrow{(\iota_{S_X}, \pi_*)} NS(\ov{S}_X) \oplus {\rm Div}(S_X).
\end{equation}

Let $J^d(S_X)$ and $J^{d}_{\rm Serre}(S_X)$ denote the Cartier duals of
the 1-motives 
\[[\Pi(S_X) \to \Pic^0(\ov{S}_X)], \text{ and }
[\Pi_{\rm Serre}(S_X) \to \Pic^0(\ov{S}_X)],\]
respectively.
There is a surjective group homomorphism
\[\rho_{S_X}\colon \CH_0^{LW}(S_X)_{\deg 0} \surj J^d(S_X)\]
which is 
universal among regular homomorphisms from $\CH_0^{LW}(S_X)_{\deg 0}$
to semi-abelian varieties. $J^d(S_X)$ is called the {\sl semi-abelian Albanese
variety} of $S_X$. By \cite[Prop.~9.7]{BK}, there is a factorization
of $\rho_X$:
\begin{equation}\label{eqn:alb-lci}
\CH_0^{LW}(S_X)_{\deg 0} \surj \CH_0^{BK}(X)_{\deg 0} 
\stackrel{\wt{\rho}_{S_X}}{\surj} J^d(S_X)
\end{equation}
and $J^d(S_X)$ is in fact the universal regular semi-abelian variety quotient of
$\CH_0^{BK}(S_X)_{\deg 0}$. 

One knows from \cite{Serre} 
that $J^{d}_{\rm Serre}(S_X)$ is Serre's Albanese variety of
$(S_X)_{\rm reg} = U \amalg U$, the universal object in the category
of morphisms from $U \amalg U$ to semi-abelian varieties.
We let $J^d(U)$ denote Serre's Albanese variety of $U$ so that
$J^{d}_{\rm Serre}(S_X) = J^d(U) \times J^d(U)$.
Using the above constructions, one checks that there is an exact sequence of 
morphisms of semi-abelian varieties
\begin{equation}\label{eqn:alb-lci-0}
0 \to T \to J^d_{\rm Serre}(S_X) \to J^d(S_X) \to 0,
\end{equation}
where $T$ is an algebraic torus of rank bounded by
the number of components of $D$.

We also have a commutative diagram
\begin{equation}\label{eqn:alb-lci-1}
\xymatrix@C1pc{
\CH^0(D) \ar[d]_{\iota_{-, *}} \ar[r] & NS(X) \ar[d]^{\iota_{-,*}} \\
\Pi_1(S_X) \ar[r] & NS(X_+ \amalg X_-).}
\end{equation}
If we let
$\Pi_{\rm Serre}(X|D) = {\rm Ker}(\CH^0(D) \to NS(X))$,
we obtain a commutative diagram of 1-motives
\begin{equation}\label{eqn:alb-lci-2}
\xymatrix@C.8pc{
0 \ar[d] & 0 \ar[d] \\
[0 \to \Pic^0(X)] \ar[r]^-{\alpha_X} \ar[d]_{\iota_{-,*}} &  
[\Pi_{\rm Serre}(X|D) \to \Pic^0(X)] \ar[d]^{\iota_{-,*}} \\ 
[\Pi(S_X) \to \Pic^0(\ov{S}_X)] \ar[r]^-{\beta_X} \ar[d]_{\iota^*_{+}} &
[\Pi_{\rm Serre}(S_X) \to \Pic^0(\ov{S}_X)] \ar[d]^{\iota^*_+} \\
[\Pi(S_X) \to \Pic^0(X)] \ar[r]^-{\gamma_X} \ar[d] & 
[\Pi_{\rm Serre}(X|D) \to \Pic^0(X)] \ar[d] \\
0 & 0}
\end{equation}
in which the two columns are exact.

We now consider the commutative diagram
\begin{equation}\label{eqn:alb-lci-4}
\xymatrix@C.8pc{
\CH^0(D) \ar[r] \ar[d]_{\wt{\Delta}^*_X} & NS(X) \ar[d]^{\wt{\Delta}^*_X} \\
\Pi_1(S_X) \ar[r] & NS(\ov{S}_X),}
\end{equation}
where we let $\wt{\Delta}^*_X(a) = (a, -a)$. Note that
$\Pi_1(S_X) = \CH^0(D) \oplus  \CH^0(D)$ and 
$NS(\ov{S}_X) = NS(X) \oplus NS(X)$.
It is clear then that $\wt{\Delta}^*_X$ factors through
$\CH^0(D) \to 
{\rm Ker}(\Pi_1(S_X) \xrightarrow{\pi_*} {\rm Div}(S_X))$.
We thus get a morphism of 1-motives
$\wt{\Delta}^*_X: [\Pi_{\rm Serre}(X|D) \to \Pic^0(X)] 
\to  [\Pi(S_X) \to \Pic^0(\ov{S}_X)]$.
Composing this with $\iota^*_+$ in the left column of \eqref{eqn:alb-lci-2},
we get a map $[\Pi_{\rm Serre}(X|D) \to \Pic^0(X)] \to
[\Pi(S_X) \to \Pic^0(X)]$.

\begin{lem}\label{lem:Semi-alb-mod}
The morphism of 1-motives
\[
\iota^*_+ \circ \wt{\Delta}^*_X: [\Pi_{\rm Serre}(X|D) \to \Pic^0(X)] 
\to [\Pi(S_X) \to \Pic^0(X)]
\]
is an isomorphism.
\end{lem}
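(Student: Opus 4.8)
The plan is to unwind all the definitions so that the claimed morphism of $1$-motives becomes, on the lattice side, an explicit map between subgroups of $\CH^0(D)$, and, on the abelian-variety side, the identity on $\Pic^0(X)$. Recall that $\Pi_1(S_X) = \CH^0(D) \oplus \CH^0(D)$ and $NS(\ov{S}_X) = NS(X) \oplus NS(X)$, with $\pi_* \colon \Pi_1(S_X) \to \Div(S_X)$ being, up to identification, the difference map $(a,b) \mapsto a - b$ on the $D$-supported divisors. The map $\wt{\Delta}^*_X \colon \CH^0(D) \to \Pi_1(S_X)$ is $a \mapsto (a,-a)$, so its image lies in $\Ker(\pi_*)$ only after we observe that $\pi_*(a,-a)$ is the class of $2\iota_*(a)$, which vanishes in $\Div(S_X)$ precisely because $D$ is the singular locus and a $D$-supported Weil divisor on one branch is identified with its negative on the other; thus $\wt{\Delta}^*_X$ indeed lands in $\Pi(S_X) = \Ker\big(\Pi_1(S_X) \xrightarrow{(\iota_{S_X},\pi_*)} NS(\ov{S}_X)\oplus \Div(S_X)\big)$ after restricting to $\Pi_{\rm Serre}(X|D) = \Ker(\CH^0(D)\to NS(X))$: if $a$ maps to $0$ in $NS(X)$, then $(a,-a)$ maps to $(0,0)$ in $NS(X)\oplus NS(X)$. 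Composing with $\iota^*_+$, which on $\Pi_1(S_X) = \CH^0(D)\oplus\CH^0(D)$ is the restriction-to-the-$X_+$-branch map, we get $\iota^*_+(a,-a) = a$ (up to sign/normalization), so on lattices the composite $\iota^*_+\circ \wt{\Delta}^*_X$ is literally the identity on $\Pi_{\rm Serre}(X|D)$, viewed inside $\Pi(S_X)$ restricted along $\iota^*_+$ — which by construction is $[\Pi(S_X)\to \Pic^0(X)]$ with lattice $\iota^*_+(\Pi(S_X))$.

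The main point to verify is therefore that $\iota^*_+(\Pi(S_X)) = \Pi_{\rm Serre}(X|D)$ as subgroups of $\CH^0(D)$, i.e.\ that the composite is not merely injective but surjective on lattices. For this I would chase the left column of the exact diagram~\eqref{eqn:alb-lci-2}: the column reads $0 \to [0\to\Pic^0(X)] \xrightarrow{\iota_{-,*}} [\Pi(S_X)\to\Pic^0(\ov{S}_X)] \xrightarrow{\iota^*_+} [\Pi(S_X)\to\Pic^0(X)] \to 0$, and the outer terms of the corresponding lattice sequence identify $\iota^*_+(\Pi(S_X))$ with a quotient of $\Pi(S_X)$ by the lattice of $[0\to\Pic^0(X)]$, which is $0$. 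Hence $\iota^*_+$ is already injective on the lattice $\Pi(S_X)$, and its image is all of the lattice of the target $1$-motive $[\Pi(S_X)\to\Pic^0(X)]$ by definition of that target as the image. Combining this with the factorization through $\wt{\Delta}^*_X$ and the explicit identification $\Pi_1(S_X) = \CH^0(D)^{\oplus 2}$, one reads off that the lattice of $[\Pi(S_X)\to\Pic^0(X)]$ is exactly $\wt{\Delta}^*_X(\CH^0(D)) \cap \Pi(S_X)$, and that this equals $\Pi_{\rm Serre}(X|D)$ by the $NS$-vanishing computation above.

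On the semi-abelian (abelian variety) side the map is the identity $\Pic^0(X) \to \Pic^0(X)$: both source and target $1$-motives have $\Pic^0(X)$ as their abelian part, $\wt{\Delta}^*_X$ induces $\Pic^0(X) \to \Pic^0(\ov{S}_X) = \Pic^0(X)\times\Pic^0(X)$, $L \mapsto (L, L^{-1})$ or $(L,L)$ depending on the chosen normalization for the gluing, and $\iota^*_+$ projects back to the first factor, giving the identity; the commutativity of the structure maps $\Pi_{\rm Serre}(X|D)\to\Pic^0(X)$ with these is exactly the commutativity of~\eqref{eqn:alb-lci-4} together with the left column of~\eqref{eqn:alb-lci-2}. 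A morphism of $1$-motives that is an isomorphism on both the lattice and the abelian-variety terms is an isomorphism, so the lemma follows. \emph{The step I expect to be the genuine obstacle} is the surjectivity on lattices, i.e.\ the clean identification $\iota^*_+(\Pi(S_X)) = \Pi_{\rm Serre}(X|D)$: it requires being careful about how the two copies of $\CH^0(D)$ inside $\Pi_1(S_X)$ interact under $\pi_*$ versus $\iota_{S_X}$, and about the fact that imposing $\pi_* = 0$ and then applying $\iota^*_+$ does not lose information — concretely, that $\Ker(\pi_*)\cap(\text{image of }\wt{\Delta}^*_X)$ maps isomorphically onto its image under $\iota^*_+$, which is where the hypothesis $k = \ov{k}$ and the structure of $S_X$ near $D$ are used.
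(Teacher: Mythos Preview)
Your overall strategy matches the paper's: check that the abelian-variety part of the composite is the identity on $\Pic^0(X)$ and then reduce everything to a statement about lattices. But the lattice argument as you have written it has two concrete problems.

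First, you have the formula for $\pi_*$ wrong. The normalization $\pi\colon X_+\amalg X_-\to S_X$ sends both copies of $D$ identically onto $D\subset S_X$, so on $\Pi_1(S_X)=\CH^0(D)\oplus\CH^0(D)$ the pushforward is the \emph{sum}, $\pi_*(a,b)=a+b$, not the difference. With the correct formula, $(a,-a)\in\Ker(\pi_*)$ is immediate; your attempt to salvage the wrong formula by claiming that $2\iota_*(a)$ vanishes ``because a $D$-supported Weil divisor on one branch is identified with its negative on the other'' is simply false in $\Div(S_X)$.

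Second, and more seriously, you have misidentified the target lattice. The $1$-motive $[\Pi(S_X)\to\Pic^0(X)]$ has lattice $\Pi(S_X)$, not $\iota^*_+(\Pi(S_X))$; in the left column of diagram~\eqref{eqn:alb-lci-2} the map $\iota^*_+$ on lattices is the identity $\Pi(S_X)\to\Pi(S_X)$ (this is forced by exactness, since the top lattice is $0$). Hence the composite on lattices is exactly $\wt{\Delta}^*_X\colon \Pi_{\rm Serre}(X|D)\to\Pi(S_X)$, $a\mapsto(a,-a)$, and what must be shown is that \emph{this} map is an isomorphism. Your ``main point'' $\iota^*_+(\Pi(S_X))=\Pi_{\rm Serre}(X|D)$ is not the target statement, and your sentence ``one reads off that the lattice of $[\Pi(S_X)\to\Pic^0(X)]$ is exactly $\wt{\Delta}^*_X(\CH^0(D))\cap\Pi(S_X)$'' presupposes precisely the surjectivity you need to prove. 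The paper handles this cleanly by introducing $\Pi_2(S_X):=\Ker(\pi_*)$ and observing that, since $\pi_*(a,b)=a+b$, the map $\wt{\Delta}^*_X\colon\CH^0(D)\to\Pi_2(S_X)$ is an isomorphism onto the antidiagonal; one then restricts to the kernels of the maps to $NS$ on both sides (the commutative square~\eqref{eqn:alb-lci-4}) and a short diagram chase gives $\Pi_{\rm Serre}(X|D)\xrightarrow{\cong}\Pi(S_X)$. No use of $k=\ov{k}$ is needed --- the argument is purely combinatorial --- so your closing remark about this hypothesis being the crux is a red herring.
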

\begin{proof}
To prove the lemma, we first note that at the level of the abelian varieties,
this map of 1-motives is given by $a \mapsto (a, -a) \mapsto a$, so it is 
actually identity. Thus, we only need to show that the map
$\wt{\Delta}^*_X: \Pi_{\rm Serre}(X|D) \to \Pi(S_X)$ is an isomorphism.

To prove this, let $\Pi_2(S_X) = 
{\rm Ker}(\Pi_1(S_X) \xrightarrow{\pi_*} {\rm Div}(S_X))$.
We then have the commutative diagram
\begin{equation}\label{eqn:Semi-alb-mod-0}
\xymatrix@C.8pc{
\Pi_{\rm Serre}(X|D) \ar[r] \ar[d] & \CH^0(D) \ar[d] 
\ar[r]^-{cl_X} & NS(X) \ar[d] \\
\Pi(S_X) \ar[r] & \Pi_2(S_X) \ar[r]^-{cl_{S_X}} & NS(\ov{S}_X),}
\end{equation}
where the vertical arrows are $\wt{\Delta}^*_X$.
Since $D \amalg D \xrightarrow{\pi} D$ is just the collapse map,
it is immediate from various definitions that any element of
$\Pi_2(S_X)$ must be of the form $\wt{\Delta}^*_X(a)$ for some
$a \in \CH^0(D)$. Furthermore, we have that 
$\iota^*_+ \circ \wt{\Delta}^*_X(a)$ is identity.
It follows that the middle vertical arrow in ~\eqref{eqn:Semi-alb-mod-0}
is an isomorphism. The right vertical arrow is injective and is split by
$\iota^*_+$. 

If we let $\wt{NS(X)}$ and $\wt{NS(\ov{S}_X)}$ be 
the images of ${cl_X}$ and $cl_{S_X}$, respectively, we therefore get a commutative
diagram of short exact sequences
\begin{equation}\label{eqn:Semi-alb-mod-1}
\xymatrix@C.8pc{
0 \ar[r] & \Pi_{\rm Serre}(X|D) \ar[r] \ar[d] & \CH^0(D) \ar[d] 
\ar[r]^-{cl_X} & \wt{NS(X)} \ar[d] \ar[r] & 0 \\
0 \ar[r] & \Pi(S_X) \ar[r] & \Pi_2(S_X) \ar[r]^-{cl_{S_X}} & 
\wt{NS(\ov{S}_X)} \ar[r] & 0,}
\end{equation}
where the middle and the right vertical arrows are isomorphisms.
It follows that the left vertical arrow is an isomorphism too.
This proves the lemma.
 \end{proof}

 \subsection{Albanese homomorphisms for 0-cycles}\label{sec:Alb-hom}
 We continue with the setup of \S~\ref{sec:Alb-double*}.
We let $J^d(X|D):= {\rm Ker}(J^d(S_X) \xrightarrow{\iota^*_-} J^d(X))$, where
the map $\iota^*_-$ is surjective and is split by $\Delta^*_X:
J^d(X) \to J^d(S_X)$.
Using \cite[Prop.~9.7, \S~11.1]{BK}, 
one shows that $J^d(X|D)$ is the universal regular semi-abelian variety
quotient of $\CH_0(X|D)_{\deg 0}$. Furthermore, there is a commutative diagram
of split short exact sequences
\begin{equation}\label{eqn:doubl-ex-seq-*}
\xymatrix@C.8pc{
0 \ar[r] & \CH_0(X|D)_{\deg 0}  \ar[r]^-{p_{+,*}} \ar@{->>}[d]_{\rho_{X|D}} &
\CH^{BK}_0(S_X)_{\deg 0} \ar[r]^-{\iota^*_-} \ar@{->>}[d]^{\wt{\rho}_{S_X}} & 
\CH_0(X)_{\deg 0} \ar@{->>}[d]^{\rho_X} \ar[r] & 0 \\
0 \ar[r] & J^d(X|D)  \ar[r]^-{p_{+,*}} & J^d(S_X) \ar[r]^-{\iota^*_-} &
J^d(X) \ar[r] & 0.}
\end{equation}

Next, recall from \cite[\S 3]{SS} that the semi-abelian Albanese variety $J^d(U)$ of $U$ is the universal regular quotient of $H_0^{sus}(U)_{\deg 0}$.
We write $\rho_U$ for the induced surjection $\rho_U\colon H_0^{sus}(U)_{\deg 0} \surj
J^d(U)$. Composing $\rho_U$ with the canonical map $\lambda_{X|D}$ of 
\eqref{eqn:mod-Suslin}, we get a (surjective) morphism
\[\CH_0(X|D)_{\deg 0}\to J^d(U),\]
and the universality of $J^d(X|D)$ as a regular quotient of $\CH_0(X|D)_{\deg 0}$
induces then a canonical map $\lambda^{alb}_{X|D}: J^d(X|D) \to J^d(U)$.
Using ~\eqref{eqn:alb-lci-2} and \lemref{lem:Semi-alb-mod}, we easily obtain
the following description of various semi-abelian Albanese varieties.

\begin{thm}\label{thm:Semi-abl-Alb}
The canonical map $\CH_0(X|D) \to H^{sus}_0(U)$ induces a commutative
diagram
\begin{equation}\label{eqn:doubl-ex-seq-*-0}
\xymatrix@C.8pc{
\CH_0(X|D)_{\deg 0} \ar@{->>}[r]^-{\lambda_{X|D}} \ar@{->>}[d]_{\rho_{X|D}} &
H^{sus}_0(U)_{\deg 0} \ar@{->>}[d]^{\rho_U} \\
J^d(X|D) \ar[r]^-{\lambda^{alb}_{X|D}} & J^d(U)}
\end{equation}
such that the following hold.
\begin{enumerate}
\item
$\lambda^{alb}_{X|D}$ is an isomorphism of semi-abelian varieties.
\item
$J^d(X|D)$ is the Cartier dual of the 1-motive 
$[\Pi(S_X) \to \Pic^0(X)]$. 
\item
$J^d(U)$ is the Cartier dual of the 1-motive 
$[\Pi_{\rm Serre}(X|D) \to \Pic^0(X)]$.
\item
There is a commutative diagram of short exact sequences of
semi-abelian varieties
\begin{equation}\label{eqn:doubl-ex-seq-*-1}
\xymatrix@C.8pc{
0 \ar[r] & J^d(U) \ar[r]^-{\iota_{+,*}} \ar[d] & J^d(U) \times J^d(U) 
\ar[r]^-{\iota^*_-} \ar[d] & J^d(U) \ar[r] \ar[d] & 0 \\
0 \ar[r] & J^d(X|D) \ar[r]^-{p_{+,*}} & J^d(S_X) \ar[r]^{\iota^*_-} & 
J^d(X) \ar[r] & 0,}
\end{equation}
where the vertical arrows are surjective.
\end{enumerate}
\end{thm}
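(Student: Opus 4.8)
The plan is to assemble the diagram ~\eqref{eqn:doubl-ex-seq-*-0} and verify its four properties by reducing everything to the already-established dictionary between zero-cycle groups and $1$-motives in \S~\ref{sec:Alb-double*}. First I would establish the commutativity of the square ~\eqref{eqn:doubl-ex-seq-*-0}: this is essentially the definition of $\lambda^{alb}_{X|D}$, since it was produced precisely by factoring the composite $\CH_0(X|D)_{\deg 0} \xrightarrow{\lambda_{X|D}} H^{sus}_0(U)_{\deg 0} \xrightarrow{\rho_U} J^d(U)$ through the universal regular quotient $\rho_{X|D}$. All four arrows in the square are surjective: $\lambda_{X|D}$ by \lemref{lem:mod-Suslin}, $\rho_{X|D}$ and $\rho_U$ by the universality recalled in \S~\ref{sec:Alb-hom}, hence $\lambda^{alb}_{X|D}$ is surjective as well.

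Next I would prove (2) and (3). For (2), I use the factorization ~\eqref{eqn:doubl-ex-seq-*} together with the fact, from \cite[Prop.~9.7, \S~11.1]{BK}, that $J^d(S_X)$ is the Cartier dual of $[\Pi(S_X) \to \Pic^0(\ov{S}_X)]$ and that the top and bottom rows of ~\eqref{eqn:doubl-ex-seq-*} are split short exact. Applying $\iota^*_+$ to split off the $\Delta^*_X$-image of $\CH_0(X)$ (equivalently, Cartier-dualizing the right column of ~\eqref{eqn:alb-lci-2}), one identifies $J^d(X|D) = {\rm Ker}(\iota^*_-)$ with the Cartier dual of the cokernel of the corresponding map of $1$-motives, which is exactly $[\Pi(S_X) \to \Pic^0(X)]$ by the exactness of the left column of ~\eqref{eqn:alb-lci-2}. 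For (3), I use that $J^d(U)$ is the Serre Albanese of $U$, which by \cite{Serre} is the Cartier dual of $[\Pi_{\rm Serre}(X|D) \to \Pic^0(X)]$ — here $\Pi_{\rm Serre}(X|D) = {\rm Ker}(\CH^0(D) \to NS(X))$ is the lattice recording the $D$-components of divisors on $X$ modulo numerical equivalence, which is precisely the data cutting out Serre's generalized Jacobian from $\Pic^0(X)$.

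Then (1) follows by dualizing \lemref{lem:Semi-alb-mod}: that lemma says $\iota^*_+ \circ \wt{\Delta}^*_X \colon [\Pi_{\rm Serre}(X|D) \to \Pic^0(X)] \to [\Pi(S_X) \to \Pic^0(X)]$ is an isomorphism of $1$-motives, so Cartier duality turns it into an isomorphism $J^d(X|D) \xrightarrow{\cong} J^d(U)$; the remaining point is to check that this dual isomorphism agrees with $\lambda^{alb}_{X|D}$ as defined. This compatibility is the place I expect the real work to be: one must trace the construction of $\lambda_{X|D}$ at the level of cycles through the identifications $\sZ_0(S_X,D)\cong \sZ_0(X_+,D)\oplus\sZ_0(X_-,D)$, the pushforward $p_{+,*}$, and the normalization maps defining Suslin homology, and match it with the combinatorial maps $\wt{\Delta}^*_X$ and $\iota^*_+$ on the $\Pi$-lattices; since both $\lambda^{alb}_{X|D}$ and the dual of \lemref{lem:Semi-alb-mod} are maps between the same two semi-abelian varieties that are compatible with the universal surjections from $\CH_0(X|D)_{\deg 0}$ (via $p_{+,*}$ into $\CH^{BK}_0(S_X)_{\deg 0}$ and $\rho_{X|D}$), uniqueness of the universal regular quotient forces them to coincide.

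Finally, for (4) I would dualize the left column of ~\eqref{eqn:alb-lci-2}, which is an exact sequence of $1$-motives, to obtain the bottom row of ~\eqref{eqn:doubl-ex-seq-*-1}; the splitness and the identification of the middle term with $J^d(S_X) $ come from ~\eqref{eqn:doubl-ex-seq-*}, while the identification of $J^d_{\rm Serre}(S_X)$ with $J^d(U)\times J^d(U)$ (the top row) is recalled in \S~\ref{sec:Alb-double*} from $(S_X)_{\rm reg} = U \amalg U$. The vertical maps are the quotient maps $J^d_{\rm Serre}(S_X) \to J^d(S_X)$ of ~\eqref{eqn:alb-lci-0} and $J^d(U)\to J^d(X|D)$ induced on kernels; their surjectivity follows from the snake lemma applied to the map of short exact sequences, together with surjectivity of $J^d_{\rm Serre}(S_X)\surj J^d(S_X)$. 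The main obstacle, as noted, is the cycle-level bookkeeping needed to identify $\lambda^{alb}_{X|D}$ with the abstractly-constructed dual isomorphism; everything else is formal manipulation of the $1$-motive diagrams already in place.
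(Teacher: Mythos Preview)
Your proposal is correct and follows exactly the route the paper takes: the paper's own proof is the single sentence ``Using ~\eqref{eqn:alb-lci-2} and \lemref{lem:Semi-alb-mod}, we easily obtain\ldots'', and your outline is precisely an unpacking of that sentence---dualizing the columns of ~\eqref{eqn:alb-lci-2} to get (2), (3), (4), and dualizing \lemref{lem:Semi-alb-mod} to get (1). The compatibility check you flag as ``the real work'' is treated by the paper as routine, and your resolution via uniqueness of the universal regular quotient is the right one; you may be overestimating the difficulty there.
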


\subsection{Chow group vs. motivic cohomology of the double}
\label{sec:Relation}
We let $k$ be an algebraically closed field of characteristic $p > 0$.
We let $(X,D)$ be a pair consisting of an integral smooth projective $k$-scheme $X$
of dimension $d\geq 1$ and a reduced effective Cartier divisor $D$ on it. 
Under this setup, we shall identify the Levine-Weibel Chow group of $S_X$ to its
motivic cohomology.
We shall use the following consequence of \cite[Thm.~6.5]{Krishna-2}
which uses our assumption on $D$ in an essential way.
But note that it does not use our assumption on ${\rm char}(k)$.

\begin{lem}$($\cite[Thm.~6.5]{Krishna-2}$)$\label{lem:LW-lci*}
The canonical map $\delta_{S_X} \colon \CH^{LW}_0(S_X) \to \CH_0^{BK}(S_X)$ is an 
isomorphism.
\end{lem}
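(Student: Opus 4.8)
The plan is to deduce this from the general comparison between the Levine--Weibel and the $\CH^{BK}_0$ (lci) Chow groups, using the very special structure of the double $S_X$. Recall that by construction $S_X = X \amalg_D X$ is a reduced projective scheme whose singular locus is exactly $D \subset S_X$, and $D$ is \emph{reduced} by our standing assumption in this subsection; moreover $D$ is a Cartier divisor on each of the two smooth components $X_\pm$. The canonical surjection $\delta_{S_X}\colon \CH^{LW}_0(S_X) \surj \CH^{BK}_0(S_X)$ is always defined (see the discussion after Definition~\ref{def:LWandlciChow}), so the content of the lemma is its injectivity, equivalently that every relation coming from an lci curve $(C, Z)$ with $\nu\colon C \to S_X$ finite (but not necessarily a closed immersion) can be rewritten as a $\Z$-linear combination of relations coming from \emph{Cartier} curves, i.e. good curves that embed in $S_X$.

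First I would recall that this is precisely the statement of \cite[Thm.~6.5]{Krishna-2}: there, Krishna proves a moving lemma for zero-cycles on reduced quasi-projective schemes whose singular locus is a Cartier divisor on the normalization, showing that in that situation the lci-curve relations and the Cartier-curve relations generate the same subgroup of $\sZ_0(S_X, (S_X)_\sing)$. The hypothesis one must verify is exactly that $S_X$ is of the required type: it is reduced, projective, $\pi\colon X_+ \amalg X_- \to S_X$ is the normalization, and $(S_X)_\sing = D$ pulls back to the reduced Cartier divisor $D \amalg D$ on $X_+ \amalg X_-$. The reducedness of $D$ is where the assumption on $D$ enters essentially (as the lemma's statement flags), since the moving lemma of \cite{Krishna-2} is set up for reduced centers; the characteristic of $k$ plays no role, which is why the lemma is stated without the char$(k)>0$ hypothesis of the ambient subsection.

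Concretely, the steps are: (i) check that the pair $(S_X, D)$ satisfies the running hypotheses of \cite[\S~6]{Krishna-2} — reducedness of $S_X$, $D = (S_X)_\sing$, $D$ reduced and Cartier on the normalization, projectivity; (ii) invoke \cite[Thm.~6.5]{Krishna-2} to conclude that $\sR^{LW}_0(S_X) = \sR_0(S_X)$ as subgroups of $\sZ_0(S_X, D)$, where the left side is generated by divisors of functions on Cartier curves and the right side by divisors of functions on all good (lci) curves; (iii) conclude that the surjection $\delta_{S_X}\colon \CH^{LW}_0(S_X) = \sZ_0(S_X,D)/\sR^{LW}_0(S_X) \to \CH^{BK}_0(S_X) = \sZ_0(S_X,D)/\sR_0(S_X)$ is an isomorphism, since source and target have identical numerators and denominators.

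The main obstacle — which is entirely absorbed into the cited theorem — is step (ii): the moving lemma argument of \cite{Krishna-2} requires producing, given an lci curve $\nu\colon C \to S_X$ passing through prescribed points of $(S_X)_\reg$ and a rational function $f$ on it with the required regularity along $\nu^{-1}(D)$, a \emph{Cartier} curve realizing the same zero-cycle relation, which rests on a delicate deformation/Bertini-type argument inside a projective embedding together with the fact that $D$ is reduced and Cartier on the normalization. Since we are permitted to assume \cite[Thm.~6.5]{Krishna-2}, the proof of the lemma itself is just the verification that our $(S_X, D)$ is an instance of that theorem's hypotheses, and the identification of the two quotient presentations; I expect this to be short.
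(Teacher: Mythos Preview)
Your proposal is correct and matches the paper's approach exactly: the paper provides no proof of this lemma and simply cites \cite[Thm.~6.5]{Krishna-2}, noting beforehand that the reducedness of $D$ is used essentially while ${\rm char}(k)$ is not. Your expansion of what needs to be checked (that $(S_X,D)$ satisfies the hypotheses of the cited moving lemma) is an accurate elaboration of why the citation applies.
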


\begin{lem}\label{lem:LW-MC}
The map $\lambda_{S_X}\colon \CH_0^{BK}(S_X)_\Lambda \to H^{2d}(S_X, \Lambda(d))$ is
surjective and the map $\lambda_{S_X}\colon \CH_0^{BK}(S_X)_\Lambda\{p'\} \to 
H^{2d}(S_X, \Lambda(d))\{p'\}$ is an isomorphism.
\end{lem}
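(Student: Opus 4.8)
The plan is to compute $H^{2d}(S_X,\Lambda(d))$ by splitting a localization triangle for motives with compact support, and then to compare the outcome with the split exact sequence \eqref{eqn:doubl-ex-seq}. First I would note that the component $X_-\subset S_X$ is a closed subscheme whose open complement $S_X\setminus X_-$ is isomorphic to $U$, a smooth scheme of pure dimension $d$. The localization triangle \eqref{eqn:loc-seq} then reads $M^c(X)\xrightarrow{\iota_{-,*}}M^c(S_X)\to M^c(U)\to M^c(X)[1]$ in $\mathbf{DM}(k,\Lambda)$. Since $\Delta_X\colon S_X\to X$ is finite flat and $\Delta_X\circ\iota_-=\id_X$, we get $\Delta_{X,*}\circ\iota_{-,*}=\id$ on $M^c(X)$, so $\iota_{-,*}$ is a split monomorphism, the triangle splits, and $M^c(S_X)\simeq M^c(X)\oplus M^c(U)$. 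Applying $\Hom_{\mathbf{DM}(k,\Lambda)}(-,\Lambda(d)[2d])$, and using that $S_X$ and $X$ are projective (so that $M^c=M$ on them), that $H^{2d}(X,\Lambda(d))\cong\CH_0(X)_\Lambda$ by Voevodsky's comparison with higher Chow groups \cite{VoevHigherChow}, and that $H^{2d}_c(U,\Lambda(d))\cong H^{sus}_0(U)_\Lambda$ (as $U$ is smooth of pure dimension $d$), one obtains a \emph{split} short exact sequence $0\to H^{sus}_0(U)_\Lambda\to H^{2d}(S_X,\Lambda(d))\to\CH_0(X)_\Lambda\to 0$.

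Next I would assemble the commutative ladder
\begin{CD}
\CH_0(X|D)_\Lambda @>p_{+,*}>> \CH_0^{BK}(S_X)_\Lambda @>\iota^*_->> \CH_0(X)_\Lambda\\
@V\lambda_{X|D}VV @V\lambda_{S_X}VV @V\lambda_XVV\\
H^{sus}_0(U)_\Lambda @>>> H^{2d}(S_X,\Lambda(d)) @>>> \CH_0(X)_\Lambda
\end{CD}
whose top row is \eqref{eqn:doubl-ex-seq} tensored with $\Lambda$ and whose bottom row is the sequence just obtained, both exact. The right-hand square commutes by the functoriality of the cycle class map $\lambda=\gamma$ of \eqref{eqn:MC-sing-0} with respect to $\iota^*_-$; since $\iota^*_-\circ p_{+,*}=0$, the composite $\lambda_{S_X}\circ p_{+,*}$ factors through the kernel $H^{sus}_0(U)_\Lambda$, and an evaluation on closed points of $U$ via \eqref{eqn:MC-sing-1} (using that cycle classes restrict compatibly to open subschemes) shows that the resulting left vertical arrow is precisely the map $\lambda_{X|D}$ of \lemref{lem:mod-Suslin}.

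The rest is a diagram chase. Voevodsky's theorem gives that $\lambda_X$ is an isomorphism, and $\lambda_{X|D}$ is surjective by \lemref{lem:mod-Suslin}, so the five lemma forces $\lambda_{S_X}$ to be surjective. For the prime-to-$p$ statement, both rows stay short exact after passing to $\{p'\}$-torsion because they are split; $\lambda_X$ remains an isomorphism, and $\lambda_{X|D}$ induces an isomorphism on $\{p'\}$-torsion. Indeed, by \thmref{thm:Semi-abl-Alb} together with diagram \eqref{eqn:doubl-ex-seq-*-0} one has $\rho_U\circ\lambda_{X|D}=\lambda^{alb}_{X|D}\circ\rho_{X|D}$ with $\lambda^{alb}_{X|D}$ an isomorphism of semi-abelian varieties, while the Roitman torsion theorems of \cite{Krishna-2} and \cite{SS} make $\rho_{X|D}$ and $\rho_U$ isomorphisms on $\{p'\}$-torsion (the torsion being concentrated in the degree-zero subgroups, as $\deg$ is $\Z$-valued, and unaffected by $\otimes_\Z\Lambda$). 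The five lemma applied on $\{p'\}$-torsion then shows $\lambda_{S_X}\colon\CH_0^{BK}(S_X)_\Lambda\{p'\}\to H^{2d}(S_X,\Lambda(d))\{p'\}$ is an isomorphism.

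I expect the main obstacle to be the functoriality invoked in the second step, namely that $\lambda_{S_X}$ is compatible with the pull-back $\iota^*_-$ and with restriction to the smooth open $U$; this must be extracted from the construction of $\gamma$ in \cite{KP}, with \eqref{eqn:MC-sing-1} as the essential input. The splitting in the first step is routine once \eqref{eqn:loc-seq} is available for the singular scheme $S_X$, and the torsion comparison in the last step is a matter of quoting \cite{Krishna-2}, \cite{SS} and \thmref{thm:Semi-abl-Alb}.
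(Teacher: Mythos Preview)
Your proposal is correct and follows essentially the same approach as the paper. Both arguments hinge on the same commutative ladder of split short exact sequences---the top row from \eqref{eqn:doubl-ex-seq} and the bottom row the localization sequence for $X_-\subset S_X$---and both invoke \thmref{thm:Semi-abl-Alb} together with the Roitman-type torsion theorems of \cite{BK} (or \cite{Krishna-2}) and \cite{SS} to handle the left vertical arrow on $\{p'\}$-torsion. The only noteworthy difference is that for surjectivity the paper avoids the five lemma, arguing instead via diagram~\eqref{eqn:LW-MC-0} that both groups are generated by $\sZ_0(S_X,D)$ thanks to \cite[Thm.~5.1]{KP}; your derivation of the split exact sequence from the localization triangle (using the retraction $\Delta_{X,*}$) is more explicit than what the paper writes, but leads to the same diagram~\eqref{eqn:LW-MC-1}.
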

\begin{proof}
In this proof, we shall assume all abelian groups to be tensored with
$\Lambda$.
By construction, there is a commutative diagram (see \cite[(8.13)]{KP})
\begin{equation}\label{eqn:LW-MC-0}
\xymatrix@C1pc{
\sZ_0(S_X,D) \ar@{=}[r] \ar@{->>}[dd] & \sZ_0(S_X,D) \ar@{->>}[dr] \ar[dd] 
& & \\
& & H^{sus}_0(S_X \setminus D) \ar[dl] \\
\CH_0^{BK}(S_X) \ar[r]^-{\lambda_{S_X}} & H^{2d}(S_X, \Lambda(d)),}
\end{equation}
where the map $H^{sus}_0(S_X \setminus D) \to H^{2d}(S_X, \Lambda(d))$ is induced by the localization sequence \eqref{eqn:loc-seq}  associated to the open embedding $S_X\setminus D\to S_X$. 
It follows from \cite[Thm.~5.1]{KP} that this map  is
surjective. This in particular implies that the motivic cohomology group $H^{2d}(S_X, \Lambda(d))$ is generated by the free abelian group on the set of closed points in $S_X\setminus D$. But this is equivalent to $\lambda_{S_X}$ being 
surjective.

To show the isomorphism between the primed torsion subgroups 
(i.e., on the prime-to-$p$-torsion part), we consider the
commutative diagram of split short exact sequences (note that the lines stay 
exact even after applying $(-)\{p'\}$, since \eqref{eqn:doubl-ex-seq} is split)
\begin{equation}\label{eqn:LW-MC-1}
\xymatrix@C1pc{
0 \ar[r] & (\CH_0(X|D))\{p'\} \ar[r] \ar[d] & (\CH_0^{BK}(S_X))\{p'\} 
\ar[r] \ar[d] & (\CH_0(X))\{p'\} \ar[r] \ar[d] & 0 \\
0 \ar[r] & (H^{sus}_0(X \setminus D))\{p'\} \ar[r] & (H^{2d,d}(S_X))\{p'\} \ar[r] &
(H^{2d,d}(X))\{p'\} \ar[r] & 0.}
\end{equation}

The right vertical arrow is an isomorphism as $X$ is smooth. It suffices 
therefore to show that the left vertical arrow is an isomorphism.
For this, we consider the commutative diagram ~\eqref{eqn:doubl-ex-seq-*-0}.
It follows from \thmref{thm:Semi-abl-Alb}(1) that the lower horizontal arrow
in  ~\eqref{eqn:doubl-ex-seq-*-0} is an isomorphism. The left (resp. right) vertical
arrow is an isomorphism on the primed torsion subgroups by
\cite[Thm.~11.5]{BK} (resp. \cite[Thm.~1.1]{SS}). Our assertion now 
follows.
\end{proof}

We can now state and prove the main result of \S~\ref{sec:Alb-double}.

\begin{thm}\label{thm:LW-MC-Main}
 Let $k$ be an algebraically closed field of characteristic $p > 0$.
Let $X$ be a smooth projective $k$-scheme
of pure dimension $d\geq 1$ and $D \subset X$ a reduced effective Cartier
divisor. Then the composition 
\[
\lambda^{LW}_{S_X} \colon \CH^{LW}_0(S_X)_\Lambda \to \CH^{BK}_0(S_X)_\Lambda \to 
H^{2d}(S_X, \Lambda(d))
\] 
is an isomorphism.
\end{thm}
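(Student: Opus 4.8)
The plan is to combine the two lemmas just proved, \lemref{lem:LW-lci*} and \lemref{lem:LW-MC}, with a torsion-and-divisible-group argument. By \lemref{lem:LW-lci*}, the natural surjection $\CH^{LW}_0(S_X)_\Lambda \to \CH^{BK}_0(S_X)_\Lambda$ is an isomorphism, so it suffices to show that $\lambda_{S_X}\colon \CH^{BK}_0(S_X)_\Lambda \to H^{2d}(S_X,\Lambda(d))$ is an isomorphism. We already know from \lemref{lem:LW-MC} that $\lambda_{S_X}$ is surjective and is an isomorphism on prime-to-$p$ torsion; the remaining work is to control the kernel.

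The strategy for the kernel is to pass to degree-zero subgroups and use the universal regular quotient. Both sides carry a degree map to $\Z^2$ (there are two components $X_\pm$), and $\lambda_{S_X}$ is compatible with these; since a splitting is given by a regular closed point in each component (the diagram \eqref{eqn:MC-sing-1} commutes), it is enough to show $\lambda_{S_X}$ is injective on the degree-zero parts $\CH^{BK}_0(S_X)_{\deg 0,\Lambda}$. Now I would invoke the Roitman-type theorem: by \eqref{eqn:alb-lci}, $\wt\rho_{S_X}\colon \CH^{BK}_0(S_X)_{\deg 0} \surj J^d(S_X)$ is the universal regular homomorphism, and by \cite[Thm.~11.5]{BK} it induces an isomorphism on torsion prime to $p$; since $k$ has positive characteristic there is no $p$-torsion to worry about on the semi-abelian side, and the point is that after inverting $p$ the map $\wt\rho_{S_X}$ identifies the torsion subgroup of $\CH^{BK}_0(S_X)_{\deg 0,\Lambda}$ with the torsion of $J^d(S_X)$, which is divisible. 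The analogous statement holds on the motivic cohomology side: $H^{2d}(S_X,\Lambda(d))_{\deg 0}$ maps to the same semi-abelian variety (compatibly, via the identification of Suslin homology with motivic cohomology with compact support in \eqref{eqn:loc-seq} and \thmref{thm:Semi-abl-Alb}), and by \cite[Thm.~1.1]{SS} together with the smooth case this is an isomorphism on torsion. Thus $\lambda_{S_X}$ restricted to torsion subgroups of the degree-zero parts is an isomorphism, and what is left is to handle the torsion-free / divisible quotient.

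For the quotient by torsion I would argue that both $\CH^{BK}_0(S_X)_{\deg 0,\Lambda}$ and $H^{2d}(S_X,\Lambda(d))_{\deg 0}$ become, modulo torsion, extensions involving $\CH_0(X)_{\deg 0}$ (which for $X$ smooth projective over an algebraically closed field is, modulo torsion, a divisible group whose torsion-free quotient is controlled by the Albanese, hence a uniquely divisible group after killing torsion) and the modulus part $\CH_0(X|D)_{\deg 0}$; the split exact sequences \eqref{eqn:doubl-ex-seq} and its motivic counterpart in \eqref{eqn:LW-MC-1} reduce the injectivity of $\lambda_{S_X}$ to the injectivity of $\lambda_{X|D}\colon \CH_0(X|D)_\Lambda \to H^{sus}_0(U)_\Lambda$ on the relevant subquotient. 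Here I would use \thmref{thm:Semi-abl-Alb}(1), which says $\lambda^{alb}_{X|D}$ is an isomorphism of semi-abelian varieties, to conclude that $\lambda_{X|D}$ is an isomorphism modulo the kernels of the respective Albanese maps, and then use a Roitman-type statement again to see those kernels are uniquely divisible and that $\lambda_{X|D}$ is an isomorphism on them as well. Assembling these pieces via the five lemma applied to \eqref{eqn:LW-MC-1} (now with $\Lambda$-coefficients rather than just $\{p'\}$) gives that the left vertical arrow, hence the middle one, is an isomorphism.

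The main obstacle I expect is precisely the control of the \emph{non-torsion} part, i.e., showing $\lambda_{S_X}$ is injective on the uniquely divisible quotient of $\CH^{BK}_0(S_X)_{\deg 0,\Lambda}$. The torsion statements are handed to us almost directly by \lemref{lem:LW-MC}; the subtlety is that $\CH^{BK}_0(S_X)_{\deg 0}$ can be enormous (a ``big'' group, not finite-dimensional in general), so one cannot simply quote finiteness. The resolution has to go through the universality of the Albanese and the fact that the comparison map $\lambda_{S_X}$ is compatible with the Albanese maps on both sides — this is exactly the content packaged in \thmref{thm:Semi-abl-Alb} and the diagram \eqref{eqn:doubl-ex-seq-*-0} — so that injectivity on the uniquely divisible part is forced once we know it on torsion and on the semi-abelian quotient. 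I would therefore spend most of the proof carefully setting up the compatibility of \eqref{eqn:LW-MC-1} with \eqref{eqn:doubl-ex-seq-*-0} and the motivic-cohomology analogue of the latter, and then the isomorphism follows formally.
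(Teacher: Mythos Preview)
Your approach has a genuine gap in the step where you try to control the ``uniquely divisible quotient'' of the kernel. Roitman-type theorems tell you that the Albanese map is an isomorphism on torsion, which means the kernel of the Albanese map is torsion-free (hence uniquely divisible after inverting $p$). But that is all they say. Knowing that $\lambda_{S_X}$ (or $\lambda_{X|D}$) is compatible with the Albanese maps and is an isomorphism on torsion does \emph{not} force it to be injective on the uniquely divisible kernels of those Albanese maps: a surjection of $\Q$-vector spaces need not be injective, and these kernels can be enormous (think of Mumford's theorem). Your sentence ``injectivity on the uniquely divisible part is forced once we know it on torsion and on the semi-abelian quotient'' is simply false as a matter of abelian group theory. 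There is also a circularity risk: the isomorphism $\lambda_{X|D}\colon \CH_0(X|D)_\Lambda \to H^{sus}_0(U)_\Lambda$ you are edging toward invoking is \propref{prop:Kernel-torsion}, whose proof in the paper \emph{uses} \thmref{thm:LW-MC-Main}.

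The paper's proof supplies exactly the missing input: it factors $\lambda^{LW}_{S_X}$ through the cycle class map to $K_0(S_X)_\Lambda$ and then to Weibel's homotopy $K$-theory $KH_0(S_X)_\Lambda$. Since ${\rm char}(k)=p>0$, the map $K_0(S_X)_\Lambda \to KH_0(S_X)_\Lambda$ is an isomorphism, while by Levine the kernel of $\CH^{LW}_0(S_X)\to K_0(S_X)$ is torsion of exponent $(d-1)!$. Hence ${\rm Ker}(\lambda^{LW}_{S_X})$ has \emph{bounded exponent}. Combined with the divisibility of $\CH^{LW}_0(S_X)_{\deg 0,\Lambda}$ (which you do mention) and the isomorphism on prime-to-$p$ torsion from \lemref{lem:LW-MC}, this forces the kernel to vanish: a divisible torsion group of bounded exponent is zero. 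The $K$-theory step is the idea you are missing; without it, nothing in your outline bounds the kernel.
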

\begin{proof}
  We can assume $X$ is integral.
  The map $\lambda^{LW}_{S_X}$ is surjective by Lemmas~\ref{lem:LW-lci*} and
  ~\ref{lem:LW-MC}. To show it is injective, we first assume
$\Lambda = \Z[\tfrac{1}{p}]$.

It follows from \cite[Thm.~5.9, Lem.~8.1]{KP} that there
is a commutative diagram
\begin{equation}\label{eqn:LW-MC-Main-0}
\xymatrix@C.8pc{
\CH^{LW}_0(S_X)_\Lambda \ar[r] \ar[d]_-{\lambda_{S_X}^{LW}} & 
K_0(S_X)_\Lambda \ar[d] \\
H^{2d,d}(S_X)_\Lambda \ar[r] & KH_0(S_X)_\Lambda,}
\end{equation}
where $KH_*(S_X)$ is Weibel's homotopy $K$-theory.
We remark here that even as \cite[Lem.~8.1]{KP} is proven over the field
of complex numbers, its proof holds over any algebraically closed field.

The right vertical arrow in ~\eqref{eqn:LW-MC-Main-0}
is an isomorphism by \cite{WeibelKH} (see also \cite[Exer.~9.11]{TT}) as $p > 0$.
The kernel of the top horizontal arrow is a torsion group of
exponent $(d-1)!$ by \cite[Cor.~2.7]{Levine-2}. It follows that the 
kernel of $\lambda^{LW}_{S_X}$ is torsion of exponent $(d-1)!$.
We therefore have a short exact sequence
\begin{equation}\label{eqn:LW-MC-Main-1}
0 \to {\rm Ker}(\lambda^{LW}_{S_X})_\Lambda \to 
\CH^{LW}_0(S_X)_\Lambda \to H^{2d,d}(S_X)_\Lambda \to 0,
\end{equation}
where the group on the left is torsion of exponent $(d-1)!$.
On the other hand, since $k$ is algebraically closed and $S_X$ is projective,
one knows that the degree zero part
of $\CH^{LW}_0(S_X)_\Lambda$ is divisible (see the last part in the proof of
\thmref{thm:Mod-fin}). 
Since ${\rm Ker}(\lambda^{LW}_{S_X})$ lies in this subgroup,
it follows from \lemref{lem:LW-MC} that 
${\rm Ker}(\lambda^{LW}_{S_X})_\Lambda$ is also divisible.
The result now follows since a divisible torsion group of bounded exponent
must be zero.

We are now left with proving the injectivity of $\lambda^{LW}_{S_X}$ when $k$ admits
resolution of singularities (even if ${\rm char}(k) > 0$) and $\Lambda = \Z$.
In this case, the above argument shows that $\lambda^{LW}_{S_X}$ is surjective whose
kernel is a $p$-primary torsion group.
We now consider the commutative diagram
\begin{equation}\label{eqn:LW-MC-Main-2} 
\xymatrix@C1pc{
\CH^{LW}_0(S_X)\{p\} \ar[r]^-{\pi^*} \ar[d]_{\rho_{S_X}} &
\CH_0(\ov{S}_X)\{p\} \ar[d]^{\rho_{\ov{S}_X}} \\
J^d(S_X)\{p\} \ar[r]^-{\pi^*} & J^d(\ov{S}_X)\{p\}.}
\end{equation}

It follows from \cite[Thm.~6.4]{Krishna-2} that the vertical arrows are
isomorphisms. On the other hand, we know that $J^d(S_X)$ is a 
semi-abelian variety whose abelian variety quotient is $J^d(\ov{S}_X)$.
Since an algebraic torus over $k$ has no $p$-primary torsion (note
that ${\rm char}(k) = p > 0$), it follows that the lower horizontal
arrow in ~\eqref{eqn:LW-MC-Main-2} is an isomorphism.
In particular, the upper horizontal arrow is also an isomorphism.

In the final step, we consider the commutative diagram
\begin{equation}\label{eqn:LW-MC-Main-3} 
\xymatrix@C1pc{
\CH^{LW}_0(S_X)\{p\} \ar[r]^-{\lambda^{LW}_{S_X}} \ar[d]_{\pi^*} &
H^{2d,d}(S_X)\{p\} \ar[d]^{\pi^*} \\
\CH_0(\ov{S}_X)\{p\} \ar[r]^-{\lambda_{\ov{S}_X}} & 
H^{2d,d}(\ov{S}_X)\{p\}.}
\end{equation}
Since $\ov{S}_X$ is smooth, $\lambda_{\ov{S}_X}$ is an isomorphism.
We have shown above that
the left vertical arrow in ~\eqref{eqn:LW-MC-Main-3} is an isomorphism. It follows that
$\lambda^{LW}_{S_X}$ is injective on the $p$-primary torsion subgroups.
We conclude that ${\rm Ker}(\lambda^{LW}_{S_X})$ is zero.
This completes the proof.
\end{proof}

We end this section with the following question.
Using the diagrams ~\eqref{eqn:alb-lci-1} and ~\eqref{eqn:alb-lci-4},
one shows that there is an 1-motive 
$[\Lambda_{\rm Serre}(X|D) \to \Pic^0(\ov{S}_X)]$ and a map of
1-motives $[\Lambda_{\rm Serre}(X|D) \to \Pic^0(\ov{S}_X)] \to
[\Lambda(S_X) \to  \Pic^0(\ov{S}_X)]$. Let $J^d_{cdh}(S_X)$ denote the
Cartier dual of $[\Lambda_{\rm Serre}(X|D) \to \Pic^0(\ov{S}_X)]$.

\begin{ques}\label{ques:cdh-alb}
Is there a surjective homomorphism $\rho^{cdh}_{S_X}:H^{2d,d}(S_X)_{\deg 0} \surj 
J^d_{cdh}(S_X)$ and a commutative diagram
\begin{equation}\label{eqn:cdh-alb-1}
\xymatrix@C1pc{
\CH^{LW}_0(S_X)_{\deg 0} \ar[r]^-{\lambda_{S_X}} \ar[d]_{\rho_{S_X}} & 
H^{2d,d}(S_X)_{\deg 0} \ar[d]^{\rho^{cdh}_{S_X}} \\
J^d(S_X) \ar[r] & J^d_{cdh}(S_X)}
\end{equation}
such that the vertical arrows are the universal regular quotient maps?
\end{ques}

\section{Proofs of Theorems~\ref{thm:Main-2} and
~\ref{thm:Main-2-fin}}\label{sec:MS**}
In this section, we shall prove Theorems~\ref{thm:Main-2} and
~\ref{thm:Main-2-fin}. We shall prove the two parts of Theorem~\ref{thm:Main-2}
in separate subsections for the sake of clarity of exposition.
We begin with the proof of its part (1).

\subsection{Positive characteristic case}\label{sec:Th-2}
Let $k$ be an algebraically closed field of characteristic $p > 0$.
Let $(X,D)$ be a pair consisting of an integral smooth projective $k$-scheme $X$
of dimension $d\geq 1$ and an effective (but not necessarily reduced)
Cartier divisor $D$ on it. We let $U = X \setminus D$. 
We need a lemma and a proposition.

\begin{lem}\label{lem:Kernel-mod}
  Let $E \subset X$ be an effective Cartier divisor which is contained in $D$ and
  whose support coincides with that of $D$. Then the kernel of the canonical surjection
  $\CH_0(X|D) \surj \CH_0(X|E)$ is a $p$-primary torsion group.
\end{lem}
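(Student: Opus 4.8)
The plan is to argue that the kernel consists entirely of relations that become trivial after raising the relevant rational functions to a $p$-power, and to reduce this to a "freshman's dream" computation on the normal curves defining the relations. First I would make the surjection explicit: since $|D| = |E|$, the cycle groups $\sZ_0(X,D)$ and $\sZ_0(X,E)$ coincide, and since $E \le D$ as effective Cartier divisors with the same support, for every finite $\phi_C\colon C \to X$ from an integral normal curve with $\phi_C(C) \not\subseteq D$ the two divisors $\phi_C^*E \le \phi_C^*D$ on $C$ have the same finite support, the semilocal rings agree, and having modulus $\phi_C^*D$ is stronger than having modulus $\phi_C^*E$; hence $G(C, \phi_C^*D) \subseteq G(C, \phi_C^*E)$. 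Therefore, if $\sR_D, \sR_E \subseteq \sZ_0(X,D)$ denote the subgroups of relations defining $\CH_0(X|D)$ and $\CH_0(X|E)$, we have $\sR_D \subseteq \sR_E$, the map in the statement is $\sZ_0(X,D)/\sR_D \surj \sZ_0(X,D)/\sR_E$, and its kernel is the image of $\sR_E$ in $\CH_0(X|D)$, which is generated by the classes $[\tau_C(f)]$ for $f \in G(C, \phi_C^*E)$ and $\phi_C\colon C \to X$ as above.

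It then suffices to show each generator $[\tau_C(f)]$ is annihilated by a power of $p$. Fix $(C,\phi_C)$ and let $Z_0 = \phi_C^{-1}(|D|) \subsetneq C$, $R = \sO_{C, Z_0}$ the common semilocal ring, and $I_D \subseteq I_E \subseteq R$ the ideals of $\phi_C^*D$ and $\phi_C^*E$. One checks directly that $u \mapsto u \bmod I_D$ identifies $G(C,\phi_C^*E)/G(C,\phi_C^*D)$ with the subgroup $1 + I_E/I_D$ of $(R/I_D)^\times$ (using that $I_E$ lies in the Jacobson radical of $R$, so $1 + I_E \subseteq R^\times$). Since $D$ and $E$ have the same support, both $I_D$ and $I_E$ have radical equal to the Jacobson radical of $R$, so $I_E/I_D$ is contained in the nilradical of the Artinian ring $R/I_D$; choose $N$ with $(I_E/I_D)^N = 0$. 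As $R/I_D$ is an $\F_p$-algebra, $(1+x)^{p^m} = 1 + x^{p^m}$ for every $x \in I_E/I_D$, which equals $1$ once $p^m \ge N$. Hence $f^{p^m} \in G(C,\phi_C^*D)$ for $m \gg 0$, so $p^m \tau_C(f) = \tau_C(f^{p^m}) \in \sR_D$, i.e.\ $p^m [\tau_C(f)] = 0$ in the kernel.

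Finally, since the kernel is an abelian group generated by elements each killed by a power of $p$, every element of the kernel is killed by a (possibly larger) power of $p$, so the kernel is $p$-primary torsion. The whole argument is essentially formal and local on the curves carrying the relations; the only points needing a little care are the identification $G(C,\phi_C^*E)/G(C,\phi_C^*D) \cong 1 + I_E/I_D$ and the nilpotency of $I_E/I_D$, and the latter is precisely where the hypothesis $\supp(E) = \supp(D)$ enters, alongside the characteristic-$p$ hypothesis used in the freshman's dream step. I do not expect a genuine obstacle here.
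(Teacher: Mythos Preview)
Your argument is correct, and it is genuinely different from the paper's proof. The paper proceeds by induction on $d = \dim X$: for $d \le 2$ it invokes Bloch's formula $\CH_0(X|D) \cong H^d_\zar(X,\sK^M_{d,(X,D)})$ from \cite{BKS} and reduces to the known fact that $H^{d-1}_\zar(D,\sK^M_{d,(D,E)})$ is $p$-primary torsion; for $d \ge 3$ it passes to $\P^n_X$ to make the curve embedded, then cuts by a Bertini hypersurface containing the curve to reduce the dimension. Your proof, by contrast, is entirely elementary and local on the curves carrying the relations: you observe that the kernel is generated by classes $\tau_C(f)$ with $f \in G(C,\phi_C^*E)$, identify $G(C,\phi_C^*E)/G(C,\phi_C^*D)$ with $1 + I_E/I_D$ inside $(R/I_D)^\times$, and kill this with Frobenius using nilpotence of $I_E/I_D$.

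What your approach buys is considerable: it avoids Bloch's formula, the projective bundle trick, and the Bertini--Altman--Kleiman theorem entirely, and it uses neither smoothness nor projectivity of $X$ nor algebraic closedness of $k$ --- only that ${\rm char}(k) = p > 0$. In fact your argument yields a bounded exponent: writing $D = \sum n_i D_i$ and $E = \sum m_i D_i$, at any point $z \in \phi_C^{-1}(|D|)$ the ratio of local multiplicities of $\phi_C^*D$ and $\phi_C^*E$ is a weighted average of the $n_i/m_i$, hence bounded by $\max_i \lceil n_i/m_i \rceil$, so a single $p^m$ with $p^m \ge \max_i \lceil n_i/m_i \rceil$ works for every curve $C$. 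The paper's proof, on the other hand, exhibits the connection to relative Milnor $K$-sheaves, which is of independent interest and ties into the broader machinery used elsewhere in the paper.
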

\begin{proof}
We shall prove the lemma by induction on $d$.
If $d \le 2$, the cycle class map $\CH_0(X|D) \to H^{d}_\zar(X, \sK^M_{d, (X,D)})$
is an isomorphism by \cite[Thm.~1.8]{BK}, where $\sK^M_{n, (X,D)} =
\Ker(\sK^M_{n, X} \surj \sK^M_{n,D})$ is the Zariski sheaf of relative Milnor $K$-groups.
It suffices therefore to show that the kernel of the canonical surjection
$H^{d}_\zar(X, \sK^M_{d, (X,D)}) \surj H^{d}_\zar(X, \sK^M_{d, (X,E)})$ is
a $p$-primary torsion group.
Using the long exact cohomology sequence associated to the exact sequence of
Zariski sheaves
\[
  0 \to \sK^M_{d, (X,D)} \to \sK^M_{d, (X,E)} \to \sK^M_{d, (D,E)} \to 0,
\]
our assertion is implied by the claim that $H^{d-1}_\zar(D, \sK^M_{d, (D, E)})$ is a
$p$-primary torsion group (in fact, of bounded exponent).
But this is well known (e.g., see \cite[Lem.~2.6]{GKris}).

We now assume $d \ge 3$.
Let $\nu \colon C \to X$ be a finite morphism from a regular integral
projective curve whose image is not contained in $D$
and let $f \in {\rm Ker}(\sO^{\times}_{C,E'} \surj \sO^{\times}_{E'})$,
where $E' = \nu^{-1}(E)$. We need to show that $\nu_*(\divf(f)) \in
\CH_0(X|D)$ is killed by a power of $p$.

We can get a factorization $C \stackrel{\nu'}{\inj} 
\P^n_X \xrightarrow{\pi} X$ of $\nu$, where $\nu'$ is a closed
immersion and $\pi$ is the canonical projection. It is
clear that $\nu'_*(\divf(f))$ dies in $\CH_0(\P^n_X|\P^n_E)$ and
$\nu_*(\divf(f)) = \pi_*(\nu'_*(\divf(f)))$.
Using the push-forward map on the Chow groups with modulus, 
it suffices therefore to show that 
$\nu'_*(\divf(f))$ is killed by some power of $p$ in $\CH_0(\P^n_X|\P^n_D)$.
We can thus assume that $\nu \colon C \to X$ is a closed immersion.

We now fix a closed embedding $X \inj \P^N_k$. We can use \cite[Thm.~7]{AK} 
to find a hypersurface $H \subset \P^N_k$ containing $C$ and not containing $X$ 
such that the scheme theoretic intersection $X \cap H$ 
has the property that it is smooth and contains
no irreducible component of $D$. By \cite[Cor.~3.13]{GK}, we can assume
that $X \cap H$ is integral. We let $Y = X \cap H$, $D_Y = D \cap H$
and $E_Y = E \cap H$.

Let $C \stackrel{\nu'}{\inj} Y \stackrel{\iota}{\inj} X$ 
be the factorization of $\nu$.
It follows from the choice of $Y$ that 
$\nu'_*(\divf(f))$ dies in $\CH_0(Y| E_Y)$. This implies by induction that
$\nu'_*(\divf(f)) \in \CH_0(Y|D_Y)$ is killed by a power of $p$.
It follows that $\nu_*(\divf(f)) = \iota _* (\nu'_*(\divf(f)))$
is killed by a power of $p$ in $\CH_0(X|D)$. This concludes the proof.
\end{proof}

\enlargethispage{25pt}

\begin{prop}\label{prop:Kernel-torsion}
The kernel of the map $\lambda_{X|D} \colon \CH_0(X|D) \surj H^{sus}_0(U)$ is a 
$p$-primary torsion group. Furthermore, the map
$\lambda_{X|D} \colon \CH_0(X|D)\{\ell\} \surj H^{sus}_0(U)\{\ell\}$
is an isomorphism for any prime $\ell \neq p$.
\end{prop}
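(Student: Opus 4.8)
The plan is to combine the structural input of \lemref{lem:Kernel-mod} with the Albanese-variety comparison of \thmref{thm:Semi-abl-Alb}. First I would observe that by \lemref{lem:Kernel-mod} applied with $E = D_{\red}$, the natural surjection $\CH_0(X|D) \surj \CH_0(X|D_{\red})$ has $p$-primary torsion kernel, and by \lemref{lem:mod-Suslin} the composite $\CH_0(X|D) \surj \CH_0(X|D_{\red}) \surj H^{sus}_0(U)$ is $\lambda_{X|D}$ (note $U = X \setminus D = X \setminus D_{\red}$). Hence it suffices to prove the proposition when $D$ is reduced, and the $\{\ell\}$-statement for reduced $D$ will give the full proposition after checking that the first surjection is an isomorphism on $\ell$-primary torsion for $\ell \neq p$ (which follows because its kernel, being $p$-primary, has trivial $\ell$-primary torsion and trivial $\ell$-primary cotorsion).

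So assume $D$ is reduced. I would first reduce to degree-zero cycles: the degree map $\deg \colon \CH_0(X|D) \to \Z$ (using $X$ integral) is compatible via $\lambda_{X|D}$ with the degree map on $H^{sus}_0(U)$, both are surjective, and on $\Z$ the identity is an isomorphism, so the snake lemma identifies $\Ker(\lambda_{X|D})$ with $\Ker(\lambda_{X|D}|_{\deg 0})$ and similarly on torsion subgroups. Now invoke \thmref{thm:Semi-abl-Alb}: there is a commutative square with $\lambda_{X|D} \colon \CH_0(X|D)_{\deg 0} \surj H^{sus}_0(U)_{\deg 0}$ on top, the universal regular quotient maps $\rho_{X|D}$ and $\rho_U$ as verticals, and the isomorphism of semi-abelian varieties $\lambda^{alb}_{X|D} \colon J^d(X|D) \xrightarrow{\cong} J^d(U)$ on the bottom. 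Therefore $\Ker(\lambda_{X|D}|_{\deg 0}) \subseteq \Ker(\rho_{X|D})$, and on $\ell$-primary torsion the map $\rho_{X|D}$ is an isomorphism by \cite[Thm.~11.5]{BK} (Roitman torsion for $\CH_0(X|D)$) — this is where the reducedness of $D$ and the positive characteristic hypothesis enter. Combining with the fact that $\rho_U$ is an isomorphism on $\ell$-primary torsion by \cite[Thm.~1.1]{SS} and that $\lambda^{alb}_{X|D}$ is an isomorphism, a diagram chase shows $\lambda_{X|D}$ is injective on $\ell$-primary torsion and surjective (it is surjective by \lemref{lem:mod-Suslin}), hence an isomorphism on $\{\ell\}$ for every $\ell \neq p$.

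It remains to see that $\Ker(\lambda_{X|D})$ is $p$-primary torsion, i.e.\ to rule out non-torsion and $\ell$-torsion elements. From the reduction above it is enough to handle reduced $D$; there $\Ker(\lambda_{X|D}|_{\deg 0}) \subseteq \Ker(\rho_{X|D})$, and I would argue that $\Ker(\rho_{X|D})$ has no prime-to-$p$ torsion: its torsion-prime-to-$p$ part injects (via $\rho_{X|D}$ being an iso on $\{\ell\}$ for each $\ell\ne p$) to zero, so $\Ker(\rho_{X|D})\{p'\} = 0$. Combined with the $\{\ell\}$-isomorphism already established, every element of $\Ker(\lambda_{X|D})$ is then forced to be $p$-primary torsion: a non-torsion or $\ell$-torsion class would contradict injectivity on the $\ell$-primary part together with the structure of $\Ker(\rho_{X|D})$. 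The main obstacle I anticipate is the bookkeeping in this last paragraph — making the passage from "$\lambda_{X|D}$ is an isomorphism on each $\{\ell\}$" plus "kernel sits inside $\Ker(\rho_{X|D})$" to the clean statement "kernel is $p$-primary torsion" genuinely airtight, since one must be careful that $\Ker(\rho_{X|D})$, while it has no prime-to-$p$ torsion, could a priori still contain non-torsion elements; here I would lean on the divisibility/structure of the kernel of a regular homomorphism to a semi-abelian variety (cf.\ the arguments used in \cite{Krishna-2} and \cite{BK}) to conclude it is in fact a torsion group.
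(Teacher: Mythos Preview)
Your reduction to reduced $D$ via \lemref{lem:Kernel-mod} and your proof of the $\{\ell\}$-isomorphism are correct and are exactly the paper's argument (the paper does this inside the proof of \lemref{lem:LW-MC}, using \thmref{thm:Semi-abl-Alb}(1) together with \cite[Thm.~11.5]{BK} and \cite[Thm.~1.1]{SS}).

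The genuine gap is in your last paragraph. From the commutative square you correctly deduce $\Ker(\lambda_{X|D})\subseteq \Ker(\rho_{X|D})$, and from the $\{\ell\}$-isomorphism you get $\Ker(\lambda_{X|D})\{p'\}=0$. But the remaining task is to rule out \emph{non-torsion} elements of $\Ker(\lambda_{X|D})$, and for this you propose to show that $\Ker(\rho_{X|D})$ itself is a torsion group, appealing to ``divisibility/structure of the kernel of a regular homomorphism''. This is false in general: already for $D=\emptyset$ and $X$ a smooth projective surface with $p_g(X)>0$ over a sufficiently large algebraically closed field of characteristic $p$ (e.g.\ over $\ov{\F_p(t)}$), Mumford-type arguments show $\Ker(\rho_X)=\Ker(\CH_0(X)_{\deg 0}\to \Alb(X))$ is enormous and certainly not torsion. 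The Roitman-type theorems in \cite{Krishna-2} and \cite{BK} only control the \emph{torsion} of $\CH_0(X|D)_{\deg 0}$ via $\rho_{X|D}$; they say nothing about the torsion-free quotient of the kernel. So your route via $\Ker(\rho_{X|D})$ cannot close.

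The paper instead bypasses the Albanese for this step and uses the double $S_X$. One has a commutative square
\[
\xymatrix@C1pc{
\CH_0(X|D)[\tfrac{1}{p}] \ar[r]^-{p_{+,*}} \ar[d]_-{\lambda_{X|D}} &
\CH_0^{BK}(S_X)[\tfrac{1}{p}] \ar[d]^-{\gamma_{S_X}} \\
H^{sus}_0(U)[\tfrac{1}{p}] \ar[r]^-{p_{+,*}} & H^{2d,d}(S_X)[\tfrac{1}{p}],}
\]
in which the horizontal arrows are split injective (coming from the split exact sequence \eqref{eqn:doubl-ex-seq} and its motivic analogue) and the right vertical arrow is an isomorphism by \thmref{thm:LW-MC-Main}. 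Hence $\lambda_{X|D}[\tfrac{1}{p}]$ is injective, which is exactly the statement that $\Ker(\lambda_{X|D})$ is $p$-primary torsion. The key extra input you are missing is \thmref{thm:LW-MC-Main}, whose proof uses the cycle class map to $K_0(S_X)$ and Levine's torsion bound on its kernel --- a genuinely different mechanism from the Albanese comparison.
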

\begin{proof}
By \lemref{lem:Kernel-mod}, we can assume that $D$ is reduced.
  Now, the second part of the proposition is already shown
  in the proof of \lemref{lem:LW-MC} (see the argument after ~\eqref{eqn:LW-MC-1}).
  To prove the first part, we let
  $S_X$ be the double of $X$ along $D$ and consider the commutative diagram
  (see ~\eqref{eqn:LW-MC-1} again)
  \begin{equation}\label{eqn:Kernel-torsion-0}
\xymatrix@C.8pc{
  \CH_0(X|D)[\tfrac{1}{p}] \ar[r] \ar[d]_-{\lambda_{X|D}} &
  \CH_0^{BK}(S_X)[\tfrac{1}{p}] \ar[d]^-{\gamma_{S_X}} \\
  H^{sus}_0(U)[\tfrac{1}{p}] \ar[r] &  H^{2d,d}(S_X)[\tfrac{1}{p}].}
\end{equation}
The horizontal arrows are split injective, and the right vertical arrow is an
isomorphism by \thmref{thm:LW-MC-Main}. It follows that the left vertical
arrow is also an isomorphism. This implies our assertion.
\end{proof}

We can now prove \thmref{thm:Main-2}(1). We restate it for convenience.

\begin{thm}\label{thm:torsion}
Let $X$ be a smooth projective scheme over
an algebraically closed field $k$ of characteristic $p > 0$.
Let $D \subset X$ be a reduced effective Cartier divisor with complement $U$.
Then the canonical map 
\[
\lambda_{X|D} \colon \CH_0(X|D) \to H^{sus}_0(U)
\]
is an isomorphism.
\end{thm}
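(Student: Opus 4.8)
The plan is to deduce Theorem~\ref{thm:torsion} from \propref{prop:Kernel-torsion} together with the Roitman-type torsion theorems and the Lefschetz hyperplane theorem for Serre's Albanese variety. By \propref{prop:Kernel-torsion}, the surjection $\lambda_{X|D}$ is already an isomorphism on prime-to-$p$ torsion, and its kernel is a $p$-primary torsion group. So the entire problem reduces to showing that $\lambda_{X|D}$ is injective on $p$-primary torsion, i.e., that ${\rm Ker}(\lambda_{X|D})\{p\} = 0$.

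The strategy I would pursue is to compare the two sides through their semi-abelian Albanese varieties, using the commutative square \eqref{eqn:doubl-ex-seq-*-0} of \thmref{thm:Semi-abl-Alb}. There $\lambda^{alb}_{X|D}\colon J^d(X|D) \to J^d(U)$ is an \emph{isomorphism} of semi-abelian varieties, and $\rho_{X|D}$, $\rho_U$ are the universal regular quotients of $\CH_0(X|D)_{\deg 0}$ and $H^{sus}_0(U)_{\deg 0}$. The key input on the left is \cite[Thm.~11.5]{BK} (Roitman torsion for $\CH_0(X|D)$): the map $\rho_{X|D}$ is an isomorphism on torsion subgroups, at least after inverting $p$; but we actually need the $p$-primary statement. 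Here I expect to invoke the torsion theorem of \cite{Krishna-2} (as used in \eqref{eqn:LW-MC-Main-2}) for the double $S_X$, which \emph{does} control $p$-primary torsion: since ${\rm char}(k) = p > 0$ and an algebraic torus over $k$ has no $p$-torsion, the $p$-primary torsion of $\CH_0(X|D)$ injects (via $p_{+,*}$ and the splitting in \eqref{eqn:doubl-ex-seq}) into that of $\CH^{BK}_0(S_X) \cong \CH^{LW}_0(S_X)$, and \thmref{thm:LW-MC-Main} identifies the latter with $H^{2d,d}(S_X)$. On the Suslin side, the torsion theorem of Spie{\ss}--Szamuely \cite{SS} (used in \eqref{eqn:LW-MC-1}) handles prime-to-$p$ torsion of $H^{sus}_0(U)$; combined with \thmref{thm:Semi-abl-Alb} this pins down everything except possibly $p$-torsion of $H^{sus}_0(U)$ — but over a field of positive characteristic the universal regular quotient argument, together with the fact that semi-abelian varieties have $p$-divisible but not necessarily $p$-torsion-free groups of points, forces $H^{sus}_0(U)_{\deg 0}$ to have no $p$-torsion that is killed by the Albanese map, so in fact $\lambda_{X|D}$ being a split surjection with $p$-primary kernel lying in a divisible subgroup must be injective.

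More concretely, the endgame runs as in the proof of \thmref{thm:LW-MC-Main}: one shows that ${\rm Ker}(\lambda_{X|D})$ is contained in the degree-zero part $\CH_0(X|D)_{\deg 0}$, which is divisible because $k$ is algebraically closed and $X$ is projective (via a Bertini/projective-bundle argument producing enough rational curves, or by appealing to \cite[Thm.~11.5]{BK}). A bounded-exponent argument — the kernel of $\CH^{LW}_0(S_X) \to K_0$-type maps is torsion of exponent $(d-1)!$ by \cite[Cor.~2.7]{Levine-2}, transported to the modulus side via \eqref{eqn:doubl-ex-seq} and \thmref{thm:LW-MC-Main} — shows ${\rm Ker}(\lambda_{X|D})$ has bounded exponent. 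A divisible group of bounded exponent is zero, so ${\rm Ker}(\lambda_{X|D}) = 0$ and $\lambda_{X|D}$ is an isomorphism.

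The main obstacle I anticipate is the $p$-primary torsion comparison: unlike the prime-to-$p$ case, there is no {\'e}tale-cohomological crutch, and one must genuinely route the argument through the double $S_X$, invoking \thmref{thm:LW-MC-Main} (which in turn rests on homotopy $K$-theory and Levine's bounded-torsion result) to identify $\CH^{LW}_0(S_X)_\Lambda$ with $H^{2d,d}(S_X)_\Lambda$ integrally when $k$ admits resolution of singularities, and with $\Z[\tfrac1p]$-coefficients otherwise. Care is needed to check that the divisibility and bounded-exponent inputs are compatible — in particular that the kernel really does sit inside a divisible subgroup on the nose — since it is exactly the interplay "divisible $\cap$ bounded exponent $= 0$" that closes the argument, and a gap in either hypothesis would leave a residual $p$-group.
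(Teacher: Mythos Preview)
Your framework is right~---~the Albanese square \eqref{eqn:doubl-ex-seq-*-0} together with \propref{prop:Kernel-torsion} is exactly what the paper uses~---~but there is a genuine gap in how you close the $p$-primary part. The paper's proof is much shorter than what you outline: after reducing via \propref{prop:Kernel-torsion} to showing $\lambda_{X|D}$ is an isomorphism on \emph{all} torsion, it simply observes that both vertical arrows in \eqref{eqn:doubl-ex-seq-*-0} are isomorphisms on the full torsion subgroups (including $p$-primary), by \cite[Thm.~1.2]{Krishna-2} for $\rho_{X|D}$ and \cite[Thm.~1.7]{GK-1} for $\rho_U$. Since $\lambda^{alb}_{X|D}$ is an isomorphism, so is $\lambda_{X|D}$ on torsion. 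That's the entire argument.

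Your proposal misses these two inputs. You cite \cite[Thm.~11.5]{BK} and \cite{SS}, both of which are prime-to-$p$ statements, and then try to recover the $p$-part by a bounded-exponent/divisibility trick routed through \thmref{thm:LW-MC-Main}. This does not work: the Levine bound of exponent $(d-1)!$ on $\Ker(\CH^{LW}_0(S_X)\to K_0(S_X))$ transports to a statement about $\Ker(\lambda_{S_X})$ only via the comparison $K_0(S_X)_\Lambda\cong KH_0(S_X)_\Lambda$, which requires $\Lambda=\Z[\tfrac1p]$. So the bounded-exponent conclusion is available only after inverting $p$~---~precisely the regime already handled by \propref{prop:Kernel-torsion}. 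Integrally, \thmref{thm:LW-MC-Main} establishes the isomorphism only under resolution of singularities, which you cannot assume here. In short, your detour through $S_X$ and $K$-theory is circular for the $p$-part; the missing idea is that the Roitman theorems for $\CH_0(X|D)$ and $H^{sus}_0(U)$ over algebraically closed fields of positive characteristic are already known in full, $p$-torsion included.
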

\begin{proof}
  We can assume $X$ to be integral.
By \propref{prop:Kernel-torsion}, we only need to show that
$\lambda_{X|D}$ induces an isomorphism between the torsion subgroups.
For this, we use the commutative diagram~\eqref{eqn:doubl-ex-seq-*-0}. 
It follows from \thmref{thm:Semi-abl-Alb}(1) that the bottom horizontal
arrow in this diagram is an isomorphism. The left (resp. right)  vertical arrow is
an isomorphism on the torsion subgroups by \cite[Thm.~1.2]{Krishna-2}
(resp. \cite[Thm.~1.7]{GK-1}).
This implies that the same is true for the top horizontal arrow.
\end{proof}

\subsection{Characteristic zero case}\label{sec:Char-0}
Let $k$ be any field (not necessarily algebraically closed) of characteristic zero.
We let $\Omega_k$ be the module of absolute K{\"a}hler differentials of $k$.
The second part of \thmref{thm:Main-2} is an easy consequence of the following
general result.

\begin{thm}\label{thm:Failure}
  Let $\ol{C} \subset \P^2_k$ be
the cuspidal curve given by the homogeneous equation
$x_0x^2_1-x^3_2 = 0$. Then $\CH_0(\P^2_k|\ol{C}) \cong \Omega_k \oplus \Z$ and
$H^{sus}_0(\P^2_k \setminus \ol{C}) \cong \Z$.
\end{thm}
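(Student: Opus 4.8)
The plan is to compute each of the two groups separately, exploiting the fact that $\ol C$ is a rational curve whose normalization is $\P^1_k$. For the Suslin homology side, note that $U := \P^2_k \setminus \ol C$ is an open subvariety whose ``boundary'' $\ol C$ is a rational (unicuspidal) curve. By \thmref{thm:suslinhomology} together with the degree map, $H^{sus}_0(U)$ sits in a presentation by divisors of rational functions on curves mapping to $U$; since $k$ has characteristic zero, general Suslin homology machinery (or the motivic interpretation in \remref{remk:mot-homology-DM}) reduces the computation to understanding $M(U)$, and for the complement of a rational plane curve the motive is built out of $M(\P^2)$ and $M(\ol C)$ via the localization/Gysin sequence. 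Concretely I would argue that $H_0^{sus}(\P^1_k) = \Z$, that the cusp does not affect the $0$-th Suslin homology (since passing to the seminormalization — here the normalization $\P^1$ — is an isomorphism on $H_0^{sus}$ of the open part, as normal curves are seminormal), and conclude $H^{sus}_0(U) \cong \Z$ via the degree map, using that every closed point of $U$ is rationally equivalent (in the Suslin sense) to any other because $U$ is rational and connected.

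For the modulus side, I would use that $\ol C \subset \P^2_k$ is an effective Cartier divisor (it is a hypersurface, hence Cartier), so $\CH_0(\P^2_k \mid \ol C)$ is defined, and I would first reduce to a local/curve computation. The key point is that $\CH_0(\P^2_k\mid \ol C)$ differs from $H_0^{sus}(U)$ precisely because the pullback of $\ol C$ to a parametrizing curve $C$ need not be reduced — and for the cuspidal curve the pullback to a line through the cusp is a length-$3$ (nonreduced) scheme. I would isolate the difference as a relative Picard / relative $K_1$ term supported at the cusp: roughly, $\CH_0(\P^2_k\mid \ol C)$ surjects onto $H_0^{sus}(U) \cong \Z$ with kernel measured by the ``extra'' relations coming from the difference between $\ol C$ and its reduction, which localizes at the cusp point $P$ and is governed by the cokernel of $\cO_{\ol C, P}^\times \to \cO_{C_\infty}^\times$-type maps, i.e. by the group $1 + \fm_P/\fm_P^{(2)}$ for the cusp local ring $k[[t^2,t^3]]$. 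That cokernel is $t\cdot k \cong \Omega^0$... — more precisely, the tangent space to the relative Picard group of the cuspidal cubic, which is well known to be the additive group of the module of absolute Kähler differentials $\Omega_k$ (the ``cusp'' contributes $\Omega_{k}$, exactly as in the computation of $\Pic(\ol C)$ versus $\Pic(\P^1)$ for a cuspidal rational curve over a non-perfect or characteristic-zero base). Assembling, I get a short exact sequence $0 \to \Omega_k \to \CH_0(\P^2_k\mid\ol C) \to \Z \to 0$, which splits because the target is free, giving $\CH_0(\P^2_k\mid\ol C) \cong \Omega_k \oplus \Z$.

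To make the modulus computation rigorous I would proceed in these steps: (i) use covariant functoriality of $\CH_0(-\mid -)$ for the normalization/parametrization maps and the fact that any closed point of $U$ can be moved along a line to reduce the generators to a single rational point, (ii) identify the relation subgroup modulo the ``naive'' (reduced) relations with the group $G(\P^1, \phi^*\ol C)/G(\P^1, (\phi^*\ol C)_{\rm red})$ summed over parametrized curves, and show using a Bertini-type argument that it suffices to take $\phi$ to be a general line, for which $\phi^*\ol C$ is the length-$3$ divisor $3[0]$ (or $[0]$ plus the cusp data) while $(\phi^*\ol C)_{\rm red}$ is reduced, (iii) compute the resulting relative units group for $\Spec k[t]/(t^3)$ versus $\Spec k[t]/(t)$, namely the group $(1 + t\,k[t]/(t^3))/(1+t^2 k[t]/(t^3)) \cong (k,+)$ — and then keep track of how these local contributions over all choices of line glue, the upshot being the \emph{absolute} differentials $\Omega_k$ rather than a single copy of $k$ (this is the standard phenomenon: varying the direction of the line records a ``$df$'' rather than an ``$f$''). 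The main obstacle I anticipate is precisely this last bookkeeping: pinning down that the aggregate of the local additive contributions, modulo the relations coming from varying the parametrizing curve, is exactly $\Omega_k$ and not something smaller (like $k$) or larger — this requires a careful moving-lemma / presentation argument, likely parallel to the known computation of the Chow group with modulus of $\P^1_k$ with a nonreduced point, or to Bloch–Esnault-style computations of additive higher Chow groups, and is where I'd expect the authors to invoke an explicit example-specific calculation rather than a black-box theorem.
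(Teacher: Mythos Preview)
Your Suslin homology argument has a genuine error: the claim that ``every closed point of $U$ is rationally equivalent (in the Suslin sense) to any other because $U$ is rational and connected'' is false as a general principle. The paper's own computation shows $H^{sus}_0(\A^2_k \setminus C) \cong k^\times \oplus \Z$, not $\Z$, even though $\A^2_k \setminus C$ is rational and connected; likewise $H^{sus}_0(\G_m) \cong k^\times \oplus \Z$. What makes $H^{sus}_0(\P^2_k \setminus \ol C) \cong \Z$ is a delicate cancellation, not rationality. The paper establishes it by two localization triangles in $\mathbf{DM}^{\rm eff}(k,\Z)$: first removing $C$ from $\A^2$ (picking up a $k^\times$ from $H^3_c(C,\Z(2)) \cong H^3_c(\A^1,\Z(2))$), then gluing back the affine line $H_\infty \setminus C_\infty$ (which kills that same $k^\times$ via $H^3_c(\A^1,\Z(2)) \to H^4_c(\A^2\setminus C,\Z(2))$). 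Your remark about seminormalization is also misdirected: that concerns the motive of $\ol C$, not of its complement in $\P^2$.

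For the modulus side, your acknowledged obstacle --- aggregating the local additive contributions over all parametrizing curves to get exactly $\Omega_k$ --- is a real gap, and it is not clear your proposed moving-lemma strategy can close it. The paper avoids the cycle-theoretic bookkeeping entirely: it invokes Bloch's formula $\CH_0(\P^2|\ol C) \cong H^2_{\rm Zar}(\P^2, \sK^M_{2,(\P^2,\ol C)})$ from \cite{BKS}, identifies this with a summand of the relative $K$-group $K_0(\P^2,\ol C)$ (the other summand $\Pic(\P^2,\ol C)$ vanishes by inspection), then uses the $K$-theory localization sequence to reduce to $K_0(\A^2,C) \cong SK_1(C)$, which is classically $\Omega_k$ by descent along the normalization $\A^1 \to C$ via the Geller--Weibel computation of $K_1(A,B,I)$. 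This route replaces your hard direct analysis of the relation subgroup with established $K$-theoretic machinery, and is where the $\Omega_k$ (rather than $k$) appears naturally.
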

\begin{proof}
  We shall freely use the duality results from \cite{VoevTriangCat}.
  We drop the subscript $k$ from the notations of $\P^n_k$ and $\A^n_k$.

  Let us first prove that $H^{sus}_0(\P^2_k\setminus \ol{C}) \cong \Z$. Let
  $C_\infty = (0:1:0)$ be the point at infinity and $P = (1:0:0)$ the singular
  point of $\ol{C}$. We write 
$C = \ol{C} \setminus C_\infty = (\P^2_k \setminus H_\infty) \cap \ov{C}$, where 
$H_\infty \cong \P^1_k$ is the line $\{X_0 =0\}$.
We claim that there is a split exact sequence
    \begin{equation}\label{eqn:RussellChow}
      0\to k^\times \to H_{c }^4(\A^2 \setminus C, \Z(2)) \to \Z\to 0
    \end{equation}

Indeed,  letting $U = \A^2\setminus C$, the localization triangle
        \[M^c(C) \to M^c(\A^2)\to M^c(U)\to M^c(C)[1]\]
    in ${\mathbf{DM}}^{\rm eff}(k, \Z)$ gives rise to 
an exact sequence of cohomology with compact support
        \begin{equation}\label{eq:computehom} 
          \cdots \to H_{c}^3(C, \Z(2)) \to H_{c}^4(U, \Z(2)) \to H_{c}^4(\A^2, \Z(2)) \to
          H_{c}^4(C, \Z(2)).
        \end{equation}
By duality and homotopy invariance, we have 
\[
H_{c}^4(\A^2, \Z(2)) =
H_0(\A^2, \Z(0)) = H_0(k, \Z(0)) =\Z.
\]
Since $k$ is infinite and $U$ is a rational variety, we can choose
$x \in U(k)$. We then get a sequence of maps
\[
  H^0(k(x), \Z(0)) \to H_{c}^4(U, \Z(2)) \to H_{c}^4(\A^2, \Z(2)) \to
  H^0(k, \Z(0)),
\]
where the first arrow is the Gysin homomorphism and the last arrow is the
push-forward map between cohomology with compact support induced by the
flat morphism $\A^2 \to \Spec(k)$. Since the composition is known to be
an isomorphism, we get a split surjection $H_{c }^4(U, \Z(2)) \surj \Z$.

We are left to compute $H_{c}^{3}(C, \Z(2))$. Comparing the exact triangles for $C$ and for $\A^1$, again we get $H_{c}^{3}(C, \Z(2)) = H_{c}^{3}(\A^1, \Z(2))$. By duality and homotopy invariance, the latter is isomorphic to $H_{-1}(k, \Z(-1)) = H^1(k, \Z(1)) = k^\times$. The long exact sequence \eqref{eq:computehom} then gives 
\[ H_{c}^{3}(\A^2, \Z(2)) \to k^\times \to  H_{c}^4(U, \Z(2)) \to \Z \to 0.\]
Since $H_{c}^{3}(\A^2, \Z(2)) = H_1(\A^2, \Z) = H_1(k, \Z) = 0$ and 
$H_0(U, \Z(0)) \cong H_{c}^4(U, \Z(2))$ by duality, we finally get \eqref{eqn:RussellChow}.

Next, note that $H_\infty\setminus C_\infty = \A^1$. Looking at the triangle in 
${\mathbf{DM}}^{\rm eff}(k, \Z)$:
\[ M^c(H_\infty\setminus C_\infty) \to M^c(\P^2\setminus \ol{C}) \to M^c(U) \to M^c(H_\infty\setminus C_\infty)[1],\]
we get 
\[k^\times = H_{c}^{3}(\A^1, \Z(2)) \to H_{c}^4(U, \Z(2)) \to  H_{c}^4(\P^2\setminus \ol{C}, \Z(2)) \to 0.\]
Since $H_{c}^4(U, \Z(2)) = k^\times\oplus \Z$ by \eqref{eqn:RussellChow}, it's clear by construction that $H_{c}^4(\P^2\setminus \ol{C}, \Z(2)) = H_0^{sus}(\P^2 \setminus \ol{C}, \Z(0))=\Z$.

We now turn to the first statement of the theorem. By \cite[Thm.~1.1]{BKS},
we have an identification $\CH_0(\P^2|\ol{C}) \cong
H^2_{\rm Zar}(\P^2, \mathcal{K}^M_{2, (\P^2, \ol{C})})$.
Since the relative $K_0$-group $K_0(\P^2, \ol{C})$ decomposes as
$H^2_{\rm Zar}(\P^2, \mathcal{K}^M_{2, (\P^2, \ol{C})}) \oplus \Pic(\P^2, \ol{C})$ and the
group  $\Pic(\P^2, \ol{C})$ vanishes by direct inspection, it is enough to compute
$K_0(\P^2, \ol{C})$.

This can be done explicitly by means of the long localization sequence for $K$-theory
with support. In particular, there is a short exact sequence
\[0\to \Z \to K_0(\P^2, \ol{C}) \to K_0(\A^2, C) \cong SK_1(C)\to 0\]
and we are left to show that $SK_1(C) \cong \Omega_{k}$. This is classical, and
can be done by descent along the normalization morphism $\nu\colon \A^1\to C$ using
the $K_1(A,B,I)$ computation of \cite{GW}. We leave the details to the interested
reader.
\end{proof}

\subsection{Kerz-Saito vs. Russell's Chow groups with modulus}
\label{sec:Russell}
    There exists another notion  different from the one used in this paper (and specifically used in the previous theorem) of Chow group of 0-cycles with 
modulus. This Chow group was introduced by H. Russell \cite{RussellANT} in the study of a higher dimensional analogue of the generalized Jacobian of Rosenlicht-Serre. Russell's Chow group with modulus is defined as the cokernel of the map
     \[\divf\colon \bigoplus_{\phi_C\colon C\to X}G(C, \phi_C^*(D-D_{\rm red}) + (\phi_C^*(D))_{\rm red})\to \sZ_0(X,D).\]

     In particular, in view of \thmref{thm:suslinhomology}, it is clear that it agrees with $H_0^{sus}(X\setminus D)$ when $D$ is reduced and $X$ is proper. Theorem \ref{thm:Failure} then exhibits explicitly an example 
(at least in characteristic zero)
for which Russell's Chow group and the Kerz-Saito Chow group do not agree. 
Several authors have enquired in the past about this comparison.
Note that in view of  \cite[Thm. 1.1]{BKS}, it follows that Russell's Chow group does not satisfy Bloch's formula, even for $D$ reduced. In particular,
it may not admit a cycle class map to relative $K$-theory.

\subsection{Finite coefficient case}\label{sec:Main2-III}
We now prove \thmref{thm:Main-2-fin}. We restate it for convenience.

\begin{thm}\label{thm:M1}
  Let $k$ be a perfect field and $X$ a smooth projective $k$-scheme.
  Let $D \subset X$ be an effective Cartier divisor with complement $U$.
  Assume that the irreducible components of $D_\red$ are regular.
  Then the map
 \[
    \lambda_{X|D} \colon \CH_0(X|D)_R \to H^{sus}_0(U)_R
  \]
  is an isomorphism, where
  $R = [\tfrac{1}{p}]$ if ${\rm char}(k) = p > 0$
  and ${\Z}/m$ with $m \neq 0$ if ${\rm char}(k) = 0$.
\end{thm}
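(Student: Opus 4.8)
The strategy is to reduce the statement, by a sequence of geometric modifications, to the case where $D$ is reduced with smooth components and transverse to the relevant curves, and then invoke \thmref{thm:torsion} (the positive characteristic case) together with a finiteness input. First I would reduce to $k$ algebraically closed: both $\CH_0(X|D)$ and $H^{sus}_0(U)$ behave well under base change to $\ol{k}$ up to controlled torsion/cotorsion, or—more robustly with finite coefficients—one uses a transfer/norm argument for the extension $k \to \ol{k}$ which is pro-(finite separable) for $k$ perfect, so that multiplication by $[k':k]$ splits the base-change maps and an element dying after base change already dies (after clearing a unit in $R$). This is where perfectness of $k$ is used. Then I would assume $X$ integral.

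Next comes the heart of the argument, following the three ``tricks'' advertised in the overview. Granting $k$ algebraically closed, by \lemref{lem:Kernel-mod} I may replace $D$ by $D_\red$ at the cost of a $p$-primary kernel, which is invisible after tensoring with $R$; so I reduce to $D$ reduced. Now in the positive characteristic case, $R = \Z[\tfrac 1p]$, the map $\lambda_{X|D}$ is already an isomorphism \emph{integrally} by \thmref{thm:torsion}, so there is nothing more to do once we are over $\ol{k}$—the only real content in char $p$ is the descent to perfect $k$, handled above. In characteristic zero, however, $\lambda_{X|D}$ is not an isomorphism integrally (Theorem~\ref{thm:Failure}), and here I must work with $\Z/m$-coefficients throughout. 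The plan is: pass from the integral normal curves $\phi_C \colon C \to X$ defining the modulus relations to \emph{embedded} curves by factoring $C \hookrightarrow \P^n_X \to X$ and using covariant functoriality of $\CH_0(-|-)$; then, given an embedded curve $C \subset X$, perform a sequence of blow-ups of $X$ with smooth centers, using Jannsen's resolution result (appendix of \cite{SaitoSatoAnnals}), to make the strict transform $\wt C$ transverse to $D$; then apply Miyazaki's theorem \cite{Miyazaki} to ensure that the total transform of $D$ under these blow-ups can be taken reduced, so that the modulus condition on $\wt C$ is exactly the Suslin condition. After these modifications, the modulus relations on the blown-up variety and the Suslin relations on $U$ become identified modulo relations that die with $\Z/m$-coefficients, and one pushes forward back to $X$.

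The finiteness input is what makes the ``$\Z/m$'' version go through where the integral one fails: I would invoke \thmref{thm:Main-4}(2), i.e.\ that $\CH_0(X|D)/m$ is finite (for $X$ smooth projective, $D$ reduced with smooth components—actually its proof reduces to exactly this setup), together with finiteness of $H^{sus}_0(U)/m$ from \thmref{thm:Main-4}(1), so that $\lambda_{X|D} \otimes \Z/m$ is a surjection of finite groups and it suffices to show the two sides have the same order, or equivalently to produce the inverse on torsion via the Albanese comparison of \thmref{thm:Semi-abl-Alb}: the semi-abelian Albanese varieties $J^d(X|D)$ and $J^d(U)$ are canonically isomorphic (\thmref{thm:Semi-abl-Alb}(1)), so their $m$-torsion agrees, and the Roitman-type theorems of \cite{BK} and \cite{SS} (as used in the proof of \lemref{lem:LW-MC}) identify the $m$-torsion of the cycle/homology groups with the $m$-torsion of the Albanese varieties. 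Combining surjectivity, equality of torsion, and the divisibility of the degree-zero parts over $\ol{k}$, one concludes $\lambda_{X|D} \otimes \Z/m$ is an isomorphism, and then descends to perfect $k$.

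\textbf{Main obstacle.} The hardest and most delicate step will be the blow-up/transversality argument in characteristic zero: one must show that after the sequence of monoidal transformations making $\wt C$ transverse to (a reduced model of) $D$, the push-forward maps on Chow groups with modulus genuinely relate the modulus relation ``$f \in \Ker(\cO^\times_{\wt C, \wt C \cap D} \to \cO^\times_{\wt C \cap D})$'' to the Suslin relation on $U$ \emph{without} introducing spurious relations—this requires carefully tracking how the modulus condition transforms under blow-ups (for which Miyazaki's theorem is essential) and checking compatibility of $\lambda$ with all the push-forwards. Controlling the error terms so that they vanish after $\otimes \Z/m$ (rather than integrally) is precisely the subtle point, and is where the finiteness theorem and the Albanese comparison must be combined rather than either used alone.
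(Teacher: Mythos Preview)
Your three tricks are exactly the right skeleton, but you have not noticed that they already \emph{finish} the proof. Once you have (i) replaced $C$ by an embedded regular curve via the $\P^n_X$ factorization, and (ii) blown up so that the strict transform $\wt C$ meets $\wt D_{\red}$ transversely in its regular locus, then for a rational function $f$ on $\wt C$ the Suslin condition ``$f(x)=1$ for all $x\in \wt C\cap\wt D_{\red}$'' is \emph{literally} the modulus condition $f\in\Ker(\sO^\times_{\wt C, E}\to\sO^\times_E)$ for $E=\wt C\times_{\wt X}\wt D_{\red}$, because $E$ is reduced. Hence $\divf(f)$ already dies in $\CH_0(\wt X|\wt D_{\red})$. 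Step (iii) is then just Miyazaki's theorem, which says the kernel of $\CH_0(\wt X|\wt D)\to\CH_0(\wt X|\wt D_{\red})$ is $p$-primary torsion in characteristic $p$ and uniquely divisible in characteristic zero, hence dies after $\otimes R$. Push forward along the blow-up and the projection $\P^n_X\to X$ and you are done. There are no ``error terms'' to control; the two relation groups coincide on the nose after the geometric moves.

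Consequently several parts of your plan are superfluous or problematic. First, the reduction to $\ol{k}$ is unnecessary: the entire argument above runs over any perfect $k$, and your proposed descent via pro-finite transfer, while plausible, is an extra layer you never need. Second, your appeal to \thmref{thm:Main-4} is dangerous: the $p$-adic case of that finiteness theorem is proved in the paper \emph{using} the present result, so invoking it in general is circular. Over an algebraically closed field the finiteness is really just divisibility of the degree-zero part, which is elementary---but then your whole Albanese/Roitman/cardinality-counting argument collapses to the trivial observation that a degree-preserving surjection between two groups each isomorphic to $\Z/m$ is an isomorphism, and this has nothing to do with the blow-up tricks you set up. Third, in characteristic $p$ you propose to use \thmref{thm:torsion}, but that requires $k$ algebraically closed and much heavier machinery (Roitman for $\CH_0(X|D)$, the double, motivic cohomology of $S_X$); the direct argument via the three tricks works uniformly in both characteristics and over any perfect $k$.

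In short: keep your tricks (i)--(iii), delete everything else, and observe that after (ii) the Suslin relation \emph{is} a modulus-$D_{\red}$ relation. That is the missing idea.
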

\begin{proof}
When $\dim(X) = 1$, we can use \cite[Thm.~1.3(2)]{Miyazaki} to
assume that $D$ is reduced. In the latter case, the theorem holds integrally
as one can directly check. We therefore assume $d \ge 2$.
We can also assume $X$ is integral.
Since both sides of ~\eqref{eqn:mod-Suslin} are quotients of the free abelian
group $\sZ_0(U)$, we need to show that
any relation imposed on $\sZ_0(X|D)$ to define $H^{sus}_0(U)$ lies in the image of
$\sR_0(X|D) \to {\sZ_0(U)}_R$, where $\sR_0(X|D) = {\rm Ker}(\sZ_0(U) \surj
\CH_0(X|D))$. 

So let $C \subset X$ be an integral curve not contained in $D$ and let
$\nu \colon C^N \to C \inj X$ be the composite map, where $C^N$ is the normalization
of $C$. Let $f \in k(C^N)^{\times}$ be a rational 
function on $C^N$ which is regular in an affine neighborhood of $\nu^{-1}(D)$
such that $f(x) = 1$ for every $x \in \nu^{-1}(D)$. We need to show that
$\divf(f)$ dies in ${\CH_0(X|D)}_R$.

Since $\nu$ is finite, we can find a factorization 
\begin{equation}\label{eqn:M1-0}
\xymatrix@C1pc{
C^N \ar[r]^-{\phi} \ar[dr]_{\nu} & \P^n_X \ar[d]^{\pi}\\
& X,}
\end{equation}
where $\phi$ is a closed immersion and $\pi$ is the canonical projection.
Since $\pi$ is smooth, the irreducible components of
$D' := \P^n_D$ are regular. Furthermore, $C^N \not\subset D'$.
Since $\pi_*(\divf(f)_{C^N}) = \divf(f)_C = \divf(f)$ 
under the proper push-forward map
$\pi_* \colon \sZ_0(\P^n_U) \to \sZ_0(U)$ and since
$\pi_*(\sR_0(\P^n_X|D')) \subset \sR_0(X|D)$ (see e.g., \cite[\S~2]{KPv} or 
\cite[Lem.~2.7]{BS}), 
it suffices to show that
$\divf(f)_{C^N}$ dies in ${\CH_0(\P^n_X|\P^n_D)}_R$.
Up to replacing $X$ with $\P^n_X$ and $C$ with $C^N$, we can therefore assume that
$C \subset X$ is regular so that $\nu$ is a closed immersion of regular
schemes.

Since $D_\red$ is reduced with regular irreducible components, we can apply 
\cite[Prop.~A.6]{SaitoSatoAnnals} to find a finite sequence of blow-ups
$\phi \colon \wt{X} \to X$ over the closed points of $D \cap C$ such that
the scheme theoretic inverse image $\wt{D} := \wt{X} \times_X D$ satisfies the
following.
\begin{enumerate}
\item
$\wt{D}_\red$ is reduced and its irreducible components are regular. 
\item
The strict transform $\wt{C}$ of $C$ is regular and intersects $\wt{D}_\red$
only in the regular locus of $\wt{D}_\red$ and transversely.
\end{enumerate}

Since $\phi$ is proper, we have a commutative diagram
\begin{equation}\label{eqn:M1-1} 
\xymatrix@C1pc{
\sZ_0(U) \ar@{->>}[r] \ar[d]_{\phi_*} \ar[d]^-{\cong} & 
\CH_0(\wt{X}|\wt{D}) \ar[d]^{\phi_*} \\
\sZ_0(U) \ar@{->>}[r] & \CH_0({X}|{D}),}
\end{equation}
where $\phi_*$ is the push-forward map of the 0-cycles groups.
Since $\phi \colon \wt{C} \to C$ is an isomorphism, we see that
$f \in k(\wt{C})^{\times}$
such that $\phi_*(\divf(f)_{\wt{C}} ) = \divf(f)_{C} = \divf(f)$. 
Moreover, $f$ is regular in a 
neighborhood of $\wt{D} \cap \wt{C}$ with $f(x) = 1$ for every $x \in
\wt{D} \cap \wt{C} \subset \phi^{-1}(D \cap C)$.
In particular, 
$\divf(f)_{\wt{C}} \in {\rm Ker}(\sZ_0(U) \surj H^{sus}_0(U))$.

It follows from ~\eqref{eqn:M1-1} 
that $\divf(f)_C$ will die in ${\CH_0(X|D)}_R$ if we can show that 
$\divf(f)_{\wt{C}}$  dies in ${\CH_0(\wt{X}|\wt{D})}_R$.
We can therefore assume that our original curve $C \subset X$ has the
property that it is regular and it intersects $D_\red$ only in the 
regular locus of $D_\red$ and transversely.
But it is immediate from various definitions that 
in this case, $f \in k(C)^{\times}$ is regular in a neighborhood of $E$
and $f(x) = 1$ for all $x \in E$ if and only if 
$f \in {\rm Ker}(\sO^{\times}_{C, E} \to \sO^{\times}_{E})$, where 
$E = C \times_X D_\red$.
The latter condition implies that $\divf(f) \in \sR_0(X|D_\red)$.
That is, $\divf(f)$ dies in $\CH_0(X|D_\red)$. 
By \cite[Thm.~1.3(2)]{Miyazaki}, this implies that
$\divf(f)$  dies in ${\CH_0({X}|{D})}_R$.
This concludes the proof.
\end{proof}

\section{Bloch's torsion theorem for Suslin homology}
\label{sec:Suslin-tor}
The goal of this section is to prove \thmref{thm:Main-7}.
We shall first establish some preliminary results of independent
interest.

\subsection{A torsion theorem for the double}
\label{sec:Sing-tor}
We fix an algebraically closed field $k$ of characteristic zero and a prime $\ell$.
Let $X$ be an integral smooth projective surface over $k$ and let
$D \subset X$ be a reduced effective Cartier divisor. We let $S_X$ be the
double of $X$ along $D$.

\begin{lem}\label{lem:Double-0}
  The canonical map
  \[
    \lambda_{S_X} \colon \CH^{LW}_0(S_X)\{\ell\} \to H^4(S_X, \Z(2))\{\ell\}
  \]
  is an isomorphism.
\end{lem}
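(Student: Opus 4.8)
The plan is to run, in this characteristic-zero situation, the argument used for the prime-to-$p$ part of \lemref{lem:LW-MC}: since ${\rm char}(k)=0$, ``prime to $p$'' means \emph{all} torsion, and in particular the $\ell$-primary part, and the argument goes through verbatim because a characteristic-zero field automatically admits resolution of singularities, so all the integral motivic statements of \cite{KP} and the splittings invoked below are available. First I would apply \lemref{lem:LW-lci*} to identify $\delta_{S_X}\colon \CH^{LW}_0(S_X)\xrightarrow{\cong}\CH_0^{BK}(S_X)$ — this step uses only that $D$ is reduced, not the characteristic — so that it suffices to show that $\lambda_{S_X}\colon \CH_0^{BK}(S_X)\{\ell\}\to H^4(S_X,\Z(2))\{\ell\}$ is an isomorphism.

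Next I would compare two short exact sequences. On the cycle side, \eqref{eqn:doubl-ex-seq} gives the split exact sequence $0\to\CH_0(X|D)\to\CH_0^{BK}(S_X)\to\CH_0(X)\to 0$. On the motivic side, the localization triangle attached to $S_X$ and its normalization $X_+\amalg X_-\to S_X$ (singular locus $D$) yields, exactly as in the proof of \lemref{lem:LW-MC} — i.e.\ the bottom row of \eqref{eqn:LW-MC-1} — a split short exact sequence $0\to H^{sus}_0(U)\to H^4(S_X,\Z(2))\to H^4(X,\Z(2))\to 0$, using the duality identification $H^{sus}_0(U)\simeq H^4_c(U,\Z(2))$ for the smooth surface $U$ and $H^4(X,\Z(2))=\CH_0(X)$ for the smooth surface $X$. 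Since both sequences are split, they remain exact after passing to $\ell$-primary torsion subgroups, and $\lambda_{X|D}$, $\lambda_{S_X}$, $\lambda_X$ assemble into a morphism between the two resulting short exact sequences.

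It then remains to check that the two outer vertical maps are isomorphisms on $\{\ell\}$. The right-hand map $\CH_0(X)\{\ell\}\to H^4(X,\Z(2))\{\ell\}$ is an isomorphism because $X$ is a smooth projective surface, so that $H^4(X,\Z(2))=\CH^2(X)=\CH_0(X)$. For the left-hand map I would use the commutative square \eqref{eqn:doubl-ex-seq-*-0}: by \thmref{thm:Semi-abl-Alb}(1) the Albanese comparison $\lambda^{alb}_{X|D}\colon J^d(X|D)\to J^d(U)$ is an isomorphism; since $k$ is algebraically closed the degree maps have torsion-free target, so all torsion of $\CH_0(X|D)$ and of $H^{sus}_0(U)$ lies in the degree-zero parts; and the Roitman-type surjections $\rho_{X|D}\colon\CH_0(X|D)_{\deg 0}\to J^d(X|D)$ and $\rho_U\colon H^{sus}_0(U)_{\deg 0}\to J^d(U)$ restrict to isomorphisms on $\{\ell\}$ by \cite[Thm.~11.5]{BK} and \cite[Thm.~1.1]{SS} respectively. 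Hence $\lambda_{X|D}$ is an isomorphism on $\{\ell\}$, and the short five lemma forces $\lambda_{S_X}$ to be an isomorphism on $\{\ell\}$. Combined with the isomorphism $\delta_{S_X}$ from \lemref{lem:LW-lci*}, this proves the lemma.

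I do not expect a serious obstacle: the argument is essentially a characteristic-zero repackaging of \lemref{lem:LW-MC} together with \lemref{lem:LW-lci*}. The one point that needs care is that the motivic ``double'' short exact sequence is available with $\Z$-coefficients and is split in this setting — this is where resolution of singularities enters, as flagged in the remark following \eqref{eqn:MC-sing-0}. By contrast, every $p$-primary subtlety that complicates \thmref{thm:LW-MC-Main} evaporates here, since for ${\rm char}(k)=0$ the subgroup $\{\ell\}$ is simply prime-to-$p$ torsion; one could alternatively imitate \thmref{thm:LW-MC-Main} directly, using that $\CH^{LW}_0(S_X)_{\deg 0}$ is divisible ($S_X$ being projective over an algebraically closed field) and that $\ker(\lambda^{LW}_{S_X})$ has bounded exponent by \cite[Cor.~2.7]{Levine-2}, but the Albanese route above is the shortest path to exactly the torsion statement needed.
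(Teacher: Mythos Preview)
Your argument is correct, but it takes a different route from the paper's proof. The paper argues directly on $S_X$ rather than decomposing via the double: it first reduces to $k=\C$ by a Lefschetz principle argument, then invokes \cite[Thm.~8.4]{KP} to obtain a factorization
\[
\CH^{LW}_0(S_X)_{\deg 0}\xrightarrow{\lambda_{S_X}} H^4(S_X,\Z(2))_{\deg 0}\xrightarrow{\rho'_{S_X}} J^2(S_X)
\]
with $\rho'_{S_X}$ an isomorphism on torsion, and finally appeals to the singular Roitman theorem of Barbieri-Viale--Pedrini--Weibel \cite{BPW} to see that the composite $\rho_{S_X}$ is an isomorphism on torsion, forcing $\lambda_{S_X}$ to be one as well. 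Your approach instead recycles the machinery of \lemref{lem:LW-MC}: you split $\CH^{BK}_0(S_X)$ and $H^4(S_X,\Z(2))$ into their $(X|D)$- and $X$-pieces and apply two separate Roitman theorems (\cite[Thm.~11.5]{BK} and \cite[Thm.~1.1]{SS}) on the outer terms. Your route avoids the Lefschetz reduction and the specific input \cite[Thm.~8.4]{KP}, at the cost of needing the motivic split sequence for $S_X$ with integral coefficients (which is fine in characteristic zero, as you note) and relying on two Roitman-type results rather than the single theorem of \cite{BPW}. Both arguments are short; the paper's is slightly more self-contained once one is willing to pass to $\C$.
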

\begin{proof}
  By a Lefschetz principle argument, we can assume that $k = \C$.
  In this case, the Albanese homomorphism $\rho_{S_X} \colon
  \CH^{LW}_0(S_X)_{\deg 0} \to J^2(S_X)$ has a factorization
  \[
    \CH^{LW}_0(S_X)_{\deg 0} \xrightarrow{\lambda_{S_X}}  H^4(S_X, \Z(2))_{\deg 0}
    \xrightarrow{\rho'_{S_X}} J^2(S_X)
  \]
  by \cite[Thm.~8.4]{KP} such that $\rho'_{S_X}$ is an isomorphism on the
  torsion subgroups. On the other hand,
  the main result of \cite{BPW} asserts that $\rho_{S_X}$ is an
  isomorphism on the torsion subgroups. This forces $\lambda_{S_X}$ to be an
  isomorphism on the torsion subgroups.
\end{proof}

By  \cite[Thm.~3.6]{Krishna-2}, there is a canonical isomorphism
$H^3_\et(S_X, {\Q_\ell}/{\Z_\ell}(2)) \xrightarrow{\cong} \CH^{LW}_0(S_X)\{\ell\}$.
One has a similar isomorphism for $X$ by Bloch.
Let $\gamma_{S_X}$ be the composition of this isomorphism with $\lambda_{S_X}$.
We denote the corresponding map for $X$ by $\gamma_X$.

\begin{lem}\label{lem}\label{lem:Double-1}
  There is a commutative diagram
  \begin{equation}\label{eqn:Double-1-0}
    \xymatrix@C.8pc{
      H^3_\et(S_X, {\Q_\ell}/{\Z_\ell}(2)) \ar[r]^-{\iota^*_-} \ar[d]_-{\gamma_{S_X}} &
      H^3_\et(X, {\Q_\ell}/{\Z_\ell}(2)) \ar[d]^-{\gamma_{X}} \\
      H^4(S_X, \Z(2))\{\ell\} \ar[r]^-{\iota^*_-} &
       H^4(X, \Z(2))\{\ell\}}
  \end{equation}
  such that the vertical arrows are isomorphisms.
\end{lem}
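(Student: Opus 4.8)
The plan is to first dispose of the claim that the two vertical maps are isomorphisms, and then to prove commutativity by peeling the square \eqref{eqn:Double-1-0} apart into a cycle-theoretic layer and a comparison layer. That $\gamma_{S_X}$ is an isomorphism is immediate: it is the composite of the isomorphism $H^3_\et(S_X,{\Q_\ell}/{\Z_\ell}(2))\xrightarrow{\cong}\CH^{LW}_0(S_X)\{\ell\}$ of \cite[Thm.~3.6]{Krishna-2} with $\lambda_{S_X}$, and the latter is an isomorphism on $\ell$-primary torsion by \lemref{lem:Double-0}. Likewise $\gamma_X$ is the composite of Bloch's isomorphism $H^3_\et(X,{\Q_\ell}/{\Z_\ell}(2))\xrightarrow{\cong}\CH_0(X)\{\ell\}$ with $\lambda_X$, which is an isomorphism because $X$ is smooth, so that $H^4(X,\Z(2))\cong\CH_0(X)$. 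Thus only the commutativity of \eqref{eqn:Double-1-0} remains, and for this I would insert the row $\CH^{LW}_0(S_X)\{\ell\}\xrightarrow{\iota^*_-}\CH_0(X)\{\ell\}$ in the middle, writing the square as a stack of two squares: a lower one with vertical maps $\lambda_{S_X},\lambda_X$, and an upper one with vertical maps the Krishna and Bloch isomorphisms. It suffices to prove each commutes.

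The lower square is precisely the functoriality of the cycle class map $\lambda$ of \eqref{eqn:MC-sing-0} along $\iota_-\colon X\hookrightarrow S_X$, and is the exact analogue of the square \eqref{eqn:LW-MC-Main-3} used earlier for $\pi\colon\ov S_X\to S_X$. Using \lemref{lem:LW-lci*} one may replace $\CH^{LW}_0(S_X)\{\ell\}$ by $\CH^{BK}_0(S_X)_{\Z}$ and $\CH_0(X)\{\ell\}$ by $\CH_0(X)_{\Z}$; then $\CH^{BK}_0(S_X)_{\Z}$ is generated by regular closed points of $S_X$, all of which lie in $(X_+\setminus D)\amalg(X_-\setminus D)$. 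For $x\in X_-\setminus D$ the cycle $\iota^*_-[x]$ is the corresponding closed point of $X$, and by the characterizing property \eqref{eqn:MC-sing-1} both ways around the square send $[x]$ to its motivic cycle class in $H^4(X,\Z(2))$; here one uses that the motivic $\iota^*_-$ is the map induced by the localization triangle \eqref{eqn:loc-seq} for $\iota_-(X)=X_-\hookrightarrow S_X$ with open complement $X_+\setminus D\cong U$, together with the standard compatibility of motivic cycle classes with push-forward and with this triangle. For $x\in X_+\setminus D$ one has $\iota^*_-[x]=0$, while the motivic cycle class of $x$ lifts to $H^4_c(U,\Z(2))$ and hence dies in $H^4(X,\Z(2))$ by exactness of \eqref{eqn:loc-seq}. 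So the lower square commutes on $\CH^{BK}_0(S_X)_{\Z}$, hence on its torsion, hence on $\CH^{LW}_0(S_X)\{\ell\}$.

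For the upper square I would route both étale isomorphisms through the Albanese varieties. Since $\CH^{LW}_0(S_X)\{\ell\}=\CH^{LW}_0(S_X)_{\deg 0}\{\ell\}$, Krishna's isomorphism is the inverse of the torsion cycle class map, and this factors as
\[ \CH^{LW}_0(S_X)_{\deg 0}\{\ell\}\xrightarrow[\cong]{\rho_{S_X}}J^d(S_X)\{\ell\}\xrightarrow{\cong}H^3_\et(S_X,{\Q_\ell}/{\Z_\ell}(2)), \]
the first arrow being the Roitman isomorphism of \cite{Krishna-2} and the second the natural $\ell$-adic realization of the semi-abelian variety $J^d(S_X)$; likewise Bloch's isomorphism factors through $\rho_X\colon\CH_0(X)_{\deg 0}\{\ell\}\xrightarrow{\cong}J^d(X)\{\ell\}$. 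The square comparing $\rho_{S_X}$, $\rho_X$ and the two maps $\iota^*_-$ is then the right-hand square of \eqref{eqn:doubl-ex-seq-*} restricted to $\ell$-torsion, where the $\rho$'s are isomorphisms, and the $\ell$-adic realizations of semi-abelian varieties are functorial; hence the upper square commutes. Stacking the two squares proves the lemma.

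The main obstacle I anticipate is this last step: one must know the constructions of \cite[Thm.~3.6]{Krishna-2} and of Bloch's isomorphism well enough to see that both really factor through the manifestly functorial Roitman/Albanese maps, and one must match the several maps labelled $\iota^*_-$ (cycle-theoretic pull-back to the component $X_-$, the localization-triangle map on $H^4(-,\Z(2))$, and restriction on $H^3_\et(-,{\Q_\ell}/{\Z_\ell}(2))$) consistently with one another. Once these identifications are in place, the rest is a diagram chase on regular closed points combined with the already-established naturality statements (the analogue of \eqref{eqn:LW-MC-Main-3} and the diagram \eqref{eqn:doubl-ex-seq-*}).
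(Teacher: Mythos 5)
Your proposal is correct in outline and its skeleton coincides with the paper's: both arguments split \eqref{eqn:Double-1-0} into a lower square expressing functoriality of $\lambda$ along $\iota_-$ and an upper square expressing compatibility of the two \'etale-to-Chow isomorphisms with $\iota^*_-$, and both get the ``isomorphism'' half of the statement from \lemref{lem:Double-0} together with Bloch's theorem on the smooth side. For the lower square the paper simply says the compatibility is clear, and your generator-level check on regular closed points (using ~\eqref{eqn:MC-sing-1}, the localization triangle ~\eqref{eqn:loc-seq}, and \lemref{lem:LW-lci*}) is a perfectly good way to make that precise. The real divergence is in the upper square: the paper disposes of it by citing \cite[Thm.~5.4, diagram~(5.13)]{Krishna-2}, which directly establishes that the isomorphisms of \cite[Thm.~3.6]{Krishna-2} and of Bloch are compatible with $\iota^*_-$, whereas you propose to re-derive this by factoring both isomorphisms through the Roitman maps $\rho_{S_X}$, $\rho_X$ and the $\ell$-adic realization of the (semi-)abelian Albanese varieties, then invoking ~\eqref{eqn:doubl-ex-seq-*}. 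That route is plausible, but be aware that the two factorization claims you need --- that Krishna's isomorphism for the singular surface $S_X$ is the composite of $\rho_{S_X}$ with a ``natural'' identification $J^2(S_X)\{\ell\}\cong H^3_{\etl}(S_X,{\Q_\ell}/{\Z_\ell}(2))$, and that Bloch's isomorphism likewise factors through $\rho_X$ compatibly with the same realization functor --- are themselves essentially the content of the compatibility being proved; neither is formal (Bloch's construction and Roitman's are a priori unrelated, and for $S_X$ the identification of $J^2(S_X)\{\ell\}$ with the \'etale group uses the semi-abelian structure worked out in \cite{Krishna-2} and \S~\ref{sec:Alb-double*}). You flag this yourself as the main obstacle; to close it you would in effect have to reprove the relevant parts of \cite[Thm.~5.4]{Krishna-2}, which is exactly the step the paper outsources to that reference.
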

\begin{proof}
By \cite[Thm.~3.6]{Krishna-2}, there is a canonical isomorphism
  $\gamma_{S_X} \colon H^3_\et(S_X, {\Q_\ell}/{\Z_\ell}(2)) \xrightarrow{\cong}
  \CH^{LW}_0(S_X)\{\ell\}$ (this is true for any complete surface).
  Moreover, it was shown in \cite[Thm.~5.4]{Krishna-2}
  (see diagram (5.13) of op. cit.) that $\gamma_{S_X}$ and $\gamma_X$ are compatible
  with $\iota^*_-$. Since $\lambda_{S_X}$ and $\lambda_X$ are clearly compatible with
  $\iota^*_-$, our assertion follows by \lemref{lem:Double-0}.
\end{proof}

\begin{lem}\label{lem:Double-2}
Let $U=X\setminus D$.  There is a canonical isomorphism
  \[
    \gamma_U \colon H^3_{c, \etl}(U, {\Q_\ell}/{\Z_\ell}(2))
    \xrightarrow{\cong} H^{sus}_0(U)\{\ell\}.
  \]
\end{lem}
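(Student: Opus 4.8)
The statement to prove is the existence of a canonical isomorphism $\gamma_U \colon H^3_{c,\etl}(U, {\Q_\ell}/{\Z_\ell}(2)) \xrightarrow{\cong} H^{sus}_0(U)\{\ell\}$, where $X$ is a smooth projective surface over an algebraically closed field of characteristic zero, $D \subset X$ a reduced effective Cartier divisor, and $U = X \setminus D$. The natural strategy is to build $\gamma_U$ by ``differencing'' the maps $\gamma_{S_X}$ and $\gamma_X$ appearing in \lemref{lem:Double-1}, using the double $S_X$ as a bridge between the open variety $U$ and its compactification. Concretely, I would first produce the ``$c$-version'' of the top row of \eqref{eqn:Double-1-0}: applying $H^*_\et(-, {\Q_\ell}/{\Z_\ell}(2))$ to the localization/Gysin sequence attached to the open immersion $U \hookrightarrow X$ with closed complement $D$ (a curve), one gets an exact sequence relating $H^*_{c,\etl}(U,-)$, $H^*_\et(X,-)$ and $H^*_\et(D,-)$; since $D$ is a reduced curve, the relevant $H^*_\et(D, {\Q_\ell}/{\Z_\ell}(2))$ terms vanish in the degrees that matter (the weight $2$ is too large for a $1$-dimensional object to carry torsion in the relevant cohomological degree), which identifies $H^3_{c,\etl}(U, {\Q_\ell}/{\Z_\ell}(2))$ with the kernel of $\iota^*_- \colon H^3_\et(S_X, {\Q_\ell}/{\Z_\ell}(2)) \to H^3_\et(X, {\Q_\ell}/{\Z_\ell}(2))$ — here I use that $S_X$ is glued from two copies of $X$ along $D$, so its étale cohomology sits in a Mayer–Vietoris sequence built from $X \amalg X$ and $D$, and the ``difference'' component is exactly the $c$-cohomology of $U$.

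The second step is the motivic side. I would run the same bookkeeping with the split short exact sequence \eqref{eqn:doubl-ex-seq} for $\CH_0^{BK}$, together with \lemref{lem:LW-lci*} ($\CH^{LW}_0(S_X) \cong \CH^{BK}_0(S_X)$, valid since $D$ is reduced) and the identification $\lambda_{S_X}$ on $\ell$-torsion from \lemref{lem:Double-0}: the kernel of $\iota^*_- \colon \CH^{LW}_0(S_X)\{\ell\} \to \CH_0(X)\{\ell\}$ is $\CH_0(X|D)\{\ell\}$, which by \propref{prop:Kernel-torsion} (or directly \thmref{thm:Main-2-fin}) maps isomorphically onto $H^{sus}_0(U)\{\ell\}$. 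Thus the bottom row of \eqref{eqn:Double-1-0}, restricted to $\ell$-torsion, has the same shape, and its ``kernel of $\iota^*_-$'' term is $H^{sus}_0(U)\{\ell\}$. Chasing the commutative diagram \eqref{eqn:Double-1-0} — whose vertical arrows $\gamma_{S_X}, \gamma_X$ are isomorphisms — one concludes that the induced map on the kernels of the two horizontal arrows, namely
\[
H^3_{c,\etl}(U, {\Q_\ell}/{\Z_\ell}(2)) \cong \Ker(\iota^*_-)_{\text{top}} \xrightarrow{\ \gamma_{S_X}\ } \Ker(\iota^*_-)_{\text{bottom}} \cong H^{sus}_0(U)\{\ell\},
\]
is an isomorphism. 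This composite is the desired $\gamma_U$, and its canonicity follows from the canonicity of each ingredient ($\gamma_{S_X}$ from \cite[Thm.~3.6]{Krishna-2}, the localization sequences, and $\lambda_{X|D}$).

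**Main obstacle.** I expect the delicate point to be the first step: identifying $H^3_{c,\etl}(U, {\Q_\ell}/{\Z_\ell}(2))$ cleanly with $\Ker(\iota^*_- \colon H^3_\et(S_X,-) \to H^3_\et(X,-))$, i.e., showing that the ``boundary'' contributions coming from the curve $D$ really do vanish (or cancel) in the right degrees and that the splitting $\Delta^*_X$ makes the sequence split exactly as on the motivic side. One must be careful that $H^*_\et(-, {\Q_\ell}/{\Z_\ell}(d))$ of a reduced — possibly singular, possibly reducible — curve $D$ behaves as expected for weight reasons; this is standard but needs the reducedness of $D$ and a weight/dimension argument (a normalization comparison for $D$ handles singular $D$, since the conductor is $0$-dimensional and again too low-dimensional to contribute in weight $2$). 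A secondary point is to check that the identification respects the splittings so that passing to kernels of $\iota^*_-$ is exact on both rows; this follows because \eqref{eqn:doubl-ex-seq} is split and its étale-cohomological counterpart is split by the same geometric maps $p_{\pm,*}, \iota^*_\pm, \Delta^*_X$. Everything else is diagram chasing with the already-established isomorphisms.
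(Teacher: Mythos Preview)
Your overall architecture matches the paper's proof: a commutative diagram of split short exact sequences whose middle and right columns are the isomorphisms of \lemref{lem:Double-1}, forcing the induced left column to be an isomorphism. The gap is in how you produce the bottom row. You identify $\Ker(\iota^*_-)$ on $\CH^{LW}_0(S_X)\{\ell\}$ with $\CH_0(X|D)\{\ell\}$ (fine, by~\eqref{eqn:doubl-ex-seq}) and then invoke \propref{prop:Kernel-torsion} or \thmref{thm:Main-2-fin} to pass to $H^{sus}_0(U)\{\ell\}$. Neither applies: we are in characteristic zero here, so \propref{prop:Kernel-torsion} (stated for $p>0$) is unavailable; and \thmref{thm:Main-2-fin} yields an isomorphism only modulo $m$, not on $\ell$-torsion, and in addition requires the irreducible components of $D_\red$ to be regular --- a hypothesis not assumed in \S\ref{sec:Sing-tor}. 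Recall also that $\lambda_{X|D}$ can genuinely fail to be an isomorphism in characteristic zero (\thmref{thm:Main-2}(2)), so there is no free lunch here.

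The paper instead obtains both rows of~\eqref{eqn:Double-2-0} from the \emph{same} localization, namely for the closed immersion $\iota_- \colon X_- \inj S_X$ with open complement $U_+ \cong U$ (not for $D \subset X$). On the {\'e}tale side this is the compact-support sequence $\cdots \to H^i_{c,\etl}(U,-) \xrightarrow{p_{+,*}} H^i_\etl(S_X,-) \xrightarrow{\iota^*_-} H^i_\etl(X,-) \to \cdots$; on the motivic side one applies $\Hom_{\mathbf{DM}}(-, \Z(2)[4])$ to the triangle~\eqref{eqn:loc-seq} for $X_- \subset S_X$ and uses the duality $H^4_c(U, \Z(2)) \cong H^{sus}_0(U)$ for the smooth surface $U$. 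In both cases the section $\Delta^*_X$ of $\iota^*_-$ forces $\iota^*_-$ to be surjective in every degree, so the long exact sequences break into split short exact sequences with no reference to $D$ whatsoever. This also shows that your anticipated ``main obstacle'' is illusory: your vanishing claim for $H^*_\etl(D, {\Q_\ell}/{\Z_\ell}(2))$ is in fact false (over an algebraically closed field the Tate twist is trivial and $H^2_\etl$ of a proper curve is nonzero), but once one localizes along $X_- \subset S_X$ rather than $D \subset X$ no such vanishing is needed.
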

\begin{proof}
  We have a commutative diagram of (split) exact sequences
  \begin{equation}\label{eqn:Double-2-0}
    \xymatrix@C.8pc{
    0 \ar[r] & H^3_{c, \etl}(U, {\Q_\ell}/{\Z_\ell}(2)) \ar[r]^-{p_{+, *}}
    \ar@{.>}[d] & H^3_\et(S_X, {\Q_\ell}/{\Z_\ell}(2)) \ar[r]^-{\iota^*_-}
    \ar[d]^-{\gamma_{S_X}} & H^3_\et(X, {\Q_\ell}/{\Z_\ell}(2)) \ar[d]^-{\gamma_{X}}
    \ar[r] & 0 \\
    0 \ar[r] & H^{sus}_0(U)\{\ell\} \ar[r]^-{p_{+, *}}  & 
    H^4(S_X, \Z(2))\{\ell\} \ar[r]^-{\iota^*_-} & H^4(X, \Z(2))\{\ell\}
    \ar[r] & 0.}
  \end{equation}  
Using this diagram, we conclude the proof by \lemref{lem:Double-1}.
\end{proof}

\vskip .3cm

\subsection{Proof of \thmref{thm:Main-7}}\label{sec:M7-pf}
Let $k$ be an algebraically closed field and
$U$ a quasi-projective $k$-scheme of pure dimension $d \ge 0$.
Let $U \inj X$ be a dense open immersion such that $X$ is a smooth projective
$k$-scheme. Let $\ell$ be a prime different from ${\rm char}(k)$.
We can now prove \thmref{thm:Main-7}. We restate it for convenience.

\begin{thm}\label{thm:torsion-0}
 There is an isomorphism
\begin{equation}\label{eqn:torsion-0-1}
  \vartheta_U \colon
  H^{sus}_0(U)\{\ell\} \xrightarrow{\cong} H^{2d-1}_{c, \etl}(U, {\Q_\ell}/{\Z_\ell}(d)).
\end{equation}
\end{thm}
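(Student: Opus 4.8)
The plan is to reduce the general statement to the surface case already handled in \S\ref{sec:Sing-tor}, and for that reduction to work I would first observe that both sides of \eqref{eqn:torsion-0-1} are ``motivic cohomology with compact support''-type invariants that behave well under the localization sequence. More precisely: the left-hand side $H^{sus}_0(U)\{\ell\}$ is, by the duality identification recalled in \S\ref{sec:LW-mot-coh}, the torsion in $H^{2d}_c(U,\Lambda(d))$, and the right-hand side is $\ell$-adic \'etale cohomology with compact support. First I would fix the smooth projective compactification $U \hookrightarrow X$ with complement the divisor $Z = X\setminus U$, and note that we may assume $U$ (hence $X$) is irreducible of dimension $d$. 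The case $d=0$ is trivial (both sides vanish), and the case $d=1$ is classical: $H^{sus}_0(U)$ is a relative Picard group by \cite[Thm.~7.16]{MVW}, and its prime-to-$p$ torsion matches $H^1_{c,\etl}(U,\Q_\ell/\Z_\ell(1))$ by standard Kummer theory on the smooth projective curve $X$. So the real content is $d\ge 2$.

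For $d \ge 2$ the plan is a Lefschetz-type dévissage: using Bertini (in the form of \cite[Thm.~7]{AK} and \cite[Cor.~3.13]{GK}, exactly as in the proof of \lemref{lem:Kernel-mod}) I would cut $X$ by a general hypersurface section to produce a smooth integral $Y \subset X$ of dimension $d-1$ meeting $Z$ transversally, with $Z_Y := Z\cap Y$ a reduced divisor on $Y$ and $U_Y := Y\setminus Z_Y$. On the motivic/Suslin side, the Gysin/pushforward along $Y \hookrightarrow X$ together with the Lefschetz hyperplane theorem for Suslin homology and for the relevant Albanese varieties (the latter from \cite{GK-1}, already invoked in the overview of proofs) shows that $H^{sus}_0(U)\{\ell\} \to H^{sus}_0(U_Y)\{\ell\}$ is an isomorphism once $d\ge 2$ — in effect the torsion in $H^{sus}_0$ is detected on a surface. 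On the \'etale side, the weak Lefschetz theorem for cohomology with compact support gives the matching isomorphism $H^{2d-1}_{c,\etl}(U,\Q_\ell/\Z_\ell(d)) \xrightarrow{\cong} H^{2(d-1)-1}_{c,\etl}(U_Y,\Q_\ell/\Z_\ell(d-1))$ after the appropriate Tate twist shift. Iterating, I would reduce to $d = 2$.

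In the surface case $d=2$, I would simply invoke \lemref{lem:Double-2}: taking $D = Z$ (a reduced effective Cartier divisor on the smooth projective surface $X = $ our compactification) produces a canonical isomorphism
\[
\gamma_U \colon H^3_{c,\etl}(U, \Q_\ell/\Z_\ell(2)) \xrightarrow{\cong} H^{sus}_0(U)\{\ell\},
\]
and $\vartheta_U$ is defined as its inverse. One must check that the isomorphisms produced at each stage of the Lefschetz dévissage are compatible with those in \lemref{lem:Double-2} — this is a diagram chase using the compatibility of Gysin maps with the localization sequences on both sides and with the cycle-class maps $\lambda_{S_X}$, and it is where one has to be a little careful about Tate twists and degree shifts. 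I expect the main obstacle to be precisely this compatibility and the Lefschetz step: one needs the hyperplane section to be chosen so that it is simultaneously transverse to $Z$, avoids bad loci, and so that the induced map on $H^{sus}_0$ really is an isomorphism (not merely surjective), which is where the Albanese/Roitman input from \cite{GK-1} and \cite{SS} is essential, together with the fact that for $d\ge 2$ the degree-zero part of $H^{sus}_0(U)$ is divisible so that its bounded torsion is controlled by the semi-abelian Albanese variety. The characteristic-zero restriction in \S\ref{sec:Sing-tor} is harmless here because $\ell \ne \mathrm{char}(k)$ and the double-construction torsion results used there hold over any algebraically closed field of the relevant characteristic after the standard Lefschetz-principle reduction to $\C$.
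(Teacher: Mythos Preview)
Your characteristic-zero argument is essentially the paper's: a Lefschetz d\'evissage (Bertini for the section, \cite[Prop.~5.1, Thm.~7.3]{GK-1} for the {\'e}tale and Albanese Lefschetz steps, \cite[Thm.~1.1]{SS} for Roitman on the Suslin side) reducing to surfaces, then \lemref{lem:Double-2}. Two minor corrections: the Lefschetz maps go the other way --- they are the Gysin pushforwards $\tau_*$ from the hyperplane section $V$ into $U$, not restrictions from $U$ to $V$ --- and you tacitly assume $X\setminus U$ is a divisor, whereas the paper first reduces to this case via the localization sequence and a dimension count.

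The genuine gap is in positive characteristic. Your claim that ``the double-construction torsion results \dots hold over any algebraically closed field \dots after the standard Lefschetz-principle reduction to $\C$'' is false: the Lefschetz principle only transports statements within characteristic zero; it cannot take you from characteristic $p>0$ to $\C$. The proof of \lemref{lem:Double-0} genuinely relies on \cite{BPW}, a result over $\C$, and your route offers no substitute in characteristic $p$. The paper handles $p>0$ by an entirely different and more direct argument, bypassing the d\'evissage altogether: after reducing to the case where the complement is a Cartier divisor $D$, it combines \thmref{thm:torsion} (which gives $\CH_0(X|D) \cong H^{sus}_0(U)$ integrally for reduced $D$ in positive characteristic) with the Bloch-type torsion theorem \cite[Thm.~1.1]{Krishna-2} for $\CH_0(X|D)$.
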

\begin{proof}
We can assume $d \ge 1$ because the theorem is trivial otherwise.
We can assume $U$ to be integral. 
Let $D \subset X$ be the complement of $U$ with reduced closed subscheme structure.
Let $p \ge 1$ be the exponential characteristic of $p$.
We consider several cases.

Assume first that $p > 1$ and $\dim(D) \le d-2$.
Let $\Lambda = \Z[\tfrac{1}{p}]$.
Since $A\{\ell\} \xrightarrow{\cong} A_\Lambda\{\ell\}$ for any abelian
group $A$,
we can tensor both sides of ~\eqref{eqn:torsion-0-1} with $\Lambda$.
We then have the localization exact sequence (see ~\eqref{eqn:loc-seq} and
the discussion below it)
\[
H^{2d-1}(D, \Lambda(d)) \to H^{sus}_0(U)_\Lambda \to \CH_0(X)_\Lambda \to
H^{2d}(D, \Lambda(d)).
\]
The first and the last terms of this exact sequence vanish by \cite[Thm.~5.1]{KP},
and hence we get $H^{sus}_0(U)_\Lambda \xrightarrow{\cong} \CH_0(X)_\Lambda$.
Using the localization sequence for {\'e}tale cohomology and repeating
the same argument, we also get 
$H^{2d-1}_{c, \etl}(U, {\Q_\ell}/{\Z_\ell}(d)) \xrightarrow{\cong}
H^{2d-1}_{\etl}(X, {\Q_\ell}/{\Z_\ell}(d))$. We therefore reduce to the case
when $U$ is smooth and projective, where Bloch's theorem applies.

We next assume that $p > 1$ and $\dim(D) = d-1$.
Let $D' \subset D$ be the union of irreducible components of $D$ which have
dimension $d-1$ and let $U' = X \setminus D'$.
We have an exact sequence
\[
  H^{2d-1}_c(U' \setminus U, \Lambda(d)) \to H^{sus}_0(U)_\Lambda \to
  H^{sus}_0(U')_\Lambda \to H^{2d}_c(U' \setminus U, \Lambda(d)). 
\]
The two end terms vanish by \cite[Lem.~6.2]{KP-1} because $\dim(U' \setminus U)
\le d-2$. Using the localization sequence for {\'e}tale cohomology and repeating
the same argument, we get
$H^{2d-1}_{c, \etl}(U, {\Q_\ell}/{\Z_\ell}(d)) \xrightarrow{\cong}
H^{2d-1}_{c, \etl}(U', {\Q_\ell}/{\Z_\ell}(d))$. It suffices therefore to prove the
theorem for $U'$. We can therefore assume that $D$ has pure dimension $d-1$.
Since $X$ is smooth, $D$ is necessarily a Cartier divisor on $X$.
In this case, we can combine \thmref{thm:torsion} and 
\cite[Thm.~1.1]{Krishna-2} to conclude our result.

We now assume $p =1$.
The proof in the positive characteristic case remains valid
in characteristic zero if $d \le 1$. However, it breaks down when $d \ge 2$ because
\thmref{thm:torsion} is no more available with us.
We shall adopt a different
strategy which involves revisiting the proof of \cite[Thm.~1.1]{Krishna-2}.

We have already argued above that $D$ can be
assumed to be a Cartier divisor on $X$. This part of the argument does not
involve ${\rm char}(k)$. We shall assume that $X$ is integral and prove
the theorem by induction on $d$.
When $d = 2$, we can apply \lemref{lem:Double-2}.

We now assume $d \ge 3$. We fix a closed embedding $X \inj \P^n_k$.
By \cite[Thm.~1]{AK}, we can find a hypersurface $H \subset \P^n_k$ of
sufficiently high degree such that the scheme theoretic intersection $Y = X \cap H$ 
satisfies the following.
\begin{enumerate}
\item
  $Y$ is smooth and irreducible of dimension $d-1$.
\item
  $Y$ contains no component of $D$.
\item
  $Y \cap U \neq \emptyset$.
\end{enumerate}

It follows from (1) that $Y$ is integral. We let $V = Y \cap U$ and let
$\tau \colon V \inj U$ be the inclusion. We consider the diagram
\begin{equation}\label{eqn:torsion-0-2}
\xymatrix@C.8pc{
H^{2d-3}_{c, \etl}(V, {\Q_\ell}/{\Z_\ell}(d-1))  \ar[r]^-{\gamma_V} \ar[d]_-{\tau_*} &
H^{sus}_0(V)\{\ell\}  \ar[d]^-{\tau_*} \\
H^{2d-1}_{c, \etl}(U, {\Q_\ell}/{\Z_\ell}(d)) \ar@{.>}[r] &
H^{sus}_0(U)\{\ell\}}
\end{equation}
The left vertical arrow is an isomorphism by \cite[Prop.~5.1]{GK-1}.
Since $\gamma_V$ exists and is an isomorphism by induction, it remains to show that
the right vertical arrow is an isomorphism to finish the proof.

We now look at the commutative diagram
\begin{equation}\label{eqn:torsion-0-3}
\xymatrix@C.8pc{
H^{sus}_0(V)\{\ell\} \ar[r]^{\rho_V} \ar[d]_-{\tau_*} &
J^{d-1}(V)\{\ell\}  \ar[d]^-{\tau_*} \\
H^{sus}_0(U)\{\ell\} \ar[r]^{\rho_U} & J^{d}(U)\{\ell\},}
\end{equation}
where the horizontal arrows are the Albanese maps of
\thmref{thm:Semi-abl-Alb}. The horizontal arrows are isomorphisms
by \cite[Thm.~1.1]{SS} and the right vertical arrow is an isomorphism by
\cite[Thm.~7.3]{GK-1}. This forces the left vertical arrow to be an
isomorphism.
\end{proof}

\begin{remk}\label{remk:SS}
  One of the referees remarked that a proof of \thmref{thm:torsion-0} can also be
  obtained using the Roitman torsion theorem of Spie{\ss} and Szamuely \cite{SS} together with Poincar\'e duality.
From a different perspective, one could use   \thmref{thm:torsion-0} to show that  the torsion theorem
  of \cite{SS} in positive characteristic is a direct consequence of
  Bloch's theorem together with the analogous result for singular varieties.
  \end{remk}

\section{The finiteness theorems}
\label{sec:Finiteness}
In this section, we shall derive \thmref{thm:Main-4} from 
some general finiteness results for motivic cohomology groups.
Let $k$ be a field. Let $m \ge 1$ be an integer which is invertible in $k$,
and $p \nmid m$ if $k$ is a $p$-adic field. Set $\Lambda' = {\Z}/m$. 
Let $X$ be a smooth projective $k$-scheme.

If $k$ is algebraically closed, then it is classically
known that ${\CH_0(X)}/m \cong \Lambda'$. 
Otherwise, the structure of ${\CH_0(X)}/m$ is not fully understood.
If $k$ is finite, then the main result of unramified class field
theory in geometric case due to Kato-Saito \cite{Kato-Saito-1} shows that
${\CH_0(X)}/m$ is finite. If $k$ is a $p$-adic field, a conjecture of
Colliot-Th{\'e}l{\`e}ne predicted that ${\CH_0(X)}/m$ is finite.
This conjecture was settled by Saito and Sato in \cite{SaitoSatoAnnals}
(see also \cite[Prop. 9.3]{EKW}).

Let us now recall that there is an isomorphism 
${\CH_0(X)} \cong H^{sus}_0(X)$. One therefore concludes that
${H^{sus}_0(X)}/m$ is finite if $k$ is either of the above three types.
Suppose now that $U$ is a smooth quasi-projective $k$-scheme 
which is not necessarily complete. One can then ask if
${H^{sus}_0(X)}/m$ is finite. One can similarly ask
if ${\CH_0(X|D)}/m$ is finite if $D$ is an effective Cartier divisor on
a smooth projective scheme $X$ over $k$. To our knowledge, no answer is known about
these questions yet. We shall answer these in this section.

\subsection{A general finiteness result}\label{sec:Fin-gen}
In this subsection, we prove a general result about the finiteness of motivic
cohomology groups of possibly singular schemes.

\begin{lem}\label{lem:smooth-proj-fin}
Let $k$ be either a finite, or a $p$-adic or an algebraically closed field.
Let $X$ be a smooth projective $k$-scheme of dimension $d \ge 0$.
Then $H^{2d+i}(X, \Lambda'(d+i))$ is finite for every $i \ge 0$.
\end{lem}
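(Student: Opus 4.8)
The plan is to collapse the group $H^{2d+i}(X,\Lambda'(d+i))$ onto a single Zariski cohomology group in the top degree, identify that group with étale data, and then quote finiteness of étale cohomology over the three kinds of base field.

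First, since $X$ is smooth of dimension $d$, we have $H^{2d+i}(X,\Lambda'(d+i)) = \H^{2d+i}_{\zar}(X,\Lambda'(d+i))$, and in the hypercohomology spectral sequence $E_2^{p,q} = H^p_{\zar}(X,\mathcal{H}^q(\Lambda'(d+i))) \Rightarrow H^{p+q}(X,\Lambda'(d+i))$ the cohomology sheaves $\mathcal{H}^q(\Lambda'(d+i))$ vanish for $q > d+i$ while $H^p_{\zar}(X,-) = 0$ for $p > d$; hence the only term contributing in total degree $2d+i$ is the corner $E_2^{d,d+i}$, with no differential entering or leaving it. Using the Nesterenko--Suslin--Totaro identification $\mathcal{H}^{n}(\Z(n)) \cong \sK^M_{n}$ (so $\mathcal{H}^{n}(\Z/m(n)) \cong \sK^M_{n}/m$) and the norm residue isomorphism theorem $\sK^M_{n}/m \cong \mathcal{H}^{n}_{\et}(\mu_m^{\otimes n})$ (valid because $m \in k^\times$), this yields
\[
H^{2d+i}(X,\Lambda'(d+i)) \;\cong\; H^d_{\zar}\big(X,\,\mathcal{H}^{d+i}_{\et}(\mu_m^{\otimes(d+i)})\big),
\]
$\mathcal{H}^{q}_{\et}(-)$ denoting the Zariski sheafification of étale cohomology; and by the Gersten resolution on the smooth $X$ the right side is
\[
\coker\Big(\,\bigoplus_{x \in X_{(1)}} H^{i+1}_{\et}(k(x),\mu_m^{\otimes(i+1)}) \;\longrightarrow\; \bigoplus_{x \in X_{(0)}} H^{i}_{\et}(k(x),\mu_m^{\otimes i})\,\Big).
\]

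Next, a closed point $x \in X_{(0)}$ has residue field finite over $k$, so $\mathrm{cd}_\ell(k(x)) = \mathrm{cd}_\ell(k) \le c$, where $c$ equals $0$, $1$, $2$ according as $k$ is algebraically closed, finite, or $p$-adic (here $\ell \mid m$, and $p \nmid m$ in the $p$-adic case). The target of the cokernel above therefore vanishes as soon as $i > c$, which settles the lemma for all $i \ge 1$ when $k$ is algebraically closed, all $i \ge 2$ when $k$ is finite, and all $i \ge 3$ when $k$ is $p$-adic. For $i = 0$ the universal coefficient sequence together with $\CH^d(X,-1) = 0$ gives $H^{2d}(X,\Lambda'(d)) \cong \CH_0(X)/m$, which is $\cong \Z/m$ for $k$ algebraically closed, is finite for $k$ finite by unramified class field theory \cite{Kato-Saito-1}, and is finite for $k$ $p$-adic by the theorem of Saito--Sato \cite{SaitoSatoAnnals}.

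For the finite remaining range $1 \le i \le c$, I would feed the group back into the Bloch--Ogus coniveau spectral sequence $E_2^{p,q} = H^p_{\zar}(X,\mathcal{H}^q_{\et}(\mu_m^{\otimes(d+i)})) \Rightarrow H^{p+q}_{\et}(X,\mu_m^{\otimes(d+i)})$: our group is the corner term $E_2^{d,d+i}$, no differential leaves it, and since $\mathcal{H}^q_{\et}(\mu_m^{\otimes(d+i)}) = 0$ for $q > d+c$, no differential enters it once $i \ge 2$. Thus in the cases ``$k$ finite, $i = 1$'' and ``$k$ $p$-adic, $i = 2$'' the group is a subquotient of $H^{2d+i}_{\et}(X,\mu_m^{\otimes(d+i)})$, which is finite (étale cohomology with finite coefficients of a variety over a finite field is finite; over a $p$-adic field it is finite by Hochschild--Serre, properness over the algebraic closure, and finiteness of Galois cohomology of local fields). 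The genuinely delicate case is ``$k$ $p$-adic, $i = 1$'': here an incoming differential $d_2$ survives a priori, so finiteness is not visible from the spectral sequence, and I would instead deduce finiteness of $H^{2d+1}(X,\Lambda'(d+1))$ from the finiteness machinery of Saito--Sato \cite{SaitoSatoAnnals} --- equivalently, from finiteness of the relevant twisted Kato complex over a $p$-adic base. I expect this last case to be where the main difficulty lies.
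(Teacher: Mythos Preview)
Your argument is essentially the paper's argument, organized a bit more cleanly: you collapse the hypercohomology spectral sequence to the corner term $H^d_\zar(X,\sK^M_{d+i}/m)$, invoke norm residue, and then feed this into Bloch--Ogus. For $i\ge 2$ over a $p$-adic field and for $i\ge 1$ over a finite field your degeneration argument is correct and gives $E_2^{d,d+i}=E_\infty^{d,d+i}\cong H^{2d+i}_\et(X,\mu_m^{\otimes(d+i)})$; the paper does the same (citing Akhtar for the finite-field case rather than running the spectral sequence, but your direct argument is fine). The $i=0$ case and the vanishing for $i>c$ are also handled exactly as in the paper.

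The one genuine gap is the case $k$ $p$-adic, $i=1$. You correctly isolate it and note that the $d_2$ coming in from $E_2^{d-2,d+2}$ obstructs the comparison with \'etale cohomology. But ``the finiteness machinery of Saito--Sato'' or ``finiteness of the relevant twisted Kato complex'' is not a proof: the result of \cite{SaitoSatoAnnals} is about $\CH_0(X)/m$, not about ${\CH^{d+1}(X,1)}/m$, and the Kato-complex finiteness statements in the literature require hypotheses on a model of $X$ over the ring of integers that you have not arranged. The paper closes this gap in two steps: first, use Temkin's prime-to-$p$ alteration theorem to produce a generically finite cover $f\colon X'\to X$ of $p$-power degree with $X'$ smooth projective admitting a strict semi-stable model, so that $f^*$ is injective on ${\CH^{d+1}(-,1)}/m$; second, for such $X'$ quote Forr\'e's thesis \cite[Thm.~3.1.4]{Forre}, which proves precisely the finiteness of ${\CH^{d+1}(X',1)}/m$ under the semi-stability hypothesis. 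You should replace your last sentence with this argument.
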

\begin{proof}
If $X$ is integral of dimension $d$, then $H^{2a-b}(X, \Z(b)) \cong
\CH^a(X, b) = 0$ for $a > d+b$. This allows us to assume that
$X$ has pure dimension $d$. Subsequently, we can assume that $X$ is integral.
Using the above discussion of $i = 0$ case, we can assume that $i \ge 1$.

We now note that over any field $k$, there is a canonical isomorphism
$H^{2d+i}(X, \Lambda'(d+i)) \cong {\CH^{d+i}(X, i)}/m$.
Furthermore, Bloch's spectral sequence $H^p_\zar(X, {\sC}{\sH}^q(X,i))
\Rightarrow \CH^{p+q}(X,i)$ and 
the isomorphisms $\CH^p(\sO_{X,x}, p) \cong
H^p(\sO_{X,x}, \Z(p)) \cong K^M_p(\sO_{X,x})$
(the latter is Gabber-Kerz improved Milnor $K$-theory, \cite{Kerz10})
together yield an exact sequence 
\begin{equation}\label{eqn:smooth-proj-fin-0}
{\underset{y \in X_{(1)}}\bigoplus} \ K^M_{i+1}(k(y))
\xrightarrow{\partial} {\underset{x \in X_{(0)}}\bigoplus} \ K^M_{i}(k(x))
\to \CH^{d+i}(X, i) \to 0.
\end{equation}
We thus get an exact sequence
\begin{equation}\label{eqn:smooth-proj-fin-2}
{\underset{y \in X_{(1)}}\bigoplus} \ {K^M_{i+1}(k(y))}/m
\xrightarrow{\partial} {\underset{x \in X_{(0)}}\bigoplus} \ {K^M_{i}(k(x))}/m
\to {\CH^{d+i}(X, i)}/m \to 0.
\end{equation}
If $k$ is algebraically closed, then $K^M_{i}(k)$ is divisible for $i \ge 1$
and hence we conclude that ${\CH^{d+i}(X, i)}/m = 0$ for $i \ge 1$.
This proves the lemma for $k$ algebraically closed.
For $k$ finite, the lemma follows from \cite[Thm.~1.2, 1.3]{Akthar}.

Suppose now that $k$ is a $p$-adic field with residue characteristic $p$. 
We first assume $i =1$. In this case, we can apply Temkin's 
strengthening of the alteration theorems of de Jong and Gabber 
(see \cite[Thm.~1.2.5]{Temkin}) to find a projective and generically
finite morphism $f \colon X' \to X$ of degree $p^n$ such that
$X'$ is a smooth projective scheme over $k$ which admits a strict semi-stable 
reduction over $k$. Since the composition $\CH^{d+1}(X, 1) 
\xrightarrow{f^*} \CH^{d+1}(X', 1) \xrightarrow{f_*} \CH^{d+1}(X, 1)$
is multiplication by $p^n$, it follows that
${\CH^{d+1}(X, 1)}/m \inj {\CH^{d+1}(X, 1)}/m$. We can therefore assume that
$X$ admits a strict semi-stable reduction over $k$. In this case, the
finiteness of ${\CH^{d+1}(X, 1)}/m$ follows from
\cite[Thm.~3.1.4]{Forre}.

We now assume $i \ge 2$.
Let $\sH^{p}_{\etl}(q)$ be the Zariski sheaf on $X$ associated
to the presheaf $U \mapsto H^p_{\etl}(U, \Lambda'(q))$.
Let $X_{(i)}$ be the set of points $x \in X$ such that $\dim(\ov{\{x\}}) = i$.
We consider the commutative diagram with exact rows:
\begin{equation}\label{eqn:smooth-proj-fin-3}
\xymatrix@C.4pc{
{\underset{y \in X_{(1)}}\bigoplus} \ {K^M_{i+1}(k(y))}/m
\ar[r]^-{\partial} \ar[d] & {\underset{x \in X_{(0)}}\bigoplus} \ 
{K^M_{i}(k(x))}/m \ar[r] \ar[d] &  {\CH^{d+i}(X, i)}/m \ar[r] \ar[d] & 0 \\
{\underset{y \in X_{(1)}}\bigoplus} \ H^{i+1}_{\etl}(k(y), \Lambda'(i+1)) 
\ar[r]^-{\partial} & {\underset{x \in X_{(0)}}\bigoplus} \ 
H^{i}_{\etl}(k(x), \Lambda'(i)) \ar[r] & H^d_\zar(X, \sH^{d+i}_{\etl}(d+i))
\ar[r] & 0,}
\end{equation}
where the bottom row is part of the Bloch-Ogus sequence for the
{\'e}tale cohomology sheaves and the first two vertical arrows are the
Norm-residue homomorphisms. 
The two vertical arrows from the left are isomorphisms by the
solution of the Bloch-Kato conjecture due to Rost and Voevodsky.
It follows that the right vertical arrow is also an isomorphism.
It suffices therefore to show that $H^d_\zar(X, \sH^{d+i}_{\etl}(d+i))$
is finite.

We now consider the Leray spectral sequence
\[
E^{p,q}_2 = H^p_\zar(X, \sH^{q}_{\etl}(d+i)) \Rightarrow
H^{p+q}_{\etl}(X, \Lambda'(d+i)).
\]
Since the {\'e}tale cohomological dimension of any affine in $X$ is bounded
by $d+2$, one easily checks that $E^{d,d+i}_2 = E^{d,d+i}_\infty$ and
there is an isomorphism
$E^{d,d+i}_\infty \xrightarrow{\cong} H^{2d+i}_{\etl}(X, \Lambda'(d+i))$.
But the latter group is finite by the proper base change theorem.
This concludes the proof.
\end{proof}

\begin{prop}\label{prop:Sing-proj-fin}
Let $k$ be as in \lemref{lem:smooth-proj-fin} and
let $X$ be any projective $k$-scheme of dimension $d \ge 0$ over 
$k$. Then $H^{2d+i}(X, \Lambda'(d+i))$ is finite for every $i \ge 0$.
\end{prop}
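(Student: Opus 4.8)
The plan is to reduce the singular case to the smooth case already treated in \lemref{lem:smooth-proj-fin}, using induction on the dimension $d$ together with abstract blow-up (cdh-descent) squares. First I would handle the base case $d = 0$, where $X$ is a finite disjoint union of spectra of finite field extensions of $k$; here $H^i(X, \Lambda'(i))$ is a direct sum of Milnor $K$-groups mod $m$ of these fields, which is finite for all three types of $k$ by the same arguments (or citations) used in the proof of \lemref{lem:smooth-proj-fin}. For the inductive step, I would use de Jong--Gabber resolution (or, in characteristic zero, Hironaka resolution of singularities, which is available under the running hypothesis on $\Lambda$) to obtain a proper birational morphism $f\colon X' \to X$ with $X'$ smooth projective, fitting into an abstract blow-up square with closed subscheme $Z \subsetneq X$ and preimage $E = f^{-1}(Z)$, where $\dim Z < d$ and $\dim E \le d$, hence $\dim E < \dim X'$ unless $E$ is a divisor; in any case $\dim Z \le d-1$ and $\dim E \le d-1$ (as a proper closed subscheme of the $d$-dimensional $X'$, since $f$ is birational).

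The key tool is the Mayer--Vietoris / descent long exact sequence for motivic cohomology associated to the cdh-square
\begin{equation*}
\cdots \to H^{n-1}(E, \Lambda'(d+i)) \to H^{n}(X, \Lambda'(d+i)) \to H^{n}(X', \Lambda'(d+i)) \oplus H^{n}(Z, \Lambda'(d+i)) \to H^{n}(E, \Lambda'(d+i)) \to \cdots
\end{equation*}
which is valid with $\Lambda$-coefficients (equivalently, since $m$ is invertible, with $\Lambda' = \Z/m$-coefficients) by cdh-descent for the complexes $C_*z_{equi}(\A^{d+i},0)_{cdh}$; this is exactly the localization/Mayer--Vietoris formalism recalled in \S\ref{sec:set-up} via \eqref{eqn:loc-seq} and the identification of motivic cohomology with $\mathbf{DM}(k,\Lambda)$-Hom groups. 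Taking $n = 2d+i$, the term $H^{2d+i}(X', \Lambda'(d+i))$ is finite by \lemref{lem:smooth-proj-fin} (with $d' = d$ and the same $i$, noting $X'$ is smooth projective of dimension $d$), while $H^{2d+i}(Z, \Lambda'(d+i))$, $H^{2d+i}(E, \Lambda'(d+i))$ and $H^{2d+i-1}(E, \Lambda'(d+i))$ all sit in the range covered by the inductive hypothesis applied to the lower-dimensional schemes $Z$ and $E$: writing $e = \dim Z \le d-1$, one has $2d+i = 2e + (2(d-e)+i)$ with $2(d-e)+i \ge i \ge 0$, so $H^{2d+i}(Z,\Lambda'(d+i)) = H^{2e + i'}(Z, \Lambda'(e + i'))$ with $i' = 2(d-e)+i \ge 0$, which is finite by induction, and similarly for $E$ (and for the shifted term $2d+i-1$, which also lies in the inductive range as long as $\dim E \le d-1$, giving $2d+i-1 = 2\dim E + (\text{something} \ge 0)$; one checks this numerically). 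Since finiteness is closed under subquotients and extensions in an exact sequence, the five-term portion of the long exact sequence surrounding $H^{2d+i}(X,\Lambda'(d+i))$ forces it to be finite.

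The main obstacle I anticipate is bookkeeping the dimension/degree ranges so that every term appearing in the descent sequence other than the smooth one genuinely falls under the inductive hypothesis — in particular making sure that the shifted term $H^{2d+i-1}(E,\Lambda'(d+i))$ and the $Z$-terms can be rewritten in the form $H^{2\dim + j}(-,\Lambda'(\dim + j))$ with $j \ge 0$, which is what the inductive statement controls; the inequality $\dim Z, \dim E \le d-1$ is what makes this work, and it holds because $f$ is birational (so $E$ is nowhere dense in $X'$) and $Z$ is the nowhere-dense locus over which $f$ fails to be an isomorphism. A minor secondary point is ensuring the existence of the cdh-square with $X'$ smooth projective: over a $p$-adic or finite field one invokes de Jong--Gabber alterations, but since alterations are only generically finite of some degree, one should instead use the resolution of singularities by blow-ups available in the relevant situations, or simply note that cdh-descent only requires an abstract blow-up square and such squares (with $X'$ a regular scheme, provided by resolution or by a sequence of blow-ups with smooth centers in characteristic zero, or by Gabber's refined alteration results which do produce honest blow-up-type squares after passing to suitable covers) suffice; alternatively one can bypass this by appealing directly to the known structure of $M^c(X)$ in $\mathbf{DM}(k,\Lambda)$ and dévissage along a stratification of $X$ into smooth locally closed pieces, reducing to \lemref{lem:smooth-proj-fin} for each stratum via the localization triangle \eqref{eqn:loc-seq}.
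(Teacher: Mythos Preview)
Your overall strategy---induction on $\dim X$ combined with cdh-excision---is the right one and coincides with the paper's. However, there is a genuine gap in positive characteristic that you flag as a ``minor secondary point'' but do not actually resolve.

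For $k$ finite or $p$-adic (positive residue characteristic), resolution of singularities is not known, so you cannot produce a proper \emph{birational} $f\colon X' \to X$ with $X'$ smooth. Your three proposed workarounds all fail: (a) resolution ``available in the relevant situations'' simply is not---note that the hypothesis on $\Lambda$ you allude to governs $\Lambda$, not $\Lambda' = \Z/m$, and the proposition is stated for $\Lambda'$; (b) Gabber-type alterations do not yield abstract blow-up squares with smooth source---they give proper generically {\'e}tale maps of $p$-power degree, which are not cdh covers; (c) d{\'e}vissage along a smooth stratification via~\eqref{eqn:loc-seq} produces terms $H^{2d+i}_c(U,\Lambda'(d+i))$ with $U$ smooth but non-proper, and \lemref{lem:smooth-proj-fin} requires projectivity---indeed, controlling such terms is essentially \thmref{thm:smooth-fin}, which in the paper is \emph{deduced from} the present proposition, so this route is circular.

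The paper's fix is a two-step maneuver. First, Temkin's refinement of de Jong--Gabber gives a proper surjective generically {\'e}tale $h\colon W \to X$ of degree $q^n$ ($q$ the exponential characteristic) with $W$ smooth projective. Second, Raynaud--Gruson flattening produces a blow-up $g\colon X' \to X$ with center $Z$ of dimension $<d$, together with a \emph{finite flat} surjection $h'\colon W' \to X'$ of degree $q^n$, where $W'$ is the blow-up of $W$ along $h^{-1}(Z)$. Now there are two honest cdh squares. The square for $g$ reduces, by induction on the lower-dimensional pieces, finiteness for $X$ to finiteness for $X'$. The square for $W' \to W$, combined with \lemref{lem:smooth-proj-fin} applied to the smooth projective $W$ and induction again, gives finiteness for $W'$. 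Finally, since $h'$ is finite flat of degree $q^n$, the pullback $h'^*$ is split-injective after inverting $q$ (via $h'_* h'^* = q^n$), so finiteness passes from $W'$ down to $X'$. Because $m$ is coprime to $q$, inverting $q$ is harmless for $\Lambda'$-coefficients. This transfer step along a finite flat map of $p$-power degree is precisely the missing idea in your outline.
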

\begin{proof}
Let $q$ denote the exponential characteristic of $k$.
Since the localization map $H^{2d+i}(X, \Lambda'(d+i)) \to 
H^{2d+i}(X, \Lambda'(d+i))[\tfrac{1}{q}]$ is injective, it suffices to show that 
the latter group is finite. 
We can assume that $X$ is reduced.
If $X$ is smooth, we are done by \lemref{lem:smooth-proj-fin}.

If $X$ is not smooth, we argue as follows. By
Temkin's strengthening of the alteration theorems of de Jong and Gabber 
(see \cite[Thm.~1.2.5]{Temkin}),
there exists $W\in \Sm _k$ and a surjective proper map $h:W\rightarrow X$, 
which is generically finite and \'etale of degree
$q^n$, $n\geq 1$.  Then by a theorem of Raynaud-Gruson
\cite[Thm. 5.2.2]{Ray}, there exists a blow-up $g:X'\rightarrow X$ with 
center $Z$
such that the following diagram commutes, where $h'$ is finite, flat and surjective 
of degree $q^n$
and $g':W'\rightarrow W$ is the blow-up of W with center $h^{-1}(Z)$:
\begin{align}  \label{bwupsqr.1}
\begin{array}{c}
\xymatrix{W' \ar[d]_-{g'} \ar[r]^-{h'}& X' \ar[d]^-{g} \\
	W \ar[r]_-{h} & X.}
\end{array}
\end{align}

Thus we have a $cdh$-cover 
$\{X' \amalg Z \rightarrow X\}$ of $X$, such that $\dim_k(Z) < \dim_k(X)$ and 
$\dim_k(E) < \dim_k(X)$, where we set $E = X' \times _{X} Z$.
By $cdh$-excision of \cite[Prop.~5.3.4]{Kelly} (see also Thm.~2.3.14 of
the same reference), the following sequence is exact:
\begin{align*}
H^{2d+i-1}(E, \Lambda'(d+i))[\tfrac{1}{q}] 
\rightarrow 
H^{2d+i}(X, \Lambda'(d+i))[\tfrac{1}{q}] 
\rightarrow \hspace*{5cm} \\
\hspace*{6cm} H^{2d+i}(X', \Lambda'(d+i))[\tfrac{1}{q}] \oplus 
H^{2d+i}(Z, \Lambda'(d+i))[\tfrac{1}{q}].
\end{align*}
By induction on dimension, this reduces to showing that 
$H^{2d+i}(X', \Lambda'(d+i))[\tfrac{1}{q}]$ is finite.

If we let $Z' = h^{-1}(Z)$ and $E' = W' \times_W Z'$, then the same reasoning
as above shows that there is an exact sequence
\begin{align*}
H^{2d+i}(W, \Lambda'(d+i))[\tfrac{1}{q}] \to
H^{2d+i}(W', \Lambda'(d+i))[\tfrac{1}{q}] \to
\frac{H^{2d+i}(E', \Lambda'(d+i))}{H^{2d+i}(Z', \Lambda'(d+i))}[\tfrac{1}{q}].
\end{align*}
The left term is finite since $W$ is smooth and projective, and the right term
is finite by induction on the dimension. It follows that the middle term is
finite too. It suffices therefore to show that the map
$h^{\prime \ast} \colon H^{2d+i}(X',\Lambda'(d+i))[\tfrac{1}{q}] \rightarrow
H^{2d+i}(W',\Lambda'(d+i))[\tfrac{1}{q}]$ 
is injective. But this follows from \cite[Prop.~6.3]{Cisinski-Deglise}
because $h'$ is finite and flat (in particular, surjective) of degree $q^n$.
This completes the proof.
\end{proof}

\subsection{Proofs of parts (1) and (2) of \thmref{thm:Main-4}}
\label{sec:M4-pf-1}
The following result generalizes the finiteness theorems of 
Kato-Saito \cite{Kato-Saito-1} and Saito-Sato
\cite{SaitoSatoAnnals} to non-complete schemes.
The question whether such a finiteness result would hold originated out of
a discussion between Colliot-Th{\'e}l{\`e}ne and the authors.
In the two results below,  $k$ is either a finite, or a $p$-adic or an algebraically
closed field.

\begin{thm}\label{thm:smooth-fin}
  Let $U$ be a smooth quasi-projective $k$-scheme of pure dimension $d \ge 0$.
  Then ${H^{sus}_0(U)}/m$ is finite.
\end{thm}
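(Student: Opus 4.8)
The plan is to deduce the theorem from \propref{prop:Sing-proj-fin} by compactifying $U$ and running the localization sequence for motivic cohomology with compact support. The crucial point is that the compactification need \emph{not} be chosen smooth: it is precisely the finiteness statement for arbitrary (possibly singular) projective schemes in \propref{prop:Sing-proj-fin} that makes this work, so the classical Kato--Saito / Saito--Sato results for smooth projective varieties would not suffice on their own.

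First I would reduce to $U$ integral: since $U$ is smooth of pure dimension $d$, it is a finite disjoint union of integral $k$-schemes of dimension $d$, and $H^{sus}_0(U)/m$ splits accordingly. Next I would pass from Suslin homology to motivic cohomology with compact support. By \secref{sec:LW-mot-coh} there is a functorial isomorphism $H^{sus}_0(U)_\Lambda \simeq H^{2d}_c(U,\Lambda(d))$, and since $m$ is prime to the exponential characteristic of $k$ one has $H^{sus}_0(U)/m \cong H^{sus}_0(U)_\Lambda/m$, hence $H^{sus}_0(U)/m \cong H^{2d}_c(U,\Lambda(d))/m$. Applying $\Hom_{\mathbf{DM}(k,\Lambda)}(M^c(U),-)$ to the coefficient triangle $\Lambda(d)\xrightarrow{m}\Lambda(d)\to\Lambda'(d)\to\Lambda(d)[1]$ then produces an injection
\[
H^{sus}_0(U)/m \;\cong\; H^{2d}_c(U,\Lambda(d))/m \;\hookrightarrow\; H^{2d}_c(U,\Lambda'(d)),
\]
where $\Lambda' = {\Z}/m$ as in \propref{prop:Sing-proj-fin}. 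It therefore suffices to prove that $H^{2d}_c(U,\Lambda'(d))$ is finite.

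For this I would take $X\subset\P^N_k$ to be the closure of $U$, an integral projective $k$-scheme of pure dimension $d$, and set $D = X\setminus U$ with its reduced structure, so $\dim D \le d-1$. Applying $\Hom(M^c(-),\Lambda'(d)[\bullet])$ to the localization triangle \eqref{eqn:loc-seq} for $D\subset X$ gives an exact sequence
\[
H^{2d-1}_c(D,\Lambda'(d)) \to H^{2d}_c(U,\Lambda'(d)) \to H^{2d}_c(X,\Lambda'(d)).
\]
Since $X$ is projective, $M^c(X)\simeq M(X)$ and the right-hand term is $H^{2d}(X,\Lambda'(d))$, finite by \propref{prop:Sing-proj-fin} with $i=0$. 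It remains to bound the left-hand term. If $\dim D = d-1$, then $D$ is projective of pure dimension $d-1$ and $H^{2d-1}_c(D,\Lambda'(d)) = H^{2(d-1)+1}(D,\Lambda'((d-1)+1))$ is finite by \propref{prop:Sing-proj-fin} with $i=1$. If $\dim D \le d-2$, then $H^{2d-1}_c(D,\Lambda(d))$ and $H^{2d}_c(D,\Lambda(d))$ both vanish by \cite[Lem.~6.2]{KP-1}, and hence so does $H^{2d-1}_c(D,\Lambda'(d))$. In either case $H^{2d}_c(U,\Lambda'(d))$ is an extension of a finite group by a finite group, so it is finite, which completes the argument. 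The only delicate point in this deduction is the treatment of the boundary term $H^{2d-1}_c(D,\Lambda'(d))$; all the genuine content sits in \propref{prop:Sing-proj-fin}.
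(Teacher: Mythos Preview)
Your proposal is correct and follows essentially the same route as the paper: reduce via the coefficient triangle to the finiteness of $H^{2d}_c(U,\Lambda'(d))$, compactify $U$ inside a possibly singular projective $X$, and bound both ends of the localization sequence by \propref{prop:Sing-proj-fin}. One minor slip worth fixing: when $\dim D = d-1$ the complement $D$ need not be of \emph{pure} dimension $d-1$, but this is harmless since \propref{prop:Sing-proj-fin} makes no purity assumption.
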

\begin{proof}
We can assume that $U$ is integral.
Let $p$ be the exponential characteristic of $k$.
It suffices to show that $({H^{sus}_0(U)}/m)[\tfrac{1}{p}]$ is finite.
We have seen in \S~\ref{sec:LW-mot-coh} that
$({H^{sus}_0(U)}/m)[\tfrac{1}{p}]$ agrees with
$({H^{2d}_c(U, \Z(d))}/m)[\tfrac{1}{p}]$.
The theorem is thus equivalent to the statement that 
$({H^{2d}_c(U, \Z(d))}/m)[\tfrac{1}{p}]$ is finite.
By the universal coefficient theorem, we have an exact sequence
\begin{equation}\label{eqn:smooth-fin-0}
0 \to ({H^{2d}_c(U, \Z(d))}/m)[\tfrac{1}{p}]
\to H^{2d}_c(U, \Lambda'(d))[\tfrac{1}{p}] \to
~_mH^{2d+1}_c(U, \Z(d)))[\tfrac{1}{p}].
\end{equation}

We choose an open embedding $U \subset X$ such that $X$ is an integral
projective scheme of dimension $d$ over $k$. We let $D = X \setminus U$
with reduced closed subscheme structure.
We then have an exact sequence
\[
H^{2d}(D, \Z(d)))[\tfrac{1}{p}] \to H^{2d+1}_c(U, \Z(d)))[\tfrac{1}{p}] \to
H^{2d+1}(X, \Z(d)))[\tfrac{1}{p}].
\]
The two end terms are zero by \cite[Thm.~5.1]{KP}. It follows that so is the
middle term. Using ~\eqref{eqn:smooth-fin-0}, 
we are reduced to showing that
$H^{2d}_c(U, \Lambda'(d))[\tfrac{1}{p}]$ is finite.
But this follows from the exact sequence
\[
H^{2d-1}(D, \Lambda'(d))[\tfrac{1}{p}] \to 
H^{2d}_c(U, \Lambda'(d))[\tfrac{1}{p}] \to H^{2d}(X, \Lambda'(d))[\tfrac{1}{p}]
\]
and \propref{prop:Sing-proj-fin}.
\end{proof}

The second main result of this section is the following generalization of the
finiteness theorems of Kato-Saito \cite{Kato-Saito-1} and Saito-Sato
\cite{SaitoSatoAnnals} to Chow groups with modulus.

\begin{thm}\label{thm:Mod-fin}
  Let $X$ be a smooth projective $k$-scheme and $D \subset X$ an effective Cartier
  divisor. Then ${\CH_0(X|D)}/m$ is finite.
\end{thm}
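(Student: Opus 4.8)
The plan is to reduce, whatever the ground field, to a pair $(X',D')$ with $X'$ smooth projective over $k$ and $(D')_\red$ a simple normal crossing divisor. For such a pair \thmref{thm:M1} identifies $\CH_0(X'|D')/m$ with $H^{sus}_0(X'\setminus|D'|)/m$, and the latter is finite by \thmref{thm:smooth-fin} (note $X'\setminus|D'|$ is smooth quasi-projective of pure dimension $d$); so once the reduction is in place and is shown to carry finiteness back to $(X,D)$, we are done. As preliminaries I would assume $X$ integral of dimension $d\ge1$, write $U=X\setminus|D|$, and let $p$ be the exponential characteristic of $k$ when $k$ is finite or algebraically closed and the residue characteristic when $k$ is $p$-adic; in all three cases $p\nmid m$, so that $A/m\xrightarrow{\cong}(A[\tfrac1p])/m$ for every abelian group $A$. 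This remark is what lets me invoke the $\Z[\tfrac1p]$-coefficient statement of \thmref{thm:M1} when $\Char k=p>0$ and the $\Z/m$-coefficient statement when $\Char k=0$.

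In characteristic zero (so $k$ is $p$-adic or algebraically closed of characteristic $0$) I would take $\pi\colon X'\to X$ to be a composition of blow-ups with smooth centres, furnished by Hironaka's embedded resolution of the pair $(X,D_\red)$, chosen so that $(\pi^\ast D)_\red$ is a simple normal crossing divisor, and set $D'=\pi^\ast D$. Since the centres lie over $|D|$, the morphism $\pi$ is an isomorphism over $U$, hence induces a bijection $\sZ_0(X'\setminus|D'|)\xrightarrow{\cong}\sZ_0(U)$; by covariant functoriality of Chow groups with modulus for proper maps (\cite[Lem.~2.4]{BS}), applied with $\pi^\ast D\le D'$, the pushforward $\pi_\ast$ descends to a surjection $\CH_0(X'|D')\surj\CH_0(X|D)$. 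Thus $\CH_0(X|D)/m$ is a quotient of $\CH_0(X'|D')/m$, which is finite by the first paragraph.

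In positive characteristic (so $k$ is finite or algebraically closed of characteristic $p$) embedded resolution is unavailable, and I would instead use the theory of alterations: by Temkin's theorem on $p$-alterations \cite{Temkin}, in the form that also puts the preimage of $D$ in simple normal crossing position, there is a proper surjective generically finite morphism $f\colon X'\to X$ of degree a power of $p$ with $X'$ a smooth projective integral $k$-scheme and $(f^\ast D)_\red$ a simple normal crossing divisor; set $D'=f^\ast D$. The pushforward $f_\ast\colon\CH_0(X'|D')\to\CH_0(X|D)$ is then defined, and the key extra ingredient is a transfer $f^!$ in the opposite direction with $f_\ast\circ f^!=\deg(f)\cdot\id$. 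Since $X$ and $X'$ are smooth, the graph $\gamma_f\colon X'\hookrightarrow X\times X'$ is a regular closed immersion of codimension $d$, and I would define $f^!\colon\CH_0(X|D)\to\CH_0(X'|D')$ by $\alpha\mapsto\gamma_f^!(\alpha\times X')$, the refined Gysin homomorphism of \cite{Fulton}; using compatibility of refined Gysin maps with proper pushforward and with base change to $X'$ of the normal curves defining the modulus relations, one checks that $f^!$ carries $\sZ_0(U)$ into $\sZ_0(X'\setminus|D'|)$ and the relation subgroup $\sR_0(X|D)$ into $\sR_0(X'|D')$, so that it descends to Chow groups, while $f_\ast\circ f^!=\deg(f)\cdot\id$ is the projection formula. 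As $\deg(f)$ is invertible modulo $m$, $f^!$ becomes split injective after tensoring with $\Z/m$, so $\CH_0(X|D)/m\hookrightarrow\CH_0(X'|D')/m$, again finite by the first paragraph.

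The step I expect to be the genuine obstacle is the transfer $f^!$ in the positive-characteristic case, and in particular checking that it respects the \emph{modulus} condition rather than merely ordinary rational equivalence: the fibre products of the relation curves with $X'$ can be singular and fail to be equidimensional over the non-flat locus of $f$, so some care with their normalisations and with the multiplicities occurring in $\gamma_f^!$ will be needed. A lesser point is the exact input from the theory of alterations: should the pair form of Temkin's theorem not be directly citable, I would instead carry out the positive-characteristic argument one prime $\ell\mid m$ at a time, using for each such $\ell$ an alteration of degree prime to $\ell$ (de Jong--Gabber) that puts $(f^\ast D)_\red$ in simple normal crossing position, which suffices since $\CH_0(X|D)/m$ is the direct sum of its $\ell$-primary pieces $\CH_0(X|D)/\ell^{v_\ell(m)}$ over the primes $\ell\mid m$.
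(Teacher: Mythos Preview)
Your characteristic-zero reduction is correct and is essentially the paper's treatment of the $p$-adic case (the paper in fact notes that $\pi^*$ is an \emph{isomorphism} $\CH_0(X|D)\xrightarrow{\cong}\CH_0(X'|D')$, but your surjectivity of $\pi_*$ is enough). Your argument also covers algebraically closed fields of characteristic zero, which the paper handles by a different route; see below.

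In positive characteristic there is a genuine gap. The transfer $f^!$ you propose, via Fulton's refined Gysin map along the graph, is defined for ordinary Chow groups, but nothing in the literature guarantees that it respects the \emph{modulus} relation. The difficulty you flag is not a routine verification: for a normal curve $\phi_C\colon C\to X$ with image not in $D$ and $g\in G(C,\phi_C^*D)$, the fibre product $C\times_X X'$ is in general not a curve over the non-flat locus of $f$, and after extracting a suitable $1$-cycle there is no reason its normalisation carries a function with modulus $(f')^*D'$ whose divisor pushes forward to $\deg(f)\cdot\divf(g)$. The only pullbacks known for $\CH_0(-|-)$ are along finite \emph{flat} maps (see \cite[Lem.~2.4]{BS}, \cite[Prop.~2.12]{KPv}); producing one along a non-flat alteration would be a new theorem, not a lemma. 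Your fallback of working one prime $\ell\mid m$ at a time does not help, since the missing ingredient is the same transfer.

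The paper avoids this problem entirely. For $k$ finite it simply cites the known finiteness from \cite[Thms.~1.3,~4.8]{GKris}. For $k$ algebraically closed (of \emph{any} characteristic) it never modifies the pair $(X,D)$: using the split inclusion $\CH_0(X|D)\hookrightarrow\CH^{BK}_0(S_X)$ of~\eqref{eqn:doubl-ex-seq}, it suffices to show that $\CH^{BK}_0(S_X)_{\deg 0}$ is $m$-divisible. Every degree-zero $0$-cycle on $S_X$ lies on a reduced complete-intersection curve $C\subset S_X$, and $\Pic^0(C)$ is divisible over an algebraically closed field; this two-line argument replaces your whole alteration-plus-transfer strategy. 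So only the $p$-adic case actually requires the reduction to simple normal crossings, and there resolution of singularities is available.
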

\begin{proof}
If $k$ is finite, the theorem is already known by \cite[Thms.~1.3, 4.8]{GKris}. 
Next, suppose that $k$ is a $p$-adic field. We can then 
find a proper birational morphism $f \colon X' \to X$ from a smooth
projective scheme $X'$ such that $f^{-1}(D)_\red$ is a strict normal
crossing divisor on $X'$ and $f$ is an isomorphism over the complement of
$D$. It is clear from the definition of the Chow group of 0-cycles 
with modulus that the pull-back map $f^* \colon \CH_0(X|D) \to 
\CH_0(X'|D')$ exists and is an isomorphism, where $D' = f^*(D)$.
We can therefore assume that $D_\red$ is a simple normal crossing 
divisor. We are now done by Theorems~\ref{thm:Main-2-fin} and
~\ref{thm:smooth-fin}.

We now assume that $k$ is algebraically closed. Using ~\eqref{eqn:doubl-ex-seq},
we only need to show that $\CH^{BK}_0(S_X)_{\deg 0}$ is
$m$-divisible.  Since every 0-cycle of degree zero on $S_X$ lies on
a complete intersection reduced curve $C \subset S_X$, it suffices to
show that $\Pic^0(C)$ is $m$-divisible. But this is classical.
\end{proof}

\subsection{Proof of \thmref{thm:Main-4}(3)}\label{sec:FLW}
We shall now complete the proof of \thmref{thm:Main-4} by proving its part (3).
If $k$ is algebraically closed, the last part of the proof of \thmref{thm:Mod-fin}
shows that $\CH^{LW}_0(X)_{\deg 0}$ is $m$-divisible. If $k$ is finite,
part (3) of \thmref{thm:Main-4} follows from \cite[Thm.~1.3]{GK-1}.
The remaining case follows from the following general result of independent
interest.

Let $k$ be any field and let $m \neq 0$ be an integer.
Let $X$ be an integral projective $k$-scheme which is regular is codimension one.
It is shown in \cite[Lem.~8.3]{GK-1} that the identity map of
$\sZ_0(U)$ defines a surjection
$\theta_X \colon \CH^{LW}_0(X) \surj H^{sus}_0(U)$ if we let $U = X_\reg$.

\begin{thm}\label{thm:LW-Suslin}
Assume that ${\rm char}(k) = 0$. Then the map
  \[
    {\theta_X}/m \colon {\CH^{LW}_0(X)}/m \to {H^{sus}_0(U)}/m
  \]
  is an isomorphism.
\end{thm}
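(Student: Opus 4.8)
The plan is to reduce the statement to the positive-characteristic comparison results already established, using the fact that over a field of characteristic zero the torsion and divisible parts of these groups can be separated, and that the prime-to-$p$ comparison is ``automatic'' once $p=1$. More precisely, $\theta_X$ is a surjection of abelian groups by \cite[Lem.~8.3]{GK-1}, so it suffices to show that the kernel $N := {\rm Ker}(\theta_X)$ satisfies $N/mN$ injects appropriately, i.e.\ that ${\rm Ker}(\theta_X)$ is divisible (equivalently $m$-divisible for the given $m$), together with a dual bound on the torsion of $H^{sus}_0(U)$. The key point is that in characteristic zero we have $\Lambda = \Z$, so all the motivic cohomology statements hold integrally, and the localization and $cdh$-descent machinery of \S~\ref{sec:set-up} is available without denominators.

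\textbf{First I would} dispose of the low-dimensional cases directly: for $\dim(X) \le 1$ the Levine--Weibel Chow group of a curve regular in codimension one is just $\Pic$ of the curve and $H^{sus}_0(U)$ is the relative Picard group, so the comparison is classical. For $\dim(X) = 2$, one can relate $\theta_X$ to the corresponding map for the double $S_X$ (using that $X$ regular in codimension one means $X_{\sing}$ is a finite set of points, and embedding the situation so that the divisorial part becomes a Cartier divisor); here \thmref{thm:LW-MC-Main}'s characteristic-zero analogue is not available, but in characteristic zero one instead invokes the identification of $\CH^{LW}_0$ with motivic cohomology together with the fact that the relevant kernels are divisible torsion groups of bounded exponent — hence zero — exactly as in the proof of \thmref{thm:LW-MC-Main} and \lemref{lem:Double-0}. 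The general case then proceeds by induction on $d = \dim(X)$: fix a closed embedding $X \inj \P^n_k$, apply \cite[Thm.~1]{AK} to cut $X$ by a hypersurface $H$ of high degree so that $Y = X \cap H$ is smooth (hence, passing to a component, integral) of dimension $d-1$, avoids $X_{\sing}$ in the appropriate sense, and meets $U$; then compare $\theta_X$ with $\theta_Y$ via the push-forward maps $\tau_*$ along $Y \inj X$ on both sides, in a diagram analogous to \eqref{eqn:torsion-0-2}.

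\textbf{The inductive step} will go through a factorization of the relevant Albanese-type maps. On the Suslin side, $\rho_U \colon H^{sus}_0(U)_{\deg 0} \surj J^d(U)$ is an isomorphism on torsion by \cite[Thm.~1.1]{SS}, and $\tau_*$ is compatible with $\rho_V$ and $\rho_U$; on the Levine--Weibel side one has the analogous surjection onto the semi-abelian Albanese of $X$ together with a Lefschetz hyperplane statement for these Albanese varieties (of the type \cite[Thm.~7.3]{GK-1}, \cite[Prop.~5.1]{GK-1}) ensuring $\tau_*$ is an isomorphism after passing to $Y$. Combining these, one reduces the isomorphism statement for ${\theta_X}/m$ to the same statement for ${\theta_Y}/m$, which holds by the inductive hypothesis. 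Finally one checks the base of the induction and the torsion comparison: since $\CH^{LW}_0(X)_{\deg 0}$ and $H^{sus}_0(U)_{\deg 0}$ are, in characteristic zero over a field, extensions of lattices by divisible groups whose divisible parts have the ``same'' Albanese quotient (after a Lefschetz reduction to the case where resolution-of-singularities arguments apply), $N/mN$ and $~_m H^{sus}_0(U)$ are controlled, and the snake lemma gives the result.

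\textbf{The main obstacle} I anticipate is precisely the characteristic-zero surface case, where the positive-characteristic tool \thmref{thm:LW-MC-Main} is unavailable and one must instead reconstruct the comparison by hand from the identification of $\CH^{LW}_0(S_X)$ with the motivic cohomology $H^4(S_X,\Z(2))$ (this uses resolution of singularities, which is why $\Lambda = \Z$ is legitimate in characteristic zero) together with a Roitman-type torsion theorem for the double; the delicate part is verifying that the kernel of $\lambda^{LW}_{S_X}$ is both a divisible group and torsion of bounded exponent, forcing it to vanish, and then checking that the degree-zero parts are $m$-divisible so that tensoring with $\Z/m$ is exact on the relevant short exact sequences. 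The higher-dimensional induction, by contrast, is bookkeeping once the Lefschetz-type results for semi-abelian Albanese varieties are cited correctly.
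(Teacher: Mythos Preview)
Your inductive scaffolding (Bertini hyperplane section, Albanese comparison, push-forward) is in the right spirit, but there are two genuine gaps.

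\textbf{The surface case is wrong.} The double construction $S_X = X \amalg_D X$ requires a \emph{smooth} $X$ together with a Cartier divisor $D$; here $X$ is a singular projective surface with $U = X_{\reg}$, and there is no divisor $D$ in sight (you note yourself that $X_{\sing}$ is zero-dimensional). So invoking \thmref{thm:LW-MC-Main} or \lemref{lem:Double-0} makes no sense for this $X$. The paper's actual argument for $d=2$ is completely different and is the heart of the proof: one takes a resolution of singularities $\pi\colon \wt X\to X$ with reduced strict normal crossing exceptional divisor $E$, and factors $\theta_X$ as
\[
\CH^{LW}_0(X)\;\stackrel{\pi^*}{\surj}\;\CH_0(\wt X|nE)\;\surj\;\CH_0(\wt X|E)\;\surj\;H^{sus}_0(U).
\]
The first arrow is an isomorphism for $n\gg 0$ by \cite[Thm.~1.5]{GK-1}, the last is an isomorphism mod $m$ by \thmref{thm:Main-2-fin}, and the middle kernel is identified via \cite[Thm.~8.2]{BKS} with $H^1_\zar(E,\sK_{2,(nE,E)})$, which is a $\Q$-vector space by Goodwillie's theorem on relative $K$-theory of nilpotent ideals, hence divisible. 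None of this appears in your outline.

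\textbf{The inductive step is not the reduction you describe.} You want a Lefschetz isomorphism $\tau_*\colon \CH^{LW}_0(Y)/m \xrightarrow{\cong} \CH^{LW}_0(X)/m$ for a generic hyperplane section $Y$; no such statement is available. The paper instead uses the Albanese diagram only to deduce that $\theta_X$ is surjective on $m$-torsion (via \cite{Levine-AJM} and \cite{SS}), whence it suffices to show that every generator of $\Ker(\theta_X)$ dies in $\CH^{LW}_0(X)/m$. It then works element by element: given a curve $\nu\colon C\to X$ and $f\in\Ker(\sO^\times_{C,E}\to\sO^\times_E)$, one first uses a projective-bundle trick to make $\nu$ a closed immersion, then applies Bertini to find a hypersurface section $Y$ \emph{containing $C$}, so that $\nu_*(\divf(f))$ already lies in $\sR^S_0(Y_{\reg})$. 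Induction on $Y$ kills it in $\CH^{LW}_0(Y)/m$, and push-forward finishes. Your proposed route never puts the curve inside the hyperplane section, so the induction hypothesis cannot be applied to the specific cycle. Also, your final claim that $\CH^{LW}_0(X)_{\deg 0}$ is an extension of a lattice by a divisible group is unjustified over an arbitrary characteristic-zero field and is not needed in the paper's argument.
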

\begin{proof}
 There is nothing to prove if $d \le 1$ and we can thus assume that $d \ge 2$.
 We shall prove the statement by induction on $d$. We first consider the starting
 case $d = 2$.

Let $\pi \colon \wt{X} \to X$ be a resolution of singularities of 
$X$ such that the reduced exceptional divisor $E \subset \wt{X}$ has
strict normal crossings.  For any integer $n \ge 1$, let 
$nE \inj \wt{X}$ denote the infinitesimal thickening of $E$ in $\wt{X}$
of order $n$.

It is clear from the definitions of $\CH^{LW}_0(X)$,
$\CH_0(\wt{X}|D)$ and $H^{sus}_0(U)$ that the identity map of
$\sZ_0(U)$ defines, by the pull-back via $\pi^*$,
the canonical surjective maps
\begin{equation}\label{eqn:Surface-00}
\CH^{LW}_0(X) \stackrel{\pi^*}{\surj} \CH_0(\wt{X}|nE) \surj 
\CH_0(\wt{X}|E) \surj H^{sus}_0(U)
\end{equation}
for every integer $n \ge 1$ such that the composite map
is $\theta_X$.
The first arrow from the left is an isomorphism for all $n \gg 1$
by \cite[Thm.~1.5]{GK-1}, and the third arrow is an isomorphism modulo $m$
by \thmref{thm:Main-2-fin}.
We thus have to show that the kernel of the
surjection $\CH_0(\wt{X}|nE) \surj \CH_0(\wt{X}|E)$ is divisible.
By \cite[Thm.~8.2]{BKS}, this is equivalent to showing that $H^1_{\rm Zar}(E,
\sK_{2, (nE,E)})$ is divisible. But this is an immediate consequence of the well known
Goodwillie's theorem for the relative $K$-theory of nilpotent ideals,
which implies that $\sK_{2, (mE,E)}$ is a sheaf of $\Q$-vector spaces on $E$.

We now assume $d \ge 3$.
By \cite[Cor.~2.5]{GK-1}, we can assume that $X$ is normal.
We let $J^d(X)$ denote the Albanese variety of $X$ (which coincides with the
classical Albanese variety of any desingularization of $X$) and consider the
commutative diagram (see \thmref{thm:Semi-abl-Alb})
\begin{equation}\label{eqn:Surface-001}
  \xymatrix@C.8pc{
    \CH^{LW}_0(X)_{\deg 0} \ar@{->>}[r]^-{\theta_X} \ar[d]_-{\rho_X} &
    H^{sus}_0(U)_{\deg 0} \ar[d]^-{\rho_U} \\
    J^d(X) \ar@{->>}[r]^-{\theta'_X} & J^d(U).}
  \end{equation}
  The vertical arrows are isomorphisms on the torsion subgroups
  by \cite[Thm.~1]{Levine-AJM} and \cite[Thm.~1.1]{SS}.
  Since a surjective morphism of semi-abelian varieties induces surjection on
  $m$-torsion subgroups for every $m \ge 1$, it follows that the same happens
  for $\theta_X$. This implies that 
  $\Ker(\theta_X) \to \Ker({\theta_X}/m)$ is surjective.
  Hence, it suffices to show that the map
  $\Ker(\theta_X) \to {\CH^{LW}_0(X)}/m$ is zero.

Let $\nu \colon C \to X$ be a finite morphism from a regular integral
projective curve whose image is not contained in $X_{\rm sing}$
and let $f \in {\rm Ker}(\sO^{\times}_{C,E} \surj \sO^{\times}_{E})$,
where $E = \nu^{-1}(X_{\rm sing})$ with the reduced closed subscheme
structure. It suffices to show that $\nu_*(\divf(f))$ dies in ${\CH^{LW}_0(X)}/m$.
Equivalently, there is a 0-cycle $\alpha \in \sZ_0(U)$ such that
$\nu_*(\divf(f)) -m \alpha \in \sR^{LW}_0(X)$.

We can get a factorization $C \stackrel{\nu'}{\inj} 
\P^r_X \xrightarrow{\pi} X$ of $\nu$, where $\nu'$ is a closed
immersion and $\pi$ is the canonical projection. Since
the singular locus of $\P^r_X$ coincides with $\P^r_{X_{\rm sing}}$, it is
clear that $\nu'_*(\divf(f)) \in \sR^{S}_0(\P^r_{U})$ and
$\nu_*(\divf(f)) = \pi_*(\nu'_*(\divf(f)))$.
Using \cite[Cor.~2.6]{GK-1}, it suffices therefore to show that 
$\nu'_*(\divf(f)) - m \alpha' \in \sR^{LW}_0(\P^r_X)$ for some
$\alpha' \in \sZ_0(\P^r_U)$.
We can thus assume that $\nu \colon C \to X$ is a closed immersion.

We now fix a closed embedding $X \inj \P^N_k$.
Since $U$ and $C$ are smooth and
$d \ge 3$, we can use \cite[Thm.~7]{AK} or 
\cite[Thm.~3.9, Cor.~3.13]{GK} to find a hypersurface $H \subset \P^N_k$
containing $C$ and not containing $X$ 
such that the scheme theoretic intersection $X \cap H$
has the property that it contains
no irreducible component of $X_\sing$ and $H \cap U$ is smooth and integral.

We let $Y = H \cap U$ and $W = H \cap U$.
Since $Y$ is a Cartier divisor on $X$, we must have $Y_\sing = X_\sing \cap Y$.
Furthermore, $\dim(Y_\sing) =
\dim(Y \cap X_\sing) \le \dim(X_\sing) - 1 \le d-3 = \dim(Y) - 2$. 
In particular, $Y$ is regular in codimension one.
Let $C \stackrel{\nu'}{\inj} Y \stackrel{\iota}{\inj} X$ 
be the factorization of $\nu$.
It follows from the choice of $Y$ that 
$\nu'_*(\divf(f)) \in \sR^{S}_0(W)$. It follows by induction on $d$ that 
$\nu'_*(\divf(f))$ dies in ${\CH^{LW}_0(Y)}/m$.
The push-forward map $\iota_* \colon \sZ_0(W) \to \sZ_0(U)$
and \cite[Cor.~2.6]{GK-1} together imply that
$\nu_*(\divf(f))$ dies in ${\CH^{LW}_0(X)}/m$. This concludes the proof.
\end{proof}

\section{Chow groups of normal crossing surfaces}
\label{sec:Surface-00}
We are now left with proving \thmref{thm:Main-6}, the last of the main results
listed in \S~\ref{sec:Introduction}.
In order to do this, we need to use an induction procedure on the dimension of
normal crossing varieties.
In this section, we shall prove some results for surfaces which will
allow this induction.

\subsection{Chow group with modulus to 
Levine-Weibel Chow group}\label{sec:CMLW}
The goal of this subsection is to prove the $d =2$ case of \thmref{thm:Reg-mod-LW}.
Let $k$ be a field and $X$ a normal crossing variety of dimension $d \ge 1$ over $k$
(see Definition~\ref{defn:ncg}).
We shall use the following notations in this section.

Let $\iota \colon Y \inj X$ be the inclusion of an irreducible component 
of $X$ and let $E \subset Y$ be the scheme theoretic intersection
$\ov{X \setminus Y} \cap Y$. Then $Y$ is a regular 
and $E$ is a simple normal crossing
divisor on $Y$. Let $D \subset X$ denote the singular locus of $X$.
Let $\iota' \colon Y' = \ov{X \setminus Y} \inj X$ be the inclusion of the
union of other components of $X$.
Let $\sI \subset \sO_X$ be the sheaf of ideals on $X$ defining $Y'$
and let $\sJ \subset \sO_Y$ be the sheaf of ideals on $Y$ defining $E$.

We have the following canonical inclusion of the groups of 0-cycles:
\begin{equation}\label{eqn:0-cycle-incl}
\iota_* \colon \sZ_0(Y, E) \inj \sZ_0(X, D).
\end{equation}
We wish to show that this map preserves the subgroups
of rational equivalences when $d \le 2$. This will be generalized 
to higher dimensions in the next section. We first deal with the case $d =1$.

\begin{lem}\label{lem:NC-surface-3}
$\iota_*$ descends to a homomorphism
$\iota_* \colon \CH_0(Y|E) \to \CH^{LW}_0(X)$ when $d = 1$.
\end{lem}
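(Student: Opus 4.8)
The plan is to check directly that the inclusion $\iota_*\colon \sZ_0(Y,E)\inj \sZ_0(X,D)$ carries the image of $\divf\colon\bigoplus_{\phi_C}G(C,\phi_C^*E)\to\sZ_0(Y,E)$ (the subgroup of rational equivalences defining $\CH_0(Y|E)$) into $\sR^{LW}_0(X)$; this suffices, as $\iota_*$ on cycle groups is already available. First I would record the shape of $X$ when $d=1$: Definition~\ref{defn:ncg} forces each irreducible component of $X$ to be a smooth curve, each double intersection to be a finite reduced set of closed points, and all triple intersections to be empty, so $X$ is a nodal curve. Hence $D=X_\sing$ is a finite reduced closed subscheme meeting no component, $\iota^{-1}(D)=Y\cap\ov{X\setminus Y}=E$, and --- the point that drives the last step --- the only embedded good curve on $X$ relative to $X_\sing$ in the sense of Definition~\ref{def:LWandlciChow}(2) is $X$ itself with $\nu=\id_X$: a proper reduced closed $1$-dimensional subscheme of $X$ is a union of components, and if it contains a node it must contain both branches through that node, hence be all of $X$.

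Second, I would put the generators of this subgroup in standard form. A generator is $(\phi_C)_*(\divf_C(f))$ with $\phi_C\colon C\to Y$ a finite morphism from an integral normal curve, $\phi_C(C)\not\subset E$, and $f\in G(C,\phi_C^*E)$. As finite morphisms preserve dimension, $\phi_C$ is dominant, so $(\phi_C)_*(\divf_C(f))=\divf_Y(\Nm_{k(C)/k(Y)}(f))$ in $\sZ_0(Y,E)$ by the standard behaviour of $\divf$ under proper generically finite maps (\cite{Fulton}). Next I claim $\Nm_{k(C)/k(Y)}\big(G(C,\phi_C^*E)\big)\subseteq G(Y,E)$: fix $q\in E$; then $\sO_{Y,q}$ is a DVR, $M:=\sO_C\otimes_{\sO_Y}\sO_{Y,q}$ is a free $\sO_{Y,q}$-module, and since $E$ is reduced the coefficient of each $x\mid q$ in $\phi_C^*E$ equals its ramification index $e_x$, so $\fm_q\sO_{C,x}=\fm_x^{e_x}$ is exactly the local ideal of $\phi_C^*E$ at $x$. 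Thus $f\in G(C,\phi_C^*E)$ gives $f-1\in\fm_q M$, so multiplication by $f$ on $M$ is the identity modulo $\fm_q$ and its determinant $\Nm(f)$ satisfies $\Nm(f)\equiv 1\pmod{\fm_q}$ for every $q\in E$, i.e.\ $\Nm(f)\in G(Y,E)$. So it is enough to treat generators $\divf_Y(g)$ with $g\in G(Y,E)$.

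Third, for $g\in G(Y,E)$, I would let $X_1,\dots,X_n$ be the components of $X$ with $X_1=Y$, and set $\wt g\in k(X)=\prod_j k(X_j)$ to be the tuple with $g$ in the factor $k(Y)$ and $1$ in all other factors. Because $g\in G(Y,E)$ means $g|_E=1$, at every node $x\in X_\sing$ both branch values of $\wt g$ equal $1$; hence $\wt g\in\sO_{X,x}$ and, its value being $1$, it is a unit there. So $\wt g\in\sO^\times_{X,X_\sing}$, and by the first paragraph $(X,X_\sing)$ with $\nu=\id_X$ is an embedded good curve relative to $X_\sing$. By construction $\divf_X(\wt g)=\divf_Y(g)$, which as a cycle on $X$ is $\iota_*(\divf_Y(g))$ and lies in $\sZ_0(X,X_\sing)$. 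Therefore $\iota_*(\divf_Y(g))\in\sR^{LW}_0(X)$; combined with the reduction of the previous paragraph, $\iota_*$ carries the defining relations of $\CH_0(Y|E)$ into $\sR^{LW}_0(X)$, so it descends to a homomorphism $\CH_0(Y|E)\to\CH^{LW}_0(X)$.

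The only step that is not pure bookkeeping is the norm compatibility $\Nm\big(G(C,\phi_C^*E)\big)\subseteq G(Y,E)$: that the norm preserves congruence to $1$ along the modulus. This is precisely where the hypothesis that $E$ is reduced enters, and it is also the ingredient that --- together with a moving lemma --- must be upgraded to treat the case $d=2$ in the remainder of the subsection. Everything else is a direct unwinding of the definitions, resting on the fact that a nodal curve admits $X$ itself as its only embedded good curve relative to $X_\sing$.
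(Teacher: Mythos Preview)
Your proof is correct and follows essentially the same approach as the paper's: both arguments take $g\in G(Y,E)$, extend it to $\wt g=(g,1,\ldots,1)\in k(X)^\times$, and use the modulus condition $g|_E=1$ together with the fiber-product description of $\sO_{X,x}$ at a node to conclude that $\wt g\in\sO^\times_{X,X_\sing}$, whence $\iota_*(\divf_Y(g))=\divf_X(\wt g)\in\sR^{LW}_0(X)$. The paper compresses this into three lines via the exact sequence $0\to\sO_{X,E}\to\sO_{Y,E}\times\sO_{Y',E}\to\sO_E\to 0$.

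The one genuine addition in your argument is the explicit norm reduction in your second step, showing that an arbitrary relation $(\phi_C)_*(\divf_C(f))$ with $\phi_C\colon C\to Y$ finite already has the form $\divf_Y(\Nm f)$ with $\Nm f\in G(Y,E)$. The paper silently starts from $f\in G(Y,E)$, relying on the (standard) fact that for a smooth curve the relations defining $\CH_0(Y|E)$ may be taken with $C=Y$; your proof supplies this step, which is a virtue. One small inaccuracy: your claim that $X$ itself is the \emph{only} embedded good curve relative to $X_\sing$ fails if $X$ is disconnected (an isolated component is also a Cartier curve), but you never use uniqueness---you only use that $(X,X_\sing)$ \emph{is} a Cartier curve, which is immediate.
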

\begin{proof}
Let $f \in {\rm Ker}(\sO^{\times}_{Y, E} \to
\sO^{\times}_{E})$. 
The scheme $X$ being a normal crossing variety means that the
sequence
\begin{equation}\label{eqn:NC-surface-3-0}
0 \to \sO_{X, E} \to \sO_{Y,E} \times \sO_{Y',E} \to \sO_{E} \to 0
\end{equation}
is exact.
If we let $f' = (f,1) \in  \sO^{\times}_{Y,E} \times \sO^{\times}_{Y',E}$, then
the modulus condition for $f$ implies that $f' \in \sO^{\times}_{X,E}$.
Moreover, we have $\divf(f') = \divf(f)$ in $\sZ_0(X,D)$.
\end{proof}

In the rest of \S~\ref{sec:CMLW}, we shall assume $d = 2$.
For any closed immersion of schemes $W_2 \subset W_1$ of $k$-schemes, we let 
$\sK_{i, (W_1, W_2)}$ denote the Zariski sheaf on $W_1$ associated to the
presheaf $U \mapsto K_i(U, U \cap W_2)$ of relative Quillen $K$-theory.

\begin{lem}\label{lem:ideal}
The canonical map
$\iota^* \colon \sI \to \iota_*(\sJ)$ is an isomorphism.
\end{lem}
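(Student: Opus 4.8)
The plan is to make the map $\iota^*$ completely explicit and thereby reduce the statement to a single vanishing assertion. Write $\sK := \ker(\sO_X \to \iota_*\sO_Y)$ for the ideal sheaf of $Y$ in $\sO_X$. Since $E$ is, by its very definition, the scheme-theoretic intersection $Y' \cap Y = V(\sI) \cap V(\sK)$, its ideal sheaf inside $Y$ is $\sJ = (\sI + \sK)/\sK \subset \sO_X/\sK = \iota_*\sO_Y$. Hence the composite $\sI \hookrightarrow \sO_X \twoheadrightarrow \sO_X/\sK = \iota_*\sO_Y$ has image exactly $\iota_*\sJ$, so it defines a surjection $\iota^*\colon \sI \twoheadrightarrow \iota_*\sJ$; this is precisely the canonical map of the lemma (equivalently, it is the unit $\sI \to \iota_*\iota^*\sI$ followed by the natural surjection $\iota^*\sI = \sI/\sI\sK \twoheadrightarrow \sJ$). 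It therefore remains only to check that $\iota^*$ is injective, i.e.\ that its kernel $\sI \cap \sK$ vanishes.

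This vanishing is exactly where the hypothesis that $X$ is a normal crossing variety enters — but only through the fact that $X$ is reduced and is the union $|Y'| \cup |Y|$ of its components. The quasi-coherent ideal sheaf $\sI \cap \sK$ cuts out the scheme-theoretic union of $Y' = V(\sI)$ and $Y = V(\sK)$, whose underlying topological space is $|Y'| \cup |Y| = |X|$. An ideal sheaf whose zero locus is all of $X$ is contained in the nilradical of $\sO_X$, which is zero because $X$ is reduced; hence $\sI \cap \sK = 0$. Concretely, on an affine open $\Spec A \subset X$ with $I, I' \subset A$ the ideals of $Y', Y$, one has $V(I \cap I') = V(I) \cup V(I') = \Spec A$, so $I \cap I' \subseteq \sqrt{0} = 0$. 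Combined with the previous paragraph, $\iota^*\colon \sI \to \iota_*\sJ$ is injective and surjective, hence an isomorphism; equivalently, $\sI\sK \subseteq \sI \cap \sK = 0$, so $\iota^*\sI = \sI/\sI\sK = \sI$ maps isomorphically onto $\sJ$.

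I do not expect a genuine obstacle here: the substance of the lemma is the elementary fact that for a reduced scheme expressed as a union of two closed subschemes the ideals of the two pieces have zero intersection, together with the bookkeeping identification $\sJ = (\sI+\sK)/\sK$ coming from the definition of $E$ as a scheme-theoretic intersection. The only point that deserves a moment's care is verifying that the target of the canonical map really is $\iota_*\sJ$ and not merely $\iota_*\sO_Y$ — that is, that $\sI$ lands in the ideal of $E$ — but this is immediate once $E$ is unwound. It is worth noting for later that the regularity of $Y$ and the normal-crossing structure of $E$ play no role in this particular lemma; they will be used afterward, when $\sJ$ is analyzed further in the proof of the $d=2$ case of \thmref{thm:Reg-mod-LW}.
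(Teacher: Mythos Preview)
Your proof is correct and follows essentially the same approach as the paper's: both identify the map as $\sI \to \sI/(\sI \cap \sK)$ (in your notation) or $I \to I/(I \cap \fp_1)$ (in the paper's affine notation, with $\fp_1$ the minimal prime of $Y$), observe that the target is by definition the ideal of $E$ in $Y$, and conclude injectivity from $\sI \cap \sK = 0$ since $X$ is reduced with $X = Y \cup Y'$. The paper simply phrases this locally on affine opens in terms of minimal primes, while you work globally with ideal sheaves, but the substance is identical.
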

\begin{proof}
Let $A$ be a reduced
affine $k$-algebra of dimension $d \ge 1$ such that
$\Spec(A)$ is a normal crossing variety.
Let $\{\fp_1, \ldots , \fp_r\}$ be the set of minimal primes of $A$.
We write $I = \stackrel{r}{\underset{i = 2}\bigcap} \ \fp_i$ and 
$B = {A}/{\fp_1}$. Our problem reduces to showing that the homomorphism
$I \to IB$ is bijective. This homomorphism is clearly surjective.
It is also injective because the kernel of the composite map
$I \to IB \to B$ is $\fp_1 \cap I = \stackrel{r}{\underset{i = 1}\bigcap}  
\fp_i = (0)$.
\end{proof}

\begin{lem}\label{lem:Surface-K-0}
The restriction
map $\iota^*\colon H^2(X, \sK_{2, (X, Y')}) \to H^2(Y, \sK_{2, (Y, E)})$ is an 
isomorphism.
\end{lem}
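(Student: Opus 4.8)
The plan is to reduce Lemma~\ref{lem:Surface-K-0} to a statement purely about Zariski sheaves on $X$. Since $\iota\colon Y\inj X$ is a closed immersion, hence an affine morphism, one has $H^2(X,\iota_*\sG)\cong H^2(Y,\sG)$ for every Zariski sheaf $\sG$ on $Y$. So it suffices to prove that the natural map of Zariski sheaves on $X$
\[
\iota^\#\colon \sK_{2,(X,Y')}\longrightarrow \iota_*\,\sK_{2,(Y,E)}
\]
is an isomorphism. This map is induced by restriction along $\iota$, via $K_2(U,U\cap Y')\to K_2(U\cap Y,U\cap E)$, using that the closed subscheme $Y'\subset X$ pulls back along $\iota$ to $E\subset Y$ --- which is exactly \lemref{lem:ideal} together with the definition of $E$.

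Next I would check that $\iota^\#$ is an isomorphism on stalks. At a point $x\in Y$ with $x\notin E$, the subscheme $Y'$ does not pass through $x$, so $\sO_{X,x}=\sO_{Y,x}$, a small neighbourhood of $x$ is disjoint from $Y'$, and both stalks equal $K_2(\sO_{X,x})$ with $\iota^\#_x$ the identity. At a point $x\notin Y$, the subscheme $Y'$ contains a neighbourhood of $x$, so $K_2(U,U\cap Y')$ is the relative $K_2$ of a scheme with respect to itself and vanishes, while $\iota_*\sK_{2,(Y,E)}$ has zero stalk at $x$ since $x\notin Y$. Thus $\iota^\#$ is an isomorphism over $X\setminus E$, and everything comes down to the points $x\in E$. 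There, writing $R=\sO_{X,x}$, $\ov R=\sO_{Y,x}$, $I=\sI_x$, $\ov I=\sJ_x$, the normal crossing condition provides the conductor square $R=\ov R\times_{\sO_{E,x}}\sO_{Y',x}$ (the exactness invoked in the proof of \lemref{lem:NC-surface-3}), with $\sO_{Y',x}\surj\sO_{E,x}$ surjective, and \lemref{lem:ideal} identifies $I=\ker(R\surj\sO_{Y',x})$ with $\ov I=\ker(\ov R\surj\sO_{E,x})$ compatibly with the ring structures. The stalk $\iota^\#_x$ is then the comparison map
\[
K_2(R,I)\longrightarrow K_2(\ov R,\ov I)
\]
attached to the surjection $R\surj\ov R$ and the common ideal $I\cong\ov I$.

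The crux is therefore an excision statement for relative $K_2$ along $I$, and this is the step I expect to require real work. Here I would exploit the geometry: $E$ is a simple normal crossing divisor on the regular surface $Y$, hence Cartier, so $\ov I=\sJ_x$ is an invertible $\ov R$-module and $I$ is generated by a single non-zero-divisor of $\ov R$; moreover $I\subset\fm_x$ lies in the Jacobson radical of $R$. I would then describe $K_2(R,I)$ and $K_2(\ov R,\ov I)$ by Dennis--Stein symbols $\langle a,b\rangle$ and observe that both the generating symbols and their defining relations lift along $R\surj\ov R$ --- lifting the first entry through the isomorphism $I\cong\ov I$ and the second through surjectivity --- thereby producing a two-sided inverse of the stalk map; alternatively, one invokes directly the known $K_2$-excision results for ideals of this type. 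The remaining steps are formal. Should the excision argument prove delicate, a natural fallback is to relate $H^2(X,\sK_{2,(X,Y')})$ and $H^2(Y,\sK_{2,(Y,E)})$ to relative $K$-theory via the Brown--Gersten--Quillen spectral sequences of the pairs $(X,Y')$ and $(Y,E)$ (and, on $Y$, to $\CH_0(Y|E)$ through Bloch's formula), and to compare the two using the Mayer--Vietoris sequence of the conductor square $\sO_X=\sO_Y\times_{\sO_E}\sO_{Y'}$, which is exact in the low degrees needed.
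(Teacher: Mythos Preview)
Your reduction to the stalk comparison $K_2(R,I)\to K_2(\ov R,\ov I)$ at points $x\in E$ is correct, and so is your analysis at points outside $E$. The gap is precisely at the crux: you assert that this map is an isomorphism by lifting Dennis--Stein generators and relations, or by ``known $K_2$-excision results for ideals of this type''. Lifting generators gives surjectivity, but lifting relations is exactly the failure of excision for $K_2$, which is governed by the double relative group $K_2(R,\ov R,I)$ studied in \cite{GW}. There is no general reason for this group to vanish in the situation at hand (a local normal crossing ring over its regular quotient along a principal conductor), and the paper does not claim it does. So the assertion that $\iota^\#$ is an isomorphism of sheaves is unjustified and probably false at points of $E$; the Dennis--Stein argument you sketch only yields surjectivity.

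The paper's proof sidesteps this entirely: it uses \cite{Keune} and \cite{GW} only to get that $\sK_{2,(X,Y')}\to\iota_*\sK_{2,(Y,E)}$ is \emph{surjective}, then observes that the kernel is supported on $D$ (this is your own analysis off $E$, together with the fact that $E\subset D$). Since $d=2$ we have $\dim D\le 1$, so any sheaf supported on $D$ has vanishing $H^2$, and the long exact sequence in cohomology gives the isomorphism on $H^2$ directly. This is strictly weaker than what you are trying to prove at the sheaf level, and it is all that is needed. Your fallback via BGQ spectral sequences and Mayer--Vietoris is in the right spirit but would amount to reproving this same dimension-drop argument in a more roundabout way.
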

\begin{proof}
We have a commutative diagram of Zariski sheaves on $X$ with exact rows:
\begin{equation}\label{eqn:Surface-K-0-0}
\xymatrix@C1pc{
\sK_{2, (X, Y')} \ar[r] \ar[d]_{\iota^*} & \sK_{2,X} \ar[r] \ar[d]^{\iota^*} &
\iota'_*(\sK_{2,Y'}) \ar[d]^{\iota^*} \ar[r] & 0 \\
\sK_{2, (Y, E)} \ar[r] & \sK_{2,Y} \ar[r] &
\iota'_*(\sK_{2,E}) \ar[r] & 0.} \\ 
\end{equation}

The canonical map
$\iota^* \colon \sI \to \iota_*(\sJ)$ is an isomorphism by \lemref{lem:ideal}. 
It follows from \cite{Keune} (see also \cite[Thm.~0.2]{GW}) that 
the map $\sK_{2, (X,Y')} \to \iota_*(\sK_{2,(Y,E)})$ is surjective.
It is also clear that the kernel of this map is supported on $D$.
In particular, the map $H^2(X,\sK_{2, (X,Y')}) \to 
H^2(X, \iota_*(\sK_{2,(Y,E)})) \simeq H^2(Y, \sK_{2,(Y,E)})$ is an isomorphism. 
\end{proof}

For any reduced quasi-projective surface $Z$ over $k$, let 
\[cyc_Z\colon \CH^{LW}_0(Z) \to F^2K_0(Z)\subset K_0(Z)\]
be the $K$-theoretic cycle class map (e.g., see \cite[Lem.~3.12]{BK}). 
As we have already observed, it factors through $H^2_{\rm Zar}(Z, \sK_{2,Z})$, and we
get canonical maps
\begin{equation}\label{eqn:cyc-cl-map}
  \CH^{LW}_0(Z) \to \CH_0^{BK}(Z) \to
  H^2_{\rm Zar}(Z, \sK_{2,Z}) \to H^2_{\rm Nis}(Z, \sK_{2,Z}).
  \end{equation}
The second arrow is an isomorphism by \cite[Thm.~8.1]{BKS}
(or by \cite{LevineBlochFormula} when $k$ is algebraically closed),
the third arrow is an isomorphism by \cite[Thm.~2.5]{Kato-Saito-2}
(the cited result assumes $X_\reg$ to be smooth but \cite[\S~3.5]{GKris2}
explains that this is unnecessary). The first arrow is an isomorphism when
$k$ is infinite by \cite[Cor.~7.8]{BKS}.

\begin{lem}\label{lem:NC-surface-4}
The map $\iota_*$ of 
~\eqref{eqn:0-cycle-incl} descends to a
homomorphism
\[
\iota_*\colon \CH_0(Y|E) \to \CH^{BK}_0(X).
\] 
\end{lem}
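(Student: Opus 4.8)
The plan is to construct the map by routing through $\sK_2$-cohomology: use \lemref{lem:Surface-K-0} to pass from the relative $\sK_2$-cohomology of $(X,Y')$ to that of $(Y,E)$, use \cite[Thm.~8.1]{BKS} to identify $\CH^{BK}_0(X)$ with $H^2_{\rm Zar}(X,\sK_{2,X})$, and then check compatibility with the set-theoretic inclusion $\iota_*$ on the level of $0$-cycle groups. First I would recall the two relevant cycle class maps. By \cite[Thm.~1.8]{BK} (together with $\sK^M_2=\sK_2$ for local rings), the assignment sending a closed point of $Y\setminus E$ to its relative cycle class induces an isomorphism
\[
cl^{\rm rel}_{(Y,E)}\colon \CH_0(Y|E)\xrightarrow{\ \cong\ } H^2_{\rm Zar}(Y,\sK_{2,(Y,E)});
\]
in particular the composite $\sZ_0(Y,E)\to H^2_{\rm Zar}(Y,\sK_{2,(Y,E)})$ annihilates the subgroup of modulus rational equivalences. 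On the other side, \eqref{eqn:cyc-cl-map} gives the isomorphism $cl_X\colon \CH^{BK}_0(X)\xrightarrow{\cong}H^2_{\rm Zar}(X,\sK_{2,X})$. Chaining these with \lemref{lem:Surface-K-0} and the natural map $\sK_{2,(X,Y')}\to\sK_{2,X}$, I obtain a homomorphism
\begin{multline*}
\Psi\colon \sZ_0(Y,E)\xrightarrow{cl^{\rm rel}_{(Y,E)}} H^2_{\rm Zar}(Y,\sK_{2,(Y,E)})\xleftarrow[\cong]{\iota^*}H^2_{\rm Zar}(X,\sK_{2,(X,Y')}) \\
\to H^2_{\rm Zar}(X,\sK_{2,X})\xleftarrow[\cong]{cl_X}\CH^{BK}_0(X),
\end{multline*}
which by construction factors through $\CH_0(Y|E)$.

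The crux is to prove that $\Psi$ coincides with the composite $\sZ_0(Y,E)\xrightarrow{\iota_*}\sZ_0(X,D)\surj\CH^{BK}_0(X)$; once this is established, the latter factors through $\CH_0(Y|E)$, which is exactly the assertion of the lemma. Both maps are additive, so it suffices to compare them on a single closed point $y\in Y\setminus E$; and since $cl_X$ is injective, it is enough to check that the absolute cycle class $cl_X([y])\in H^2_{\rm Zar}(X,\sK_{2,X})$ is the image of $cl^{\rm rel}_{(Y,E)}([y])$ under $(\iota^*)^{-1}$ followed by $\sK_{2,(X,Y')}\to\sK_{2,X}$. (Note that $y\in Y\setminus E$ forces $y\in X_{\rm reg}$, so $[y]$ does define a class in $\CH^{BK}_0(X)$ and $cl_X([y])$ makes sense.) This is a local statement at $y$: because $y\notin E=\overline{X\setminus Y}\cap Y$, the point $y$ does not lie on $Y'$, hence it has an open neighbourhood $V\subseteq X$ with $V\cap Y'=\emptyset$. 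On $V$ one has $\sO_X=\sO_Y$ (the injectivity $\sO_X\inj\sO_Y\times\sO_{Y'}$ that is built into the normal-crossing condition, cf. \eqref{eqn:NC-surface-3-0}), together with $V\cap E=V\cap D=\emptyset$, so over $V$ all four sheaves $\sK_{2,(X,Y')}$, $\sK_{2,X}$, $\sK_{2,Y}$, $\sK_{2,(Y,E)}$ agree and the transition morphisms above (including $\iota^*$ of \lemref{lem:Surface-K-0}) are canonically the identity. Consequently the local cycle class of $y$ in $H^2_{\{y\}}(V,\sK_2)$ is one and the same object in every interpretation, and its two images in $H^2_{\rm Zar}(X,\sK_{2,X})$ coincide; one also invokes here the standard functoriality of the $\sK$-theoretic cycle class map of \cite{BK}, \cite{BKS} under closed immersions and Zariski localization.

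I expect the main obstacle to be precisely this last compatibility. The isomorphism of \lemref{lem:Surface-K-0} is produced globally, from the surjection $\sK_{2,(X,Y')}\to\iota_*\sK_{2,(Y,E)}$ whose kernel is supported on $D$, so one must confirm that over the complement of $D$ it is nothing but the restriction-to-$Y$ isomorphism, and therefore carries the relative cycle class of a point $y\in Y\setminus E$ to the class of the very same point viewed on $X$. Granting naturality of cycle classes, this reduces to the elementary observation above about a Zariski neighbourhood of a regular point of $X$, so no genuine difficulty remains once the bookkeeping of the various $\sK_2$-sheaves is set up carefully.
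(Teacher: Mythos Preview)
Your proposal is correct and follows essentially the same route as the paper: both arguments construct $\iota_*$ by passing through $\sK_2$-cohomology, invoke \lemref{lem:Surface-K-0} together with the Bloch-formula isomorphisms from \cite{BKS} for $\CH^{BK}_0(X)$ and $\CH_0(Y|E)$, and then verify on a regular closed point $y\in Y\setminus E$ that the composite agrees with the naive inclusion. The only stylistic difference is that the paper records the pointwise compatibility via an explicit cube of local-cohomology groups $H^2_{\{x\}}(-,\sK_{2,-})$ (see~\eqref{eqn:Surface-K-0-2}), whereas you argue by restricting to a Zariski neighbourhood $V$ of $y$ disjoint from $Y'$ on which all four $\sK_2$-sheaves literally coincide; these are two presentations of the same check.
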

\begin{proof}
Since $d = 2$ and $H^i(X, \iota'_*(\sK_{2,Y'})) \xrightarrow{\simeq}
H^i(Y', \sK_{2,Y'})$ for $i \ge 0$, we get from ~\eqref{eqn:Surface-K-0-0},
a commutative diagram of Zariski cohomology groups:
\begin{equation}\label{eqn:Surface-K-0-1}
\xymatrix@C1pc{
H^2(X, \sK_{2, (X, Y')}) \ar[r] \ar[d]^{\simeq}_{\iota^*} & 
H^2(X, \sK_{2,X}) \ar[r] 
\ar[d]^{\iota^*} &
H^2(Y', \sK_{2,Y'}) \ar[d]^{\iota^*} \ar[r] & 0 \\
H^2(Y, \sK_{2, (Y, E)}) \ar[r] & H^2(Y, \sK_{2,Y}) \ar[r] &
H^2(E, \sK_{2,E}) \ar[r] & 0,} \\ 
\end{equation}
where the two rows are exact.

For a closed point $x \in Y \setminus E = X \setminus Y' \subset X_{\rm reg}$,
we have a commutative diagram
\begin{equation}\label{eqn:Surface-K-0-2}
\xymatrix@C.8pc{
\Z \ar[r]^-{\simeq} & H^2_{\{x\}}(X, \sK_{2, (X, Y')}) \ar[rr] \ar[dd]_{\simeq} 
\ar[dr]^{\simeq} & & H^2(X, \sK_{2, (X, Y')}) \ar[dr] \ar[dd] \\
& & H^2_{\{x\}}(X, \sK_{2, X}) \ar[rr] \ar[dd]_>>>>>>{\simeq} & &  
H^2(X, \sK_{2, X}) 
\ar[dd]^{\iota^*} \\
& H^2_{\{x\}}(Y, \sK_{2, (Y, E)}) \ar[rr] \ar[dr]_{\simeq} & & H^2(Y, \sK_{2, (Y, E)})
\ar[dr] & \\
& & H^2_{\{x\}}(Y, \sK_{2, Y}) \ar[rr] & & H^2(Y, \sK_{2, Y}).} 
\end{equation}

Since the composite $\Z \xrightarrow{\simeq} H^2_{\{x\}}(X, \sK_{2, X})
\inj K_0(X)$ is the cycle class of $x \in X_{\rm reg}$, it follows from
\lemref{lem:Surface-K-0}, ~\eqref{eqn:Surface-K-0-1} and 
~\eqref{eqn:Surface-K-0-2} that the inclusion $Y \setminus E = X \setminus Y' 
\subset X_{\rm reg}$ induces a commutative diagram of cycle class maps
\begin{equation}\label{eqn:Surface-K-0-3}
\xymatrix@C.8pc{
\sZ_0(Y, E) \ar[r]^-{cyc_{Y|E}} \ar@{^{(}->}[d]_-{\iota_*} & H^2(Y, \sK_{2, (Y, E)}) 
\ar[d]^-{\phi_Y}  & \\
\sZ_0(X,D) \ar[r]^-{\cyc_X} & H^2_{\rm Zar}(X, \sK_{2,X}) \ar@{^{(}->}[r] &  
K_0(X),}
\end{equation}
where $\phi_Y = (\iota^*)^{-1} \colon H^2(Y, \sK_{2, (Y, E)}) \to 
H^2(X, \sK_{2, (X, Y')})$.

It follows from \cite[Thm.~3.6]{BKS} that $cyc_X$ factors
through the quotient $\CH^{BK}_0(X)$ and it follows from 
\cite[Thm.~1.2, Lem.~2.1]{Krishna-3} that the map $cyc_{Y|E}$
factors through the quotient $\CH_0(Y|E)$. We thus get a commutative
diagram with solid arrows
\begin{equation}\label{eqn:Surface-K-0-4}
\xymatrix@C.6pc{
  \sZ_0(Y, E) \ar@{->>}[r] \ar@{^{(}->}[d]_-{\iota_*} &
  \CH_0(Y|E) \ar[r]^-{cyc_{Y|E}} &
H^2_{\rm Zar}(Y, \sK_{2, (Y, E)}) \ar[d]^-{\phi_Y} & \\
\sZ_0(X,D) \ar@{->>}[r] & \CH^{BK}_0(X) \ar[r]^-{cyc_X} &  H^2_{\rm Zar}(X, \sK_{2,X})
\ar@{^{(}->}[r] & K_0(X).}
\end{equation}

Now, the map $\cyc_X\colon \CH^{BK}_0(X) \to  H^2_{\rm Zar}(X, \sK_{2,X})$  is an 
isomorphism as shown above, and the map $ \CH_0(Y|E) \to 
H^2_{\rm Zar}(Y, \sK_{2, (Y, E)})$ is an isomorphism by \cite[Thm.~8.2]{BKS}. It 
follows that the composite map
$\iota_* \colon \sZ_0(Y, E)  \inj \sZ_0(X,D) \surj \CH^{BK}_0(X)$ factors through
$\CH_0(Y|E)$. This proves the lemma.
\end{proof}

\subsection{0-cycles on normal crossing surfaces}
\label{sec:0-cyc-surf}
In this subsection, we shall prove the $d =2$ case of \thmref{thm:EKW-LW}.
We continue with the setup of \S~\ref{sec:CMLW}.
Let $\{X_1, \ldots , X_r\}$ be the set of irreducible components of $X$. 
Let $C \subset X$ be a reduced curve. The following definition 
is borrowed from \cite{EKW}.

\begin{defn}\label{defn:snc-curve}
We shall say that $C$ is a {\sl snc subcurve} in $X$ if its scheme theoretic
intersection with each irreducible component of $X$ is either empty or smooth and
integral of dimension one, its intersection with each $X_i \cap X_j$ ($i \neq j$)  
is either empty or reduced and 0-dimensional, and its intersection with each
$X_i \cap X_j \cap X_{\ell}$ ($i \neq j \neq \ell \neq i$) is empty.
\end{defn}

Let $X$ be as above and let $C \subset X$ be a snc subcurve.
The following is easy to verify.
\begin{lem}\label{lem:snc-curve-reg}
The following hold for $C$.
\begin{enumerate}
\item
The inclusion $C \inj X$ is a regular embedding.
\item
The embedding dimension of $C$ is at most two.
\item
$C$ is regular and integral 
if and only if it is contained in one component $X_i$ of
$X$ and $C \cap X_j = \emptyset$ for every component $X_j \neq X_i$.
\end{enumerate}
\end{lem}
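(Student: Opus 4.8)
The goal is to verify three properties of a snc subcurve $C \subset X$, where $X$ is a normal crossing variety of dimension $d \geq 1$ (we are in the $d = 2$ case, so $X$ is a normal crossing surface). The statement is declared ``easy to verify'', so the plan is to reduce each assertion to a local computation on an affine chart, using the fact that a normal crossing variety is \'etale-locally a union of coordinate subspaces.

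\textbf{Setup for the local analysis.} First I would fix a closed point $x \in C$ and pass to the local ring $\cO_{X,x}$ (or an \'etale neighborhood). Since $X$ is a normal crossing variety, after an \'etale base change we may write $X$ locally as a union of coordinate hyperplanes, so that $\cO_{X,x}^{h}$ (or the completion) looks like $k[[t_1, \dots, t_n]]/(t_1 \cdots t_s)$ for some $s$ (with $s \leq 2$ in the surface case since at most two components of a surface-dimensional normal crossing variety can pass through a point in a configuration compatible with the dimension bound $d+1-|J|$; more precisely, $X_i \cap X_j \cap X_\ell = \emptyset$ forces $s \leq 2$ near any point, given $d = 2$). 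The snc condition on $C$ then says: on each branch, $C$ is cut out by a single equation defining a smooth integral curve; $C$ meets the double locus $X_i \cap X_j$ in a reduced $0$-dimensional scheme; and $C$ avoids triple points. I would translate this into: in suitable \'etale coordinates, $C$ is cut out in each component $X_i$ by a regular system of parameters of the appropriate length, and these are compatible along the intersections.

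\textbf{Proof of the three items.} For (1), regular embedding is a local statement: on each component $X_i$ (which is smooth), $C \cap X_i$ is smooth hence a regular embedding in $X_i$; since $X_i \inj X$ is a regular embedding (each component of a normal crossing variety is a Cartier divisor, locally cut out by one of the $t_i$'s), composing and using the compatibility along the double locus (where the snc condition gives a transverse, reduced intersection) shows $C \inj X$ is regular. Concretely, near a point of $C$ lying on a single component, $C$ is cut out by $d-1$ equations forming part of a regular sequence; near a double point of $C$ (on $X_i \cap X_j$), the reducedness of $C \cap (X_i \cap X_j)$ gives that $C$ is again cut out by a regular sequence in $\cO_{X,x}$. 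For (2), the embedding dimension of $C$ at a point $x$ is $\dim_{k(x)} \fm_x/\fm_x^2$; at a smooth point of $C$ on one component this is $1$, and at a double point (on $X_i \cap X_j$, $d = 2$) one computes directly from the local model $k[[t_1,t_2,t_3]]/(t_1 t_2)$ with $C$ cut out by, say, $t_3 - f(t_1)$ on the first branch and $t_3 - g(t_2)$ on the second, that $\fm_x/\fm_x^2$ has dimension at most $2$. For (3): if $C$ is contained in $X_i$ and disjoint from all other $X_j$, then $C = C \cap X_i$ is smooth and integral by definition of snc subcurve; conversely, if $C$ is regular and integral, it is in particular connected and irreducible, so it cannot meet two distinct components in a nonempty $0$-dimensional scheme (that would disconnect or create a non-regular node on $C$ — a reduced $0$-dimensional intersection point lying on two one-dimensional branches is an ordinary node, hence singular), forcing $C$ to lie in a single $X_i$ and avoid the others.

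\textbf{Main obstacle.} None of this is deep; the only point requiring care is the bookkeeping of \'etale-local normal forms and checking that ``snc subcurve'' exactly matches the regularity/transversality one needs, especially at the double points of both $C$ and $X$ simultaneously. I would organize the write-up by first recording the \'etale-local model of $(X, C)$ at each type of point (generic point of $C \cap X_i$; point of $C$ on $X_i \cap X_j$), and then reading off (1), (2), (3) from those two models. The slightly delicate direction is the ``only if'' in (3): one must rule out that $C$ is regular and integral while meeting the double locus, which follows because such an intersection point is necessarily a node on $C$, contradicting regularity.
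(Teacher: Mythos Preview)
The paper omits the proof entirely (it only says ``The following is easy to verify''), so there is no argument to compare against; your \'etale-local approach is the natural one and is essentially correct. One claim, however, is wrong and should be removed: you assert that each component $X_i \inj X$ is a regular embedding (a Cartier divisor, ``locally cut out by one of the $t_i$'s''). This is false at points lying on more than one component --- in the local model $k[[t_1,\ldots,t_{d+1}]]/(t_1 t_2)$ the element $t_1$ is a zerodivisor (as $t_1 t_2 = 0$), so $V(t_1)$ is not Cartier there, and your ``compose two regular embeddings'' argument does not go through as stated.

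Fortunately this claim is not load-bearing: your subsequent direct sentence (``Concretely, \ldots the reducedness of $C \cap (X_i \cap X_j)$ gives that $C$ is again cut out by a regular sequence'') is the right argument, and it just needs to be made explicit. At a point $x \in C \cap X_i \cap X_j$ with local model $R = k[[t_1,\ldots,t_{d+1}]]/(t_1 t_2)$, the snc conditions let you parametrize $C \cap X_1$ as $t_j = \phi_j(t_2)$ and $C \cap X_2$ as $t_j = \psi_j(t_1)$ for $j \ge 3$ (the reduced-intersection hypothesis is exactly what makes $t_2$ a uniformizer on $C\cap X_1$, and similarly for $t_1$). Then $h_j := t_j - \phi_j(t_2) - \psi_j(t_1)$ for $3 \le j \le d+1$ cut out $C$ in $R$; since $R$ is Cohen--Macaulay of dimension $d$ and $R/(h_3,\ldots,h_{d+1})$ has dimension $1$, this is automatically a regular sequence, giving (1). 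Item (2) follows at once since $\fm_C/\fm_C^2$ is then spanned by the images of $t_1,t_2$. For (3), your node argument is fine, but note that irreducibility alone already suffices: if $C$ is irreducible then $C = C\cap X_{i_0}$ for the component $X_{i_0}$ containing its generic point, and for $j\neq i_0$ the snc definition forces $C\cap X_j = C\cap (X_{i_0}\cap X_j)$ to be simultaneously a smooth integral curve (if nonempty) and $0$-dimensional, hence empty.
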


Let $X$ be a normal crossing variety of dimension $d \ge 2$ over $k$.
Let $D \subset X$ denote the singular locus of $X$. We let 
$\sR^{snc}_0(X,D)$ denote the subgroup of $\sZ_0(X,D)$ generated by
$\divf(f)$, where $f$ is a rational function on a snc subcurve
$C \subset X$ such that $f$ is regular and invertible along $C \cap D$.
We let 
\begin{equation}\label{eqn:snc-Chow}
\CH^{snc}_0(X) = \frac{\sZ_0(X,D)}{\sR^{snc}_0(X,D)}.
\end{equation}

It follows from \lemref{lem:snc-curve-reg} that a snc subcurve on $X$ is
a Cartier curve relative to $X_{\rm sing}$. This implies that there are
canonical surjections $\CH^{snc}_0(X) \surj \CH^{LW}_0(X) \surj \CH_0^{BK}(X)$.
We will now prove the following result.
This shows that the Levine-Weibel Chow group of a normal crossing surface
can be described using a simpler notion of rational equivalence. 
This will be used in the proof of \thmref{thm:EKW-LW}.

\begin{prop}\label{prop:snc-surface}
Assume that $k$ is infinite and $d =2$, then the maps 
\[
\CH^{snc}_0(X) \surj \CH^{LW}_0(X) \surj \CH_0^{BK}(X)
\]
are isomorphisms.
\end{prop}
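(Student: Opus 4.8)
The plan is to prove that the composite surjections $\CH^{snc}_0(X) \surj \CH^{LW}_0(X) \surj \CH_0^{BK}(X)$ are isomorphisms by comparing with the $K$-cohomology group $H^2_{\rm Zar}(X, \sK_{2,X})$, which we already know is isomorphic to $\CH_0^{BK}(X)$ by \cite[Thm.~8.1]{BKS}. Since the composite surjection $\CH^{snc}_0(X) \surj \CH_0^{BK}(X)$ factors through both intermediate groups, it suffices to show that this composite is \emph{injective}. Equivalently, writing $\sR^{BK}_0(X,D)$ for the subgroup defining $\CH_0^{BK}(X)$, I must show $\sR^{BK}_0(X,D) \subseteq \sR^{snc}_0(X,D)$, i.e.\ every relation coming from an arbitrary lci curve $\nu\colon C \to X$ (good relative to $X_\sing$) can be rewritten, modulo snc-relations, as a relation on snc subcurves.

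First I would reduce to the case of a \emph{Cartier} curve, i.e.\ where $\nu$ is a closed immersion: this is exactly the content of the isomorphism $\CH^{LW}_0(X) \xrightarrow{\simeq} \CH_0^{BK}(X)$ from \cite[Thm.~8.1]{BKS} (via \eqref{eqn:cyc-cl-map}), together with the factorization through $H^2_{\rm Zar}(X,\sK_{2,X}) \xrightarrow{\simeq} H^2_{\rm Nis}(X,\sK_{2,X})$; so I may assume $C \subset X$ is an embedded good curve relative to $X_\sing$. Next, given such a Cartier curve $C \subset X$ on the surface $X$ and a rational function $f$ on $C$ regular and invertible along $C \cap X_\sing = C \cap D$, the main step is a moving argument: I would perturb $C$ to a snc subcurve without changing the divisor class. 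Concretely, fix a closed embedding $X \inj \P^N_k$ and apply the Altman--Kleiman/Bertini-type results (\cite[Thm.~7]{AK}, \cite[Cor.~3.13]{GK}) to pass, as in the proof of \thmref{thm:M1}, to a projective-space bundle $\P^n_X$ where an abstract curve becomes embedded, and then to move the Cartier curve into general position with respect to the double-point locus $D$ and the components $X_i$. The key point is that since $\dim X = 2$, a Cartier curve meets $D$ in finitely many points and meets each triple intersection $X_i \cap X_j \cap X_\ell$ (which is empty, as $X$ is a normal crossing \emph{surface}) trivially; so the only conditions to arrange are smoothness of $C \cap X_i$ and reducedness/$0$-dimensionality of $C \cap (X_i \cap X_j)$, which are the snc conditions of Definition~\ref{defn:snc-curve}. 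This is precisely the kind of moving achieved by the moving lemma of \cite{BK} invoked in the overview of proofs (``our key step is \thmref{thm:Reg-mod-LW}, which in turn relies on a moving lemma from \cite{BK}'').

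The precise mechanism I would use: decompose $X$ as $Y \cup Y'$ with $Y = X_i$ a component and $Y' = \ov{X \setminus Y}$, meeting along $E$. A Cartier curve $C$ on $X$ breaks into pieces $C_i = C \cap X_i$, and on each regular component $X_i$ the restricted function $f_i$ has a divisor; by \lemref{lem:NC-surface-4} (the $d=2$ case of the relevant descent statement, already established above) the pushforward $\iota_* \colon \CH_0(X_i | E_i) \to \CH_0^{BK}(X)$ is well-defined, where $E_i = \ov{X \setminus X_i} \cap X_i$. Applying the surjection $\CH_0(X_i|E_i) \surj H^{sus}_0(X_i \setminus E_i)$ together with Schmidt's presentation (\thmref{thm:suslinhomology}) lets me replace, modulo rational equivalence on the \emph{smooth} surface $X_i$, the integral curve $C_i$ together with its closure by a curve that is smooth, integral, and transverse to $E_i$ — and such a curve, when its components are assembled across the $X_i$'s and glued along the finitely many points of $D$, is exactly an snc subcurve by \lemref{lem:snc-curve-reg}. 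The divisor of the corresponding function on the snc subcurve equals $\nu_*(\divf(f))$ in $\sZ_0(X,D)$ up to an element of $\sR^{snc}_0(X,D)$, which is what is needed.

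The main obstacle I expect is the gluing/compatibility step: after moving each component $C_i \subset X_i$ into general position \emph{separately} via rational equivalence on $X_i$, the perturbed pieces must still fit together along $D$ into a single connected curve carrying a single rational function realizing the original zero-cycle relation — i.e.\ the chosen rational functions $f_i$ on the moved components must agree (or be made to agree, up to snc-relations) at the points of $D \cap C$ where the components are glued, and the modulus/regularity condition along $D$ must be preserved throughout the move. Handling this requires a careful bookkeeping of the values of $f$ along $C \cap D$ and an argument that one can always arrange the moving to fix (or appropriately normalize) these boundary values; this is where the hypothesis that $k$ is infinite and the full strength of the moving lemma of \cite{BK} enter, and it is the technical heart of the proposition. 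Once this is done, injectivity of $\CH^{snc}_0(X) \to \CH_0^{BK}(X)$ follows, and since all three maps in the chain are surjective, they are all isomorphisms.
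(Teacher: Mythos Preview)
Your overall framing is right: since the second map is already known to be an isomorphism, the content is to show that every Cartier-curve relation lies in $\sR^{snc}_0(X,D)$. But the route you propose---decompose $C$ into its intersections $C_i = C \cap X_i$ with the components, move each $C_i$ separately inside the smooth surface $X_i$ using $\CH_0(X_i|E_i)$, and then glue---does not work as stated, and the paper takes a quite different path.

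The first concrete problem is that $\divf(f_i)$ is \emph{not} a relation in $\CH_0(X_i|E_i)$. For a Cartier curve on $X$ one only knows that $f$ is regular and invertible along $C\cap D$; the restriction $f_i$ then satisfies $f_i(x)\in k(x)^\times$ at each $x\in C_i\cap E_i$, but there is no reason for $f_i(x)=1$, which is what the modulus condition requires. So the map $\iota_*\colon \CH_0(X_i|E_i)\to \CH^{BK}_0(X)$ from \lemref{lem:NC-surface-4} is not applicable to the pieces of $\divf(f)$. (Incidentally, for $d=2$ the triple intersections $X_i\cap X_j\cap X_\ell$ are $0$-dimensional, not empty: avoiding them is a genuine condition on an snc subcurve.) Even setting aside the modulus issue, the ``gluing'' you flag as the main obstacle is not a bookkeeping matter: once you have moved the $C_i$ independently there is no mechanism forcing the moved curves to meet along $D$ at all, let alone to assemble into a single curve carrying a single rational function with the original divisor.

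The paper avoids these difficulties by never decomposing $C$. Instead it runs a global linear-system argument on $X$ in the style of \cite[Lem.~1.4]{Levine-2} and \cite[Lem.~5.5]{BK}: one first arranges $C=(t_0)$ for a section $t_0$ of a very ample $\sL$, then uses Bertini on $X$ (applied simultaneously to all the strata $X_J$, which is where the normal-crossing hypothesis and $k$ infinite enter) to produce auxiliary sections $t_\infty,\ s_\infty,\ s'_0$ whose zero loci are snc subcurves, and finally writes $\nu_*(\divf(f))$ as a $\Z$-linear combination of divisors of the rational functions $(t_0/t_\infty)$ restricted to the snc subcurves $(s'_0)$ and $(s_\infty)$. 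The moving is done ambiently on $X$, so no gluing or modulus bookkeeping along $D$ is ever needed.
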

\begin{proof}
We have remarked previously that the second arrow is an isomorphism.
We shall show that the first arrow is injective using the ideas of
\cite[Lem.~5.5]{BK}, that in turn is inspired by  \cite[Lem.~1.4]{Levine-2}.
We fix a locally closed embedding $X \inj \P^n_k$.
Let $C \subset X$ be a reduced Cartier curve relative to $D$ and
let $f \in k(C)^{\times}$ be such that it is regular and invertible along 
$C \cap D$. We need to show that $\divf(f) \in \sR^{snc}_0(X,D)$.

As in the proof of \cite[Lem.~1.4]{Levine-2}, we can reduce to the case
when there is a very ample line bundle $\sL$ on $X$ and a section
$s \in H^0(X, \sL)$ such that $C = (t_0)$. Since $\sL$ is very ample,
the  Bertini theorem (see the proof of \cite[Lem.~1.3]{Levine-2}
and \cite[Chap.~6]{Jou}, 
and also \cite{GK} for the case where $k$ is not assumed to be perfect)  
now allow us to choose another Cartier divisor $C_{\infty}$ in the linear 
system $H^0(X, \sL)$ such that:
\begin{enumerate}
\item
$C_{\infty} \subset X$ is a snc subcurve.
\item
$C_{\infty} \cap C \cap D = \emptyset$.
\item
$C_{\infty}$ contains no component of $C$.
\end{enumerate}

Denote by $t_{\infty}$ the section of $\sL$ with  $C_\infty = (t_{\infty})$. 
We extend the function $f$ on $C$ to a function $h$ on 
$C_{\infty} \cup C$ by setting $h=f$ on $C$ and $h=1$ on $C_{\infty}$. Notice 
that $h$ is meromorphic on $C_{\infty} \cup C$ and regular
invertible in a neighborhood of $(C_{\infty} \cup C) \cap D$ by (2). Let $S$ 
denote the finite set of points 
$S = \{\text{poles of $f$}\} \cup (C\cap C^{\infty})$. 
Note that $S\cap D = \emptyset$.

We now choose a very ample line bundle $\sM = j^*(\sO_{\P^r_k}(1))$ on $X$, 
corresponding to an embedding $j\colon X \inj \P^r_k$ for $r \gg 0$ of $X$ as 
a locally closed subscheme. Since $k$ is infinite, there is a dense open 
subset $V_X$ of the dual projective space $(\P_k^r)^\vee$ such that for 
$L \in V_X$, the scheme theoretic intersection $L\cdot X = L\times_{\P^r_k} X$ 
satisfies the following list of properties:
\begin{listabc}
	\item $L\cdot X$ is a snc subcurve in $X$,
        \item $L\cdot X \cap (C\cup C_\infty) \cap D = \emptyset$,
	\item $L\cdot X\supset S$,
    \item  $L\cdot Y$ intersects $C_{\infty} \cup C$ in a finite set of points.
\end{listabc}

The hyperplane $L$ corresponds to a section $l$ of the linear system 
$H^0(\P^r_k, \sO_{\P^r}(1))$. Write $s_\infty$ for the global section of 
$\sL$ that is the restriction of $l$ to $X$. Then $L\cdot X = (s_\infty)$. 
Note that we can assume that $s_\infty$ does not have poles on 
$(C\cup C_\infty) \cap D$. 
Let $\ov{X}$ be the closure of $X$ in $\P^r_k$ and let $\sI$ be the ideal 
sheaf of $\ov{X} \setminus X$ in $\P^r_k$.  We can find a section 
$s'_{\infty}$ of the sheaf $\sI \otimes \sO_{\P^N_k}(m)$ for some
$m \gg 0$, which restricts to a section $s_{\infty}$ on $X$ satisfying
the properties (a) - (d) on $X = \ov{X} \setminus V(\sI)$.
This implies in particular that 
$X \setminus (s_{\infty}) = \ov{X} \setminus (s'_{\infty})$ 
is affine. Thus, up to taking a further Veronese embedding of $\P^r_k$ 
(replacing $s_\infty$ with $s_\infty'$) we can assume that (a) - (d) as above as 
well as the following hold:
\\
\hspace*{1cm} e) \ $X \setminus (s_\infty) = X \setminus (L\cdot X)$ is affine.

Consider again the function $h$ on $C \cup C_\infty$. By our choice of $S$, 
$h$ is regular on $(C\cup C_\infty) \setminus S$. By (e) above, $h$ extends to a 
regular function $H$ on the affine open $U = X \setminus (s_{\infty})$. Since 
$H$ is a meromorphic function on $X$ which has poles only along
$(s_{\infty})$, it follows that for $N\gg 0$ the section $H s_\infty^N$ is an 
element of $H^0(U, \sL^{N})$ which extends to a section $s_0$ of $\sL^N$ 
on all of $X$. Since $h$ is regular and invertible at each point of 
$C\cup C_\infty \cap D$ and since $s_\infty$ does not have zeros or poles on 
$C\cup C_\infty \cap D$, it follows that 
$(s_0)\cap (C\cup C_\infty)\cap D = \emptyset$ and (using (d) above) that 
$(s_0)$ does not contain any component of $C\cup C_\infty$. Note that up to 
replacing $s_0$ by $s_0s_\infty^i$, we are free to choose $N$ as large as 
needed.

Write $\sI_{C \cup C_\infty}$ for the ideal sheaf of $C\cup C_\infty$ in $X$. We 
can then find sections $s_1,\ldots, s_m$ of 
$H^0(X, \sL^{N} \otimes \sI_{C \cup C_\infty})$ such that the rational map 
$\phi\colon X\dashrightarrow \P^{m-1}_k$ that they define is a locally closed 
immersion on $X\setminus (C\cup C_\infty)$. In particular, there exists an 
affine open neighbourhood $U_x$ of every 
$x\in X\setminus (C\cup C_\infty)$ where at least one of the $s_i$ is not 
identically zero and where the $k$-algebra $k[s_1/s_i, \ldots, s_m/s_i]$ 
generated by $s_1/s_i, \ldots, s_m/s_i$ coincides with the coordinate ring of 
$U_x$. But then, the same must be true for the algebra  
$k[s_0/s_1, s_1/s_i, \ldots, s_m/s_i]$ obtained by adding the element 
$s_0/s_i$. Hence, the rational map $\psi\colon X \dashrightarrow \P^m_k$ given 
by the sections $(s_0, s_1, \ldots, s_m)$ of $H^0(X, \sL^{N})$ is also a 
locally closed immersion on $X\setminus (C\cup C_\infty)$, and since the base 
locus of the linear system associated to $(s_0, s_1, \ldots, s_m)$  is 
$(s_0)\cap (C\cup C_\infty)$, it is in fact a morphism away from 
$(s_0) \cap (C\cup C_\infty)$.

In particular, $\psi$ is birational (hence separable) to its image so that the linear 
system $V = (s_0, \ldots, s_m)$ is not composite with a pencil. 
Since $X$ is a simple normal crossing divisor, the 
classical Theorem of Bertini (see, for example, \cite[Thm.~I.6.3]{Zar58}
and \cite[Chap.~6]{Jou}) 
shows that a general divisor $E$ in $V$ is a snc subcurve in $X$.
We can therefore assume that there is a global section $s_0'$ of 
$H^0(X, \sL^{N})$ of the form $s_0' = s_0 +\alpha$ with 
$\alpha \in (s_1,\ldots, s_m)$, that satisfies the following properties:
\begin{listabcprime}
\item 
$(s'_0)$ is a snc subcurve in $X$.
\item
$(s'_0) \cap (C \cup C_{\infty}) \cap D = \emptyset$.
\item
$C_{\infty} \cup C$ contains no component of $(s'_0)$.
\end{listabcprime}

We then have 
\[
\frac{s'_0}{s^N_{\infty}} = \frac{Hs^N_{\infty} + (\alpha s^{-N}_{\infty})
s^N_{\infty}}{s^N_{\infty}} = H + \alpha s^{-N}_{\infty} = H', \ (\mbox{say}).
\]

Since $\alpha$ vanishes along $C \cup C_{\infty}$ and $s_{\infty}$ does not 
vanish identically on $U\cap (C\cup C_\infty)$ by (d), it follows that 
$H'_{|{(C \cup C_{\infty}) \cap U}} =
H_{|{(C \cup C_{\infty}) \cap U}} = h_{|U}$. In other words, we have
${s'_0}/{s^N_{\infty}} = h$ as rational functions on $C \cup C_{\infty}$.
We can now compute:

\[
\nu_*({\rm div}(f)) = (s'_0) \cdot C - N (s_\infty) \cdot C
\]
\[ 
0 = {\rm div}(1) = (s'_0) \cdot C_{\infty} - N (s_\infty)\cdot C_{\infty}.
\]

We therefore have
\[
\begin{array}{lll}
\nu_*(\divf(f)) & = & (s'_0) \cdot (C - C_{\infty}) - 
N(s_{\infty})(C - C_{\infty}) \\
& = & (s'_0) \cdot (\divf({t_0}/{t_{\infty}})) - N(s_{\infty}) \cdot
(\divf({t_0}/{t_{\infty}})) \\
& = & \iota_{{s'_0}, *}(\divf(f')) - N \iota_{{s_{\infty}}, *}(\divf(f'')),
\end{array}
\]
where $f' = ({t_0}/{t_{\infty}})|_{(s'_0)} \in 
\sO^{\times}_{(s'_0), D \cap (s'_0)}$ (by (b')) and 
$f'' = (t_0/t_{\infty})|_{(s_\infty)} \in 
\sO_{(s_\infty), D\cap (s_\infty) }^{\times}$ (by (b)). 
Since $(s'_0)$ and $(s_\infty)$ are snc subcurves on $X$, we are done.
\end{proof}

\section{Proof of \thmref{thm:Main-6}}
\label{sec:high-dim}
In this section, we study groups of 0-cycles on higher
dimensional normal crossing varieties and prove \thmref{thm:Main-6}.
We begin with a moving lemma from \cite{BK} and \cite{GKR}.

\subsection{A moving lemma}\label{sec:ML}
Let $k$ be a perfect field and $X$ an integral smooth projective
$k$-scheme with an effective Cartier divisor $D \subset X$.
We shall use the following condition in some results of this section.

\begin{equation}\label{eqn:star}
  \mbox{The canonical map $\CH^{LW}_0(S_X) \surj \CH_0^{BK}(S_X)$ is an isomorphism}.
  \hspace*{2cm}
\end{equation}

\vskip .2cm

The question as to when this condition is satisfied is a subtle one.
We summarize here some known facts about it. 
\begin{lem} \label{lem:comparisonLWlci}
The condition ~\eqref{eqn:star} holds in the following cases.
\begin{enumerate}
    \item $X$ is affine and $k$ is algebraically closed.
    \item ${\rm char}(k)=0$ and $X$ is projective.
    \item $\dim(X)\leq 2$ and $k$ is infinite.
    \item $k$ is algebraically closed and $D$ is reduced.
\end{enumerate}
\begin{proof}
Items (1) and (2) are respectively, the items (2) and (3) of
\cite[Thm.~3.17]{BK}. The item (2) is \cite[Cor.~7.8]{BKS} and
(4) follows from \cite[Thm.~6.6]{Krishna-2}.
\end{proof}
\end{lem}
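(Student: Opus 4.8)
The plan is to observe that \eqref{eqn:star} concerns only the reduced quasi-projective scheme $S_X = S(X,D)$ recalled in \S~\ref{sec:double-construction}, whose singular locus is precisely $D$ and which is affine (resp.\ projective) whenever $X$ is, with $\dim S_X = \dim X$. Consequently each of the four cases reduces to a known comparison between the Levine-Weibel and the lci Chow groups of a fixed reduced scheme, and the proof is a matter of matching each geometric hypothesis to the appropriate statement in the literature. So the strategy is simply: verify that $S_X$ lands in the relevant class of schemes in each case, and then cite.

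Concretely, for (1): since $X$ affine forces $S_X$ affine and $k$ is algebraically closed, I would quote \cite[Thm.~3.17]{BK}, which gives $\CH^{LW}_0(Y) \xrightarrow{\cong} \CH^{BK}_0(Y)$ for every reduced affine $Y$ over an algebraically closed field. For (2): $X$ projective forces $S_X$ projective, and in characteristic zero the same reference \cite[Thm.~3.17]{BK} yields the isomorphism for reduced projective schemes (one could alternatively invoke resolution of singularities to land in an already-settled case). For (3): $\dim S_X = \dim X \le 2$, and the surjection $\CH^{LW}_0(Z) \surj \CH^{BK}_0(Z)$ is an isomorphism for any reduced quasi-projective surface $Z$ over an infinite field by \cite[Cor.~7.8]{BKS} --- this is the same input already used in \eqref{eqn:cyc-cl-map}. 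For (4): $D$ reduced means the singular locus of $S_X$ is the reduced divisor $D$, so the hypotheses of \lemref{lem:LW-lci*} (i.e.\ \cite[Thm.~6.5]{Krishna-2}) are met and $\delta_{S_X}$ is an isomorphism; note that that result holds over any field, in particular over an algebraically closed one, and that reducedness of $D$ enters there in an essential way.

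There is no genuine mathematical obstacle here: the content of the lemma is bookkeeping. The only point needing a little care --- and the step I would single out as the ``main'' one --- is that the cited theorems are phrased for arbitrary reduced schemes (or for surfaces) rather than directly for the double $S_X$, so before invoking each of them I would check, all of this being immediate from the construction of $S_X$, that $S_X$ indeed lies in the stated class (reduced; affine, projective, or of dimension $\le 2$; singular locus reduced). With these verifications in place, the four cases follow at once from the four cited results.
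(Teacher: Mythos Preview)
Your proposal is correct and follows essentially the same approach as the paper: reduce each case to a known comparison result for $S_X$ by noting that $S_X$ inherits the relevant property (affine, projective, dimension $\le 2$, reduced singular locus) from $X$, then cite \cite[Thm.~3.17]{BK} for (1) and (2), \cite[Cor.~7.8]{BKS} for (3), and \cite{Krishna-2} for (4). The only cosmetic difference is that the paper cites \cite[Thm.~6.6]{Krishna-2} for (4) whereas you invoke \cite[Thm.~6.5]{Krishna-2} via \lemref{lem:LW-lci*}; both point to the same underlying result.
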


Let $A \subset D$ be a closed subscheme of dimension 
at most $\dim(X) - 2$.
Let $\sR^{\rm mod}_0(X|D) \subset \sZ_0(X, D)$ be the subgroup generated by
$\divf(f)$, where $C \subset X$ is an integral curve
and $f \in k(C)^{\times}$ is such that the following hold.
\begin{enumerate}
\item
$C \not\subset D$.
\item
$C$ is regular along $D$.
\item
$C \cap A = \emptyset$.
\item
$f \in {\rm Ker}(\sO^{\times}_{C,\nu^*(D)} \surj \sO^{\times}_{\nu^*(D)})$, where
$\nu: C \inj X$ is the inclusion.
\end{enumerate}

Let $\CH^{\rm mod}_0(X|D)$ be the quotient of $\sZ_0(X, D)$ by 
$\sR^{\rm mod}_0(X|D)$. There is an evident surjection
$\CH^{\rm mod}_0(X|D) \surj \CH_0(X|D)$.

\begin{lem}\label{lem:reg-mod}
  Assume that $k$ is infinite and the condition ~\eqref{eqn:star} holds. Then the map
\[
\CH^{\rm mod}_0(X|D) \surj \CH_0(X|D)
\]
is an isomorphism.
\end{lem}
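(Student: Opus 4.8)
The plan is to show that $\sR^{\rm mod}_0(X|D)$ already contains all the defining relations of $\CH_0(X|D)$, so that the evident surjection $\CH^{\rm mod}_0(X|D) \surj \CH_0(X|D)$ is injective as well. Thus I need to take an arbitrary integral curve $C' \subset X$ with $C' \not\subset D$ and a rational function $f' \in G(C'^N, (\phi_{C'})^*D)$ on its normalization $\phi_{C'}\colon C'^N \to X$, and rewrite $\divf(f')$ modulo $\sR^{\rm mod}_0(X|D)$. The three features I have to arrange for the curve are: it should be regular along $D$ (not merely regular after normalization), it should avoid the fixed closed subscheme $A$ of codimension $\ge 2$ in $D$, and the rational function should still satisfy the modulus condition with respect to $\nu^*(D)$.

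First I would reduce, as in the proof of \thmref{thm:M1}, to the case where $C$ is already embedded: factor $\phi_{C'}$ through a closed immersion $C'^N \inj \P^n_X$ followed by the projection $\pi\colon \P^n_X \to X$, replace $X$ by $\P^n_X$ and $D$ by $\P^n_D$ (noting $\pi$ is smooth, so ~\eqref{eqn:star} is preserved for the new pair by \lemref{lem:comparisonLWlci}, and $A$ is replaced by $\P^n_A$ which still has codimension $\ge 2$), and use that $\pi_*$ carries $\sR^{\rm mod}_0$ into $\sR^{\rm mod}_0$ and $\sR_0(X|D)$ into $\sR_0(X|D)$ by the covariant functoriality of both theories. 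So I may assume $C \subset X$ is a regular (hence smooth, $k$ perfect) integral curve, and $f \in k(C)^\times$ satisfies the modulus condition along $E := C \times_X D$, and I must show $\divf(f) \in \sR^{\rm mod}_0(X|D)$. The support $|\divf(f)|$ is a finite set of closed points of $X \setminus D$, and the issue is that $C$ may be non-regular at some points of $D$ (it is automatically regular at points of $E$ only in the sense coming from its own regularity — actually since $C$ is already regular, condition (2) holds), and more seriously $C$ may meet $A$. The key tool is the moving/Bertini argument underlying \lemref{lem:reg-mod}'s hypothesis: using the double $S_X$ and the isomorphism ~\eqref{eqn:star}, together with the exact sequence ~\eqref{eqn:doubl-ex-seq} identifying $\CH_0(X|D)$ with a subgroup of $\CH^{BK}_0(S_X) \cong \CH^{LW}_0(S_X)$, I transport the problem to a moving lemma for Levine-Weibel cycles on the singular variety $S_X$, where the relevant statement is the moving lemma of \cite{BK} (and \cite{GKR}) cited in \S~\ref{sec:ML}: on $S_X$ one may move a zero-cycle of degree zero to lie on an lci curve avoiding a prescribed closed subset of codimension $\ge 2$ in the singular locus $D$, and one may arrange the curve to be a Cartier curve meeting $D$ nicely. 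Pulling this back across $p_{+,*}$ and using that $\iota^*_-$ splits off $\CH_0(X)$, a relation on $S_X$ supported on an admissible curve descends to a relation in $\sR^{\rm mod}_0(X|D)$.

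Concretely the steps in order: (i) reduce to $C$ embedded and regular via the projective bundle trick; (ii) push the cycle $\divf(f)$, which is degree-zero on each component, into $\CH^{BK}_0(S_X)$ via $p_{+,*}$ and invoke ~\eqref{eqn:star} to regard it in $\CH^{LW}_0(S_X)$; (iii) apply the moving lemma from \cite{BK}/\cite{GKR} to write its image as a sum of divisors of rational functions on Cartier curves in $S_X$ that are disjoint from $p_{+}(A)$ and are, away from $X_{-}$, honestly curves in $X$ regular along $D$; (iv) pull these curves and functions back to $X_+ \cong X$ using $\iota_+$, check that the modulus condition along $\nu^*(D)$ is inherited (this is where the precise matching of the relative units $\sO^\times_{C, E}$ on $S_X$ with those on $X$, already used in the exact sequence ~\eqref{eqn:doubl-ex-seq}, is needed), and conclude $\divf(f) \in \sR^{\rm mod}_0(X|D)$; (v) finally observe the degree-zero restriction is harmless, since one can always add and subtract a fixed regular closed point avoiding $A$ to reduce to the degree-zero case, that extra point being moved by a line in a projective space.

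The main obstacle I expect is step (iii)–(iv): making the moving lemma on $S_X$ respect the modulus structure, i.e. producing moving curves whose restriction to the component $X_+$ is regular \emph{along all of} $D$ (not just generically transverse to it) and on which the moved rational function still lies in $\Ker(\sO^\times_{C,\nu^*D} \to \sO^\times_{\nu^*D})$ rather than merely in $\Ker$ of the reduced version. This is precisely the subtlety that distinguishes $\CH_0(X|D)$ from $H^{sus}_0(U)$, so the argument must use the full strength of the \cite{BK}/\cite{GKR} moving lemma (which is designed to keep track of the non-reduced divisor through the double construction) rather than a crude Bertini argument; verifying that the hypothesis ~\eqref{eqn:star} is exactly what is needed to run it, and that $k$ infinite suffices for the Bertini steps inside it, will be the technical heart of the proof.
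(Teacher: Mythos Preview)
You have the right ingredients (the double $S_X$, condition~\eqref{eqn:star}, and the moving lemma of \cite{BK}/\cite{GKR}), but you are organizing them inefficiently and your outline contains gaps. The paper's proof is a two-line diagram chase once one invokes \cite[Prop.~4.3]{GKR} as a black box: that result already provides maps $p_{+,*}\colon \CH^{\rm mod}_0(X|D)\to \CH^{LW}_0(S_X)$ and $\tau^*_X\colon \CH^{LW}_0(S_X)\to \CH^{\rm mod}_0(X|D)$ with $\tau^*_X\circ p_{+,*}=\id$, while \cite[Thm.~7.1]{BK} provides the analogous splitting $p_{+,*}\colon \CH_0(X|D)\hookrightarrow \CH^{BK}_0(S_X)$. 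These fit into a commutative square whose right vertical arrow $\CH^{LW}_0(S_X)\to \CH^{BK}_0(S_X)$ is an isomorphism by~\eqref{eqn:star}, and since both horizontal arrows are split injective, the left vertical arrow is injective. Your steps (ii)--(iv) amount to re-deriving the retraction $\tau^*_X$ landing in $\CH^{\rm mod}_0(X|D)$ from scratch; this is exactly the content of \cite[Prop.~4.3]{GKR}, and once you cite it, your projective bundle reduction (i) and degree manipulation (v) become unnecessary.

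Two concrete gaps in your outline as written: first, your claim that~\eqref{eqn:star} for $(X,D)$ implies~\eqref{eqn:star} for $(\P^n_X,\P^n_D)$ ``by \lemref{lem:comparisonLWlci}'' is not justified --- that lemma lists sufficient conditions for~\eqref{eqn:star}, not a stability statement under smooth pull-back. Second, the push-forward $\pi_*\colon \sZ_0(\P^n_X,\P^n_D)\to \sZ_0(X,D)$ need not carry $\sR^{\rm mod}_0(\P^n_X|\P^n_D)$ into $\sR^{\rm mod}_0(X|D)$: the image under $\pi$ of a curve regular along $\P^n_D$ and avoiding $\P^n_A$ has no reason to be regular along $D$ or to avoid $A$. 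So the projective bundle trick, while natural in other contexts of this paper, does not interact well with the refined relations defining $\CH^{\rm mod}_0$.
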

\begin{proof}
This is a direct consequence of \cite[Prop.~4.3]{GKR}
(which is \cite[Cor.~7.2]{BK} when $A = \emptyset$).
By  \cite[Prop.~4.3]{GKR}, there are maps $\tau^*_X \colon
\CH^{LW}_0(S_X) \to \CH^{\rm mod}_0(X|D)$ and $p_{+,*} \colon \CH^{\rm mod}_0(X|D) \to
\CH^{LW}_0(S_X)$ such that $\tau^*_X \circ p_{+,*}$ is identity.
By \cite[Thm.~7.1]{BK}, there are maps
$\tau^*_X \colon
\CH^{BK}_0(S_X) \to \CH_0(X|D)$ and $p_{+,*} \colon \CH_0(X|D) \to
\CH^{BK}_0(S_X)$ such that $\tau^*_X \circ p_{+,*}$ is identity.
In particular, the horizontal arrows in the commutative diagram
\begin{equation}\label{eqn:reg-mod-0}
  \xymatrix@C.8pc{
\CH^{\rm mod}_0(X|D) \ar[r]^-{p_{+,*}} \ar@{->>}[d]_-{can} & 
\CH^{LW}_0(S_X) \ar@{->>}[d]^{can} \\
\CH_0(X|D) \ar[r]^-{p_{+,*}} & \CH_0^{BK}(S_X)}
\end{equation}
are (split) injective.
The right vertical arrow is an isomorphism under ~\eqref{eqn:star}.
The lemma now follows.  
\end{proof}

\subsection{Generalization of \lemref{lem:NC-surface-4} in higher 
dimensions}\label{sec:Spl-fiber}
Let $k$ be a perfect field and $X$ a normal crossing variety of dimension
$d \ge 1$ over $k$ (see Definition~\ref{defn:ncg}). 
We shall now generalize
\lemref{lem:NC-surface-4} to higher dimensions. We shall use
the notations of \S~\ref{sec:CMLW} in the proof.

Recall that for a $k$-scheme $Y$ and a point $y \in Y$, the embedding
dimension of $Y$ at $y$ is $\edim_y(Y) =
\dim_{k(y)}(\Omega^1_{Y/k}\otimes _{\sO_Y} k(y))$,
where $\Omega^1_{Y/k}$ is the Zariski sheaf of K{\"a}hler differentials on $Y$.
For any integer $e \ge 0$, we let $Y_e$ denote the subscheme
of points $y \in Y$ such that ${\rm edim}_y(Y) = e$.
Since $y \mapsto {\rm edim}_y(Y)$ is an upper semi-continuous function 
on $Y$ (see \cite[Example~III.12.7.2]{Hartshorne}), 
it follows that $Y_e$ is a locally closed subscheme of $Y$. 
We let ${\rm edim}(Y) = {\underset{e \ge 0}{\rm max}}
\{e + \dim(Y_e)\}$.

\begin{thm}\label{thm:Reg-mod-LW}
  Assume that the condition ~\eqref{eqn:star} holds
  (e.g., $k$ is algebraically closed and 
$D$ is reduced, or  $k$ is infinite and $d \leq 2$).
Then the map $\iota_* \colon \sZ_0(Y, E) \inj \sZ_0(X, D)$ of 
~\eqref{eqn:0-cycle-incl} descends to a group 
homomorphism 
\[
\iota_* \colon \CH_0(Y|E) \to \CH^{LW}_0(X).
\]
\end{thm}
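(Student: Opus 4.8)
The goal is to show that the inclusion $\iota_* \colon \sZ_0(Y,E) \inj \sZ_0(X,D)$ sends $\sR_0(Y|E)$ (the modulus rational equivalences on the component $Y$ with modulus $E$) into $\sR^{LW}_0(X)$. The strategy is a dévissage that reduces to the surface case $d=2$ already established in \lemref{lem:NC-surface-4}, using the moving lemma of \S\ref{sec:ML} to pass from arbitrary modulus rational equivalences to ones supported on curves that are regular along $D$ and miss a chosen codimension-$\ge 2$ locus. First I would invoke \lemref{lem:reg-mod}: under hypothesis \eqref{eqn:star} the canonical surjection $\CH^{\rm mod}_0(Y|E) \surj \CH_0(Y|E)$ is an isomorphism (applied to the smooth component $Y$ with its snc divisor $E$, taking $A \subset E$ to be, say, a suitable singular locus of the relevant intersections $X_i \cap X_j$ inside $E$, of dimension $\le d-2$). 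So it suffices to show that $\divf(f)$ lands in $\sR^{LW}_0(X)$ whenever $\nu \colon C \inj Y$ is an integral curve, regular along $E$, with $C \cap A = \emptyset$, and $f \in \Ker(\sO^\times_{C,\nu^*E} \surj \sO^\times_{\nu^*E})$.

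\textbf{Key steps.} Given such a curve $C \subset Y$, I would cut down by a Bertini/general-position argument inside a fixed projective embedding $X \inj \P^N_k$: choose a sequence of hypersurface sections so that the resulting $2$-dimensional normal crossing subvariety $X' \subset X$ contains $C$, contains no irreducible component of $D$, meets $X_{\rm sing}$ properly, and has $X' \cap Y$ a smooth surface component $Y'$ of $X'$ containing $C$ with $X' \cap E$ playing the role of the snc divisor. Here I would lean on \cite[Thm.~7]{AK} together with \cite[Cor.~3.13]{GK} (to keep things integral/reduced and to handle possibly imperfect base fields), exactly as in the proofs of \lemref{lem:Kernel-mod} and \thmref{thm:M1}. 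The point of the reduction to dimension two is that $C \cap A = \emptyset$ and $C$ regular along $E$ guarantee that the generic hypersurface section avoids the bad loci and that $f$, viewed on $C \subset X'$, still satisfies the modulus condition for $X' \cap E$. By the $d=2$ case \lemref{lem:NC-surface-4}, $\iota'_*(\divf(f)) \in \CH_0(X')$ is a class in the image of $\CH_0(Y'|E')$ inside $\CH^{BK}_0(X')$ (equivalently, by \propref{prop:snc-surface}, inside $\CH^{LW}_0(X')$, since for surfaces these agree). Finally I would push forward along the inclusion $X' \inj X$: the functoriality of the Levine-Weibel Chow group for such closed immersions (via proper pushforward of the associated Cartier curves, cf.\ \S\ref{ssec:0-cycles-singschemes}) carries $\sR^{LW}_0(X') \to \sR^{LW}_0(X)$, and one checks that the composite $\sZ_0(Y,E) \inj \sZ_0(X',D') \to \sZ_0(X,D)$ agrees with $\iota_*$ on the cycles in question. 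Hence $\iota_*(\divf(f)) \in \sR^{LW}_0(X)$, which is exactly what is needed.

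\textbf{The main obstacle.} The delicate point is the Bertini step: one must produce the $2$-dimensional slice $X'$ that simultaneously (i) contains the given curve $C$, (ii) is itself a normal crossing surface (so that $\iota'_*$ makes sense and \lemref{lem:NC-surface-4} applies), (iii) has $X' \cap E$ an snc divisor on the component $Y' = X' \cap Y$, and (iv) does not contain any component of $D = X_{\rm sing}$, all while the modulus function $f$ remains admissible after restriction. Controlling the intersection of the slice with the strata $X_i \cap X_j$ and with $D$ is where the condition $\dim A \le d-2$ and the regularity of $C$ along $E$ from \lemref{lem:reg-mod} are essential — they are precisely what let us apply the connectedness/irreducibility refinements of \cite{GK} rather than just the classical Bertini theorem, and they ensure the slice meets $E$ transversally in the regular locus. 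Once this slicing is set up cleanly, the rest is bookkeeping with pushforwards and the already-proven surface statement. I would also record the degenerate observation that when $d=1$ the claim is immediate from \lemref{lem:NC-surface-3} and when $d=2$ it is \lemref{lem:NC-surface-4}, so the induction is genuinely only needed for $d \ge 3$.
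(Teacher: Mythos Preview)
Your overall architecture --- invoke \lemref{lem:reg-mod} to assume the curve is regular along $E$ and misses $E_{\rm sing}$, then slice by Bertini hypersurfaces containing $C$ down to a normal crossing surface and apply \lemref{lem:NC-surface-4}, then push forward --- is exactly the paper's strategy. But one essential step is missing, and without it the Bertini argument breaks.

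After \lemref{lem:reg-mod} the curve $C$ is only known to be regular in a neighborhood of $C\cap D$; it may still have singular points in $Y\setminus E$. The Bertini theorems you invoke for hypersurfaces \emph{containing} a subscheme (\cite[Thm.~7]{AK}, \cite[Thm.~3.12]{GK}) require an embedding-dimension bound of the form $\edim(C)<\dim(Y)$; a singular point $p$ of $C$ can have $\dim_{k(p)}(\Omega^1_{C/k}\otimes k(p))$ as large as $d$, so this bound can fail and no smooth hypersurface section through $C$ need exist. The paper repairs this by first blowing up $X$ at the finite set $T=C_{\rm sing}$ of singular points of $C$. Since $T\cap D=\emptyset$ (by the regularity along $D$ just obtained), these blow-ups take place entirely in $Y_{\rm reg}\subset X_{\rm reg}$, so the resulting $X'$ is still a normal crossing variety with $X'_{\rm sing}\cong D$, and the strict transform $C'$ is now regular with $\edim(C')=2<d$. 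Only then does one apply the Bertini theorem in $X'$ to produce a normal crossing slice $W'$ of dimension $d-1$ containing $C'$; the conclusion is pushed forward first along the complete-intersection closed immersion $W'\hookrightarrow X'$ and then along the blow-down $\pi\colon X'\to X$ (using \cite[Lem.~3.10, Lem.~3.18]{BK} respectively).

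A secondary point: the paper argues by induction on $d$, cutting by a single hypersurface to reach dimension $d-1$ and invoking the theorem there, rather than slicing all the way to a surface in one shot. This is cosmetic, but it avoids having to verify by hand that every intermediate slice remains a normal crossing variety.
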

\begin{proof}
We shall prove the theorem by induction on $d$.
The case $d \le 2$ follows from Lemmas~\ref{lem:NC-surface-3} and
~\ref{lem:NC-surface-4} (and in this case the result is unconditional). 
So we assume $d \ge 3$. 
Let $C \subset Y$ be an integral curve not contained in $E$
and let $f \in {\rm Ker}(\sO^{\times}_{C^N,\nu^*(E)} \to \sO^{\times}_{\nu^*(E)})$,
where $\nu: C^N \to X$ is the canonical map.
We need to show that $\divf(f) \in \sR^{LW}_0(X,D)$.

By \lemref{lem:reg-mod}, we can assume that $C$ is regular in a
neighborhood of $C \cap D$ so that $\nu:C^N \to X$ is a closed embedding
in a neighborhood of $C \cap D$ and $C \cap D_{\rm sing} = C \cap E_{\rm sing} = 
\emptyset$. Note here that $E \subset Y$ is a simple normal crossing divisor.
In particular, it is reduced.

Let $T$ be the set of singular points of $C$. Since $C$ is 
regular along $D$, we must have $T \cap Z = \emptyset$,
where we let $Z = \ov{X \setminus Y}$.
We can find a finite sequence of blow-ups $\pi: X' \to X$ over the 
points of $T$ such that the strict transform $C'$ of $C$ is regular and
it intersects the exceptional divisor $F$ transversely in the regular locus of
$F$. As $T \cap Z = \emptyset$, the map $\pi^{-1}(Z) \to Z$ is an isomorphism.
In particular, $X'$ is a normal crossing variety of dimension $d$ over $k$.
Let $Y'$ be the strict transform of $Y$ under $\pi$
and let $E' = F + E$.
Then $E'$ is a simple normal crossing divisor on $Y'$ and $C'$ intersects
$E'$ transversely in its regular locus.
Note also that $D':= X'_\sing \cong D$.

We now choose a locally closed embedding $X' \inj \P^N_k$. 
We then have:
\begin{enumerate}
\item
${\rm edim}(C') = 2 < \dim(Y')$.
\item
${\rm edim}(C' \cap Y_i) + \dim(C \cap Y_i) \le 1 < \dim(Y_i)$ for every irreducible
component $Y_i \neq Y'$ of $X'$.
\item
${\rm edim}(C' \cap D_i) + \dim(C' \cap D_i) \le 1 < \dim(D_i)$ for every irreducible
component $D_i$ of $D'$. 
\item
$C' \cap D'_{\rm sing} = \emptyset$.
\end{enumerate}

Let $\ov{C'}$ denote the scheme theoretic closure of $C'$ in $\P^N_k$ and let
$\sI$ be the sheaf of ideals on $\P^N_k$ defining $\ov{C'}$.
Let $\{X_1, \ldots , X_r\}$ be the set of irreducible components of $X$
with $X_1 = Y$. Then $\{Y', X_2, \ldots , X_r\}$ is the set of irreducible
components of $X'$.
Let $\sW$ be the collection of all subschemes of $X'$ such that
$W \subset X'$ lies in $\sW$ if and only if it is an irreducible component
of $X'_J$ for some $J \subset [1,n]$ (see \S~\ref{sec:CMLW}).
Note that any such $W$ is integral and smooth.

If $k$ is infinite, we can use (1) - (4) and \cite[Thm.~3.12]{GK} to  
find $d \gg 0$ such that a general member $H$ (which is defined over $k$) of 
the linear system $|H^0(\P^N_k, \sI(d))|$ has the property that
$H \cap W$ is irreducible and smooth for every $W \in \sW$. 
In particular, it is integral.
If $k$ is finite, we can use \cite[Prop.~5.2, Thm.~8.4]{GK} to find a
hypersurface $H \in |H^0(\P^N_k, \sI(d))|(k)$ for all $d \gg 0$ such that
$H \cap W$ is irreducible and smooth for every $W \in \sW$. 
In particular, it is integral.

We choose one hypersurface $H$ as above and let $W' = X' \cap H$. Then
it is clear that
$W'$ is a normal crossing variety of dimension $d-1$ with irreducible
components $\{Y' \cap W', X_2 \cap W', \ldots , X_r \cap W'\}$.
We let $V' = Y' \cap W'$. 
By induction, it follows that $\divf(f) \in \sR^{LW}_0(W', D' \cap W')$.
Since $\psi \colon W' \inj X'$ is a complete intersection closed subscheme
such that $W'_{\rm sing} = X'_{\rm sing} \cap W' = D' \cap W'$, there is a 
push-forward map (e.g., see \cite[Lem.~3.10]{BK})
$\psi_* \colon \sZ_0(W', D' \cap W') \to \sZ_0(X',D')$
such that $\psi_*(\sR^{LW}_0(W', D' \cap W')) \subset \sR^{LW}_0(X',D')$.
It follows that $\divf(f) \in \sR^{LW}_0(X',D')$.

Finally, as $\pi \colon X' \to X$ is a finite sequence of point blow-ups away 
from $D = X_{\rm sing}$,
there is a push-forward map $\pi_* \colon \sZ_0(X', D') \to \sZ_0(X,D)$
such that $\pi_*(\sR^{LW}_0(X', D')) \subset \sR^{LW}_0(X,D)$
(see \cite[Lem.~3.18]{BK}). We conclude that
$\divf(f) = \divf(f)_C = \pi_*(\divf(f)_{C'}) \in 
\sR^{LW}_0(X,D)$. This concludes the proof.
\end{proof}

\subsection{Proof of \thmref{thm:Main-6}}\label{sec:Final}
Let $k$ be an infinite perfect field 
and $X$ a projective normal crossing variety of dimension $d \ge 0$ over $k$.
We write $D$ for $X_\sing$.

We shall now prove our final result which identifies the
motivic cohomology in the bi-degree $(2d,d)$ of $X$ to its Levine-Weibel Chow 
group. One consequence of this is that it allows us to deduce that the
main result of \cite{BKres}, which proved a restriction isomorphism for
relative 0-cycles in a projective and flat family over a Henselian 
discrete valuation
ring, recovers the earlier result of \cite{EKW} when the family is semi-stable,
i.e., the reduced special fiber is a simple normal crossing divisor.
This question was then raised by some of the authors of \cite{EKW}.
The second consequence is that it allows one to describe the
motivic cohomology of a normal crossing variety in terms of 0-cycles 
supported on $X_\reg$ and only one type of relations, unlike in the
description given in \cite{EKW} which uses two types of relations
(see below).

We shall need one more ingredient: a new Chow group of 0-cycles
introduced in \cite[\S~2]{EKW}. 
Let $\sR^{EKW}_0(X,D) \subset \sZ_0(X,D)$ be the subgroup generated
by $\divf(f)$, where $f \in k(C)^{\times}$ is a rational function on a curve 
$C \subset X$ such that the pair $(C,f)$ satisfies either of the conditions 
(1) and (2) below.
\begin{enumerate}
\item
$C$ is an integral curve not contained in $D$ with normalization
$\nu: C^N \to C \inj X$ and $f \in \sO^{\times}_{C^N, \nu^*(D)}$ such that 
$f(x) = 1$ for all $x \in \nu^*(D)$.
\item
$C \subset X$ is a snc subcurve and 
$f \in \sO^{\times}_{C, (C\cap D)}$.
\end{enumerate}

We let $\CH^{EKW}_0(X)$ denote the quotient of $\sZ_0(X,D)$ by the subgroup
$\sR^{EKW}_0(X,D)$. This group was defined in \cite[\S~2]{EKW}, where
it was denoted by $\sC(X)$. One of the main computational results of 
\cite{EKW} is the comparison between the group $\CH^{EKW}_0(X)[\tfrac{1}{p}]$ and 
the cdh motivic cohomology in bi-degree $(2d,d)$ of the normal crossing variety 
$X$, where $p$ is the exponential characteristic of $k$, see \cite[Thm.~7.1]{EKW}. 
There is a canonical surjection
\begin{equation}\label{eqn:surjSNC-EKW}
    \CH_0^{snc}(X) \surj \CH_0^{EKW}(X)
\end{equation}
since both groups are quotients of $\sZ_0(X,D)$.

\vskip .2cm

We now prove \thmref{thm:Main-6}. We restate it for convenience.

\begin{thm}\label{thm:EKW-LW}
  Assume that the condition ~\eqref{eqn:star} holds.
  Then the map $\lambda_X \colon \CH^{LW}_0(X)_\Lambda \to H^{2d}(X, \Lambda(d))$
  is an isomorphism in the following cases.
  \begin{enumerate}
    \item
      ${\rm char}(k) = p > 0$ and $\Lambda =
      \Z[\tfrac{1}{p}]$.
     \item
     $m \in k^{\times}$ and $\Lambda = {\Z}/m$. 
    \end{enumerate}
  \end{thm}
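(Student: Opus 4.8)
The plan is to reduce Theorem~\ref{thm:EKW-LW} to the result of Esnault--Kerz--Wittenberg via the group $\CH^{EKW}_0(X)$. Recall that by \cite[Thm.~7.1]{EKW}, the composite $\lambda^{EKW}_X \colon \CH^{EKW}_0(X)_\Lambda \to H^{2d}(X, \Lambda(d))$ (defined via the closed point cycle classes, compatibly with $\lambda_X$) is an isomorphism under our hypotheses on $\Lambda$. There are canonical surjections
\[
\CH^{snc}_0(X) \surj \CH^{EKW}_0(X) \quad\text{and}\quad \CH^{snc}_0(X) \surj \CH^{LW}_0(X),
\]
the first from \eqref{eqn:surjSNC-EKW}, the second because a snc subcurve is a Cartier curve (Lemma~\ref{lem:snc-curve-reg}). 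Both are compatible with the maps to $H^{2d}(X,\Lambda(d))$ since all of them are induced by the identity of $\sZ_0(X,D)$ on the level of cycle groups. Since $\lambda^{EKW}_X$ is already an isomorphism, the composite $\CH^{snc}_0(X)_\Lambda \surj \CH^{LW}_0(X)_\Lambda \xrightarrow{\lambda_X} H^{2d}(X,\Lambda(d))$ is surjective, hence $\lambda_X$ is surjective. So the entire content is to produce a \emph{surjective} map $\CH^{EKW}_0(X)_\Lambda \surj \CH^{LW}_0(X)_\Lambda$ splitting the surjection $\CH^{snc}_0(X)\surj \CH^{EKW}_0(X)$ in the sense that the triangle commutes; equivalently, to show that the relation subgroup $\sR^{EKW}_0(X,D)$ maps into $\sR^{LW}_0(X,D)$ (after tensoring with $\Lambda$). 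Then $\lambda_X$ factors the isomorphism $\lambda^{EKW}_X$ through the surjection $\CH^{EKW}_0(X)_\Lambda \surj \CH^{LW}_0(X)_\Lambda$, forcing the latter to be an isomorphism and $\lambda_X$ to be injective.

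First I would handle relation type (2) in the definition of $\sR^{EKW}_0(X,D)$: these come from rational functions on snc subcurves $C\subset X$, and since a snc subcurve is a Cartier curve by Lemma~\ref{lem:snc-curve-reg}, such a $\divf(f)$ lies in $\sR^{LW}_0(X,D)$ essentially by definition. The real work is relation type (1): an integral curve $C\subset X$ not contained in $D$, with normalization $\nu\colon C^N\to C \inj X$, and $f\in \sO^\times_{C^N,\nu^*(D)}$ with $f\equiv 1$ on $\nu^*(D)$; one must show $\nu_*(\divf(f))\in \sR^{LW}_0(X,D)$ modulo $\Lambda$-torsion issues. Here is where Theorem~\ref{thm:Reg-mod-LW} enters: restrict attention to the component-wise picture. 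If $C$ lies in a single irreducible component $Y$ of $X$ and meets no other component, then by Lemma~\ref{lem:snc-curve-reg}(3) $C$ (or rather its regular model) is a curve in the smooth variety $Y$, and the condition on $f$ exactly says $f$ defines a class with modulus $E = \ov{X\setminus Y}\cap Y$; Theorem~\ref{thm:Reg-mod-LW} gives the push-forward $\iota_*\colon \CH_0(Y|E)\to \CH^{LW}_0(X)$, so $\divf(f)$ dies in $\CH^{LW}_0(X)$. The general case of a type (1) curve $C$ meeting several components is reduced to this one: I would perform a sequence of blow-ups with centers in $C\cap D$ (using \cite[Prop.~A.6]{SaitoSatoAnnals} or a variant, together with Miyazaki's theorem \cite{Miyazaki} to keep the pulled-back divisor reduced) to make the strict transform of $C$ transverse to $D_\red$ and supported component-wise, all the while using the push-forward compatibility of $\sR^{LW}_0$ under such blow-ups (as in the proof of Theorem~\ref{thm:Reg-mod-LW}) and under the projection $\P^n_X\to X$ when we embed $C$.

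The main obstacle I anticipate is precisely the bookkeeping in handling a type (1) curve $C$ that is \emph{not} contained in a single component and passes through the singular locus $D$ in a complicated way: one needs to simultaneously (a) resolve the singularities of $C$ away from $D$ by point blow-ups, (b) arrange transversality of the strict transform to $D_\red$, (c) control what happens to the divisor $\nu^*(D)$ and the condition $f\equiv 1$ there under these modifications, invoking Miyazaki's theorem to replace a non-reduced pulled-back modulus by a reduced one at the cost of inverting $p$ (hence the appearance of $\Lambda = \Z[\tfrac1p]$ or $\Z/m$), and (d) keep track of the compatibility of all the push-forward maps on $\sR^{LW}_0$ through the tower $C^N \to \P^n_X \to X$ and through the blow-ups. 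A secondary subtlety is making sure the map $\CH^{EKW}_0(X)\to \CH^{LW}_0(X)$ is well-defined with \emph{integral} coefficients when $k$ admits resolution of singularities, and only requires inverting $p$ otherwise; this mirrors the dichotomy already present in \cite[Thm.~7.1]{EKW} and in our standing convention on $\Lambda$. Once the relation inclusion $\sR^{EKW}_0(X,D)_\Lambda \subseteq \sR^{LW}_0(X,D)_\Lambda$ is established, the conclusion is formal: $\lambda^{EKW}_X$ factors as $\CH^{EKW}_0(X)_\Lambda \surj \CH^{LW}_0(X)_\Lambda \xrightarrow{\lambda_X} H^{2d}(X,\Lambda(d))$ with the first map surjective and the composite an isomorphism, whence $\lambda_X$ is an isomorphism and, as a bonus, $\CH^{EKW}_0(X)_\Lambda \xrightarrow{\cong} \CH^{LW}_0(X)_\Lambda$.
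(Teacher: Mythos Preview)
Your overall strategy matches the paper's exactly: construct a surjection $\psi_X \colon \CH^{EKW}_0(X)_\Lambda \surj \CH^{LW}_0(X)_\Lambda$ compatible with the cycle-class maps to motivic cohomology, and then invoke \cite[Thm.~7.1]{EKW}. Your treatment of type~(2) relations via Lemma~\ref{lem:snc-curve-reg} is correct and is what the paper does.

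Your treatment of type~(1) relations, however, is over-complicated because you miss a simple observation that the paper makes explicitly. A type~(1) curve $C$ is by hypothesis \emph{integral}, hence irreducible; its generic point lies in exactly one irreducible component $Y$ of $X$, so $C \subset Y$. There is no ``general case of a type~(1) curve not contained in a single component'': that case is empty, and the extra layer of blow-ups you propose for it is unnecessary. (Your appeal to Lemma~\ref{lem:snc-curve-reg}(3) is also misplaced: that lemma is about snc subcurves, not arbitrary integral curves.)

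Once $C \subset Y$, there is a second point you blur. The type~(1) condition is that $f(x)=1$ for every $x \in \nu^*(D)$; since $C \cap D = C \cap E$ where $E = Y \cap \overline{X\setminus Y}$, this is exactly a \emph{Suslin} relation for $Y \setminus E$, not a priori a \emph{modulus} relation for $(Y,E)$ as you assert. The bridge is Theorem~\ref{thm:M1} (= Theorem~\ref{thm:Main-2-fin}): because $X$ is a normal crossing variety, $E$ is a simple normal crossing divisor on the smooth projective $Y$, so its irreducible components are regular and $\lambda_{Y|E} \colon \CH_0(Y|E)_\Lambda \to H^{sus}_0(Y\setminus E)_\Lambda$ is an isomorphism. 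Hence $\divf(f)$ already dies in $\CH_0(Y|E)_\Lambda$, and Theorem~\ref{thm:Reg-mod-LW} (which uses condition~\eqref{eqn:star}) pushes it into $\sR^{LW}_0(X,D)_\Lambda$. That is the paper's whole argument for type~(1); the Jannsen and Miyazaki inputs were already absorbed in the proof of Theorem~\ref{thm:M1} and need not be re-run here.
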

  \begin{proof}
It is shown in \cite[Thm~7.1]{EKW} that the maps 
\begin{equation}\label{eqn:surjSNC-EKW-0}
\Lambda \xrightarrow{\simeq} H^0(k(x), \Lambda(0)) \to 
H^{2d}_c(X \setminus D, \Lambda(d)) \to H^{2d}(X, \Lambda(d))
\end{equation}
for $x \in X \setminus D$ induce a map
$\gamma_X \colon \CH^{EKW}_0(X)_\Lambda \xrightarrow{\simeq} 
H^{2d}(X, \Lambda(d))$. Moreover, this is an isomorphism. 
We remark here that in op. cit., Definition~\ref{defn:ncs} is used for
normal crossing varieties which is a stronger notion than the one we use.
However, the reader can easily check that the proof of \cite[Thm~7.1]{EKW} 
only uses our weaker notion. It suffices now to show that 
there is a canonical surjection  $\psi_X\colon \CH^{EKW}_0(X)_\Lambda \surj 
\CH^{LW}_0(X)_\Lambda$ such that $\gamma_X = \lambda_X \circ \psi_X$.

To show that the identity map of $\sZ_0(X, D)_\Lambda$ descends to $\psi_X$,
we let $C \subset X$ be a curve and $f \in k(C)^{\times}$ a rational function.
We first observe that if the pair $(C,f)$ is of type (1) above, then
$C$ must be contained in
one and only one irreducible component $Y$ of $X$. Moreover, for
this component $Y$, the intersection $E = D \cap Y$ must be a simple normal
crossing divisor on $Y$.  We now conclude easily from Theorems~\ref{thm:Main-2-fin}
and ~\ref{thm:Reg-mod-LW} that $\psi_{X}(\divf(f)) \in \sR^{LW}_0(X,D)_\Lambda$.
If $(C,f)$ is of type (2) above, then we get 
$\psi_X(\divf(f)) \in \sR^{LW}_0(X,D)_\Lambda$
by \lemref{lem:snc-curve-reg}. We have thus shown that $\psi_X \colon
\CH^{EKW}_0(X)_\Lambda \surj \CH^{LW}_0(X)_\Lambda$ is defined.
It is clear from ~\eqref{eqn:surjSNC-EKW-0} that
$\gamma_X = \lambda_X \circ \psi_X$.
This concludes the proof.
\end{proof}

\vskip .4cm

\noindent\emph{Acknowledgements.}
Parts of this project were worked out during authors' visits to 
HIM at Bonn, Universit{\"a}t Regensburg and University of Tokyo
during past years. The authors thank these institutes for supporting their
visits. The authors are indebted to the anonymous referees for reading the
manuscript thoroughly and suggesting many improvements.


\begin{thebibliography}{99}
\bibitem{Akthar} R. Akthar, {\sl Zero-cycles on varieties over finite
fields\/}, Comm. Alg., {\bf 32}, (2004), 279--294. \

\bibitem{AK} A. Altman, S. Kleiman, {\sl Bertini theorems for hypersurface
sections containing a subscheme\/}, Comm. Alg., {\bf 7}, (1979), no. 8,
775--790. \


\bibitem{BPW} L. Barbieri-Viale, C. Pedrini,  C. Weibel, {\sl Roitman's theorem for
singular complex projective surfaces\/}, Duke Math. J., {\bf 84}, (1996), 155--190. \

\bibitem{BK} F. Binda, A. Krishna, {\sl Zero cycles with modulus and zero
cycles on singular varieties\/}, Comp. Math., {\bf 154}, (2018), 120-187. 


\bibitem{BKres} F. Binda, A. Krishna, {\sl Rigidity for relative $0$-cycles\/},
Ann. Sc. Norm. Sup. Pisa (5), (2021), to appear. \

\bibitem{BKS} F. Binda, A. Krishna, S. Saito, {\sl Bloch's formula for 
$0$-cycles with modulus and higher dimensional class field theory\/}, 
J. Alg. Geom. (to appear), arXiv:2002.01856 [math.AG], (2021). \

\bibitem{BS}  F. Binda, S. Saito, {\sl Relative cycles with moduli and
regulator maps\/}, J. Math. Inst. Jussieu, {\bf 18}, (2019), 1233--1293. \

\bibitem{Bloch-torsion} S. Bloch, {\sl Torsion algebraic cycles and a 
theorem of Roitman\/}, Comp. Math., {\bf 39}, (1979), 107--127. \ 

\bibitem{Cisinski-Deglise} D.-C. Cisinski, F. D{\'e}glise, {\sl Integral mixed
motives in equal characteristic\/},
Doc. Math. Extra volume: Alexander S. Merkurjev's sixtieth birthday,
(2015), 145--194. \


\bibitem{deJong} A. de Jong, {\sl Smoothness, semi-stability and alterations\/},
Inst. Hautes \'{E}tudes Sci. Publ. Math., {\bf 83}, (1996), 1233-1293. \

\bibitem{EsnaultWittenberg} H. Esnault, O. Wittenberg, with an Appendix by S. Bloch, {\sl On the cycle class map for zero-cycles over local fields\/}, 
Ann. Sci. \'{E}c. Norm. Sup\'{e}r. (4), {\bf 49}, (2016), no. 2, 483--520.\

\bibitem{EKW} H. Esnault, M. Kerz, O. Wittenberg, {\sl A restriction
isomorphism for cycles of relative dimension zero\/},
Cambridge J. Math., {\bf 4}, (2016), 163--196. \


\bibitem{Forre} P. Forr{\'e}, {\sl On the kernel of the reciprocity map
for varieties over local fields\/}, PhD thesis, Universit{\"a}t Regensburg,
(2011). 


\bibitem {Fulton} W. Fulton, {\sl Intersection theory\/}, Second Edition,
Ergebnisse der Mathematik und ihrer Grenzgebiete 3,
 Folge. A Series of Modern Surveys in Mathematics,
{\bf 2}, Springer-Verlag, Berlin, (1998). \



\bibitem{FV} E. Friedlander, V. Voevodsky, {\sl Bivariant cycle cohomology\/},
In ``Cycles, Transfers and Motivic Homology Theories'',
 Annals of Math. Stud., {\bf 143}, Princeton Univ. Press, Princeton, NJ,
(2000), 138--187. \


\bibitem{Geisser-Duke} T. Geisser, {\sl Arithmetic cohomology over finite fields and
    special values of $\zeta$-functions\/}, Duke Math. J.,  {\bf 133},
  (2006), 27--57. \

\bibitem{GL} T. Geisser, M. Levine, 
{\sl The $K$-theory of fields in characteristic $p$\/},
Invent. Math., {\bf 139}, (2000), 459--493. \


\bibitem{GW} S. Geller, C. Weibel, {\sl $K_{1}(A,\,B,\,I)$\/},
J. Reine Angew. Math., {\bf 342}, (1983), 12--34. \


\bibitem{GK} M. Ghosh, A. Krishna, {\sl Bertini theorems revisited\/},
arXiv:1912.09076v2 [math.AG], (2020). \

\bibitem{GK-1} M. Ghosh, A. Krishna, {\sl Zero-cycles on normal varieties\/},
arXiv:2012.11249v2  [math.AG], (2021). \


\bibitem{GS} M. Gros, N. Suwa, {\sl Application d'Abel-Jacobi $p$-adique et 
cycles alg{\'e}briques\/},  Duke Math. J., {\bf 57}, (1988), 579--613. \

\bibitem{Ray} L. Gruson, M. Raynaud,  {\sl Crit\`eres de platitude
et de projectivit\'e\/}, Invent. Math., {\bf 13}, (1971), 1--89. \

\bibitem{GKris2} R. Gupta, A. Krishna, {\sl Reciprocity for Kato-Saito idele Class
    group with modulus\/}, arXiv:2008.05719v1 [math.AG], (2020).\

\bibitem{GKris} R. Gupta, A. Krishna, {\sl Idele class groups
    with modulus\/}, arXiv:2101.04609v1 [math.AG], (2021). \

\bibitem{GKR} R. Gupta, A. Krishna, J. Rathore, {\sl A decomposition theorem for
    0-cycles and applications to class field theory\/},
  arXiv:2109.10037v1 [math.AG], (2021). \

\bibitem{Hartshorne} R. Hartshorne, {\sl Algebraic Geometry\/}, 
Graduate Texts in Math., {\bf 52}, Springer-Verlag, (1977). 
 

\bibitem{Jou} J.-P. Jouanolou, {\sl Th{\'e}or{\`e}mes de Bertini et
application\/}, Progress in Mathematics, Birkh{\"a}user, Boston, {\bf 42},
(1983). \

\bibitem{Kato-Saito-1} K. Kato, S. Saito, {\sl Unramified class field theory of
    arithmetic surfaces\/}, Ann. of Math., {\bf 118}, No. 2, (1983), 241--275. \


\bibitem{Kato-Saito-2} K. Kato, S. Saito, {\em Global class field theory of 
arithmetic schemes\/}, Applications of algebraic K-theory to algebraic 
geometry and number theory, Contemporary Mathematics, {\bf 55}, 
Amer. Math. Soc, Providence, RI, (1986), 255--331. \
  

\bibitem{Kelly} S. Kelly, {\sl Voevodsky motives and {$l$}dh-descent\/},
Ast\'{e}risque, {\bf 391}, (2017), pp. 125. \

\bibitem{Kerz10} M. Kerz, {\em Milnor K-theory of local rings with finite 
residue fields\/}, J. Alg. Geom., {\bf 19}, (2010), 173--191.\

\bibitem{KS} M. Kerz, S. Saito, {\sl Chow group of 0-cycles with modulus
and higher dimensional class field theory\/}, Duke Math. J., {\bf 165},
(2016), no. 15, 2811--2897. \



\bibitem{Keune} F. Keune, {\sl The relativization of $K_2$\/},
Journal of Algebra, {\bf 54}, (1978), 159--177. \

\bibitem{Krishna-3} A. Krishna, {\sl On 0-cycles with modulus\/},
Algebra \& Number Theory, {\bf 9}, (2015) no. 10, 2397--2415.  \

\bibitem{Krishna-2} A. Krishna, {\sl Torsion in the 0-cycle group with
modulus\/}, Algebra \& Number Theory, {\bf 12}, (2018), 1431--1469. \

\bibitem{KPv} A. Krishna, J. Park, {\sl A module structure and a vanishing
theorem for cycles with modulus\/}, Math. Res. Lett., {\bf 24}, (2017), 
1147--1176. \
  
\bibitem{KP} A. Krishna, P. Pelaez, {\sl Slice spectral sequence for singular
schemes and applications\/}, Ann. K-Theory, {\bf 3}, (2018), 657--708. \

\bibitem{KP-1} A. Krishna, P. Pelaez, {\sl Motivic spectral sequence for relative
    homotopy $K$-theory\/}, Ann. Sc. Norm. Super. Pisa Cl. Sci., {\bf 21}, (2020),
  411--447. \

\bibitem{LevineBlochFormula} M. Levine, {\sl Bloch's formula for singular 
surfaces\/}, Topology, {\bf 24}, (1985), 165--174. \

\bibitem{Levine-AJM} M. Levine, {\sl Torsion zero-cycles on singular varieties\/},
Am. J. Math., {\bf 107}, (1985), 737--757. \ 

\bibitem{Levine-2} M. Levine, {\sl Zero-cycles and $K$-theory on
singular varieties\/}, Proc. Symp. Pure Math., {\bf 46},
Amer. Math. Soc., Providence, (1987), 451--462. \

\bibitem{LW} M. Levine, C. Weibel, {\sl Zero cycles and complete intersections
on singular varieties\/}, J. Reine Angew. Math.,
\textbf{359}, (1985), 106--120. \

\bibitem{Mallick} V. Mallick, {\sl Roitman's theorem for singular projective
varieties in arbitrary characteristic}, J. K-Theory {\bf 3}, (2009),
501--531. \

\bibitem{MVW} C. Mazza, V. Voevodsky, C. Weibel, {\sl Lecture notes on motivic
cohomology\/}, Clay Mathematics Monographs, {\bf 2},
American Mathematical Society, Providence, (2006) .  \

\bibitem{Milne} J. Milne, {\sl {\'E}tale Cohomology\/}, 
Princeton Math. Ser., {\bf 33}, Princeton University Press, (1980). \ 

\bibitem{Miyazaki} H. Miyazaki, {\sl Cube invariance of higher Chow groups
with modulus\/}, J. Alg. Geom., {\bf 28}, (2019), 339--390. \

\bibitem{RussellANT} H. Russell, {\sl Albanese varieties with modulus over a 
perfect field\/}, Algebra \& Number Theory, {\bf 7}, (2013), 853--892. \

\bibitem{SaitoSatoAnnals} S. Saito, K. Sato, {\sl A finiteness theorem for 
zero-cycles over $p$-adic fields\/}, Ann. of Math., {\bf 172}, (2010),
1593--1639. \

\bibitem{Schmidt-1} A. Schmidt, {\sl Singular homology of arithmetic schemes\/},
Algebra \& Number Theory, {\bf 1}, No. 2, (2007), 183--222. \

\bibitem{Schmidt-2} A. Schmidt, {\sl Some consequences of Wiesend's higher
dimensional class field theory\/}, Math. Z., {\bf 256}, (2007), 731--736. \,

\bibitem{Serre} J.-P. Serre, {\sl Morphisme universels et 
differentielles de troisieme espace\/}, S{\'e}minaire Chevalley, Expos{\'e}~11, 
1958--1959. \

\bibitem{SS} M. Spie{\ss}, T. Szamuely, {\sl On the Albanese map for
smooth quasi-projective varieties\/}, Math. Ann., {\bf 325}, (2003),
1--17. \

\bibitem{Temkin} M. Temkin, {\sl Tame distillation and desingularization by 
$p$-alterations\/}, Ann. of Math., {\bf 186}, (2017),  97--126. \


\bibitem{TT} R. Thomason, T.Trobaugh, {\sl Higher algebraic $K$-theory of
    schemes and of derived categories\/}, in `The Grothendieck Festschrift III',
Progress in Math., {\bf 88}, Birkh{\"a}user, Boston, (1990), 247--435.


\bibitem{VoevTriangCat} V. Voevodsky, {\sl Triangulated categories of motives
over a field\/}, Cycles,
transfers, and motivic homology theories, Ann. of Math. Stud., {\bf 143},
(2000). \

\bibitem{VoevHigherChow} V. Voevodsky, {\sl Motivic cohomology are isomorphic
to higher Chow groups in any characteristic\/}, Int. Math. Res. Not., {\bf 7},
(2002), 3511--355. \

\bibitem{WeibelKH} C. Weibel, {\sl Homotopy algebraic $K$-theory\/},
AMS Contem. Math., {\bf 83}, (1988), 461--488. \


\bibitem{Weil} A. Weil, {\sl Sur les crit{\'e}res d'{\'e}quivalence en
g{\'e}om{\'e}trique alg{\'e}brique\/},
Math. Annalen, {\bf 128}, (1954), 209--215. \

\bibitem{Zar58} O. Zariski, {\sl Introduction to the problem of minimal models
in the theory of algebraic surfaces\/}, Publ. of the Math. Society of Japan,
 no.4, (1958). \

\end{thebibliography}
\end{document}